\numberwithin{equation}{section}
\newtheorem{theorem}{{\textbf Theorem}}[section]
\newtheorem{proposition}[theorem]{{\textbf Proposition}}
\newtheorem{corollary}[theorem]{{\textbf Corollary}}
\newtheorem{lemma}[theorem]{{\textbf Lemma}}
\newtheorem{defn}[theorem]{{\textbf Definition}}
\newtheorem{remit}[theorem]{{\textbf Remark}}
\newenvironment{remark}{\begin{remit}\rm}{\end{remit}}
\newenvironment{definition}{\begin{defn}\rm}{\end{defn}}
\newcommand{\cB}{{\mathcal B}}
\newcommand{\cC}{{\mathcal C}}
\newcommand{\cD}{{\mathcal D}}
\newcommand{\cE}{{\mathcal E}}
\newcommand{\cF}{{\mathcal F}}
\newcommand{\cH}{{\mathcal H}}
\newcommand{\cI}{{\mathcal I}}
\newcommand{\cM}{{\mathcal M}}
\newcommand{\cN}{{\mathcal N}}
\newcommand{\cO}{{\mathcal O}}
\newcommand{\cP}{{\mathcal P}}
\newcommand{\cQ}{{\mathcal Q}}
\newcommand{\cR}{{\mathcal R}}
\newcommand{\cT}{{\mathcal T}}
\newcommand{\cV}{{\mathcal V}}
\newcommand{\bC}{{\mathbb C}}
\newcommand{\pp}{{\mathbb P}}
\newcommand{\bX}{{\mathbb X}}
\newcommand{\bY}{{\mathbb Y}}
\newcommand{\Z}{{\mathbb Z}}
\newcommand{\scrC}{\mathscr{C}}
\newcommand{\tscrC}{\widetilde{\scrC}}
\newcommand{\scrE}{\mathscr{E}}
\newcommand{\scrL}{\mathscr{L}}
\newcommand{\scrV}{\mathscr{V}}
\newcommand{\bfE}{\mathbf{E}}
\newcommand{\tV}{\widetilde{V}}
\newcommand{\bE}{\overline{E}}
\newcommand{\be}{\begin{equation}}
\newcommand{\ee}{\end{equation}}
\newcommand{\bfmu}{\mathbf{\mu}}
\newcommand{\Qt}{\widetilde{Q}}
\newcommand{\Pt}{\widetilde{P}}
\newcommand{\Sym}{\mathrm{Sym}}
\newcommand{\Image}{\mathrm{Im}\,}
\newcommand{\Iden}{\mathrm{Id}}
\newcommand{\rank}{\mathrm{rk}\,}
\newcommand{\Pic}{\mathrm{Pic}}
\newcommand{\ev}{\mathrm{ev}}
\newcommand{\sm}{\mathrm{sm}}
\newcommand{\Oc}{{{\cO}_{C}}}
\newcommand{\Gr}{\mathrm{Gr}}
\newcommand{\OG}{\mathrm{OG}}
\newcommand{\Quot}{\mathrm{Quot}}
\newcommand{\cQuot}{\cQ uot}
\newcommand{\Supp}{\mathrm{Supp}}
\newcommand{\Sing}{\mathrm{Sing}}
\newcommand{\isom}{\xrightarrow{\sim}}
\newcommand{\codim}{\mathrm{codim}}
\newcommand{\length}{\mathrm{length}}
\newcommand{\GL}{\mathrm{GL}}
\newcommand{\SO}{\mathrm{SO}}
\newcommand{\GO}{\mathrm{GO}}
\newcommand{\Orth}{\mathrm{O}}
\newcommand{\Spin}{\mathrm{Spin}}
\newcommand{\MO}{{\cM\cO}_C (2n ; \delta)}
\newcommand{\IQ}{\mathrm{IQ}}
\newcommand{\IQo}{\IQ^\circ}
\newcommand{\IQe}{\IQ_e (V)}
\newcommand{\IQeo}{\IQ^\circ_e (V)}
\newcommand{\IQeV}{\IQ_e(V)}
\newcommand{\IQeoV}{\IQ_e^\circ(V)}
\newcommand{\IQoer}{\IQo_{e + 2r} ( V )_\delta}
\newcommand{\dpr}{{\delta^\prime}}
\newcommand{\tPi}{\widetilde{\Pi}}
\def\cOB{{\cOB}}
\def\bIQo{\mathbb{IQ}^\circ}
\def\bIQ{\mathbb{IQ}}
\newcommand{\bIQeV}{\bIQ_e(V)}
\newcommand{\bIQeoV}{\bIQ_e^\circ(V)}
\newcommand{\bIQetV}{\bIQ_e(\tV)}
\newcommand{\bIQe}{\bIQ_e(V)}
\newcommand{\bIQeW}{\bIQ_e(W)}
\newcommand{\tpi}{\widetilde{\pi}}
\newcommand{\tB}{\widetilde{B}}
\newcommand{\tU}{\widetilde{U}}
\newcommand{\tN}{\widetilde{N}}
\title[Counting maximal isotropic subbundles]{Counting maximal isotropic subbundles of orthogonal bundles over a curve}
\author{Daewoong Cheong}
\address{Chungbuk National University, Department of Mathematics, Chungdae-ro 1, Seowon-Gu, Cheongju City, Chungbuk 28644, Korea}
\email{daewoongc@chungbuk.ac.kr}
\author{Insong Choe}
\address{Department of Mathematics, Konkuk University, 1 Hwayang-dong, Gwangjin-Gu, Seoul 143-701, Korea}
\email{ischoe@konkuk.ac.kr}
\author{George H.\ Hitching}
\address{Oslo Metropolitan University, Postboks 4, St. Olavs plass, 0130 Oslo, Norway}
\email{gehahi@oslomet.no}
\subjclass[2010]{14H60; 14M17; 14N10; 14N35}
\keywords{Orthogonal vector bundle, curve, isotropic subbundle, enumeration}
\begin{document}

\begin{abstract}
Let $C$ be a smooth projective curve and $V$ an orthogonal bundle over $C$. Let $\IQeV$ be the isotropic Quot scheme parameterizing degree $e$ isotropic subsheaves of maximal rank in $V$. We give a closed formula for intersection numbers on components of $\IQeV$ whose generic element is saturated. As a special case, for $g \ge 2$, we compute the number of isotropic subbundles of maximal rank and degree of a general stable orthogonal bundle in most cases when this is finite. This is an orthogonal analogue of Holla's enumeration of maximal subbundles in \cite{Ho}, and of the symplectic case studied in \cite{CCH1}.
\end{abstract}

\maketitle

\section{Introduction}

Let $V$ be a vector bundle of rank $r$ over a smooth projective curve $C$ of genus $g \ge 2$. For $1\leq k \leq r-1$, a rank $k$ subbundle $E$ of $V$ is called a \textsl{maximal subbundle} if $E$ has maximal degree among rank $k$ subbundles of $V$. When a bundle $V$ has finitely many rank $k$ maximal subbundles,  the  problem of counting these subbundles has been the subject of much attention. For example, Segre \cite{Seg} and Nagata \cite{Na} proved that if $d \not\equiv g \mod 2$, then a general stable bundle of rank two has $2^g$ maximal line subbundles. Results for higher rank cases appeared in \cite{LN}, \cite{Ox},  \cite{Te} and Holla \cite{Ho} gave a closed formula for the number of maximal subbundles, using Gromov--Witten invariants of the Grassmannian. An analogous formula was obtained  in \cite{CCH1} for Lagrangian subbundles of a symplectic bundle.

The objective of the present work is to give a similar result for orthogonal bundles. For a line bundle $L$ over $C$, an \textsl{$L$-valued orthogonal bundle} over $C$ is a vector bundle $V$ equipped with a nondegenerate symmetric bilinear form $\omega \colon V \otimes V \to L$. A subsheaf $E$ of $V$ is called \textsl{isotropic} if $\omega_{E \otimes E} = 0$. By linear algebra, the rank of an isotropic subsheaf is at most $\frac{1}{2}\rank \, V$. By \cite{CCH2}, an $L$-valued  orthogonal bundle $V$ of  rank $2n$ has an isotropic subbundle of  rank $n$ if and only if $\det V \cong L^n$. On the other hand, any orthogonal bundle of  rank $2n+1$ has an isotropic subbundle of rank $n$.
 In this paper,  for most configurations of numerical invariants, we compute the number of isotropic subbundles  of rank $n$ and maximal degree when this is finite.

 Let $V$ be an $L$-valued orthogonal bundle of rank $r$ and $\IQ_e(V)$  the subscheme of the  Quot scheme of $V$ which parameterizes isotropic subsheaves of $V$ of rank $n:= \left\lfloor \frac{r}{2} \right\rfloor$ and degree $e$.
 %To exclude the components consisting of non-saturated subsheaves, let $\bIQo_e(V) \subset \IQ_e(V)$ be the open subscheme parameterizing isotropic subbundles, that is, saturated subsheaves, and set
 %\[ \bIQ_e(V) := \overline{\bIQo_e(V)},\]
% the closure of $\bIQo_e(V)$ in  $\IQ_e(V)$.
By Riemann--Roch, the expected dimension of $\IQ_e(V)$ is given by $I( n, \ell, e)$  if $r=2n$ and $I( n+1, \ell, e +\frac{\ell}{2})$ if $r=2n+1$, where $\ell = \deg (L)$ and
 \begin{equation}
 I ( n, \ell, e ) \ := \ -( n-1) e - \frac{1}{2} n ( n - 1 ) ( g-1-\ell   ) . \label{Inle}
 \end{equation}

Now  suppose there is an integer $e_0$  such that  that the expected dimension of $\IQ_{e_0}(V)$ is  zero.
%We further impose some conditions on  the determinant of $V $ and the generalized 2nd Stiefel--Whitney class $w_2(V)$ to guarantee the nonemptyness of   $\IQ_{e_0}(V)$.
We show for a general orthogonal bundle $V$ that  $\IQ_{e_0}(V)$, if nonempty,   is a  smooth scheme of dimension zero (Theorem \ref{e_0maximal}).
Hence  in this case the length of the scheme $\IQ_{e_0}(V)$ is equal to the number  of isotropic subbundles of $V$ of rank $n$ and degree $e_0$, which will be denoted by $N( g , r , \ell , e_0 )$.
The goal of this paper is to provide a formula to compute these numbers.

 Our main results are Corollary  \ref{counting formula} for the even rank case and Proposition \ref{counting odd rank} for the odd rank case, which give formulas to compute $N(g,r,\ell, e_0)$. These formulas cover almost all cases with small exceptions; see Remark \ref{Remark:Not have}.
Explicit counting results for  low rank cases are stated in \S\:\ref{low_rank}.

Let us briefly explain the strategy for obtaining the formula. For the time being, consider  the even rank case $r = 2n$; the result for the odd rank case will follow from the even rank case.  In contrast to the classical and symplectic cases, in general the scheme $\IQe$ has multiple components, most of which parameterize nonsaturated subsheaves. To %get rid of the components of $\IQe$ consisting of  non-saturated subsheaves only,
 circumvent this problem, we let $\bIQeoV$ be the open subscheme of $\IQe$ parameterizing isotropic subbundles (that is, {saturated} subsheaves), and set
\[
\bIQeV\ := \ \overline{\bIQeoV} ,
\]
the closure of $\bIQeoV$ in $\IQe$. Assume that $\bIQeoV$ is nonempty.
%Lemma \ref{existence of isotropic bundle} gives conditions for isotropic subbundles to exist.
%Then $\OG(V)$ has two components. We label these components with elements of $\Z_2 := \{ 0 , 1 \}$, so we have
%\[ \OG(V) \ = \ \OG(V)_0 \cup \OG(V)_1 .\]
% The labeling on the components of $\OG(V)$ gives rise to a labeling on components of $\bIQeV$ as follows:
%\begin{equation} \label{labeling on isotropic Quot scheme}
%\bIQeV \ = \ \bIQeV_0 \cup \bIQeV_1 .
%\end{equation}
%(If $\ell$ is even, there is no canonical labeling; see Subsection \ref{subsec:labeling}.)
Then the above number $N(g, 2n, \ell, e_0)$ can be realized as the intersection number
\[\int_{\mathbb{IQ}_{e_0}(V)} 1.\]

To compute this, we need to develop an intersection theory on $\bIQeV$. For technical reasons, we will work with labeled schemes $\bIQeV_\delta$, where $\delta \in \{0,1 \}$, such that  $\bIQeV  = \bIQeV_0 \: \cup \: \bIQeV_1$ (see \S\:\ref{subsec:labeling}).
 We first construct some special cycles $\Theta$ on $\bIQeV_\delta$, and then  associate to $\Theta$  the intersection number \[\int_{\bIQeV_\delta}\Theta.\]  %(see \S\:\ref{subsection:def-GW number}).
  To get a closed formula for this intersection number, we follow the approach of Holla \cite{Ho}.
  There are two basic operations on orthogonal bundles: Hecke transformation and tensoring by  a line  bundle. We use these operations,  together with  deformations of bundles and curves, to connect $V$ to the trivial orthogonal bundle. For each vector bundle operation, we record a relation between the intersection numbers for $\Theta$  and the `transformed' cycle.
%  letting $
%V^\prime$ be a resulting orthogonal bundle,  we identify a cycle $\Theta^\prime$ on $\mathbb{IQ}_{e^\prime}(V^\prime)_{\delta^\prime}$ related with $\Theta$ (for some integer $e^\prime$ and $\delta^\prime \in \{0,1\}$)  and also capture a relation the intersection numbers
 %\[\int_{\bIQeV_\delta}\Theta \,\,\, \,\,  \mathrm{and} \, \, \,\, \int_{\mathbb{IQ}_{e^\prime}(V^\prime)_{\delta^\prime}}\Theta^\prime.\]
 Note that an intersection number for  the trivial orthogonal bundle turns out to be a Gromov--Witten invariant (\cite{KT1}, \cite{RuTi}). Keeping track of the relations on the intersection numbers  for all the involved operations  taken to connect the original bundle $V$ to the trivial bundle, we can finally express the intersection number  in terms of a Gromov--Witten invariant.

%{\color{blue}Points where the argument/technical details are different from  the symplectic case in \cite{CCH1}. }

We mention that in some sense our result may  have a relation with a topological quantum field theory (TQFT) of Witten \cite{Wi}. Marian and Oprea studied a TQFT using an intersection theory on Quot schemes, and showed that the so-called Verlinde number of TQFT is equal to an intersection number on a Quot scheme. See also  \cite{Go} for a weighted TQFT.  We believe that if an ``orthogonal Verlinde number" were suitably defined, one could obtain an analogous result. We leave this as a future project.

This paper is organized as follows. In \S\:2, we record some results from \cite{CCH2}  on orthogonal bundles and isotropic Quot schemes. In \S\:3, we review the quantum cohomology of the orthogonal Grassmannian. In \S\:4, we give basic properties of $\bIQeV$ and describe its boundary. In \S\:5, we deal with degeneracy  loci on $\bIQeV_\delta$ and investigate their relations and behavior under Hecke transforms. In \S\:6, we define intersection numbers on  $\bIQeV_\delta$ and obtain relations among intersection numbers on related isotropic Quot schemes. In \S\:7, we give a closed formula for counting maximal isotropic subbundles.

\subsection*{Conventions and notation}
We work throughout over the field $\mathbb C$ of complex numbers. Unless otherwise stated, the curve $C$ may have arbitrary genus $g \ge 0$.
Given a subsheaf $E$ of a locally free sheaf $V$,  we write $\bE$ for the saturation of $E$, which is a subbundle of $V$.
%and $W$ is an $L$-valued orthogonal bundle of rank $2n \ge 2$, where $\ell := \deg(L)$. %and degree $w := n \cdot \deg (L)$ over $C$.
%$The fiber at $p \in C$ of a bundle $V \to C$ is denoted by $V_p$. We denote a point of the orthogonal Quot scheme $\IQeV$ either as a subsheaf $[E \to W]$ or a quotient $[W \to W/E]$ interchangeably.

\bigskip

\subsection*{Acknowledgements}
The second named author was supported by the National Research Foundation of Korea (NRF-2020R1F1A1A01068699). The third author thanks the Korea Institute for Advanced Study, the IBS Center for Complex Geometry, Daejeon, and Chungbuk National University for generous financial support and hospitality.

\section{Orthogonal bundles and isotropic subbundles} \label{existence}

In this section, we review some results on orthogonal bundles over $C$ established in \cite{CCH2}.

\begin{definition} Let $L$ be a line bundle. A vector bundle $V$ over $C$ is said to be \textsl{$L$-valued orthogonal} if there is a  nondegenerate symmetric bilinear form $\sigma \colon V \otimes V \to L$; equivalently, if there is a symmetric isomorphism $V \isom V^* \otimes L$. A subbundle, or more generally a subsheaf $E \subset V$ is \textsl{isotropic} if $\sigma|_{E \otimes E} = 0$. \end{definition}

By linear algebra as for orthogonal vector spaces, an isotropic subbundle has rank at most $\frac{1}{2} \rank V$.

In view of the isomorphism $V \isom V^* \otimes L$, we have $\det(V)^2 = L^{\rank V}$. In particular, if $V$ has odd rank, then $\deg L$ is even. If $V$ has even rank $2n$, then $\det (V) = \eta \otimes L^n$ for some $\eta$ of order two in $\Pic^0 (C)$.
%\begin{lemma} \label{twistEven} Let $V$ be an $L$-valued orthogonal bundle of rank $2n$ with $\det V = \eta \otimes L^n$ for some $\eta$ of order two in $\Pic^0(C)$.
%\begin{enumerate}
%\item[(a)] If $\deg L$ is even, then there is a line bundle $M$ such that the twist $V \otimes M^{-1}$ is $\Oc$-valued rthogonal bundle of determinant $\eta$.
%\item[(b)] If $\deg L$ is odd, then  there is a line bundle $M$ such that the twist $V \otimes M^{-1}$ is $\Oc (x) $-valued orthogonal bundle of determinant $\eta (nx)$ for some $x \in C$.
% \end{enumerate}
% \end{lemma}
The bundle $\eta$ determines whether $V$ admits isotropic subbundles of half rank as follows. The next statement follows from \cite[Lemma 2.5]{CCH2} (see also \cite[Proposition 3.2]{CH4} for a more elementary proof).

\begin{lemma} \label{existence of isotropic bundle} Let $V$ be an $L$-valued orthogonal bundle of rank $2n$. Then $V$ admits an isotropic subbundle of rank $n$ if and only if $\det(V) = L^n$. \end{lemma}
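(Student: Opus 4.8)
The plan is to prove both directions via reduction modulo the orthogonal structure, using the symmetric isomorphism $V \isom V^* \otimes L$ directly. First I would handle the easy direction: suppose $E \subset V$ is an isotropic subbundle of rank $n$. Since $E$ is isotropic, the composite $E \hookrightarrow V \isom V^*\otimes L \to E^* \otimes L$ is zero, so the symmetric form induces an isomorphism $V/E \isom E^*\otimes L$, giving an exact sequence $0 \to E \to V \to E^*\otimes L \to 0$. Taking determinants yields $\det V \cong \det E \otimes \det(E^*\otimes L) = \det E \otimes (\det E)^{-1} \otimes L^n = L^n$, which proves necessity.

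For the converse, suppose $\det V = L^n$; I would produce an isotropic subbundle of rank $n$. The strategy is to work generically and then take closures: over the generic point $\mathrm{Spec}\,K(C)$, the bundle $V$ becomes a nondegenerate quadratic space over the function field, and its discriminant is governed by $\det V$ together with the value $\ell = \deg L$. The condition $\det V = L^n$ (as opposed to $\eta\otimes L^n$ with $\eta$ nontrivial of order two) is precisely what forces the discriminant class of this quadratic space to be trivial, hence the quadratic form over $K(C)$ is split, i.e.\ it contains a maximal isotropic subspace of dimension $n$. Spreading this isotropic $K(C)$-subspace out and taking its saturation inside $V$ produces an isotropic subsheaf of rank $n$, whose saturation is an isotropic subbundle of rank $n$ (saturating an isotropic subsheaf keeps it isotropic, since isotropy is a closed condition on the total space and the saturation agrees with the subsheaf away from finitely many points, but one must check isotropy is preserved at those points — it is, because the form is a morphism of vector bundles vanishing on a dense open subset of $E\otimes E$).

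The main obstacle is the converse direction, specifically the passage from ``$\det V = L^n$'' to ``the generic fibre is a split quadratic space.'' This requires correctly identifying the arithmetic invariants: over a field, a nondegenerate even-dimensional quadratic form is split if and only if its discriminant is trivial and (over general fields) its Clifford/Hasse invariant vanishes, but over $K(C)$ with $C$ a curve the cohomological dimension is low enough that the discriminant alone controls the relevant question for our purposes, since we only need \emph{a} Lagrangian, and one can also argue more geometrically: a rank $2n$ orthogonal bundle with $\det V=L^n$ admits, after a generically étale base change or directly, a reduction of structure group realizing it as an extension of $E^*\otimes L$ by $E$. Rather than invoking arithmetic, I would instead cite that this is exactly \cite[Lemma 2.5]{CCH2}, and in the proof simply recall the geometric argument: one shows the $\eta$ attached to $\det V$ is the obstruction by analyzing the locus of maximal isotropic subspaces in the fibres, a smooth projective bundle over $C$ whose two connected components (when $\eta$ is trivial) each admit sections by a dimension count, Tsen's theorem guaranteeing rational sections of the relevant fibration over the curve. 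Thus the heart of the matter is the component-counting of the isotropic Grassmannian bundle $\OG(n, V) \to C$ and the fact that $\eta$ is precisely its monodromy/obstruction class; everything else is bookkeeping with determinants.
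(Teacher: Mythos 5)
The paper offers no proof of this lemma; it simply cites \cite[Lemma 2.5]{CCH2} (and \cite[Proposition 3.2]{CH4} for an elementary variant), so there is no in-paper argument to compare against. Your proof of the ``only if'' direction is complete and correct: isotropy of $E$ forces $E = E^\perp$ (both are subbundles of rank $n$ with $E \subseteq E^\perp$), so the form induces an isomorphism $V/E \isom E^* \otimes L$, and taking determinants in $0 \to E \to V \to E^*\otimes L \to 0$ gives $\det V \cong L^n$.

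For the ``if'' direction your outline identifies the right obstruction, and it matches the machinery the paper develops later for a closely related purpose: $\eta := \det V \otimes L^{-n} \in \Pic^0(C)[2]$ is exactly the obstruction to reducing the structure group from $\GO_{2n}$ to $\GO_{2n}^0$, equivalently to $\OG(n,V)\to C$ being disconnected into two components over $C$ (Lemma \ref{technicalGO} and Proposition \ref{OGTwoCpts}); once there are two components, a rational section of one of them exists because $K(C)$ is a $C_1$ field, and saturating gives the isotropic subbundle. Two imprecisions are worth flagging. First, ``by a dimension count'' is not what produces the section: one either uses that $\OG(n)_0$ is a smooth projective rational variety, so has a $K(C)$-point because $K(C)$ is $C_1$, or argues via Witt theory by repeatedly splitting off hyperbolic planes (every nondegenerate form of rank $\ge 3$ over a $C_1$ field is isotropic) down to rank $2$, where vanishing of the discriminant makes the last plane hyperbolic. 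Second, the identification ``discriminant of the generic fibre trivial $\Longleftrightarrow$ $\eta$ trivial'' is plausible but not automatic; it comes from the injectivity of the restriction $\Pic(C)[2] \cong H^1(C,\Z_2) \hookrightarrow K(C)^*/(K(C)^*)^2$ and deserves a sentence. Since you ultimately defer to \cite{CCH2} for this half, your proposal has the same logical status as the paper's own (a citation), augmented with a correct and useful sketch of what lies behind it.
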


\noindent Concerning the odd rank case, by \cite[Lemma 2.7]{CCH2} we have:

\begin{lemma} \label{existence of isotropic bundle odd rank} Let $V$ be an $L$-valued orthogonal bundle of rank $2n + 1$. Then $V$ admits an isotropic subbundle of rank $n$. \end{lemma}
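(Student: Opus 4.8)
The plan is to reduce the odd-rank case to the even-rank case (Lemma \ref{existence of isotropic bundle}) by a one-dimensional reduction of the quadratic form. Given an $L$-valued orthogonal bundle $V$ of rank $2n+1$, I would first choose a nonzero section of an appropriate twist of $V$ to produce a rank-one subbundle $N \subset V$ on which $\sigma$ is \emph{nondegenerate}, i.e. $\sigma|_{N \otimes N} \colon N \otimes N \to L$ is an isomorphism. Concretely, since $V \cong V^* \otimes L$, a generic local section is nonisotropic, and one can realize such an $N$ globally as follows: twist so that the determinant condition works out, and use that the generic symmetric form on a fiber has a nonisotropic vector; the locus where a chosen line subbundle becomes isotropic is a proper condition, so after adjusting we get $N$ with $N^2 \cong L$ (hence $\deg N = \tfrac{\ell}{2}$, which is an integer since $\ell$ is even in the odd-rank case).

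Once such an $N$ is found, I would take the orthogonal complement $W := N^\perp \subset V$, which is a subbundle of rank $2n$, and observe that $V = N \oplus W$ as orthogonal bundles since $\sigma|_N$ is nondegenerate. Then $W$ inherits a nondegenerate $L$-valued symmetric form, and from $\det V = \det N \otimes \det W$ together with $\det(V)^2 = L^{2n+1}$ and $N^2 = L$, one computes $\det(W)^2 = L^{2n}$, so $\det W = \eta \otimes L^n$ for some two-torsion $\eta$. The subtle point is that this $\eta$ need \emph{not} be trivial, so Lemma \ref{existence of isotropic bundle} does not apply directly to $W$; instead I would modify $N$ (or rather $W$) by a further Hecke-type adjustment, or twist $N$ by a two-torsion line bundle killing $\eta$ — but two-torsion line bundles have $N^2$ still $\cong L$ only up to that torsion, so more care is needed. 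The cleanest route is: if $\det W = \eta \otimes L^n$ with $\eta$ nontrivial, replace the chosen nonisotropic line subbundle $N$ by $N \otimes \eta$ (which still satisfies $(N\otimes\eta)^2 \cong L$ since $\eta^2 = \cO_C$), giving a new complement $W'$ with $\det W' = L^n$; then Lemma \ref{existence of isotropic bundle} yields an isotropic subbundle $E \subset W'$ of rank $n$, and $E \subset W' \subset V$ is the desired rank-$n$ isotropic subbundle of $V$.

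The main obstacle I anticipate is the \emph{global} existence of a nonisotropic line subbundle $N$ with the prescribed square: pointwise nonisotropic vectors always exist, but patching them into a line subbundle $N$ with $N^2 \cong L$ exactly (not merely $N^2 \hookrightarrow L$) requires showing that a generic section of $V \otimes M^{-1}$, for a suitable square root $M$ of $L$ (or line bundle with $M^2 \cong L \otimes \eta'$), is everywhere nonvanishing \emph{and} nonisotropic. One handles this by a dimension count: the bundle $V \otimes M^{-1}$ has rank $2n+1 \ge 3 > \dim C$, so a generic section is nowhere zero, and the isotropic quadric in each fiber is a proper subvariety, so the generic section avoids it as well — this is exactly the kind of genericity argument used for $\IQ_{e_0}(V)$ elsewhere in the paper. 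The remaining bookkeeping — verifying the determinant identities and that $E$ remains isotropic and saturated inside $V$ — is routine linear algebra on bundles. I would present the reduction first, isolate the existence of $N$ as the key claim, prove that claim by the genericity argument, and then invoke Lemma \ref{existence of isotropic bundle} to conclude.
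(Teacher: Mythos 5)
The proposal breaks at exactly the point you identify as ``the main obstacle,'' and the genericity argument you offer does not close the gap. The constraint $N^2 \cong L$ forces $\deg N = \ell/2$, and the bundle $V \otimes M^{-1}$ from which you want to draw a nowhere-vanishing, nowhere-isotropic section has degree zero and rank $2n+1$; by Riemann--Roch its Euler characteristic is $(2n+1)(1-g)$, which is negative for $g \ge 2$. For a general $V$, then, $H^0(C, V \otimes M^{-1}) = 0$ and there is no section to which the ``rank $>\dim C$'' argument could be applied. More geometrically, $\ell/2$ is precisely the slope of $V$, and for a stable (or generic) orthogonal $V$ in positive genus the maximal degree of a line subbundle is strictly less than the slope, so a line subbundle $N$ with $N^2 \cong L$ on which $\sigma$ is everywhere nondegenerate --- that is, an orthogonal splitting $V \cong N \perp W$ --- simply does not exist for most $V$. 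This is not bookkeeping that more care can repair: the strategy requires a splitting that typically fails. (The proposed patch of ``replacing $N$ by $N \otimes \eta$'' is also ill-posed, since $N \otimes \eta$ is only an abstract line bundle and no embedding of it into $V$ is given; although if a nonisotropic $N$ with $N^2 \cong L$ did exist, one could take $N$ to be the distinguished square root $M$ with $\det V \cong M^{2n+1}$, making $\det W = L^n$ automatic, so that issue is the lesser one.)

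The paper does not reprove this statement --- it cites \cite[Lemma 2.7]{CCH2} --- but the working route is essentially the opposite of yours: rather than splitting a nonisotropic line \emph{off} $V$, one \emph{enlarges} $V$. Taking $M$ to be the canonical square root of $L$ with $\det V \cong M^{2n+1}$ (as in \cite[Lemma 2.4]{CCH2}), the orthogonal direct sum $V \perp M$ has even rank $2n+2$ and determinant $L^{n+1}$, so Lemma \ref{existence of isotropic bundle} produces a rank $n+1$ isotropic subbundle $F \subset V \perp M$; the saturation of $F \cap V$ in $V$ is then a rank $n$ isotropic subbundle of $V$. (Equivalently, one can note that for odd rank the orthogonal Grassmann bundle $\OG(n,V) \to C$ has irreducible rational fibers, hence admits a section.) Your instinct to reduce to Lemma \ref{existence of isotropic bundle} is correct, but the reduction must go by enlarging the ambient orthogonal bundle, not by attempting a rank-one orthogonal decomposition of $V$.
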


%\begin{lemma} \label{existence of isotropic bundle} Let $V$ be an $L$-valued orthogonal bundle of rank $2n$. Then the following statements are equivalent.
%\begin{enumerate}
%\item[(1)] $V$ has an isotropic subbundle of rank $n$.
%\item[(2)] $\det(V) = L^n$.
%\item[(3)] There is a line bundle $M$ such that either $V \otimes M^{-1}$ or an orthogonal Hecke transformation of $V \otimes M^{-1}$ is $\Oc$-valued orthogonal of trivial determinant.
%\item[(4)] There exists a nonempty Zariski open subset $U$ of $C$ together with a trivialization $V|_U \cong \bC^{2n} \times U$ such that the standard basis of $\bC^{2n}$ forms an orthonormal frame.
%\end{enumerate}
%\end{lemma}

\subsection{Stiefel--Whitney classes} \label{StWh}

Let $V$ be an $L$-valued orthogonal bundle of rank $r$. When $L = \Oc$ and $\det V$ is trivial, $V$ determines a principal $\SO_r$-bundle (up to a choice of orientation if $r = 2n$ is even; see for example \cite[{\S} 2]{Ra}). In the nonabelian cohomology sequence associated to $1 \to \Z_2 \to \Spin_r \to \SO_r \to 1$, the Stiefel--Whitney class $w_2(V) \in H^2 ( C , \Z_2 )$ is the obstruction to lifting a principal $\SO_r$-bundle to a $\Spin_r$-bundle; see for example \cite{Ser}. As in \cite[{\S} 2.3]{CCH2}, this can be generalized to the case where $\deg(L)$ is even and $\det V = L^n$. In this situation, by for example \cite[Lemma 2.3]{CCH2}, there exists a line bundle $M$ such that $V \otimes M^{-1}$ is $\Oc$-valued orthogonal of trivial determinant.

\begin{definition} Assume that $\deg (L)$ is even and $\det (V) \cong L^n$. We define
\[ w_2(V) \ := \ w_2 (V \otimes M^{-1}) + \frac{n \ell}{2} \ \text{in} \ H^2 (C, \Z_2 ) \cong \Z_2. \]
\end{definition}

\noindent By \cite[Theorem 2.13 (a)]{CCH2}, the invariant $w_2(V)$ is the parity of the degree of any rank $n$ isotropic subbundle of $V$. (See also \cite[Theorem 2.2]{CH4} for a more elementary proof in the case $L = \Oc$.)

\subsection{Zero-dimensional isotropic Quot schemes}
 In \S \:4, we shall discuss the isotropic Quot schemes  parameterizing  isotropic subsheaves in detail. Here we consider a  zero-dimensional isotropic Quot scheme, which will be of interest in this paper.

Let $V$ be an $L$-valued orthogonal bundle of rank $r$ over $C$ of genus $g \ge 2$, and let $\IQ_e(V)$ be the closed subscheme of the Quot scheme of $V$ which parameterizes  isotropic subsheaves of $V$ of rank $n:=\left\lfloor \frac{r}{2} \right\rfloor$ and degree $e$.

\begin{theorem} \label{e_0maximal}
Suppose that an orthogonal bundle $V$  is general in a component of the moduli space $\cM O_C(r ; L)$  of stable $L$-valued orthogonal bundles of rank $r \ge 3$.  We impose the following numerical conditions, with $\ell := \deg (L)$:
\begin{enumerate}
\item If $r=2n$, assume that $\det V \cong L^n$ and that $n(g-1-\ell)$ is even. Set
\[ e_0 \ := \ -\frac{1}{2}n(g-1-\ell) , \]
and assume that $w_2(V) \equiv e_0 \mod 2$.
\item If $r=2n+1$, assume that $\ell$ and $(n+1)(g-1)$ are even. Set
\[ e_0 \ := \ -\frac{1}{2}(n+1)(g-1) + \frac{1}{2}n\ell \]
and assume that $w_2(V) \equiv e_0 \mod 2$.
\end{enumerate}
Then the scheme $\IQ_{e}(V)$ is empty for $e > e_0$, and $\IQ_{e_0}(V)$ is a nonempty smooth scheme of dimension zero, every point of which corresponds to an isotropic subbundle.
\end{theorem}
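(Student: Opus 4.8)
The plan is to treat the even-rank case $r = 2n$ first and then deduce the odd-rank case $r = 2n+1$ by the standard trick of passing to $V \oplus L$ (or a suitable rank-one extension), which carries the orthogonal structure and whose rank-$n$ isotropic subbundles are closely related to those of $V$. So assume $r = 2n$, $\det V \cong L^n$, and $w_2(V) \equiv e_0 \bmod 2$, with $e_0 = -\tfrac12 n(g-1-\ell)$ chosen so that $I(n,\ell,e_0) = 0$. By Lemma \ref{existence of isotropic bundle} the determinant condition guarantees that $V$ admits \emph{some} rank-$n$ isotropic subbundle; by the Stiefel--Whitney computation recalled after the definition in \S\ref{StWh}, the degree of any such subbundle is $\equiv w_2(V) \equiv e_0 \bmod 2$, and in fact $e_0$ is the largest degree consistent with stability-type bounds. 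I would first establish the vanishing statement: for a \emph{general} $V$ in its component of $\cM O_C(2n; L)$, there is no rank-$n$ isotropic subbundle of degree $e > e_0$. This is a dimension count on an incidence variety — the moduli of pairs $(V, E)$ with $E \subset V$ isotropic of rank $n$ and degree $e$ — together with upper-semicontinuity: if such pairs existed over a dense subset of the component, the expected dimension $I(n,\ell,e)$ (which is strictly negative for $e > e_0$, since $I$ is linear and decreasing in $e$ with slope $-(n-1) < 0$) would have to be nonnegative at a general point, a contradiction once one knows the relevant deformation space of the bundle is unobstructed or at least that the incidence variety dominates the moduli space with fibers of the expected dimension.

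The heart of the argument is the claim that for general $V$, the scheme $\IQ_{e_0}(V)$ is nonempty, zero-dimensional, and smooth, with every point a subbundle. For this I would use the deformation theory of the isotropic Quot scheme, which must be developed in \S 4--5 (and which I am allowed to assume is available there): at a point $[E \hookrightarrow V]$ of $\IQ_{e_0}(V)$, the Zariski tangent space and obstruction space are $H^0$ and $H^1$ of a sheaf built from $\mathrm{Hom}(E, V/E)$ cut down by the isotropy condition — concretely, the relevant complex has $H^0 = H^0(C, \mathrm{Sym}^*)$-type term and $H^1$ the obstruction, with Euler characteristic exactly $I(n,\ell,e_0) = 0$. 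So smoothness of dimension zero is equivalent to the vanishing of that $H^1$. The standard mechanism: show that the universal/relative isotropic Quot scheme over (an étale cover of) the moduli space $\cM O_C(2n; L)$ is itself smooth of the expected relative dimension, so that its generic fiber is smooth of dimension $\max(0, I(n,\ell,e_0)) = 0$; then generic smoothness (char.\ $0$) gives the result. Nonemptiness I would get either by exhibiting one explicit pair $(V_0, E_0)$ with $E_0$ a subbundle and the obstruction vanishing — e.g.\ built from a general extension $0 \to N \to V_0 \to N^* \otimes L \to 0$ with $N$ a suitable rank-$n$ bundle of degree $e_0$, checking $\det$ and $w_2$ — and then spreading out, or by a direct count showing the forgetful map from the incidence variety to $\cM O_C(2n;L)$ is dominant and generically finite of positive degree. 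That "subbundle, not merely subsheaf" is automatic at degree $e_0$: any isotropic subsheaf of degree $e_0$ has saturation of degree $\ge e_0$, still isotropic, hence of degree exactly $e_0$ by the vanishing above, so it coincides with its saturation.

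The main obstacle I anticipate is the unobstructedness/smoothness of the total space of the relative isotropic Quot scheme over the orthogonal moduli space — i.e.\ showing that the obstruction space $H^1$ of the pair vanishes for generic $V$, rather than just generically for fixed $V$. The subtlety, compared with Holla's classical case and the symplectic case of \cite{CCH1}, is the presence of the order-two invariant $w_2$ and the fact that $\IQ_e(V)$ generically has many components parameterizing non-saturated subsheaves; one must make sure the component containing the honest subbundles behaves well and that the degeneracy-locus / Hecke-transform machinery of \S 5 is compatible with the labeling $\delta$. I would isolate this as a lemma: deform $(V, E)$ simultaneously, using that $H^1$ of the pair maps to $H^1$ of the deformations of $V$ with the right cokernel, and that for generic $V$ the relevant connecting maps are surjective — essentially a statement that the "second-order" obstruction to deforming an isotropic subbundle inside a varying orthogonal bundle is unobstructed, which should follow from Serre duality identifying the obstruction with a space of co-sections that a general $V$ kills. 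Once that lemma is in place, vanishing of $H^1$, zero-dimensionality, smoothness, nonemptiness, and the subbundle property all follow formally as above, and the odd-rank case reduces to the even-rank one.
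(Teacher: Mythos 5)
Your overall strategy --- dimension count on a parameter space of pairs $(V,E)$, dominance onto the moduli space, and a cohomological vanishing to control the tangent space of the fiber --- is the right one, and matches the paper's in spirit. But several of the details are off, and the paper's proof is organized in a way that sidesteps the main difficulty you flag.

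First, the tangent space to $\IQ_{e_0}(V)$ at an isotropic subbundle $[E \subset V]$ is $H^0(C, \wedge^2 E^* \otimes L)$, not a ``$\mathrm{Sym}$-type'' term: for an orthogonal bundle, the condition that a first-order deformation $\phi \in H^0(C,\mathrm{Hom}(E, V/E)) = H^0(C, E^*\otimes E^*\otimes L)$ preserve isotropy forces the associated bilinear form to be \emph{skew}-symmetric ($\mathrm{Sym}$ would be the symplectic case). Since $\chi(\wedge^2 E^* \otimes L) = I(n,\ell,e_0) = 0$, smoothness of dimension zero is equivalent to $h^1(C, \wedge^2 E^* \otimes L) = 0$, and this is the precise vanishing the paper establishes, by exhibiting explicit $E$'s (direct sums of general rank-2 bundles when $n$ is even, or of general line bundles when $g-1-\ell$ is even).

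Second, the paper does not work with a ``relative isotropic Quot scheme over the orthogonal moduli space,'' which is the construction whose unobstructedness you rightly worry about. Instead, it parametrizes pairs over $U_C(n,e_0)$ (the ordinary moduli of stable rank-$n$ bundles, which is known to be smooth) via the projective bundle $\cP$ with fiber $\pp H^1(C,\wedge^2 E \otimes L^{-1})$ of extension classes. Quasi-finiteness of the classifying map $\cP \dashrightarrow \cM O_C(2n;L)$ over the open locus $U_C(n,e_0)^\circ = \{E : h^1(\wedge^2 E^*\otimes L)=0\}$ then gives dominance by dimension count, and a complementary dimension argument rules out subbundles with $h^1 \neq 0$ for general $V$. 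This routes around the deformation theory of the pair $(V,E)$ entirely; your proposal, as stated, leaves that step as an open obstacle.

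Third, the odd-rank reduction cannot go via $V \oplus L$: that bundle carries no natural $L$-valued orthogonal structure, since the $L \otimes L \to L$ component of the form would be a section of $L^{-1}$. The correct device, used in the paper (via \cite[Lemma 8.1 and 8.2]{CCH2}), is the orthogonal direct sum $V \perp M$ where $M$ is the line bundle with $M^2 \cong L$ determined by $\det V \cong M^{2n+1}$; this is an $L$-valued orthogonal bundle of rank $2n+2$ whose rank-$(n+1)$ isotropic subbundles correspond pairwise with rank-$n$ isotropic subbundles of $V$.

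Your argument that every point of $\IQ_{e_0}(V)$ is saturated is essentially correct, though it is cleaner to say directly: if $[E \subset V]$ were non-saturated, its saturation $\bE$ would be an isotropic subbundle of degree strictly greater than $e_0$, contradicting the emptiness of $\IQ_e(V)$ for $e > e_0$.
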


\begin{proof}
(1) $r=2n$: In the case $L = \Oc$, the (non)emptyness for $e \ge e_0$ follows from \cite[Theorem 1.3]{CH2}. In fact, the same argument works for arbitrary $L$.  For the reader's convenience, let us briefly indicate the steps of the proof.

Let $U_C(n,e)$ be the moduli space of stable bundles over $C$ of rank $n$ and degree $e$. Up to an \'{e}tale cover of $U_C(n,e)$, every orthogonal bundle with a stable degree $e$ isotropic subbundle $E \subset V$ corresponds to a point of a projective bundle $\pi \colon \cP \to U_C(n,e)$ with fibers $\pi^{-1}(E) = \pp H^1(C, \wedge^2 E \otimes L^{-1})$. By dimension count, we have
\[
\begin{cases}
\dim \cP < \dim \cM O_C(2n ; L) & \text{if} \ e > e_0, \\
\dim \cP = \dim \cM O_C(2n ; L) & \text{if}  \ e = e_0.
\end{cases}
\]
 Hence if $e >e_0$, the classifying map $\Phi \colon \cP \dashrightarrow \cM O_C(2n ; L)$ is not dominant and thus $\IQe$ is empty for a general $V$.
 Suppose that $e = e_0$. Now the tangent space to $\IQ_{e_0}(V)$ at $[E \subset V]$ is identified with $H^0(C, L \otimes \wedge^2 E^*) $. We claim that $H^1 ( C, L \otimes \wedge^2 E^* )$ is zero for generic $E \in U_C (n, e_0)$. This follows from \cite[Appendix A]{CH1} for $L = \cO_C$. In general, it suffices to exhibit a single $E$ of rank $n$ and degree $e_0$ with $H^1(C, L \otimes \wedge^2 E^*) =0$. By hypothesis, either $n$ or $g-1-\ell$ is even. When $n$ is even, take $\displaystyle E = \bigoplus_{i=1}^{n/2} F_i$ for general bundles $F_i \in U_C(2, \ell+1-g)$. When $g-1-\ell$ is even, take $E = \bigoplus_{i=1}^{n} M_i$ for general line bundles of degree $\frac{1}{2} (\ell+1-g)$.  Then the vanishing of $H^1(C, L \otimes \wedge^2 E^*)$ can be checked easily.

It follows that
\[
U_C(n, e_0)^\circ \ := \ \{ E \in U_C (n, e_0) \: | \: h^1 ( C, L \otimes \wedge^2 E^* ) = 0 \} .
\]
is a nonempty open subset of $U_C ( n, e_0 )$. For any $E \in U_C (n, e_0)^\circ$, the tangent space $T_E \IQ_{e_0}(V)$ has dimension zero for all $V$ admitting $E$ as an isotropic subbundle. Therefore the restriction of $\Phi$ to $\cP|_{U_C (n, e_0)^\circ}$ is quasi-finite. Hence, by dimension count it is dominant. On the other hand, for dimension reasons the restriction of $\Phi$ to $\cP|_{U_C(n, e_0) \backslash U_C(n, e_0)^\circ}$ cannot be dominant. Thus a general $V \in \MO_C (n; L)$ admits no isotropic subbundle $E$ with $h^1 ( C, \wedge^2 E^* \otimes L ) \ne 0$. It follows that for general $V \in \cM O_C(2n ; L)$ the scheme $\IQ_{e_0} (V)$ is nonempty, smooth and of dimension zero.

%If $e=e_0$, the tangent  space to $\IQ_{e_0}(V)$ at $[E \subset V]$ is zero, as will be shown below. Hence  the classifying map $\cP \to \cM O_C(2n; L)$ is generically finite and thus it is dominant, and $\IQ_{e_0}(V)$ is nonempty for a general $V$.
%
%The tangent space to $\IQ_{e_0}(V)$ at $[E \subset V]$ is given by $H^1(C, L \otimes \wedge^2 E^*) $, whose vanishing follows from \cite[Appendix A]{CH1} for $L = \cO_C$. In general, it suffices to exhibit a single $E \in U_C(n, e_0)$ with $H^1(C, L \otimes \wedge^2 E^*) =0$. From the assumption, either $n$ or $g-1-\ell$ is even. When $n$ is even, take $\displaystyle E = \bigoplus_{i=1}^{n/2} F_i$ for general bundles $F_i \in U_C(2, \ell+1-g)$. When $g-1-\ell$ is even, take $ E =  \bigoplus_{i=1}^{n} M_i$ for general line bundles of degree $\frac{1}{2} (\ell+1-g)$.  Then the   vanishing of $H^1(C, L \otimes \wedge^2 E^*)$ can be checked easily.
%
%To see that $\IQ_{e_0}(V)$ is smooth of dimension zero everywhere, it suffices to show that $H^1(C, L \otimes \wedge^2 E^*) =0$ for every point $[E \subset V] \in \IQ_{e_0}(V)$, by \cite[Proposition 3.3, Lemma 3.4]{CCH2}.   This holds if $V$ is  very stable, and very stable bundles are  dense in each component of the moduli space $\mathcal{M}O_C(2n,L)$  by \cite{BR}.

Finally, if $V$ admits a non-saturated subsheaf $[E\subset V] \in \IQ_{e_0}(V)$, then its saturation $\overline{E}$ is an isotropic subbundle of degree $e>e_0$, which is a contradiction if $V$ is general.\\
\par
 (2)  $r=2n+1$: Note that $(\det V)^2 \cong L^{2n+1}$,   thus $\ell$ must be even. The desired result was stated  in \cite[Lemma 8.2]{CCH2}, and its proof is similar to the even rank case.
\end{proof}

\begin{remark}
 There is a typo in the statement of \cite[Theorem 1.5]{CCH2}: the dimension of $\overline{\IQeo}$ is   $I(n+1, \ell, e+\frac{\ell}{2})$, not $I(n+1,  e-\frac{\ell}{2})$ as stated there. The same error appears in the proof of Theorem 1.5 in \S\:8.1. Accordingly, $e-m$ in Lemma 8.2, Proposition 8.3 and its proof must be corrected to $e+m$. Fortunately, this change of sign does not affect the remaining statements and arguments.
\end{remark}

\noindent
\textbf{From now until the end of {\S} \ref{ResultsEven}, we shall assume that our bundles have even rank $2n \ge 4$. The odd rank case will be discussed in {\S} \ref{ResultsOdd}.}

\subsection{Orthogonal Grassmann bundles}

In this subsection we shall work in the following slightly more general situation. Let $\scrC$ be a variety and $\scrL \to \scrC$ a line bundle, and let $\scrV \to \scrC$ be an $\scrL$-valued orthogonal bundle of rank $2n$. Generalizing the orthogonal Grassmannian $\OG(V)$, let $\OG ( \scrV )$ be the closed subvariety of the Grassmann bundle $\Gr ( n, \scrV )$ whose fiber at $x \in \scrC$ is the orthogonal Grassmannian $\OG(n, \scrV_x) \subset \Gr(n, \scrV_x)$.

Before proceeding, we need a definition and a lemma. Let $M$ be an invertible symmetric $r \times r$ matrix defining a nondegenerate symmetric bilinear form on $\bC^r$. We recall from \cite{BG} that the \textsl{conformal orthogonal group} $\GO_r$ is the set
\[ \{ A \in \GL_r \: | \: {^tA} M A = p ( A ) M \hbox{ for some } p ( A ) \in \bC^* \} , \]
It is easy to see that $\GO_r$ is the image of the multiplication map $\bC^* \times \Orth_r \to \GL_r$.
\begin{comment}
{\color{blue} For: If $A \in \GO_r$, and $s^2 = p ( A )$, then $^t( s^{-1} A) M ( s^{-1} A ) = M$, so $s^{-1} A \in \Orth_r$ and $A = s \cdot (s^{-1} A)$ is in the image of $\bC^* \times \Orth_r \to \GL_r$.}
\end{comment}
 We write $\GO_r^0$ for the image of $\bC^* \times \SO_r$. Moreover, for $k \ge 1$ we denote the group of $k$th roots of unity by $\bfmu_k$.

\begin{lemma} Fix $n \ge 1$ and suppose $A \in \GO_{2n}$. Then $A \in \GO_{2n}^0$ if and only if $\det ( A ) = p ( A )^n$. \label{technicalGO} \end{lemma}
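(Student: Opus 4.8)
The plan is to reduce everything to a statement about the connected components of $\GO_{2n}$. Recall that $\GO_{2n}$ is the image of the multiplication map $m \colon \bC^* \times \Orth_{2n} \to \GL_{2n}$, and $\GO_{2n}^0$ is the image of $\bC^* \times \SO_{2n}$. Since $\bC^*$ is connected and $\SO_{2n}$ is connected, $\GO_{2n}^0$ is connected; moreover it is a subgroup of index dividing $2$ in $\GO_{2n}$ (because $\Orth_{2n}/\SO_{2n} \cong \Z_2$), so $\GO_{2n}^0$ is precisely the identity component of $\GO_{2n}$. Thus the task is to show that the polynomial function $A \mapsto \det(A) \cdot p(A)^{-n}$ — which is well-defined and nowhere-zero on $\GO_{2n}$ — is identically $1$ on $\GO_{2n}^0$ and identically $-1$ on the other component.

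\textbf{Key steps.} First I would check that $f(A) := \det(A)\, p(A)^{-n}$ is a group homomorphism $\GO_{2n} \to \bC^*$; this follows since both $\det$ and $p$ are homomorphisms ($p$ because ${}^t(AB)M(AB) = p(A)p(B)M$). Next, writing any $A \in \GO_{2n}$ as $A = s \cdot R$ with $s \in \bC^*$ and $R \in \Orth_{2n}$, one computes $p(A) = s^2$ and $\det(A) = s^{2n}\det(R)$, hence $f(A) = \det(R) \in \{\pm 1\}$. So $f$ takes values in $\bfmu_2$, and $f(A) = 1$ exactly when $R$ can be chosen in $\SO_{2n}$. If $A \in \GO_{2n}^0$, then by definition $A = s'\cdot R'$ with $R' \in \SO_{2n}$, and the computation gives $f(A) = \det(R') = 1$, i.e. $\det(A) = p(A)^n$. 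Conversely, suppose $\det(A) = p(A)^n$, so $f(A) = 1$; writing $A = sR$ we get $\det(R) = 1$, hence $R \in \SO_{2n}$ and $A = s R \in \GO_{2n}^0$. This establishes both directions.

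\textbf{The main subtlety.} The one point that requires care is the ambiguity in the decomposition $A = sR$: the pair $(s,R)$ is only determined up to $(s,R) \mapsto (-s,-R)$, and for even size $2n$ the sign $-I \in \SO_{2n}$, so replacing $R$ by $-R$ does not change $\det(R)$ — this is exactly why the statement is clean for even rank. (For odd rank the analogous $f$ would not be well-defined as stated, since $p(A)^n$ would only be defined up to sign; this is consistent with the lemma being asserted only for $\GO_{2n}$.) I would make this explicit: the value $\det(R)$ is independent of the choice in the factorization, so $f$ is genuinely a function on $\GO_{2n}$, and the argument above goes through. No serious obstacle is expected; the proof is essentially a bookkeeping exercise with the multiplier homomorphism $p$ and the determinant.
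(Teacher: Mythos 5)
Your proof is correct, and it rests on the same underlying structural fact as the paper's, namely the decomposition of any $A \in \GO_{2n}$ as a scalar times an orthogonal matrix. The difference is one of organization: you observe up front that $f(A) := \det(A)\, p(A)^{-n}$ is a homomorphism $\GO_{2n} \to \{\pm 1\}$ and compute $f(A) = \det(R)$ once a factorization $A = sR$ with $R \in \Orth_{2n}$ is fixed; the two directions of the lemma are then immediate from the kernel description of $\GO_{2n}^0$. The paper instead proves the converse by hand, fixing $s$ with $s^{2n} = \det(A)$ and then adjusting $s^{-1}A$ by a $2n$-th root of unity $\zeta$ (chosen so that $\zeta^2$ corrects the $n$-th root of unity discrepancy $p(A)/s^2$) so that the resulting matrix lands in $\SO_{2n}$. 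Your route is slightly cleaner because it openly uses the already-stated fact that $\GO_{2n}$ is the image of $\bC^* \times \Orth_{2n}$, whereas the paper's argument re-derives that step inline; but the idea — the scalar cannot affect whether the orthogonal part lies in $\SO_{2n}$, precisely because $2n$ is even — is the same. Your remark about the sign ambiguity $(s,R) \mapsto (-s,-R)$ and why this is harmless in even rank correctly identifies the point that the paper addresses via the $\zeta \in \bfmu_{2n}$ manoeuvre.
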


\begin{proof} The ``only if'' direction is an easy computation.
\begin{comment}
{\color{blue} Suppose $A = s A_0$ for $s \in \bC^*$ and $A_0 \in \SO_{2n}$. Then $\det A = s^{2n}$, while
\[ {^t( s A_0 )} M (s A_0 ) \ = \ s^2 \cdot {^tA_0} M A_0 \ = \ s^2 M , \]
so $p ( s A ) = s^2$ and $\det ( s A ) = p ( s A )^n$.}
\end{comment}
 Conversely, suppose $A \in \GO_{2n}$ and $\det A = p ( A )^n$. %We wish to solve the equation $A = s_0 A_0$ for $s_0 \in \bC^*$ and $A_0 \in \SO_{2n}$.
 Fix $s \in \bC$ such that $s^{2n} = \det A$. Then for any $\zeta \in \bfmu_{2n}$, the matrix $\zeta^{-1} s^{-1} A$ has determinant $1$. %We shall find $\zeta$ such that ${^t\left( \zeta^{-1} s^{-1} A \right)} M ( \zeta^{-1} s^{-1} A ) = M$, so that $\zeta^{-1} s^{-1} A$ belongs to $\SO_{2n}$.
 As $A \in \GO_{2n}$, for each $\zeta \in \bfmu_{2n}$ we have
\[ {^t( \zeta^{-1} s^{-1} A )} M ( \zeta^{-1} s^{-1} A ) \ = \ \zeta^{-2} s^{-2} p( A ) M . \]
%Thus we must solve $p(A) = \zeta^2 s^2$ for $\zeta \in \bfmu_{2n}$.
 Now by hypothesis, $p ( A )^n = s^{2n}$. Hence $p(A) = \xi s^2$ for some $\xi \in \bfmu_n$, and we can find $\zeta \in \bfmu_{2n}$ such that $\zeta^2 = \xi$. Then $A = ( \zeta s ) \cdot ( \zeta^{-1} s^{-1} A )$ belongs to the image of $\bC^* \times \SO_{2n}$, as desired. \end{proof}

Now we can prove the required existence of isotropic subbundles of families, which will be used in the argument for deformation invariance of intersection numbers.

 \begin{proposition} Let $\pi \colon \scrC \to B$ be a family of smooth curves of genus $g$ parameterized by an irreducible curve $B$. Let $\scrV \to \scrC$ be a vector bundle of rank $2n$ and $\Sigma \colon \scrV \isom \scrV^* \otimes \scrL$ a family of nondegenerate quadratic forms. Suppose that $\det \scrV_b \cong \scrL_b^n$ for all $b \in B$. Then for any $b_0 \in B$, there is a neighborhood $U$ of $b_0$ in $B$ and a diagram
\[ \xymatrix{ \tscrC \ar[r]^{\overline{f}} \ar[d]_\tpi & \scrC \ar[d]^\pi \\
 \tU \ar[r]^f & U } \]
where $f$ is an \'etale double cover, such that the orthogonal Grassmann bundle $\OG ( \overline{f}^* \scrV )$ has two connected components. \label{OGTwoCpts} \end{proposition}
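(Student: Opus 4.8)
The plan is to realize the set of connected components of the fibres of the orthogonal Grassmann bundle as an \'etale double cover of $B$, and to trivialize it after base change along a suitable double cover of a neighbourhood of $b_0$. We may assume $B$, and hence $\scrC$, is smooth.

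The first step is a statement about a single smooth projective curve: \emph{if $C'$ is such a curve and $V'$ is an $\scrL'$-valued orthogonal bundle of rank $2n$ with $\det V' \cong (\scrL')^{\otimes n}$, then $\OG(V')$ has exactly two connected components.} To prove this I would choose local trivializations of $\scrL'$ and of $V'$ putting the form into a fixed standard shape; the transition functions $g$ of $V'$ then lie in $\GO_{2n}$, with $p(g)$ equal to the corresponding transition function of $\scrL'$, so they define a principal $\GO_{2n}$-bundle $P'\to C'$ with $\OG(V') = P'\times_{\GO_{2n}}\OG(n,\bC^{2n})$. Consider the homomorphism $\chi\colon\GO_{2n}\to\bC^*$, $A\mapsto\det(A)\,p(A)^{-n}$; it takes values in $\{\pm 1\}$ since $(\det A)^2 = p(A)^{2n}$, and by Lemma~\ref{technicalGO} its kernel is $\GO_{2n}^0$. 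Since $\GO_{2n}^0 = \bC^*\cdot\SO_{2n}$ preserves each of the two connected components of $\OG(n,\bC^{2n})$ (scalars act trivially, and $\SO_{2n}$ has the two families of maximal isotropic subspaces as its orbits) while the nontrivial coset interchanges them, the bundle $\OG(V')$ has two connected components precisely when the class $\chi_*[P']\in H^1(C',\Z_2)$ vanishes. Its image in $\Pic(C')$ is $\det V'\otimes(\scrL')^{-n}$; as $H^1(C',\Z_2)\hookrightarrow\Pic(C')$ for a smooth projective curve and $\det V'\cong(\scrL')^{\otimes n}$, the class vanishes, and then $\OG(V')$ splits as a disjoint union of two subbundles, each with connected fibre over the connected curve $C'$, hence each connected.

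Next I would globalize. Applying the previous step to each fibre $C_b = \pi^{-1}(b)$, with $V' = \scrV|_{C_b}$ and using the hypothesis $\det\scrV_b\cong\scrL_b^n$, every fibre of the smooth proper morphism $p\colon\OG(\scrV)\to B$ (the bundle projection followed by $\pi$) has exactly two connected components; in particular $h^0(\cO)$ of the fibres of $p$ is constantly $2$. By cohomology and base change, $p_*\cO_{\OG(\scrV)}$ is locally free of rank two and compatible with base change, so the Stein factorization $\OG(\scrV)\xrightarrow{h}\tB\xrightarrow{q}B$ has $q$ finite flat with every fibre a pair of distinct reduced points, i.e.\ $q$ is an \'etale double cover and $h$ has connected fibres. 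If $\tB$ is connected, take $U = B$ and $f = q$; then, with $\overline{f}\colon\tscrC = \scrC\times_B\tB\to\scrC$, base change gives $\OG(\overline{f}^*\scrV)\cong\OG(\scrV)\times_B\tB$, and the Stein factorization of the projection $\OG(\scrV)\times_B\tB\to\tB$ has middle term $\tB\times_B\tB\cong\tB\sqcup\tB$ (the diagonal and its complement). If instead $\tB\cong B\sqcup B$, then $\OG(\scrV)$ already has two connected components, and after removing finitely many points of $B$ away from $b_0$ if necessary we may pick a Zariski neighbourhood $U$ of $b_0$ admitting a connected \'etale double cover $f\colon\tU\to U$; then the Stein factorization of $\OG(\overline{f}^*\scrV)\to\tU$ has middle term $\tU\sqcup\tU$. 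In either case $\OG(\overline{f}^*\scrV)$ maps, with connected fibres, onto the disjoint union of two copies of a connected curve, hence has exactly two connected components.

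I expect the crux to be this globalization step: showing that the cover of $B$ recording the components of the fibres of $\OG(\scrV)$ is \'etale of degree \emph{exactly} two, and that base change along a double cover splits it while leaving exactly two components in the total space. This is where the hypothesis $\det\scrV_b\cong\scrL_b^n$ for \emph{every} $b$ (not merely the generic one) is essential — via cohomology and base change it is what forces $q$ to be finite flat of degree two — and where Lemma~\ref{technicalGO} does its work, identifying the component cover with the $\Z_2$-bundle cut out by $\det\scrV\otimes\scrL^{-n}$.
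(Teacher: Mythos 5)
Your proposal is correct and reaches the conclusion by a genuinely different route. The paper works directly with the family: writing $\det\scrV \cong \scrL^n \otimes \Xi$ for a $2$-torsion $\Xi \in \Pic(\scrC)$, it uses the fiberwise hypothesis $\det\scrV_b \cong \scrL_b^n$ together with the see-saw principle to get $\Xi \cong \pi^*\xi$ for some $\xi \in \Pic(B)[2]$, takes $f$ to be the double cover of $B$ attached to $\xi$, and then, via Lemma \ref{technicalGO}, reduces the structure group of the principal $\GO_{2n}$-bundle of $\overline{f}^*\scrV$ to $\GO_{2n}^0$, so that $\OG(\overline{f}^*\scrV)$ splits as $\scrE/P_1 \sqcup \scrE/P_2$. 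You instead first prove the single-curve statement (which in the paper is the Corollary immediately following the Proposition) by showing the splitting obstruction $\chi_*[P'] \in H^1_{et}(C',\Z_2)$, for $\chi(A) = \det(A)\,p(A)^{-n}$ with kernel $\GO_{2n}^0$ by Lemma \ref{technicalGO}, vanishes because its image in $\Pic(C')$ is $\det V' \otimes (\scrL')^{-n} = 0$ and $H^1_{et}(C',\Z_2) \hookrightarrow \Pic(C')$ by the Kummer sequence; you then globalize via Stein factorization of $\OG(\scrV) \to B$, obtaining the étale double cover as the Stein middle term and finishing by a case analysis on whether that cover is connected. Both proofs hinge on Lemma \ref{technicalGO}. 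The paper's see-saw argument is shorter and constructs the cover explicitly; your route is a bit heavier (cohomology-and-base-change, Stein factorization) but it makes the ``component cover'' of $B$ concrete and explicitly handles the case where $\OG(\scrV)$ already splits — where the paper's double cover attached to a trivial $\xi$ would be disconnected, and $\OG(\overline{f}^*\scrV)$ would a priori have four components — by passing to a connected double cover of a small neighbourhood instead. Two small remarks: the obstruction theory and the injection $H^1(C',\Z_2) \hookrightarrow \Pic(C')$ must be taken in the étale (or analytic) topology, since the $\GO_{2n}$-reduction need not be Zariski-local; and the opening ``we may assume $B$ smooth'' is unnecessary and not entirely innocuous (neighbourhoods on the normalization need not descend near a singular $b_0$) — your Stein-factorization argument only requires $B$ to be reduced, which it is.
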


\begin{proof} As $\scrV$ is $\scrL$-valued orthogonal, we have $\det \scrV \cong \scrL^n \otimes \Xi$ for some line bundle $\Xi$ of order two in $\Pic ( \scrC )$. Since $\det \scrV_b \cong \scrL_b^n$ for all $b \in B$, the line bundle $\Xi$ is trivial on all fibers $\scrC_b$. Hence $\Xi \cong \pi^* \xi$ for some $\xi$ of order two in $\Pic (B)$. Let $f \colon \tB \to B$ be the \'etale double cover of $B$ associated to $\xi$, and consider the fiber product diagram
\[ \xymatrix{ \tscrC \ar[r]^{\overline{f}} \ar[d]_\tpi & \scrC \ar[d]^\pi \\
 \tB \ar[r]^f & B . } \]

Now $\overline{f}^* \scrV$ is $\overline{f}^* \scrL$-valued orthogonal of determinant $\overline{f}^* \left( \scrL^n \otimes \pi^* \xi \right)$. As $\overline{f}^* \pi^* \xi \cong \tpi^* f^* \xi = \tpi^* \cO_B = \cO_{\tscrC}$, in fact
\begin{equation} \det \overline{f}^* \scrV \ \cong \ \overline{f}^* \scrL^n . \label{detCond} \end{equation}

Now $\overline{f}^* \scrV$ is the associated bundle of a principal $\GO_{2n}$-bundle $\scrE$ over $\tscrC$. By (\ref{detCond}) and Lemma \ref{technicalGO}, the structure group of $\scrE$ further reduces to $\GO_{2n}^0$. Let $P_1$ and $P_2$ be the maximal parabolic subgroups of $\GO_{2n}^0$ preserving $n$-dimensional isotropic subspaces chosen from opposite components of $\OG(n)$. Then
\[ \OG ( \overline{f}^* \scrV ) \ \cong \ \scrE / P_1 \sqcup \scrE / P_2 . \]
As the images of the two loci on the right are disjoint in $\Gr ( n , \scrV )$, we see that $\OG ( \overline{f}^* \scrV )$ has two connected components. \end{proof}

%{\color{red}Letting $B$ be the trivial family,} \footnote{Is this ok? That is, do you mean B is a set consisting of a single point? Or $B$ means the family $\pi \colon \scrC \to B$?  (B is just a variety) } we obtain easily:

Letting $B$ be the trivial family $\pi_C^* V \to B \times C$ for an orthogonal bundle $V$, we obtain easily:

\begin{corollary}
Let $V \to C$ be an $L$-valued orthogonal bundle of rank $2n$ and determinant $L^n$. Then $\OG (V)$ has two connected components.
\end{corollary}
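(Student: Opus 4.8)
The plan is to realize this as the specialization of Proposition \ref{OGTwoCpts} promised in the sentence preceding the statement. First I would fix an arbitrary irreducible smooth curve $B$ and apply that proposition to the constant family over $B$: set $\scrC = B \times C$, let $\pi \colon \scrC \to B$ and $\pi_C \colon \scrC \to C$ be the two projections, and put $\scrL = \pi_C^* L$ and $\scrV = \pi_C^* V$. Since $\scrV_b \cong V$ and $\scrL_b \cong L$ for every $b \in B$, the hypothesis $\det V \cong L^n$ gives $\det \scrV_b \cong \scrL_b^n$ for all $b$, so the hypotheses of Proposition \ref{OGTwoCpts} are satisfied.

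Next I would observe that for the constant family the relevant obstruction is already trivial: the order-two line bundle $\Xi = \det \scrV \otimes \scrL^{-n}$ appearing in the proof of Proposition \ref{OGTwoCpts} equals $\pi_C^*(\det V \otimes L^{-n}) \cong \cO_\scrC$. Hence the associated class $\xi \in \Pic(B)$ is trivial, the \'etale double cover $f$ there may be taken to be the identity, and no base change is needed; the remainder of that proof then shows verbatim that $\scrV$ is associated to a principal $\GO_{2n}$-bundle $\scrE \to \scrC$ whose structure group reduces, via (\ref{detCond}) and Lemma \ref{technicalGO}, to $\GO_{2n}^0$, so that $\OG(\scrV) \cong \scrE/P_1 \sqcup \scrE/P_2$ has two connected components. (Equivalently, one could simply rerun the proof of Proposition \ref{OGTwoCpts} directly with $\scrC = C$, $\scrL = L$ and $\scrV = V$, omitting the double-cover step because $\Xi$ is already trivial.)

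Finally I would transfer this back to $C$. Because $\scrV = \pi_C^* V$ is pulled back from $C$, there is a canonical identification $\OG(\scrV) \cong B \times \OG(V)$ sitting inside $\Gr(n,\scrV) = B \times \Gr(n,V)$; as $B$ is irreducible, hence connected, $\OG(\scrV)$ and $\OG(V)$ have the same number of connected components, and therefore $\OG(V)$ has exactly two. I do not expect a genuine obstacle here: the one point requiring care is the bookkeeping that, in the constant-family setting, the \'etale double cover of Proposition \ref{OGTwoCpts} collapses to the identity, which is precisely what allows the conclusion to descend to $V$ over $C$ itself rather than to a bundle over an unwanted cover.
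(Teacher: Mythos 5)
Your proposal is correct and takes essentially the same approach as the paper, which likewise deduces the corollary by specializing Proposition \ref{OGTwoCpts} to the constant family $\pi_C^* V \to B \times C$. You have merely made explicit (and correctly) the bookkeeping that, because $\Xi = \det \scrV \otimes \scrL^{-n}$ is trivial for a pulled-back bundle, the \'etale double cover collapses and the two-component conclusion descends directly to $\OG(V)$.
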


\section{Quantum cohomology of orthogonal Grassmannians}

In this section, we record some known facts on quantum cohomology of orthogonal Grassmannians. We begin by recalling some basic objects and fixing notation.

\subsection{Notation}\label{subsec:notation}

A \textsl{partition} $\lambda$ is a weakly decreasing sequence of nonnegative integers $\lambda = (\lambda_1 , \lambda_2 , \ldots , \lambda_r)$. The nonzero $\lambda_i$ are called the \textsl{parts} of $\lambda$. The number of parts is called the \textsl{length} of $\lambda$ and is denoted $l(\lambda)$. The sum $\sum_{i=1}^m \lambda_i$ is called the \textsl{weight} of $\lambda$ and is denoted $|\lambda|$.

For a positive integer $m$, let $\cR(m)$ be the set of all partitions $\lambda$ such that $\lambda_1 \leq m$. Such a $\lambda$ is called \textsl{strict} if $\lambda_{i} > \lambda_{i+1}$ for $0 \le i < l(\lambda)$. Denote by $\cD(m)$ the set of all strict partitions in $\cR (m)$.

We shall use the following notation to state the Vafa--Intriligator-type formula in \S\:\ref{GW_for_OG(n)}. For $m = 2r + 1$, set
\[
\cT_m \ := \ \left\{ J = (j_1, \ldots , j_m) \in \Z^m \: | \: -r \leq j_1 < \dots < j_m \leq 3r+1 \right\} ,
\]
and for $m = 2r$, set
\[
\cT_m \ := \ \left\{ J = (j_1 , \ldots , j_m) \in \left( \Z + \frac{1}{2} \right)^m \: {\big|} \: -r +\frac{1}{2} \leq j_1 < \dots < j_m \leq 3r- \frac{1}{2} \right\} .
\]
For $J = (j_1, \ldots ,j_m) \in \cT_m$ and $\zeta := \exp \left( \frac{\pi \sqrt{-1}}{m} \right)$, we write $\zeta^J := \left( \zeta^{j_1}, \ldots ,\zeta^{j_m} \right)$. Define a subset $\cI_m$ of $\cT_m$ by
\[ \cI_m \ := \ \left\{ J = ( j_1 , \ldots , j_m) \in \cT_m\hspace{0.05in}| \: \zeta^{j_k} \ne -\zeta^{j_l}  \text{ for } k \neq l \right\} . \]

\subsection{Symmetric polynomials}\label{subsec:symmetric-poly}

Let $X = ( x_1 , \ldots , x_m )$ be an $m$-tuple of variables. For $1 \le i \le m$, let $h_i (X)$ (resp., $e_i(X)$) be the $i$-th complete (resp., elementary) symmetric function in $x_1 , \ldots , x_m$. By convention, $h_0 (X) = e_0 (X) = 1$ and $h_{k} (X) = e_k (X) = 0$ for $k < 0 $.

The $\Pt$-polynomials of Pragacz and Ratajski \cite{PrRa} are indexed by the elements of $\cR(m)$. We put $\Pt_0(X) =1$ and for  $\lambda = ( a )$ with $1 \le a \le m$, we define
$$ \Pt_{(a)} (X) \ := \ \frac{1}{2}e_a(X) . $$
 Next for $\lambda = ( a , b )$ with $b > 0$, we define
\[
\Pt_{(a,b)}(X) \ :=\ \frac{1}{4} \left( e_a(X) e_b (X) + 2 \sum_{k=1}^b (-1)^k e_{a+k} (X) e_{b-k} (X) \right) .
\]
By convention, $\Pt_{a, 0}(X) =  \Pt_a(X) = \frac{1}{2} e_i(X)$.

For any partition $\lambda \in \cR (m) $ with  $l(\lambda) \geq 3$, let $M_\lambda (X)$ be the $r \times r$ skew-symmetric matrix for $r := 2 \lfloor (l(\lambda)+1)/2 \rfloor$ whose $(i , j)$-th entry  is $\Pt_{(\lambda_i , \lambda_j)}(X)$ for $1 \le i<j  \le r$.
%Note that when $l:=l(\lambda)$ is odd, we put $\Pt_{\lambda_{i}, \lambda_{l+1} } (X)=  \Pt_{\lambda, 0} (X) = \Pt_{\lambda_i}(X)$ as above.

 Then the $\Pt$-polynomial associated to $\lambda$ is defined by
\[ \displaystyle \Pt_{\lambda}(X) \ = \ \textrm{Pfaff} ( M_{\lambda}(X) ) . \]
By convention, for $1 \le k \le m$ we write $\Pt_{k} (X)$ for $\Pt_{(k)} (X)$.

For any partition $\lambda \in \cR(m)$, the Schur polynomial $S_{\lambda}(X)$ is defined by
$$ S_{\lambda}(X) \ := \ \det \left[ h_{\lambda_i+j-i}(X) \right]_{1 \leq i,j \leq m} \ = \ \det \left[ e_{\lambda'_i + j -i}(X) \right]_{1 \leq i,j \leq m} $$
where  $\lambda'$ denotes the conjugate partition of $\lambda$.

Consider the ring $\bC [ x_1 , \ldots , x_m ]^{S_m}$ of symmetric polynomials with coefficients in $\bC$.  Recall that any symmetric polynomial $Q ( x_1 , \ldots , x_m )$ may be expressed as a polynomial in $ \alpha_i: = \frac{1}{2} e_i (X)$ for $1 \le i \le m$. Therefore, in what follows we shall
%Recall that this ring is generated by $ \Pt_i(X) = \frac{1}{2} e_i (X)$ for $1 \le i \le m$. %In other words,  for each   symmetric polynomial $Q(X)$,  there is a polynomial $\dot{Q}( y_1, \ldots , y_m )$ such that
%\[  \dot{Q} \left(  \Pt_{1} (X) , \ldots , \Pt_{m} (X)  \right) \ = \ Q ( x_1 , \ldots , x_m ) . \]
%For the remainder of the paper, we shall
simply use the notation $Q ( \alpha_1 , \ldots , \alpha_m  )$ for the rewriting of $Q ( x_1 , \ldots , x_m )$  as a  polynomial in $\alpha_1 , \ldots , \alpha_m  $ if no confusion should arise.

 %work directly with polynomials $Q ( \Pt_{1} (X) , \ldots , \Pt_{m} (X)  )$. %which is  the  rewriting of $Q ( x_1 , \ldots , x_m )$  as a  polynomial in $\Pt_{1} (X) , \ldots , \Pt_{m} (X)  $.

% if no confusion seems likely.  In this notation,  $Q ( \Pt_{1} (X) , \ldots , \Pt_{m} (X)  )$ should be understood as the  rewriting of $Q ( x_1 , \ldots , x_m )$  as a  polynomial in $\Pt_{1} (X) , \ldots , \Pt_{m} (X)  $.

\subsection{Cohomology of orthogonal Grassmannians} \label{section:CohomOrthGrass}

The main reference for this subsection is \cite{KT1}.

Let $\Gamma = \bC^{2n}$ be a vector space equipped with a nondegenerate quadratic form $\sigma \colon \Gamma \otimes \Gamma \to \bC$. By linear algebra, maximal isotropic subspaces in $\Gamma$ have dimension $n$. Let $\OG(n, \Gamma)$ or simply $\OG(n)$ denote the orthogonal Grassmannian parameterizing isotropic subspaces of dimension $n$ in $\Gamma$. This is a variety of dimension $\frac{1}{2}n(n-1)$.

\begin{lemma} \label{GH} The orthogonal Grassmannian $\OG(n)$ has two irreducible connected components. Two $n$-dimensional isotropic subspaces $\Lambda_1$ and $\Lambda_2$ belong to the same component if and only if $\dim ( \Lambda_1 \cap \Lambda_2 ) \equiv n \mod 2$. \end{lemma}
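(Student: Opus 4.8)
The plan is to exploit the transitive action of the orthogonal group on $\OG(n)$. By Witt's extension theorem, $\Orth_{2n}$ acts transitively on $\OG(n)$; the first key observation is that the stabilizer $P$ of any maximal isotropic $\Lambda_0$ is already contained in $\SO_{2n}$. Indeed, choosing a complementary maximal isotropic $\Lambda_0'$ and an adapted basis, an element $A$ fixing $\Lambda_0$ is block upper-triangular, and the orthogonality relation forces the lower-right block to be the inverse transpose of the upper-left block, so $\det A = 1$. Hence $\OG(n) \cong \Orth_{2n}/P$ with $P \subset \SO_{2n}$, so the two cosets $\SO_{2n}\cdot\Lambda_0$ and $r\cdot\SO_{2n}\cdot\Lambda_0$ (for $r$ any reflection) are precisely the connected components; each is a homogeneous space under the connected group $\SO_{2n}$, hence irreducible and smooth, of dimension $\dim\SO_{2n}-\dim P = \tfrac12 n(n-1)$. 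In particular $\OG(n)$ has exactly two irreducible components, and $\Lambda_1,\Lambda_2$ lie in the same component if and only if some $g\in\SO_{2n}$ carries $\Lambda_1$ to $\Lambda_2$.

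Next I would attach to an ordered pair $(\Lambda_1,\Lambda_2)$ the sign $\epsilon(\Lambda_1,\Lambda_2):=\det g\in\{\pm1\}$ for any $g\in\Orth_{2n}$ with $g\Lambda_1=\Lambda_2$. This is well defined: if $g,g'$ both work then $g^{-1}g'$ stabilizes $\Lambda_1$, hence lies in $\SO_{2n}$ by the previous step, so $\det g=\det g'$. Combined with the characterization above, $\Lambda_1$ and $\Lambda_2$ lie in the same component precisely when $\epsilon(\Lambda_1,\Lambda_2)=1$, so the lemma reduces to proving $\epsilon(\Lambda_1,\Lambda_2)=(-1)^{\,n-\dim(\Lambda_1\cap\Lambda_2)}$.

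I would first treat the transverse case $\Lambda_1\cap\Lambda_2=0$: then $\Gamma=\Lambda_1\oplus\Lambda_2$, and picking a basis $e_1,\dots,e_n$ of $\Lambda_1$ with $\sigma$-dual basis $f_1,\dots,f_n$ of $\Lambda_2$, the involution $e_i\leftrightarrow f_i$ is orthogonal, carries $\Lambda_1$ to $\Lambda_2$, and equals the product of the $n$ commuting reflections in the anisotropic vectors $e_i-f_i$, so its determinant is $(-1)^n$. For the general case, put $W=\Lambda_1\cap\Lambda_2$ and $k=\dim W$; choosing an isotropic $Z$ with $H:=W\oplus Z$ a nondegenerate hyperbolic subspace gives $\Gamma=H\perp H^\perp$ with $\dim H^\perp=2(n-k)$, and one checks that $\Lambda_i=W\oplus\Lambda_i'$ with $\Lambda_i':=\Lambda_i\cap H^\perp$ maximal isotropic in $H^\perp$ and $\Lambda_1'\cap\Lambda_2'=0$. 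The transverse case applied inside $H^\perp$ produces $g_0\in\Orth(H^\perp)$ with $g_0\Lambda_1'=\Lambda_2'$ and $\det g_0=(-1)^{n-k}$; extending by the identity on $H$ gives $g=\mathrm{id}_H\oplus g_0\in\Orth_{2n}$ with $g\Lambda_1=\Lambda_2$ and $\det g=(-1)^{n-k}$, which is the claimed formula.

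The statement is classical, so there is no deep obstacle; the one point needing care is the verification that the parabolic $P$ lies in $\SO_{2n}$, since this single fact simultaneously yields the count of components and the well-definedness of the sign $\epsilon$. The reduction to the transverse case is then routine linear algebra. Alternatively one may simply cite a standard reference, but the argument above is short and self-contained.
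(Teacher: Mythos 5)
Your proof is correct. The paper itself only cites \cite[Proposition 2, p.\ 735]{GH} for this statement, so your self-contained argument takes a genuinely different route by supplying the full classical proof rather than referring the reader out. Your key structural observation — that the stabilizer $P$ of a maximal isotropic is already contained in $\SO_{2n}$, because the block structure forces the determinant to be $1$ — is exactly what makes the sign $\epsilon(\Lambda_1,\Lambda_2)=\det g$ well-defined, and then the formula $\epsilon=(-1)^{n-\dim(\Lambda_1\cap\Lambda_2)}$ is established cleanly by reducing to the transverse case via the orthogonal splitting $\Gamma=H\perp H^\perp$. All steps check out: the reflection computation in the transverse case is right ($\sigma(e_i-f_i,e_i-f_i)=-2\neq 0$, and the swap $e_i\leftrightarrow f_i$ is indeed the reflection in $e_i-f_i$); the decomposition $\Lambda_i=W\oplus(\Lambda_i\cap H^\perp)$ holds because $W\subseteq\Lambda_i\subseteq W^\perp=W\oplus H^\perp$, so any $w+h\in\Lambda_i$ has $h=(w+h)-w\in\Lambda_i\cap H^\perp$. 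What your approach buys is transparency and independence from the reference; what the paper's citation buys is brevity, which is appropriate given that the lemma is classical and peripheral to the paper's main contribution.
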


\begin{proof} See \cite[Proposition 2, p.\ 735]{GH}. \end{proof}

It is well-known that two components of $\OG(n)$ are isomorphic to each other.  Let us fix one component $\OG(n)_0$. Let $\bfE$ be the tautological subbundle of the trivial bundle $\OG(n)_0 \times \Gamma$ on $\OG(n)_0$.
We  choose a complete isotropic flag in $\Gamma$:
$$ H_{\bullet} : \ H_1 \ \subset \ H_2 \ \subset \ \cdots \ \subset H_{n-1} \subset {H}_n $$
such that $H_n$ belongs to $\OG(n)_0$.
Note that there is another  maximal isotropic subspace $H_n'$  containing $H_{n-1}$ which does not belong to $\OG(n)_0$.

We now write $Z := \OG(n)_0$. For each strict partition $\lambda \in \cD(n-1)$, we define a Schubert variety $Z_{\lambda}(H_\bullet) \subset Z$ by
$$ Z_{\lambda} ( H_\bullet ) \ = \ \left\{ \Sigma \in Z \ \bigg|
\begin{aligned}
\ \rank \left( \Sigma \rightarrow \Gamma / H_{n - \lambda_i}^\perp \right) &\leq  n - i - \lambda_i \hbox{ for } 1 \le i \le l(\lambda)  \\
\ \rank \left( \Sigma \rightarrow \Gamma / \widetilde{H}_{n} \right) &\leq  n - l(\lambda)-1
\end{aligned}
\right\}
$$
where $ \widetilde{H}_{n} = H_n$ if $l(\lambda) \not\equiv n \mod 2$ and $ \widetilde{H}_{n} = H_n'$ if $l(\lambda) \equiv n \mod 2$.

The locus $Z_{\lambda} ( H_\bullet )$ has codimension $|\lambda |$ in $Z$. Let $\tau_\lambda$ be the  cohomology class  $Z_{\lambda}( H_\bullet )$, which is called the \textsl{Schubert class associated to $\lambda$}. The set $\{ \tau_\lambda  \: | \: \lambda \in \cD(n-1) \}$ forms a basis of $H^*(Z)$.
For the variables $X = ( x_1 , \ldots , x_{n-1} )$, we identify $e_i (X) = c_i ( \bfE^\vee )$ for $1 \le i \le n-1$. Then the cohomology class  corresponding to $\Pt_{\lambda} (X)$ coincides with $\tau_\lambda$.

By convention, we write $Z_k ( H_\bullet )$ and $\tau_k$ for the \textsl{special Schubert variety} $Z_{(k)} ( H_\bullet )$ and    the  associated class $\tau_{(k)} \in H^{2k} ( Z )$, respectively.

\subsection{A Vafa--Intriligator-type formula} \label{GW_for_OG(n)}

In this subsection, we state a Vafa--Intriligator-type formula for $\OG(n)_0$, which computes the Gromov--Witten invariants. We begin by defining these invariants.

The \textsl{degree} of a morphism $f \colon C \rightarrow \OG(n)_0$ is defined as the intersection number
\[ \int_{[ \OG(n)_0 ]} f_* [C] \cdot \tau_1 \ =: \ \deg (f) . \]
For the induced isotropic subbundle $E_f$ of the trivial orthogonal bundle $\Oc^{\oplus 2n}$,  we have $\deg ( E_f ) = -2 \deg(f)$.

We now give an informal definition of the Gromov--Witten invariant. For the precise definition, we refer to \cite{RuTi}.

\begin{definition} \label{GW-for-OG} Let $p_1 , \ldots , p_m$ be distinct points of $C$. Let ${\lambda^1}, \ldots , {\lambda^m} \in \cD(n-1)$ be strict partitions. Fix a nonnegative integer $d$.  The \textsl{Gromov--Witten invariant} $\langle \tau_{\lambda^1}, \ldots ,\tau_{\lambda^m} \rangle_{C,d}$ can be informally defined as follows. %See \cite {RuTi} for the rigorous definition.
 If
\begin{equation} \sum_{j=1}^m | \lambda^j | \ = \ \frac{1}{2} n ( n - 1 ) ( 1 - g ) + 2 ( n - 1 ) d , \label{SumLengths} \end{equation}
then $\langle \tau_{\lambda^1} , \ldots , \tau_{\lambda^m} \rangle_{C, d}$ is the number of morphisms $f \colon C \rightarrow \OG(n)_0$ of degree $d$ with the property that for each $i$, we have $f(p_i) \in Z_{\lambda^i}(\gamma_i \cdot H_{\bullet})$ for a general $\gamma_i \in \SO_{2n} (\bC)$. If (\ref{SumLengths}) does not hold, then we define $\langle \tau_{\lambda^1}, \ldots ,\tau_{\lambda^m}\rangle_{C,d}$ to be zero. \end{definition}

Since the Gromov--Witten invariant is independent of the points $p_i$ and the curve $C$, depending only on the genus $g$ (see \cite[p.\ 262]{RuTi}), we write $\langle \sigma_{\lambda^1},\ldots , \sigma_{\lambda^m}\rangle_{g, d}$ for $\langle \sigma_{\lambda^1},\ldots , \sigma_{\lambda^m}\rangle_{C, d}$.

%The word ``invariant'' is used because the number $\langle \tau_{\lambda^1}, \ldots ,\tau_{\lambda^m}\rangle_{C,d}$ is invariant under deformations of $ ( C, ( p_1 , \ldots , p_m )$. Therefore, it makes sense to write
%\[ \langle \tau_{\lambda^1}, \ldots ,\tau_{\lambda^m} \rangle_{g,d} \ := \ \langle \tau_{\lambda^1}, \ldots ,\tau_{\lambda^m} \rangle_{C,d} . \]

The (small) quantum cohomology ring of $\OG(n)_0$ is defined via the genus zero three-point Gromov--Witten invariants \cite{KT1}. Let $q$ be a formal variable of degree $2(n-1)$. The ring $q H^*(\OG(n)_0,\Z)$ is isomorphic as a $\Z [q]$-module to $H^* ( \OG(n)_0 , \Z ) \otimes_{\Z} \Z [q]$. The multiplication in $q H^* ( \OG(n)_0 , \Z )$ is given by the formula
\[
\tau_\lambda * \tau_\mu \ = \ \sum_{d \ge 0 } \sum_\nu \left\langle \tau_\lambda , \tau_\mu, \tau_{\nu^\prime} \right\rangle_{0,d} \tau_\nu \: q^d ,
\]
where $\nu$ ranges over all strict partitions satisfying $|\nu| = |\lambda| + |\mu| - 2(n-1)d$, and as before $\nu'$ denotes the conjugate partition of $\nu$. Note that the specialization of the (complexified) quantum cohomology ring at $q = 1$ is given by
\[
q H^* ( \OG(n)_0 , \bC )_{q = 1} \ := \ q H^* ( \OG(n)_0 , \bC ) \otimes \bC [q] / ( q - 1 ) .
\]
As a complex vector space, this is isomorphic to $H^* ( \OG(n)_0 , \bC)$.

Now we can give a Vafa--Intriligator-type formula for $\OG(n)_0$ for arbitrary genus $g$.

\begin{proposition}\label{corollary:gro;oge} For $m$ partitions $\lambda^1 , \lambda^2 , \ldots , \lambda^m \in \cD(n-1)$ and a nonnegative integer $d$, the genus $g$ Gromov--Witten invariant $\left\langle \tau_{\lambda^1} , \tau_{\lambda^2} , \ldots , \tau_{\lambda^m} \right\rangle_{g,d}$ for $\OG(n)_0$ is given by
$$
\left\langle \tau_{\lambda^1} , \tau_{\lambda^2} , \ldots , \tau_{\lambda^m} \right\rangle_{g,d} \ = \ 4^d \sum_{J \in
\cI_{n-1}} S_{\rho_{n-1}} \left( \zeta^J \right)^{g-1} \Pt_{\lambda^1} \left( \zeta^J \right) \Pt_{\lambda^2} \left( \zeta^J \right) \cdots \Pt_{\lambda^m} \left( \zeta^J \right) $$
whenever (\ref{SumLengths}) holds, and it is zero otherwise.
 \end{proposition}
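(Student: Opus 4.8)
The plan is to derive the formula from the structure of the small quantum cohomology ring of $\OG(n)_0$, together with the general fact that a semisimple Frobenius algebra determines all of its higher-genus correlators. The first step is to recall from \cite{KT1} the presentation of $qH^*(\OG(n)_0,\bC)$: it is generated by the special Schubert classes $\tau_1,\ldots,\tau_{n-1}$ --- equivalently, by the $\alpha_i=\tfrac12 e_i(X)$ under the identification $e_i(X)=c_i(\bfE^\vee)$ of \S\ref{section:CohomOrthGrass} --- modulo an explicit complete intersection ideal, exactly one of whose relations carries a correction in the quantum parameter $q$. Specializing at $q=1$ gives a finite-dimensional commutative $\bC$-algebra $A:=qH^*(\OG(n)_0,\bC)_{q=1}$ of dimension $|\cD(n-1)|=2^{n-1}$, equipped with the quantum-corrected Poincar\'e pairing $\langle a,b\rangle=\theta(ab)$, where $\theta$ reads off the coefficient of the point class $\tau_{\rho_{n-1}}$ in the basis $\{\tau_\lambda\}_{\lambda\in\cD(n-1)}$. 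Thus $(A,\theta)$ is a Frobenius algebra, and by the composition (gluing) law for the enumerative genus-$g$ invariants of Definition \ref{GW-for-OG} --- which expresses pinching a handle of the domain curve in terms of the quantum product and the Poincar\'e pairing (\cite{RuTi}; see also \cite{Wi}) --- these invariants are precisely the correlators of the $2$-dimensional TQFT attached to $(A,\theta)$. Concretely, if $\{\epsilon_p\}$ is a complete system of orthogonal idempotents of $A$ and $\theta_p:=\theta(\epsilon_p)\neq 0$, the genus-$g$ correlator equals
\[
\sum_p \theta_p^{\,1-g}\,\tau_{\lambda^1}(p)\cdots\tau_{\lambda^m}(p),
\]
where $\tau_{\lambda^i}(p)$ is the value at $\epsilon_p$ of the character of $A$ applied to $\tau_{\lambda^i}$.

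The second step is to identify $\mathrm{Spec}\,A$ explicitly. Solving the relations of \cite{KT1} at $q=1$ amounts to a system of symmetric polynomial equations in $x_1,\ldots,x_{n-1}$; one shows that its solution set, modulo the symmetric group, is exactly $\{\zeta^J : J\in\cI_{n-1}\}$ with $\zeta=\exp(\pi\sqrt{-1}/(n-1))$, the constraint $\zeta^{j_k}\neq-\zeta^{j_l}$ defining $\cI_{n-1}$ being precisely what discards the solutions that kill the relevant Vandermonde-type factor, and reflecting the two-component structure of Lemma \ref{GH}. Counting gives $|\cI_{n-1}|=2^{n-1}=\dim_\bC A$, so $A$ is reduced, hence semisimple, with idempotents indexed by $\cI_{n-1}$. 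Since the Schubert class $\tau_\lambda$ corresponds to the polynomial $\Pt_\lambda(X)$, the character at $\epsilon_J$ sends $\tau_\lambda$ to $\Pt_\lambda(\zeta^J)$, up to the common rescaling of the $x_i$ discussed below.

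The third step is to compute the norms $\theta_J$. As $A$ is a complete intersection, the trace $\theta$ satisfies a Scheja--Storch/Grothendieck residue formula: $\theta_J$ is the reciprocal, up to an explicit constant depending only on $n$, of the Jacobian determinant of the defining relations of \cite{KT1} evaluated at $\zeta^J$, in suitable coordinates. A determinant manipulation identifies this Jacobian, up to that constant, with $S_{\rho_{n-1}}(\zeta^J)$; the product shape $\prod_i x_i\cdot\prod_{i<j}(x_i+x_j)$ of the staircase Schur polynomial is exactly what such a Jacobian produces. Substituting $\theta_J$ and the character values into the correlator formula, and using the homogeneity of the $\Pt$- and Schur polynomials together with the degree relation $\sum_i|\lambda^i|=\tfrac12 n(n-1)(1-g)+2(n-1)d$ --- i.e.\ (\ref{SumLengths}) --- the various $n$-dependent constants (the $\tfrac12$ in $\Pt_\lambda$, the rescaling of the $x_i$ relating the quantum parameter of \cite{KT1} to the geometric one, and the constant in $\theta_J$) collect into the prefactor $4^d$, and one obtains
\[
\langle\tau_{\lambda^1},\ldots,\tau_{\lambda^m}\rangle_{g,d}\ =\ 4^d\sum_{J\in\cI_{n-1}}S_{\rho_{n-1}}(\zeta^J)^{g-1}\Pt_{\lambda^1}(\zeta^J)\cdots\Pt_{\lambda^m}(\zeta^J),
\]
while if no nonnegative integer $d$ satisfies (\ref{SumLengths}) the correlator vanishes for degree reasons.

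The step I expect to require the most care is the $q$-bookkeeping --- verifying rigorously that the enumerative invariants of Definition \ref{GW-for-OG} coincide with the abstract TQFT correlators of $(A,\theta)$, and then tracking the quantum parameter through the specialization at $q=1$ so as to pin down both the constraint (\ref{SumLengths}) and the exact constant $4^d$ (here the factors of $2$ must be matched against the explicit presentation of \cite{KT1}, which is the delicate point). Secondary, but still non-formal, points are the identification of the Jacobian of the Kresch--Tamvakis relations with $S_{\rho_{n-1}}$ --- a concrete determinant computation rather than an abstract argument --- and the verification that the points $\zeta^J$ for $J\in\cI_{n-1}$ are pairwise distinct, which is what makes $A$ genuinely semisimple rather than merely of the right dimension.
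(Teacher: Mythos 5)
Your proposal follows essentially the same route as the paper, which simply cites the two ingredients you reconstruct: the genus-$g$ trace/TQFT formula
$\langle\tau_{\lambda^1},\ldots,\tau_{\lambda^m}\rangle_{g,d}=\mathrm{tr}\left([\mathcal{E}^{g-1}\tau_{\lambda^1}\cdots\tau_{\lambda^m}]\right)$
is quoted from \cite{CMP} (with the quantum Euler class $\mathcal{E}$ of \cite{Ab} playing the role of your $\theta_p^{-1}$ weights), and the identification of the spectrum of $qH^*(\OG(n)_0)_{q=1}$ with $\{\zeta^J: J\in\cI_{n-1}\}$, together with the character values $\Pt_\lambda(\zeta^J)$ and the Euler-class eigenvalues $S_{\rho_{n-1}}(\zeta^J)$, is quoted from \cite{Che2} (with \cite{Che1} supplying the $g=0$ case). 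In other words, the steps you flag as delicate --- the $q$-bookkeeping that fixes $4^d$, the Jacobian-determinant identification via Scheja--Storch residues, and the pairwise distinctness of the $\zeta^J$ --- are precisely the content of those references; the paper's proof is a short citation of them rather than a rederivation.
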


\begin{proof}
For $g = 0$, the formula was given in \cite{Che1}. For an arbitrary $g$, we have the formula from \cite[p.\:1263]{CMP}:
\be \label{genus-g-VI-formula}
\left\langle \tau_{\lambda^1} , \tau_{\lambda^2}, \ldots , \tau_{{\lambda}^m} \right\rangle_{g,d} \ = \ \mathrm{tr} \left( [ \mathcal{E}^{g-1}\sigma_{\lambda^1} \cdots \sigma_{\lambda^m}] \right) ,
\ee
where $\mathcal{E}$ is the quantum Euler class (cf.\ \cite{Ab}) of $\OG(n)_0$ in $qH^*(\OG(n)_0,\bC)_{q=1}$, and for each $\tau \in qH^*(\OG(n)_0,\bC)_{q=1}$, we denote by $[\tau]$ the corresponding  quantum multiplication operator on $qH^*(\OG(n)_0,\bC)_{q=1}$. Then the formula follows from \cite[Theorem 6.5]{Che2} where the eigenvalues of $[\tau]$ were
explicitly computed for an arbitrary $\tau \in qH^*(\OG(n)_0,\bC)_{q=1}$.  \end{proof}

\section{Isotropic Quot schemes}

\subsection{Generalities}

Let $V$ be an $L$-valued orthogonal bundle of rank $2n$. Let $\ell:= \deg (L)$. For each integer $e$, we define the \textsl{isotropic Quot scheme} $\IQe$ by
\[ \IQeV \ := \ \left\{ [j \colon E \to V] \in \Quot^{n, n\ell - e} (V) \: | \: E \hbox{ isotropic in } V \right\} , \]
a closed subscheme of $\Quot^{n, n \ell - e}(V)$. We denote by $\IQeoV$ the open subscheme consisting of saturated isotropic subsheaves; that is, isotropic subbundles. As discussed in \cite[{\S} 5.1]{CCH2}, in contrast to the classical and symplectic cases, in general the scheme $\IQeV$ is neither irreducible nor equidimensional, and even for $e \ll 0$ can have arbitrarily many components consisting entirely of unsaturated subsheaves. For these reasons, we replace $\IQeV$ with a subscheme which is more tractable and better suited to the enumeration problem that we wish to solve.

\begin{definition} Let $V$ be an orthogonal bundle of degree $n \ell$. We denote by $\bIQeV$ the closure of $\IQeoV$ in $\IQeV$. And we write $\bIQeoV$ for $\IQeo$. Abusing language, we shall also refer to $\bIQeV$ as an ``isotropic Quot scheme''. \end{definition}

\noindent The following is \cite[Theorem 1.4]{CCH2}.

\begin{theorem} \label{Even} Let $V$ be an $L$-valued orthogonal bundle of even rank $2n \ge 4$ and of determinant $L^n$. Then there is an integer $e(V)$ such that:
\begin{enumerate}
\item[(a)] If $\ell$ is even, then for each $e \le e(V)$ with $e \equiv e (V) \mod 2$, the locus $\bIQeV$ has two nonempty connected irreducible components, both of which are generically smooth of dimension
\[
I ( n, \ell, e ) \ := \ ( 1 - n ) e + \frac{1}{2} n ( n - 1 ) ( \ell + 1 - g ) .
\]
Moreover, $\bIQeoV$ is empty when $e \not \equiv e (V) \mod 2$.
\item[(b)] If $\ell$ is odd, then for each $e \le e (V)$, the locus $\bIQeV$ is nonempty and irreducible, and generically smooth of dimension $I(n, \ell, e)$.
\end{enumerate}
\end{theorem}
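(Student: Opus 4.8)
The plan is to establish, in this order: nonemptiness of $\bIQeoV$ below a bound $e(V)$; a deformation-theoretic dimension count; the splitting of $\bIQeV$ into connected components coming from $\OG(V)$; and, finally, irreducibility of each component together with generic smoothness of dimension $I(n,\ell,e)$. Beginning with nonemptiness: since $\det V \cong L^n$, Lemma \ref{existence of isotropic bundle} provides at least one rank $n$ isotropic subbundle of $V$. Starting from it, iterated orthogonal elementary (Hecke) modifications at points of $C$ --- at a point $p$, replacing a rank $n$ isotropic subbundle $E \subset V$ by the subbundle of strictly smaller degree determined by a hyperplane $W \subset E|_p$ and an isotropic line of $W^\perp/W$ distinct from the image of $E|_p$ --- produce rank $n$ isotropic subbundles of $V$ of every admissible degree $\le e(V)$, where $e(V)$ is chosen suitably (the smaller of two such bounds when $\ell$ is even, one for each of the two families below). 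Here \emph{admissible} encodes the parity constraint: by the invariant recalled in \S\ref{StWh}, when $\ell$ is even every rank $n$ isotropic subbundle of $V$ has degree of parity $w_2(V)$, so $\bIQeoV$ is empty unless $e \equiv e(V) \bmod 2$, while when $\ell$ is odd there is no such restriction. Boundedness of the degree of an isotropic subbundle of $V$ guarantees that $e(V)$ is finite.

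\emph{Deformation theory.} At a point $[E \hookrightarrow V] \in \bIQeoV$, a first-order deformation is a homomorphism $E \to V/E$; since $E$ is Lagrangian the form identifies $V/E \cong E^* \otimes L$, and preservation of isotropy restricts the homomorphism to its skew part, so $T_{[E]}\bIQeoV \cong H^0(C, \wedge^2 E^* \otimes L)$ with obstructions in $H^1(C, \wedge^2 E^* \otimes L)$. By Riemann--Roch, $\chi(\wedge^2 E^* \otimes L) = (1-n)e + \tfrac{1}{2}n(n-1)(\ell + 1 - g) = I(n,\ell,e)$, so every component of $\bIQeoV$ has dimension $\ge I(n,\ell,e)$, with equality and smoothness exactly at points where $H^1(C, \wedge^2 E^* \otimes L) = 0$.

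\emph{Components.} Since $\det V \cong L^n$, the corollary of Proposition \ref{OGTwoCpts} gives $\OG(V) = \OG(V)_0 \sqcup \OG(V)_1$, and a rank $n$ isotropic subbundle is a section $C \to \OG(V)$ of a bundle over a connected base, hence lands in exactly one piece; thus $\bIQeoV$ splits accordingly. By Lemma \ref{GH} together with the parity analysis: when $\ell$ is even both pieces carry subbundles precisely for $e \equiv e(V) \bmod 2$, whereas when $\ell$ is odd the two pieces carry subbundles of opposite parity (two rank $n$ isotropic subbundles in general position lying in opposite components meet generically in a subbundle of odd corank, which for odd $\ell$ forces opposite degree-parity), so exactly one piece is nonempty for each admissible $e$. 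Granting that each nonempty piece is irreducible and generically smooth of dimension $I(n,\ell,e)$, this yields two components in case (a) and irreducibility in case (b).

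\emph{Irreducibility and generic smoothness; the main obstacle.} For this last point I would follow the strategy used for Theorem \ref{e_0maximal}: over the locus where $E$ is stable, realize each nonempty piece as a union of fibres over $[V]$ of the classifying rational map $\cP \dashrightarrow \cM O_C(2n;L)$, with $\cP$ the irreducible variety that is \'etale over an open subset of $U_C(n,e)$ and has fibre $\pp H^1(C, \wedge^2 E \otimes L^{-1})$ over $E$ (the space of orthogonal extensions $0 \to E \to V \to E^* \otimes L \to 0$); a dimension count gives $\dim \cP - \dim \cM O_C(2n;L) = I(n,\ell,e)$, so a nonempty fibre is of pure dimension $I(n,\ell,e)$, and the sublocus where $h^1(C, \wedge^2 E^* \otimes L) > 0$ --- equivalently, by Serre duality, $h^0(C, \wedge^2 E \otimes K_C \otimes L^{-1}) > 0$ --- is proper and closed, which gives generic smoothness; irreducibility should follow by combining this with a connectedness input, for instance the linking of any two equal-degree isotropic subbundles in a fixed component by a chain of Hecke modifications. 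The main obstacle is exactly this step for a \emph{fixed} bundle $V$: genericity of $V$ is unavailable here, so one must control the cohomology-jump locus $\{[E] : h^1(C, \wedge^2 E^* \otimes L) > 0\}$ inside each component directly and upgrade connectedness to irreducibility, which I would expect to do by stratifying $\cP$ by the jumps of $h^1$ and using the Hecke correspondence to transport the statement from $e \ll 0$ --- where $\wedge^2 E^* \otimes L$ is positive enough that $H^1$ vanishes automatically --- to all $e \le e(V)$.
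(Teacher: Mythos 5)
The paper does not give its own proof of Theorem \ref{Even}; it is cited verbatim from \cite[Theorem 1.4]{CCH2}, so there is no in-paper argument to compare against. I can therefore only assess the proposal on its merits and against the proof of the nearby Theorem \ref{e_0maximal}, whose strategy you explicitly invoke.

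Your first three paragraphs are sound in outline. The identification $T_{[E]}\bIQeoV \cong H^0(C, \wedge^2 E^* \otimes L)$ with obstructions in $H^1$ is correct (the orthogonal form identifies $V/E \cong E^* \otimes L$ and the isotropy condition selects the alternating part), and the Riemann--Roch count does give $I(n,\ell,e)$. The splitting of $\bIQeoV$ along the two components of $\OG(V)$ and the parity constraint via $w_2(V)$ in the case $\ell$ even are both correct and use exactly the machinery recalled in \S\ref{StWh} and Proposition \ref{OGTwoCpts}. One caveat: your parity argument for case (b) (``two rank $n$ isotropic subbundles in opposite components meet generically in odd corank, which for odd $\ell$ forces opposite degree-parity'') is a heuristic, not a proof; the rigorous statement is \cite[Theorem 2.13(b)]{CCH2}, which the paper quotes in \S\ref{subsec:labeling}. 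Nothing in the intersection rank by itself pins down the degree parities without additional bookkeeping.

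The genuine gap is the one you yourself flag: irreducibility and generic smoothness of each labeled piece for a \emph{fixed} $V$. The classifying-map argument behind Theorem \ref{e_0maximal} exploits that $V$ is general in $\cM O_C(2n;L)$; for a fixed $V$ you would be analyzing a single (possibly ill-behaved) fiber of $\cP \dashrightarrow \cM O_C(2n;L)$, and that fiber has no reason to be of pure dimension, irreducible, or generically reduced without substantial extra input. Moreover, the claim that $H^1(C,\wedge^2 E^* \otimes L)$ vanishes ``automatically'' when $e \ll 0$ is not justified as stated: large degree of $\wedge^2 E^* \otimes L$ does not by itself kill $H^1$ unless one controls instability of $E$, and the isotropic subbundles of a fixed $V$ are not generically chosen. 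Note also that the theorem concerns the closure $\bIQeV$, not merely $\bIQeoV$: establishing that the nonsaturated boundary contributes no new components (this is precisely the content of the type-$\cT$ analysis in \S\ref{section:NonSat}, imported from \cite{CCH2}) is an essential step which the proposal does not address. So the proposal is a plausible blueprint up to its last paragraph, but the last paragraph is where the actual content of \cite[Theorem 1.4]{CCH2} lives, and it remains unproven here.
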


\noindent As in the symplectic case \cite[Definition 1.1]{CCH1}, we shall initially restrict our attention to isotropic Quot schemes which are well behaved in the following sense.

\begin{definition} \label{Defn Property P} Let $V$ be an $L$-valued orthogonal bundle of rank $2n$. Then we say that $\bIQeV_\delta$ \textsl{has property $\cP$} if every component of $\bIQeV_\delta$ is generically smooth of the expected dimension $I(n, \ell, e)$. We say that $\bIQeV$ has \textsl{property $\cP$} if both $\bIQeV_0$ and $\bIQeV_1$ have property $\cP$. \end{definition}

\subsection{Labeling of components of \texorpdfstring{$\bIQeV$}{IQe(V)}}\label{subsec:labeling}

Let $L$ be a line bundle of degree $\ell$ and $V$ an $L$-valued orthogonal bundle of determinant $L^n$. It follows from Proposition \ref{OGTwoCpts} that the orthogonal Grassmann bundle $\OG(V)$ has two connected components. If $\ell$ is odd, then by \cite[Theorem 2.13 (b)]{CCH2} there is a canonical labeling of the components
\[ \OG(V) \ = \ \OG(V)_0 \ \cup \ \OG(V)_1 , \]
where a rank $n$ isotropic subsheaf of $V$ defines a section of $\OG(V)_\delta$ if and only if $\deg ( \bE ) \equiv \delta \mod 2$. If $\ell$ is even, then by \cite[Theorem 2.13 (a)]{CCH2}, all rank $n$ subbundles of $V$ have degrees of the same parity. In this case, we do not obtain a canonical labeling of the components of $\OG(V)$, so once and for all we fix a labeling $\OG(V) = \OG(V)_0 \cup \OG(V)_1$. This naturally gives rise to labelings $\OG \left( V_p \right) = \OG(V_p)_0 \cup \OG(V_p)_1$ for each $p \in C$, as well as
\[ \bIQeoV \ = \ \bIQeoV_0 \ \cup \ \bIQeoV_1 \quad \hbox{and} \quad \bIQeV \ = \ \bIQeV_0 \ \cup \ \bIQeV_1 . \]
Note that $\bIQeV_\delta$ may be empty for some $\delta$.

For convenience, we define a labeling function $\delta_V \colon \OG(V) \to \mathbb{Z}_2$ which assigns the labeling to each isotropic subspace of  $V$. It should be mentioned that the function $\delta_V$ has nothing to do with  $\delta_W$ for any $W \ncong V$ when $\ell = \frac{2 \deg (V) }{\rank (V)}$ is even.

\begin{remark} \label{Remk:even intersection} Assume that $\bIQeV_\delta$ is not empty. Fix $[ F \to V ] \in \bIQeoV_\delta$. Then by Lemma \ref{GH}, an element $[E \to V] \in \bIQeV$ belongs to $\bIQeV_\delta$ if and only if $\dim \left( \bE_p \cap F_p \right) \ \equiv \ n \mod 2$ for all $x \in C$.
%where the intersection $\bE_p \cap F_p$ is taken inside $V_p$.
%Indeed, if $[F\hookrightarrow V]\in \bIQeoV$, then this follows from Lemma \ref{GH}.  If $[F\hookrightarrow V]\in \bIQeV\setminus \bIQeoV,$ then this follows from  Proposition  \ref{TypeT}.%(Correct?)
\end{remark}

\subsection{Nonsaturated loci of isotropic Quot schemes} \label{section:NonSat}

For the analogue of property $\cP$ in the symplectic case used in \cite{CCH1}, it was also required that a general point of the isotropic Quot scheme correspond to a saturated subsheaf. In the present situation, this follows from the definition of $\bIQeV$. However, in general $\bIQeV$ does have a nonsaturated boundary, whose structure is analyzed in \cite[{\S} 5.2]{CCH2}. We now examine this more closely.

\begin{definition} \label{TorsionTypeT}
\quad \begin{enumerate}
\item[(a)] We say that a torsion sheaf $\tau$ on $C$ is \textsl{of type} $\cT$ if there is a filtration
\[ 0 \ = \ \tau_0 \ \subset \ \tau_1 \ \subset \ \cdots \ \subset \ \tau_r \ = \ \tau \]
where $\tau_i / \tau_{i-1} \cong \cO_{x_i} \otimes \bC^2$ for some $x_i \in C$ for $1 \le i \le r$. Clearly a torsion sheaf of type $\cT$ has even degree. Note that the filtration may not be unique.
\item[(b)] We say that an element $[ E \to V ]$ of $\IQe$ is \textsl{of type} $\cT$ if either $E \in \IQeoV$ or the quotient $\bE / E$ is of type $\cT$.
\item[(c)] Let $\cF \to B \times C$ be a family of bundles of rank $n$. We write $\cT^{2r} ( \cF )$ for the sublocus of the relative Quot scheme $\cQuot^{0, 2r} ( \cF ) \to B$ of elementary transformations $0 \to E \to \cF_b \to \tau \to 0$ where $\tau$ is of type $\cT$ and length $2r$. It is also a scheme over $B$.
\end{enumerate}
\end{definition}

\noindent The significance of the type $\cT$ property comes from the following (see \cite[Theorem 1.7]{CCH2}).

\begin{theorem} \label{TypeT} Let $V$ be an orthogonal bundle {of even rank $2n \ge 4$}.
\begin{enumerate}
\item[(a)] Every point $[E \to V]$ in $\bIQ_e(V) = \overline{\bIQ_e(V)^\circ}$ is of type $\cT$.
\item[(b)] Let $e(V)$ be as defined in Theorem \ref{Even}. For $e \le e(V)$, every point $[E \to V]$ in $\IQe$ of type $\cT$ lies in $\bIQ_e(V)$.
\end{enumerate}
\end{theorem}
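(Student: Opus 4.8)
The plan is to prove the two assertions separately, in each case by analysing one--parameter degenerations; the essential input from the quadratic form is the following. If $\bE\subset V$ is an isotropic subbundle of rank $n$, then $\bE=\bE^\perp$, so $\omega$ identifies $V/\bE\isom\bE^\vee\otimes L$, and near any point $x\in C$ one may choose a local frame in which $V=\cO^{\oplus n}\oplus\cO^{\oplus n}$, $\bE$ is the first summand, $L$ is trivialised, and $\omega$ is the standard hyperbolic pairing; in such a frame every isotropic rank $n$ subbundle of $V$ close to $\bE$ is the graph of a skew--symmetric homomorphism $\cO^{\oplus n}\to\cO^{\oplus n}$. I would also use repeatedly that the saturation $\bE$ of an isotropic subsheaf $E$ is again isotropic (the image of $\omega|_{\bE\otimes\bE}$ is a torsion subsheaf of the line bundle $L$, hence zero), and that by \S\ref{StWh} the degree of any rank $n$ isotropic subbundle of $V$ is congruent to $w_2(V)\bmod 2$, so that for any point of $\IQ_e(V)$ that lies in $\bIQ_e(V)$ or is of type $\cT$, the torsion sheaf $\bE/E$ has even length. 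Finally, $\cO_x\otimes\bC^2$ can occur as a torsion quotient of a rank $n$ bundle only when $n\ge 2$, which is where the hypothesis $2n\ge 4$ enters.

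For (a), I would take $[E\to V]\in\bIQ_e(V)=\overline{\IQ_e(V)^\circ}$ and, by the definition of the closure, choose a morphism from a smooth pointed curve $(T,0)$ to $\bIQ_e(V)$ sending $0$ to $[E\to V]$ and $T\smallsetminus\{0\}$ into $\IQ_e(V)^\circ$; pulling back the universal subsheaf yields a family $[\cE\to\pi_C^*V]$ over $T\times C$ with $\cE_t$ an isotropic subbundle for $t\ne 0$ and $\cE_0=E$. Let $\widetilde{\cE}$ be the saturation of $\cE$ in $\pi_C^*V$; it is isotropic, it agrees with $\cE$ away from $\{0\}\times C$, and its failure to be flat over $T$ is concentrated at the finitely many points $(0,x_j)$ lying over $\Supp(\bE/E)$. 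After blowing up $T\times C$ at these points --- equivalently, resolving the rational section of $\OG(V)$ determined by $\cE$ --- one is reduced to a local model at a single $x=x_j$: in a hyperbolic frame as above, $\cE_t$ is the graph of a skew--symmetric $\psi_t(z)$, and the structure of $\bE/E$ near $x$ is governed by the degeneration of $\psi_t$ as $t\to 0$. The decisive point is that a skew--symmetric matrix over the discrete valuation ring $\cO_{C,x}$ is congruent to a block--diagonal matrix with $2\times2$ blocks $\left(\begin{smallmatrix}0&z^{a}\\-z^{a}&0\end{smallmatrix}\right)$, so its elementary divisors occur with even multiplicity; this is exactly the statement that $\bE/E$ admits, at each $x_j$, a filtration with successive quotients isomorphic to $\cO_{x_j}\otimes\bC^2$. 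Gluing the local filtrations shows $[E\to V]$ is of type $\cT$ in the sense of Definition \ref{TorsionTypeT}.

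For (b), let $[E\to V]\in\IQ_e(V)$ be of type $\cT$, with saturation $\bE$ an isotropic subbundle and $\bE/E$ of length $2r$. I would induct on $r$; the case $r=0$ is immediate since $E=\bE\in\IQ_e(V)^\circ\subset\bIQ_e(V)$. For the inductive step, choose $E\subset E'\subset\bE$ with $E'/E\cong\cO_x\otimes\bC^2$ and $\bE/E'$ of type $\cT$ and length $2(r-1)$; as $\bE$ is an isotropic subbundle of degree $e+2r\le e(V)$ by Theorem \ref{Even}, we have $e+2\le e(V)$, so by the inductive hypothesis $[E'\to V]\in\bIQ_{e+2}(V)$, i.e.\ $E'$ is a limit of isotropic subbundles of degree $e+2$. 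It then suffices to produce a one--parameter family $[\cF_s\to\pi_C^*V]$ with $\cF_0=E$ and $\cF_s$ an isotropic subbundle of degree $e$ for $s\ne 0$; composing it with the family witnessing $[E'\to V]\in\bIQ_{e+2}(V)$ places $[E\to V]$ in $\overline{\IQ_e(V)^\circ}$. Such a family I would build fibrewise in $\OG(V)$: at the point $x$ one attaches to the section determined by $\bE$ a rational curve of degree one in the fibre $\OG(n,V_x)$ through $[\bE_x]$ --- the pencil of isotropic $n$--planes lying between an isotropic $(n-2)$--plane $U\subset\bE_x$ and its perpendicular $U^\perp$, a ruling of $\OG(2,U^\perp/U)\cong\pp^1$ --- smooths this nodal configuration to nearby honest sections, and checks that the resulting limiting subsheaf is precisely the prescribed modification $E$ of $E'$; the degree--one curve contributes a length--two quotient (the induced subbundle of a degree--one curve in $\OG(n)_0$ has degree $-2$), of type $\cT$ by construction.

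I expect the main obstacle to be the local analysis in part (a): one must show not merely that $\bE/E$ has even length --- immediate from the Stiefel--Whitney class --- but that the degenerating skew--symmetric homomorphisms have their elementary divisors in equal pairs, and that this local picture assembles into a global type $\cT$ filtration; the bookkeeping around the non--flatness of the relative saturation, which the blow--up is designed to control, is the delicate ingredient. In part (b) the analogous difficulty is to arrange the fibrewise smoothing so that the limit is \emph{exactly} the given $E$ while every member of the family stays isotropic; here the dimension estimates of Theorem \ref{Even}, and in particular the hypothesis $e\le e(V)$, are what ensure that the loci of isotropic subbundles invoked in the construction are nonempty.
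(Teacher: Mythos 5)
Note first that this theorem is quoted from \cite[Theorem 1.7]{CCH2} and is not reproved in the present paper, so there is no internal argument to compare against; what follows assesses the proposal on its own terms.

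Your argument for (a) contains a genuine error at its crux. You claim that the elementary divisors of $E\hookrightarrow\bE$ at each $x_j$ occur with even multiplicity (via the $2\times 2$ block normal form of a skew-symmetric matrix over a DVR), and that this pairing ``is exactly'' the type $\cT$ condition. Both halves fail. The pairing assertion is simply false for the degenerations in question: take $n=3$, so that $\OG(3)_0\cong\pp^3$ with tautological subbundle $\bfE\cong\cO(-1)\otimes Q$, where $Q$ is the rank-$3$ quotient bundle on $\pp^3$. A generic conic $h\colon\pp^1\to\pp^3$ (e.g.\ $(s:t)\mapsto(s^2:st:t^2:0)$) has $h^*Q\cong\cO(1)^{\oplus 2}\oplus\cO$, hence $h^*\bfE\cong\cO(-2)\oplus\cO(-1)^{\oplus 2}$; a bubble of this type in a degeneration contributes torsion $\cO_{2x}\oplus\cO_x^{\oplus 2}$ to $\bE/E$, which is of type $\cT$ (take $\tau_1=z\cO_{2x}\oplus\cO_x\oplus 0$), but its elementary divisors $(2,1,1)$ do not occur in equal pairs. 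So the normal-form route would prove something strictly stronger than (a), and false. In addition, the reduction to a single skew-symmetric matrix over the one-variable ring $\cO_{C,x}$ is not justified: the $\psi_t$ are a one-parameter family of matrices, typically with poles at $t=0$, and the quotient $\bE/E$ is controlled by the two-variable germ $\psi(t,z)$ after resolution — i.e.\ by the degrees and splitting types of the rational curves traced out by the exceptional loci in $\OG(n)_0$, not by a Smith-type normal form in $z$ alone. The actual content of (a) is a stable-maps-to-Quot comparison in the spirit of Bertram and Popa--Roth: one must show that the torsion arising from chains of bubbles and multiple covers of minimal curves in $\OG(n)_0$ (which have Pl\"ucker degree two) always admits a type $\cT$ filtration, and this is more subtle than a pointwise normal form.

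Your argument for (b) is circular as written. If you could really produce a one-parameter family $[\cF_s\to\pi_C^*V]$ with $\cF_0=E$ and $\cF_s$ an isotropic \emph{subbundle} of degree $e$ for $s\ne0$, then $[E\to V]\in\bIQ_e(V)$ follows immediately and neither the inductive hypothesis on $E'$ nor the ``composition'' step is ever used. What your construction actually produces — the pencil of isotropic $n$-planes between $U$ and $U^\perp$ in the single fibre $\OG(n,V_x)$ — is the bubble component of a nodal map, not a family of subbundles over $C$; smoothing the nodal configuration (section through $\bE$ plus a degree-one bubble) to nearby honest sections of $\OG(V)$, and verifying that the flat Quot-scheme limit is the prescribed elementary modification $E$ rather than some other point of $\cT^2(\bE)$, is precisely the orthogonal analogue of Bertram's lemma and is omitted. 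Your observation that $\deg\bE\le e(V)$ keeps the intermediate degrees admissible is correct and is indeed where the hypothesis $e\le e(V)$ is needed, but it does not substitute for the missing smoothing construction.
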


We now study isotropic subsheaves of type $\cT$ more closely, in analogy with \cite[{\S} 1]{Be} and \cite[{\S} 3.4]{CCH1}. Firstly, we introduce an explicit parameter space for elementary transformations of type $\cT$ for use in estimating dimension bounds later. This generalizes and simplifies slightly a construction mentioned in \cite[Lemma 5.3 (a)]{CCH3}.

\begin{proposition} \label{parameter-space} Let $\cF \to B \times C$ be a family of vector bundles of rank $n \ge 2$, where $B$ is an irreducible variety.
\begin{enumerate}
\item[(a)] For each $r \ge 0$, there is an irreducible variety $\cQ^{2r} ( \cF ) \to B$, projective of relative dimension $r(2n-3)$ over $B$, parameterizing flags of subsheaves
\begin{equation} E_r \ \subset \ E_{r-1} \ \subset \ \ \cdots \ \subset \ E_1 \ \subset \ E_0 \ = \ \cF_b \label{filtration} \end{equation}
where for $1 \le i \le r$ we have $E_{i-1} / E_i \cong \cO_{x_i} \otimes \bC^2$ for some $x_i \in C$.
\item[(b)] For $0 \le s \le r$, there is a morphism $\cQ^{2r} ( \cF ) \to \cQ^{2s} ( \cF )$ taking (\ref{filtration}) to the truncated flag $E_s \subset \cdots \subset E_0$.
\item[(c)] There is a classifying map $\cQ^{2r} ( \cF ) \to \cQ uot^{0, 2r} ( \cF )$ taking (\ref{filtration}) to $[ E_r \to \cF_b ]$, which is surjective and generically finite to $\cT^{2r} ( \cF )$.
\item[(d)] There is a universal bundle $\cE_{2r} \to \cQ^{2r} ( \cF ) \times C$ satisfying
\[ \cE_{2r}|_{\{ [ E_r \subset E_{r-1} \subset \cdots \subset \cF_b ] \} \times C} \ \cong \ E_r \]
for each $[ E_r \subset E_{r-1} \subset \cdots \subset \cF_b ] \in \cQ^{2r} ( \cF )$.
\end{enumerate}
\end{proposition}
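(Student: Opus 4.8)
The plan is to construct $\cQ^{2r}(\cF)$ inductively on $r$ as a tower of relative Quot schemes of relative length $2$ over $B$, and to verify the stated properties as we go. Set $\cQ^0(\cF) := B$ with universal bundle $\cE_0 := \cF$. Assuming $\cQ^{2s}(\cF) \to B$ together with a universal bundle $\cE_{2s} \to \cQ^{2s}(\cF) \times C$ has been built, I would form the relative Quot scheme $\cR := \cQuot^{0,2}(\cE_{2s}) \to \cQ^{2s}(\cF)$ parameterizing length $2$ torsion quotients $E_s \to \cO_{x} \otimes \bC^2$, which automatically have the form $\cO_x \otimes \bC^2$ required in (\ref{filtration}) since a length $2$ torsion sheaf supported at a single reduced point $x$ with two-dimensional fiber is necessarily $\cO_x \otimes \bC^2$; restricting to this locus (which is closed, and actually the whole space once we impose that the quotient is killed by the maximal ideal at a point) defines $\cQ^{2s+2}(\cF)$. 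The fiber of $\cR \to \cQ^{2s}(\cF)$ over a point $[E_s \subset \cdots \subset \cF_b]$ lying over $x \in C$ is the space of surjections $E_s \to \cO_x \otimes \bC^2$ up to automorphism of the target, i.e. $\Gr(2, (E_s)_x^\vee) \cong \Gr(2,n)$-worth of choices fibered over $x \in C$; hence $\cR \to \cQ^{2s}(\cF)$ is projective of relative dimension $1 + 2(n-2) = 2n-3$. Composing with $\cQ^{2s}(\cF) \to B$ and using the inductive hypothesis on relative dimension gives relative dimension $(s+1)(2n-3)$ over $B$, which is (a) for $r = s+1$. Irreducibility follows because each step is a Grassmann bundle (hence with irreducible fibers) over an irreducible base, together with the fact that the incidence locus is the total space of such a bundle; a standard argument (a flat projective morphism with irreducible fibers from a scheme whose base is irreducible need not have irreducible total space in general, so here I would instead note that $\cQ^{2s+2}(\cF)$ is itself a Grassmann bundle over the irreducible variety $\cQ^{2s}(\cF) \times_B (\text{the universal curve})$, hence irreducible). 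The universal bundle $\cE_{2s+2}$ is the kernel of the universal quotient $\pi^* \cE_{2s} \to \cO_{\Delta} \otimes \bC^2$ pulled back to $\cQ^{2s+2}(\cF) \times C$, giving (d); and (b) is the forgetful tower map $\cQ^{2r}(\cF) \to \cQ^{2s}(\cF)$ obtained by composing the structure maps, by construction.

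For part (c), the classifying map $\cQ^{2r}(\cF) \to \cQuot^{0,2r}(\cF)$ sends a flag to $[E_r \subset \cF_b]$; this is a morphism by the universal property applied to $\cE_{2r} \hookrightarrow \cF$. Its image lands in $\cT^{2r}(\cF)$ by definition of type $\cT$, since the flag (\ref{filtration}) exhibits $\cF_b / E_r$ as an iterated extension of sheaves $\cO_{x_i}\otimes \bC^2$. Conversely, any $[E \subset \cF_b]$ with $\cF_b / E$ of type $\cT$ admits, by definition, such a filtration, so the map is surjective onto $\cT^{2r}(\cF)$. Generic finiteness is the point where one must be slightly careful: over the open locus where the torsion quotient $\cF_b/E$ is supported at $2r$ distinct points $x_1, \dots, x_r$ (in the sense that the filtration steps sit over distinct points), the filtration of $\cF_b/E$ by the $\tau_i$ is unique up to reordering of the $r$ distinct supporting points, so the fiber of the classifying map over such a point is finite (of cardinality $r!$, times the number of orderings consistent with the nesting, which is again finite); since this locus is dense in $\cT^{2r}(\cF)$, the map is generically finite. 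I would phrase this by exhibiting the open dense substack where the $r$ points are pairwise distinct and invoking that over it the two schemes have the same dimension, which combined with surjectivity gives generic finiteness.

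The main obstacle I anticipate is not any single hard step but rather bookkeeping: making the inductive construction genuinely functorial in $B$ and checking that the locus inside $\cQuot^{0,2}(\cE_{2s})$ cut out by "the quotient is of the form $\cO_x \otimes \bC^2$" is exactly all of $\cQuot^{0,2}$ — which is true since any length-two torsion sheaf that is a quotient of a locally free sheaf and has a fiber of dimension two at a point must be $\cO_x \otimes \bC^2$, but the alternative, a quotient $\cO_{x}\otimes \bC \oplus \cO_{x'}\otimes\bC$ supported at two points or a non-reduced $\cO_{2x}$, also occurs in $\cQuot^{0,2}$ and does \emph{not} have this form. Thus $\cQ^{2s+2}(\cF)$ is genuinely a proper closed subscheme of $\cR$, and I would identify it explicitly as the relative $\Gr(2,\cE_{2s})$-bundle over the universal curve $\cQ^{2s}(\cF)\times_B \scrC \to \cQ^{2s}(\cF)$, where $\scrC$ is the family of curves, since specifying a surjection $E_s \to \cO_x\otimes\bC^2$ is the same as specifying $x\in C$ together with a $2$-dimensional quotient of the fiber $(E_s)_x$. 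This description simultaneously gives irreducibility, the relative dimension count $1+2(n-2)=2n-3$, and projectivity, so it is the cleanest route and I would organize the whole proof around it.
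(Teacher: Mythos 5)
Your proposal is correct and takes essentially the same approach as the paper: the same inductive construction of $\cQ^{2r}(\cF)$ as a sublocus of the relative Quot scheme $\cQuot^{0,2}(\cE_{2r-2})$, identified with a relative Grassmann bundle over $\cQ^{2r-2}(\cF) \times C$ (which gives irreducibility, projectivity, and the relative dimension count $1 + 2(n-2) = 2n-3$ in one stroke), and the same $r!$ count over reduced support for generic finiteness in part (c). One small notational slip in your final paragraph: the Grassmann bundle is $\Gr(n-2,\cE_{2s})$, equivalently $\Gr(2,\cE_{2s}^\vee)$ as your earlier fiber computation $\Gr(2,(E_s)_x^\vee)$ indicates, not $\Gr(2,\cE_{2s})$ as written; the dimension count is unaffected.
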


\begin{proof} We construct the desired objects inductively. We take $\cQ^0 ( \cF ) = B$ and $\cE_0 = \cF$. For $r \ge 1$, let $\cQ^{2r} ( \cF )$ be the sublocus of $\cQ uot^{0, 2} ( \cE_{2r-2} ) \to B$ parameterizing subsheaves $[ E_r \subset E_{r-1} ]$ with
\[ \left[ E_{r-1} \ \subset \ \ \cdots \ \subset \ E_1 \ \subset \ E_0 \ = \ \cF_b \right] \ \in \ \cQ^{2r-2} ( \cF ) \]
and $E_{r-1} / E_r \cong \cO_x \otimes \bC^2$ for some $x \in C$. The locus of such $[ E_r \subset E_{r-1} ]$ is naturally identified with the relative Grassmann bundle $\Gr ( n-2 , \cE_{2r-2} )$, by sending $E_r$ to $\Image \left( E_r|_x \to E_{r-1}|_x \right)$.  As the Grassmannian bundle is irreducible and projective of relative dimension $\dim \Gr ( n-2, n ) + \dim C = 2n-3$ over $\cQ^{2r-1} ( \cF )$, we obtain (a). Part (b) is also clear from the construction.

For (c): The existence of the map $\cQ^{2r} \to \cQ uot^{0, 2r} ( \cF )$  follows from the universal property of Quot schemes. By construction and by Definition \ref{TorsionTypeT}, the image of this map is exactly $\cT^{2r} ( \cF )$. Moreover, if $[E \to \cF_b] \in \cT^{2r} ( \cF )$ is such that $\cF_b / E \cong \cO_D \otimes \bC^2$ for a reduced divisor $D$ of degree $r$, then there are precisely $r!$ filtrations of the form (\ref{filtration}) lying over $[ E \subset \cF_b ]$. Thus the classifying map is generically finite. Lastly, taking $\cE_r$ to be the restriction of the Poincar\'e family on $\cQ uot^{0, 2} ( \cE_{r-1} )$, we obtain (d). \end{proof}

Taking $B$ to be a point, we recover the following (see \cite[Lemma 5.3]{CCH2}).

\begin{corollary} \label{Lemma:TypeT} For a fixed bundle $F$ of rank $n \ge 2$, the locus $\cT^{2r} (F)$ is a closed subset of $\Quot^{0, 2r} ( F )$ which is irreducible and of dimension $r ( 2n - 3 )$. \end{corollary}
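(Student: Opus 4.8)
The plan is to obtain this statement as an immediate specialization of Proposition \ref{parameter-space}. Taking $B$ to be a point and $\cF = F$, part (a) of that proposition furnishes an irreducible variety $\cQ^{2r}(F)$, projective over $B$ and hence projective, of dimension $r(2n-3)$, parameterizing flags of subsheaves
\[ E_r \ \subset \ E_{r-1} \ \subset \ \cdots \ \subset \ E_0 \ = \ F \]
with $E_{i-1}/E_i \cong \cO_{x_i} \otimes \bC^2$ for suitable points $x_i \in C$.

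Next I would invoke part (c): the classifying morphism $\cQ^{2r}(F) \to \Quot^{0,2r}(F)$ sending such a flag to $[E_r \to F]$ is surjective onto $\cT^{2r}(F)$ and generically finite. Since $\cQ^{2r}(F)$ is projective, its image $\cT^{2r}(F)$ is a closed subset of $\Quot^{0,2r}(F)$; since $\cQ^{2r}(F)$ is irreducible, so is its image $\cT^{2r}(F)$; and since a surjective, generically finite morphism of varieties preserves dimension, we get $\dim \cT^{2r}(F) = \dim \cQ^{2r}(F) = r(2n-3)$. This establishes the three assertions at once.

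There is essentially no obstacle here: the whole content has been packaged into Proposition \ref{parameter-space}. The only thing that needs a (harmless) check is that the inductive construction of $\cQ^{2r}(F)$ in that proposition — iterated relative Quot and Grassmann bundles $\Gr(n-2, \cE_{2i-2})$ over $\cQ^{2i-2}(F)$ — remains valid when $B$ is a reduced point, which it plainly does, since each step is the formation of a relative Grassmann bundle over the previous (irreducible, projective) stage.
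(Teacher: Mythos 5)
Your proposal is correct and is exactly the paper's approach: the paper derives this corollary simply by taking $B$ to be a point in Proposition \ref{parameter-space}. The extra sentences you include (projectivity gives closedness of the image, irreducibility passes to the image, generic finiteness preserves dimension) are routine justifications that the paper leaves implicit.
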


Let now $V$ be an $L$-valued orthogonal bundle and $e$ an integer. For fixed $r \ge 0$, let $\cF \to \bIQo_{e+2r} ( V ) \times C$ be the universal bundle. The next statement follows from the definitions and Corollary \ref{Lemma:TypeT}.

\begin{lemma} \label{TrFibration} The association $E \mapsto \bE$ defines a surjective morphism
\[ f_{2r} \colon \cT^{2r} ( \cF ) \ \to \ \bIQo_{e+2r} (V) . \]
If $F \subset V$ is an isotropic subbundle of degree $e+2r$, then $f_{2r}^{-1} ( F )$ is canonically identified with $\cT^{2r} (F)$. %(cf.\ Definition \ref{TorsionTypeT} (c)).
 In particular, $\cT^{2r} ( \cF )$ is topologically a fiber bundle over $\bIQ_{e+2r}^\circ ( V )$ with irreducible fibers of dimension $r ( 2n - 3 )$. \end{lemma}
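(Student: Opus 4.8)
The plan is to unwind the definition of $\cT^{2r}(\cF)$ as a sublocus of a relative Quot scheme, identify its structural morphism with the assignment $E\mapsto\bE$, and then read off the fiberwise description from Proposition~\ref{parameter-space} and Corollary~\ref{Lemma:TypeT}. By Definition~\ref{TorsionTypeT}(c), $\cT^{2r}(\cF)$ is a closed sublocus of the relative Quot scheme $\cQuot^{0,2r}(\cF)\to\bIQo_{e+2r}(V)$; let $f_{2r}$ denote its structural morphism. A point of $\cT^{2r}(\cF)$ lying over $[F\subset V]\in\bIQo_{e+2r}(V)$ is a subsheaf $E\subseteq\cF|_{\{[F\subset V]\}\times C}\cong F$ with $F/E$ of type $\cT$ and length $2r$. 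Composing $E\hookrightarrow F\hookrightarrow V$ exhibits $E$ as an isotropic subsheaf of $V$ of rank $n$, and since $\deg F=e+2r$ we get $\deg E=e$. As $F$ is a subbundle of $V$ and $F/E$ is torsion of the same rank, $F$ is the saturation of $E$ in $V$; hence $\bE=F$ and $\bE/E=F/E$ is of type $\cT$. Thus $f_{2r}$ sends $[E\subset F\subset V]$ to $[\bE\to V]=[F\to V]$, so it is precisely the morphism $E\mapsto\bE$, and it is a morphism of schemes because it is the restriction of a relative Quot scheme morphism.

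For surjectivity, given $[F\subset V]\in\bIQo_{e+2r}(V)$ I would exhibit a preimage by an elementary modification: choose distinct points $x_1,\dots,x_r\in C$ and a codimension-two subspace of $F|_{x_i}$ at each, and let $E$ be the kernel of the resulting surjection $F\twoheadrightarrow\bigoplus_{i=1}^r(\cO_{x_i}\otimes\bC^2)$. Then $F/E\cong\bigoplus_{i=1}^r(\cO_{x_i}\otimes\bC^2)$ is of type $\cT$ and length $2r$, so $[E\subset F]$ is a point of $\cT^{2r}(\cF)$ over $[F\subset V]$, and $f_{2r}$ is surjective. (Alternatively, one combines surjectivity of the classifying map $\cQ^{2r}(\cF)\to\cT^{2r}(\cF)$ from Proposition~\ref{parameter-space}(c) with surjectivity of the iterated Grassmann bundle $\cQ^{2r}(\cF)\to\bIQo_{e+2r}(V)$.)

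For the fibers, the construction of the relative Quot scheme together with $\cF|_{\{[F\subset V]\}\times C}\cong F$ identifies $f_{2r}^{-1}([F\subset V])$ canonically with the locus of $[E\subset F]$ such that $F/E$ is of type $\cT$ and length $2r$, i.e.\ with $\cT^{2r}(F)\subset\Quot^{0,2r}(F)$, which by Corollary~\ref{Lemma:TypeT} is irreducible of dimension $r(2n-3)$. For the fiber-bundle assertion I would use Proposition~\ref{parameter-space} once more: the tower $\cQ^{2r}(\cF)\to\cQ^{2r-2}(\cF)\to\cdots\to\bIQo_{e+2r}(V)$ consists of Grassmann bundles, hence is Zariski-locally trivial with fiber $\cQ^{2r}(F)$, and the generically finite surjection $\cQ^{2r}(\cF)\to\cT^{2r}(\cF)$ restricts over each point to $\cQ^{2r}(F)\to\cT^{2r}(F)$; transporting a local trivialization of the Grassmann tower through this surjection shows $f_{2r}$ is topologically locally trivial with fiber $\cT^{2r}(F)$. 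The degree bookkeeping and the elementary-modification step are routine; the delicate point is this last one, since the relative Quot scheme $\cQuot^{0,2r}(\cF)$ need not be Zariski-locally a product over the base, so the local triviality has to be extracted from the explicit presentation $\cQ^{2r}(\cF)$ where it is manifest, and one should check that the identification $f_{2r}^{-1}([F\subset V])\cong\cT^{2r}(F)$ is genuinely independent of the choice of universal bundle.
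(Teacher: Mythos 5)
Your unwinding of the definitions, the identification of $f_{2r}$ with $E\mapsto\bE$ (using that $F\subset\bE$ with both of rank $n$ and $F$ saturated forces $F=\bE$), the surjectivity via elementary modifications, and the identification of fibers with $\cT^{2r}(F)$ of dimension $r(2n-3)$ via Corollary~\ref{Lemma:TypeT} are all correct, and they follow exactly the route the paper indicates (the lemma is stated to follow from the definitions and Corollary~\ref{Lemma:TypeT}, with no further argument given).

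Your justification of the ``topological fiber bundle'' assertion, however, contains an inaccuracy. Each stage $\cQ^{2m}(\cF)\to\cQ^{2m-2}(\cF)$ of the tower is the \emph{composite} of the relative Grassmannian $\Gr(n-2,\cE_{2m-2})\to\cQ^{2m-2}(\cF)\times C$ with the projection to $\cQ^{2m-2}(\cF)$; the universal bundle $\cE_{2m-2}$ is \emph{not} the pullback of a fixed bundle on $C$ (its restriction to $\{q\}\times C$ varies holomorphically with $q$), and a Grassmann bundle over a curve is not Zariski-locally a product of the curve with a Grassmannian. So the tower is not Zariski-locally trivial over the base, and ``transporting a local trivialization'' is not available in the stated form. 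What is true is only topological: the restriction of $\cE_{2m-2}$ to each $\{q\}\times C$ has the same rank and degree, hence the same underlying topological type, so over a contractible analytic open set the family is $C^\infty$-trivial, and iterating this gives topological local triviality of the tower; one then has to check that the fiberwise $C^\infty$-identifications are compatible with the equivalence relation defining $\cQ^{2r}(\cF)\to\cT^{2r}(\cF)$. That said, what the paper actually uses downstream (Proposition~\ref{general intersection}) is only that every fiber of $f_{2r}$ is irreducible of the fixed dimension $r(2n-3)$, so that base and fiber codimensions add; your argument does establish this weaker, and sufficient, statement.
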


\noindent Clearly $\cB_0 = \bIQeoV$ and $f_0$ is the identity.

\begin{comment}
Let $V$ be an orthogonal bundle and $E \subset V$ a isotropic subsheaf. We denote by $\bE$ the saturation of $E$ in $V$. This is the sheaf of sections of the subbundle generated by $E$, or equivalently, the inverse image in $V$ of the torsion subsheaf of $V/E$. By Theorem \ref{TypeT} (a), if $[E \to V] \in \bIQ_e(V)$ is nonsaturated, then $\bE/E$ is a torsion sheaf of length $2r$ for some $r \geq 0$. For fixed $e$ and for $r \ge 0$, we write
%\[ \cB_{2r} \ := \ \{ [E \to V] \in \bIQ \: | \: \bE / E \hbox{ is a torsion sheaf of length } 2r \} . \]
\[ \cT^{2r} \ := \ \{ [ E \to V ] \in \bIQ_e (V) \: | \: \bE / E \hbox{ is a torsion sheaf of type $\cT$ and length } 2r \} . \]
This is a locally closed subscheme of $\bIQ_e(V)$.
\end{comment}

\subsection{Dominance of evaluation maps}

Let $V$ be an orthogonal bundle of rank $2n$. By \cite[Lemma 7.1]{CCH2} and the definition of $e(V)$ given after \cite[Proposition 7.14]{CCH2}, there exists an integer $f_\delta \ge e(V)$ such that $\ev_{f_\delta, p} \colon \bIQ_{f_\delta}^\circ ( V )_\delta \to \OG( V_p )_\delta$ is dominant for a general $p \in C$. We shall require the following stronger statement.

\begin{lemma} \label{Lemma:EvDom} Let $V$ be an orthogonal bundle of rank $2n$. Suppose $e \le e(V)$ and $e \equiv f_\delta \mod 2$. Then the evaluation map
\[ \ev_{e, p, \delta} \colon \bIQ_e^\circ ( V )_\delta \ \to \ \OG ( V_p )_\delta \]
is dominant for generic $p \in C$. \end{lemma}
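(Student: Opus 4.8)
The plan is to deduce the dominance at degree $e$ from the assumed dominance at degree $f_\delta \ge e$ by moving along the type-$\cT$ boundary stratum of $\bIQ_e(V)_\delta$, using the saturation fibration of Lemma \ref{TrFibration}. First I would set $r := \tfrac{1}{2}(f_\delta - e)$, which is a nonnegative integer since $e \le e(V) \le f_\delta$ and $e \equiv f_\delta \bmod 2$; if $r = 0$ the conclusion is exactly the hypothesis, so assume $r \ge 1$. Let $\cF \to \bIQo_{f_\delta}(V)_\delta \times C$ be the universal bundle and form $\cT^{2r}(\cF)$, the relative locus of length-$2r$ elementary transformations of type $\cT$ (Definition \ref{TorsionTypeT}). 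By Lemma \ref{TrFibration} the saturation map $E \mapsto \bE$ is a surjection $f_{2r} \colon \cT^{2r}(\cF) \to \bIQo_{f_\delta}(V)_\delta$. Since $e \le e(V)$, every element of $\cT^{2r}(\cF)$, viewed as a degree-$e$ isotropic subsheaf of type $\cT$ in $V$, lies in $\bIQ_e(V)$ by Theorem \ref{TypeT}(b); and as its saturation lies in the $\delta$-component at degree $f_\delta$, so that $\bE_x \in \OG(V_x)_\delta$ for generic $x$, the labeling of \S\ref{subsec:labeling} (cf.\ Remark \ref{Remk:even intersection}) places it in $\bIQ_e(V)_\delta$. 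Thus $\cT^{2r}(\cF) \subseteq \bIQ_e(V)_\delta$; in particular $\bIQ_e(V)_\delta \ne \emptyset$, so by Theorem \ref{Even} it is irreducible of dimension $I(n,\ell,e)$ with $\bIQo_e(V)_\delta$ a dense open subset.

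Next I would fix $p \in C$ general enough that $\ev_{f_\delta,p}$ is dominant onto $\OG(V_p)_\delta$, and let $W \subseteq \bIQ_e(V)_\delta$ be the nonempty, hence irreducible, open subscheme on which the universal subsheaf is a subbundle at $p$; on $W$ the evaluation $\ev_{e,p}$ is an honest morphism to $\OG(V_p)_\delta$. Inside $\cT^{2r}(\cF)$, the locus where $\bE/E$ is supported away from $p$ is dense open and contained in $W$, and for a point $[E \to V]$ of it with $\bE = F$ one has $E_p = F_p$, so on this locus $\ev_{e,p} = \ev_{f_\delta,p} \circ f_{2r}$. Since $f_{2r}$ remains dominant after restriction to this dense locus, the composite — and hence $\ev_{e,p}|_W$ — has dense image in $\OG(V_p)_\delta$. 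Finally, $\bIQo_e(V)_\delta$ is dense in the irreducible $W$, and the image of a dense subset under a morphism is dense in the image of the whole; therefore $\ev_{e,p}(\bIQo_e(V)_\delta)$ is dense in $\OG(V_p)_\delta$, i.e. $\ev_{e,p,\delta}$ is dominant, as required.

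The main obstacle is this passage from the closure to the open locus. The type-$\cT$ construction produces only points of the boundary of $\bIQ_e(V)_\delta$ (for $r \ge 1$ the subsheaves involved are unsaturated), so what is directly controlled is the evaluation map on the full closure $\bIQ_e(V)_\delta$, not on the subbundle locus $\bIQo_e(V)_\delta$ that the statement concerns. Two ingredients bridge the gap: the irreducibility of $\bIQ_e(V)_\delta$ together with the density of $\bIQo_e(V)_\delta$ in it (Theorem \ref{Even}), and the care to work on the open subscheme $W$ where $\ev_{e,p}$ is genuinely a morphism, so that the elementary density argument applies. The remaining steps — the fibration $f_{2r}$, the factorization of $\ev_{e,p}$ through $\ev_{f_\delta,p}$ away from $p$, and the parity and labeling bookkeeping — are routine.
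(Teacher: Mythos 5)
Your proof is correct and follows essentially the same route as the paper's: both use type-$\cT$ elementary transformations of $F \in \bIQo_{f_\delta}(V)_\delta$ supported away from $p$ to realize any $\Lambda \in \Image(\ev_{f_\delta,p,\delta})$ in the image at degree $e$, invoke Theorem \ref{TypeT}(b) to place these in $\bIQ_e(V)_\delta$, and then use irreducibility together with density of the saturated locus to pass to $\bIQo_e(V)_\delta$. Your write-up is somewhat more elaborate — constructing the full relative family $\cT^{2r}(\cF)$ and isolating the open subscheme $W$ where evaluation is a genuine morphism — whereas the paper simply works with the rational map on the closure, but the underlying argument is the same.
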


\begin{proof} Suppose that $e < f_\delta$ and $e \equiv f_\delta \mod 2$. We claim that the image of
\[ \ev_{e, p, \delta} \colon \overline{\bIQ_e^\circ ( V )_\delta} \ \dashrightarrow \ \OG( V_p )_\delta \]
contains $\Image ( \ev_{f_\delta, p, \delta} )$. For: If $\Lambda = \ev_{f_\delta , p, \delta} ( F )$ then $\Lambda = \ev_{e, p, \delta} ( E )$ for any type $\cT$ elementary transformation $E$ of $F$ such that $p \not\in \Supp ( F/E )$. Since $e \le e(V)$, by Theorem \ref{TypeT} (b) such an $E$ belongs to $\overline{\bIQ_e^\circ ( V )_\delta}$.

But by Theorem \ref{TypeT} (a), the space $\overline{\bIQ_e^\circ ( V )_\delta}$ is irreducible and a general element is saturated. Thus the restriction of $\ev_{e, p, \delta}$ to the dense subset $\bIQ_e^\circ ( V )_\delta$ is also dominant. \end{proof}

\section{Degeneracy loci and Hecke transform}

In this section we define degeneracy loci on the schemes $\bIQeV$ and prove various codimension bounds on the nonsaturated parts of these loci. We then prove various facts we shall require on orthogonal Hecke transformations.

\subsection{Degeneracy loci on \texorpdfstring{$\bIQeV$}{IQe(V)}}\label{subsec:degeneracy}

Let $V$ be an $L$-valued orthogonal bundle. To ease notation, we write $\bX := \bIQeV$. Assume that $\bX_\delta = \bIQeV_\delta$ is nonempty. Write $\pi_C \colon \bX_\delta \times C \to C$ for the projection. Over $\bX_\delta \times C$ there is an exact sequence of sheaves
\begin{equation} 0 \ \rightarrow \ \cE \ \rightarrow \ \pi_C^* V \ \rightarrow \ \widetilde{\cE} , \label{univ} \end{equation}
where $\cE$ is the universal subsheaf and $\widetilde{\cE} = \cE^\vee \otimes \pi_C^*L$. For $p \in C$, denote by $\cE(p)$ and $\cV(p)$ and $\widetilde{\cE}(p)$ the restrictions to $\bX_\delta \cong \bX_\delta \times\{ p \}$ of $\cE$ and $\pi_C^* V$ and $\widetilde{\cE}$, respectively.

Identifying the fibers $\widetilde{\cE}(p) \cong \cE(p)^\vee$,  we obtain an exact sequence over $\bX_\delta$:
\[
0 \ \rightarrow \ \cE(p) \ \rightarrow \ \cV(p) \ \rightarrow \ \cE(p)^\vee ,
\]
where $\cV(p) = \bX_\delta \times V_p$ is the trivial orthogonal bundle.

We will define maximal isotropic degeneracy loci on $\bX_\delta$.
As in {\S} \ref{section:CohomOrthGrass} we fix a complete flag
\begin{equation} H_{\bullet} : \ H_1 \ \subset \ H_{2} \ \subset \ \cdots \ \subset \ H_{n-1} \ \subset \ H_n \label{ReferenceFlag} \end{equation}
of isotropic subspaces of $V_p$ such that $H_n \in \OG ( V_p )_\delta$. As before, we note that there is a unique maximal isotropic subspace $H_n' \in \OG(V_p)_{1 - \delta}$ which contains $H_{n-1}$.  For each $i$, denote by $\cH_i = H_i \times \bX_\delta$ the trivial vector bundle on $\bX_\delta$. For $i=n$, we have two bundles $\cH_n$ and $\cH_n'$.

\begin{comment}
Let $\widetilde{H}_\bullet$ be another complete isotropic flag  given by
$$ H_1 \ \subset \ \cdots \ \subset \ H_{n-1} \ \subset \ \widetilde{H}_n . $$
Put
\begin{displaymath} H_{\bullet}^{(i)} \ = \ \left\{\begin{array}{ccc}
 H_\bullet & \mathrm{if}\ i \equiv n \ \mathrm{mod}\ 2 ; \\
 \widetilde{H}_\bullet & \textrm{otherwise}.
 \end{array}\right.
\end{displaymath}
\end{comment}
%Each sheaf inclusion $\psi \colon E \to V$ induces a map $E_p \to V_p / H_{n+1-k}^\perp$ for each $k$. %We adapt Definition \ref{degeneracy-first} to this case.

\begin{definition}\label{def: isotropic loci}
%Let $V$ be an orthogonal bundle over $C$. Assume $\bX_\delta = \bIQeV_\delta$ is nonempty.
For each $\lambda \in \cD(n-1)$, we define a degeneracy locus  by
\[
 \bX_\lambda ( H_\bullet ; p )_\delta \ = \  \left\{
 \begin{aligned}
{[} \psi \colon E \to V {]} &\in \bX_\delta \: | \:  \\
& \rank \left( \cE(p) \rightarrow \cV(p) / \cH^\perp_{n-\lambda_i} \right)_\psi  \leq  n-i-\lambda_i  \ \text{for } \ 1 \le i \le l(\lambda) \\
&  \rank \left( \cE(p) \rightarrow \cV(p)  / \widetilde{\cH}_{n} \right)_\psi \leq  n - l(\lambda)-1
  \end{aligned}
   \right\} ,
   \]
\end{definition}
\noindent where $ \widetilde{\cH}_{n} = \cH_n$ if $l(\lambda) \not\equiv n \mod 2$ and $ \widetilde{\cH}_{n} = \cH_n'$ if $l(\lambda) \equiv n \mod 2$.

%\begin{definition} \label{DeltaPrime} As in \cite[Definition 7.2]{CCH3}, we write
%\[ \dpr \ := \ \left\{ \begin{array}{cl}
 %\delta & \hbox{ if } n \equiv 0 \mod 2 ; \\
 %1 - \delta & \hbox{ if } n \equiv 1 \mod 2 . \end{array} \right. \]
%The point of this definition, as in \cite{CCH3}, is that if $\Lambda \in \OG ( V_p )_\delta$, then $\OG(V)_\dpr$ is the component of $\OG (V_p)$ containing $n$-dimensional isotropic subspaces which intersect $\Lambda$ in dimension zero; equivalently, which surject to $V_p / \Lambda$. \end{definition}

\begin{remark} \label{independent_p} Later we will be mostly concerned with two types of degeneracy loci:
%(a) Fix $\epsilon = \dpr$ in (\ref{ReferenceFlag}), and suppose $\lambda = (k)$. Then $\bX_k ( H_\bullet ; p )_\delta$ is defined by the two conditions
%\begin{equation} \rank \left( \cE(p) \rightarrow \cV(p) / \cH^{\perp}_{n-k} \right)_\psi \leq \ n - 1 - k , \quad \hbox{and} \quad \rank \left( \cE(p) \rightarrow \cV(p) / \cH_n^\perp \right)_\psi \leq \ n-2 . \label{Rephrasek} \end{equation}

%{\color{blue} $\bX_k ( H_\bullet ; p )_\alpha$ or $\bX_k ( H_\bullet ; p )_\delta$?}\\
%where
%\begin{displaymath} \widehat{\cH}_n = \left\{\begin{array}{ccc}
%\cH_n & \mathrm{if}\ \ n \ \mathrm{is} \ \mathrm{even} , \\
%\widetilde{\cH}_n & \mathrm{if}\  \ n \ \mathrm{is} \ \mathrm{odd}.
%\end{array}\right.
%\end{displaymath}

%Suppose now that $[ E \to V ] \in \bX_\delta$ is saturated at $p$. As the projection $\cV ( p ) / H_n \to \cV ( p ) / H_{n - k + 1}^\perp$ is surjective, $[ E \to V ]$ belongs to $\bX_k ( H_\bullet ; p )_\delta$ only if
%\[ \rank \left( \psi_p \colon E_p \ \to \ \cV(p) / H_n \right) \ \le \ n - 1 . \]
%But since $H_n \in \OG ( V_p )_\dpr$, by Lemma \ref{GH} we have $\dim ( E_p \cap H_n ) \equiv 0 \mod 2$. Thus if $E \in \bX_k ( H_\bullet ; p )_\delta$ then $E_p \to \cV(p) / H_n$ has rank at most $n - 2$. Hence, when $H_n \in \OG (V_p)_\dpr$, on the saturated part of $\bX_\delta$ the first condition in (\ref{Rephrasek}) implies the second. {\color{red} Similarly, if $\rank \left( E_p \to V_p / H_n \right) \le n-2$ then necessarily $[ E \to V ] \in \bX_{(1)} ( H_\bullet ; p )_\delta$?}
\noindent
 (1) For $\lambda = (k)$ with $1 \le k \le n-1$, we have
 \begin{equation} \label{exact-sequence-p}
 \bX_{(k)}( H_{\bullet} ; p )_\delta =
  \left\{ [ \psi \colon E \to V ] \in \bX_\delta \: \bigg| \:
 \begin{aligned}
 \rank \left( \cE(p) \rightarrow \cV(p) / \cH^\perp_{n-k} \right)_\psi  &\leq  n-k-1  \\
  \rank \left( \cE(p) \rightarrow \cV(p)  / \widetilde{\cH}_{n} \right)_\psi &\leq  n - 2
  \end{aligned}
   \right\}.
   \end{equation}
  This will  be denoted by $\bX_k ( H_{\bullet} ; p )_\delta$.\\
  \par
\noindent (2) For $\lambda = \rho_{n-1}= (n-1, n-2, \dots, 1)$, we have
\be \label{longest degeneracy locus} \bX_{\rho_{n-1}} ( H_{\bullet} ; q )_\delta \ = \ \left\{ [ \psi \colon E \to V ] \in \bX_\delta \: | \:  \psi \left( \cE_q \right) \subseteq \cH_n  \right\}. \ee
Since this depends only on $H_n$, we shall denote it by $\bX_{\rho_{n-1}} ( H_n ; q )_\delta$. \end{remark}

Later we need some bounds on the dimension of certain boundary degeneracy loci.

\begin{lemma} \label{lemma:Ttr} Let $V$ be an orthogonal bundle of rank $2n$ and $F \subset V$ an isotropic subsheaf of rank $n$. Let $p$ be a point of $C$ at which $F$ is saturated. Let $H_\bullet$ be a complete flag in $V_p$ as before. Let $k \in \{ 1 , \ldots , n-1 \}$ be such that $F_p \to V_p / H_{n-k}^\perp$ %and $F_p \to V_p / H_n$ are
 is surjective. Then the locus
 \[ \{ [ E \to F ] \in \cT^{2r} (F) \: | \: \rank \left( E_p \to V_p / H_{n-k}^\perp \right) \le n-k-1  \ \text{and} \
 \rank \left( E_p \to V_p / \widetilde{H}_n \right) \le n - 2 \} \] \label{ENotSurj}
has codimension at least $k$ in $\cT^{2r} ( F )$. \end{lemma}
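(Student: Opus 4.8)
The plan is to reduce the codimension estimate to a dimension count on the explicit parameter space $\cQ^{2r}(F)$ constructed in Proposition \ref{parameter-space}, via the generically finite classifying map $\cQ^{2r}(F) \to \cT^{2r}(F)$. Since that map is surjective and generically finite, it suffices to bound the dimension of the preimage $Y_r$ in $\cQ^{2r}(F)$ of the degeneracy locus in question. A point of $\cQ^{2r}(F)$ is a flag $E_r \subset E_{r-1} \subset \cdots \subset E_0 = F$ with $E_{i-1}/E_i \cong \cO_{x_i} \otimes \bC^2$. The key observation is that the condition ``$\rank(E_r|_p \to V_p/H_{n-k}^\perp) \le n-k-1$'' is vacuous unless $p \in \{x_1, \ldots, x_r\}$: if $p$ is not among the $x_i$, then $E_r$ agrees with $F$ near $p$, so $(E_r)_p \to V_p/H_{n-k}^\perp$ has the same rank $n-k$ as $F_p \to V_p/H_{n-k}^\perp$, which is surjective by hypothesis. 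Hence $Y_r$ is contained in the union over $1 \le j \le r$ of the loci where $x_j = p$.

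First I would fix $j$ and analyze the locus $Y_{r,j} \subset \cQ^{2r}(F)$ where $x_j = p$. Using the inductive tower description of $\cQ^{2r}(F)$, forgetting the last $r-j$ steps gives a morphism to $\cQ^{2j}(F)$ with fibers of dimension $(r-j)(2n-3)$ (the relative dimension of $\cQ^{2r} \to \cQ^{2j}$). So it suffices to bound the corresponding locus inside $\cQ^{2j}(F)$, i.e.\ the locus of flags $E_j \subset \cdots \subset E_1 \subset F$ with the $j$th jump occurring at $p$ and with $\rank(E_j|_p \to V_p/H_{n-k}^\perp) \le n-k-1$ and $\rank(E_j|_p \to V_p/\widetilde H_n) \le n-2$. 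The first $j-1$ steps are free, contributing $(j-1)(2n-3)$ to the dimension; the $j$th step, with $x_j = p$ now fixed, is a choice of a codimension-$2$ subspace $\Image(E_j|_p \to (E_{j-1})|_p) \subset (E_{j-1})|_p \cong \bC^n$, i.e.\ a point of $\Gr(n-2,n)$, which has dimension $2(n-2) = 2n-4$. The crucial point is then to show that the rank conditions cut down the dimension of this last Grassmannian factor by at least $k$.

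For that last step: since $F$ is saturated at $p$, we have $(E_{j-1})_p = F_p$ as subspaces of $V_p$ modulo the earlier jumps (more precisely $E_{j-1}$ agrees with $F$ at $p$, as the earlier jumps $x_1, \ldots, x_{j-1}$ are at other points once we are on the locus $x_j = p$; if some earlier $x_i = p$ as well one argues similarly with a further dimension drop, which only helps). Write $W := H_{n-k}^\perp$, a subspace of $V_p$ of codimension $n-k$, so that $F_p \to V_p/W$ is surjective and $\ker(F_p \to V_p/W) = F_p \cap W$ has dimension $k$. For a codimension-$2$ subspace $U \subset F_p$, the image of $U$ in $V_p/W$ has dimension $\ge (n-2) - (n-k) + \dim(U \cap W)' \ge n-k-2+\ldots$; more simply, $U \to V_p/W$ fails to be surjective (i.e.\ has rank $\le n-k-1$) precisely when $U + W \ne V_p$, equivalently $U \supset$ some fixed line complementary conditions fail — the locus of such $U$ in $\Gr(n-2, F_p)$ is a Schubert-type variety whose codimension I would compute to be exactly $k$ (it is the condition that the $(n-2)$-plane $U$ meet the fixed $k$-plane $F_p \cap W$ in dimension $\ge k-1$, a standard Schubert cycle of codimension $k$ in the Grassmannian; the extra condition involving $\widetilde H_n$ only imposes further constraints). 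Combining: $\dim Y_{r,j} \le (r-j)(2n-3) + (j-1)(2n-3) + (2n-4 - k) = (r-1)(2n-3) + (2n-4) - k = r(2n-3) - 1 - k$. Taking the maximum over $j$ and using $\dim \cT^{2r}(F) = r(2n-3)$ (Corollary \ref{Lemma:TypeT}) together with generic finiteness of the classifying map, we conclude that the degeneracy locus has codimension at least $k+1 \ge k$ in $\cT^{2r}(F)$, which is more than enough.

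\textbf{Main obstacle.} The delicate point is the precise Schubert-calculus computation in the last step: identifying the locus of $(n-2)$-planes $U \subset F_p$ with $\rank(U \to V_p/W) \le n-k-1$ (intersected with the $\widetilde H_n$-condition) as a Schubert variety and verifying its codimension is at least $k$ in $\Gr(n-2,n)$. One must be careful that $W = H_{n-k}^\perp$ is a genuine codimension-$n+k$ — wait, codimension $n-k$ — subspace and that the hypothesis ``$F_p \to V_p/H_{n-k}^\perp$ surjective'' is exactly what guarantees the generic $U$ does not lie in the locus, so the codimension count is not vacuous; and one should handle cleanly the harmless case where several jump points coincide with $p$.
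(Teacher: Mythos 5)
Your plan follows the paper's approach closely: reduce to $\cQ^{2r}(F)$ via the generically finite classifying map of Proposition~\ref{parameter-space}(c), note that the degeneracy condition forces $p$ to be among the jump points, and bound the dimension by analyzing the Grassmannian choice at the relevant step. However, there are two issues.

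First, the Schubert-calculus count is off by one. For $\Pi \in \Gr(n-2, F_p)$ and a fixed $k$-plane $K = \ker(F_p \to V_p/H_{n-k}^\perp)$, the condition $\dim(\Pi \cap K) \ge k-1$ has codimension $(k-1)\cdot\bigl(n - (n-2) - k + (k-1)\bigr) = k-1$ in $\Gr(n-2,F_p)$, not $k$. (Check $n=4$, $k=2$: the locus of $2$-planes meeting a fixed $2$-plane nontrivially in $\Gr(2,4)$ is $\sigma_1$, codimension $1$.) The extra unit of codimension needed to reach $k$ comes from fixing the jump point $x_j = p$, which costs $1$ in $\cT^2(E_m)$. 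So your final bound of ``codimension $\ge k+1$'' overshoots; the correct conclusion from your dimension count would be $\ge k$, which is exactly what is claimed, with no room to spare. (In fact, the $\widetilde{H}_n$ condition does not buy you anything further: once $(F/E)_p \ne 0$, the map $E_p \to F_p$ has rank $\le n-2$, so the second condition holds automatically, as the paper points out explicitly.)

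Second, your stratification by the index $j$ with $x_j = p$ does not cleanly handle the case where several $x_i$ equal $p$. If $j$ is minimal, the rank drop on $(E_r)_p$ need not be caused by the $j$th step at all (later jumps at $p$ can be responsible, leaving the $j$th step unconstrained); if $j$ is maximal, then $(E_{j-1})_p \ne F_p$ and the analysis of the $j$th Grassmannian choice changes. ``One argues similarly with a further dimension drop, which only helps'' does not actually resolve this: a routine count shows that each additional $p$-jump both reduces the available dimension (fixing a point) and weakens the Schubert constraint on the final image, so it is not a priori obvious that the tradeoff ``only helps.'' The paper avoids this by stratifying instead by the first index $m$ at which $(E_{m+1})_p \to V_p/H_{n-k}^\perp$ fails to surject; then the constraint falls cleanly on the single step $m \to m+1$ (with $(E_m)_p$ still surjecting, so the kernel $K$ has exactly dimension $k$ in $(E_m)_p$), while all other steps are unconstrained (or only subject to an open condition). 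You should adopt this stratification, or else carry out an explicit induction on the number of jumps at $p$ as is done in Lemma~\ref{lemma:Pi}.
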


\begin{proof} For each $m \ge 0$, let $\cQ^{2m} ( F ) =: \cQ^{2m}$ be as defined in {\S} \ref{section:NonSat}. By Proposition \ref{parameter-space} (c), it will suffice to show that the locus of flags of subsheaves of the form (\ref{filtration}) such that $[ E_r \to F ]$ belongs to (\ref{ENotSurj}) has codimension at least $k$ in $\cQ^{2r}$. For $0 \le m \le r - 1$, we consider the locus
\begin{multline} \left\{ [ E_r \subset \cdots \subset E_1 \subset E_0 = F ] \in \cQ^{2r} \: | \: (E_m)_p \to V_p / H_{n-k}^\perp \hbox{ is surjective} \right. \\
\left. \hbox{and } \left( E_{m+1} \right)_p \to V_p / H_{n-k}^\perp \hbox{ is not surjective} \right\} . \label{mNotSurj} \end{multline}

Now $V_p / H_{n-k}^\perp$ is naturally isomorphic to $H_{n-k}^\vee$. Hence for any element of (\ref{mNotSurj}), we have an exact sequence of vector spaces $0 \to K \to (E_m)_p \to H_{n-k}^\vee \to 0$, where $K \subset (E_m)_p$ is a subspace of dimension $k$. By hypothesis, $\Image \left( (E_{m+1})_p \to (E_m)_p \right) =: \Pi$ is a codimension two subspace of $(E_m)_p$ intersecting $K$ in dimension $k-1$ or $k$. In the first case, we have an exact sequence
\[ 0 \ \to \ \Pi_1 \ \to \ \Pi \ \to \ \Pi_2 \ \to \ 0 \]
where $\Pi_1 \in \Gr ( k-1 , K ) = \pp K^\vee$ and $\Pi_2 \in \Gr ( n - k - 1 , H_{n-k}^\vee ) = \pp H_{n-k}$. The codimension of the locus of such $\Pi$ in $\cT^2 ( E_m ) = \Gr ( n-2 , E_m )$ is therefore at least
\[ \dim \Gr ( n-2, E_m ) - \dim ( \pp K^\vee ) - \dim ( \pp H_{n-k} ) \ = \ (2n - 3) - (k - 1) - (n - k - 1) %\ = \ 2n - 4 - k + 1 - n + k + 1
 \ = \ n - 1 . \]
%\begin{multline*} \dim \Gr ( n-2, E_m ) - \dim ( \pp K^\vee ) - \dim ( \pp H_{n-k} ) \ = \ (2n - 3) - (k - 1) - (n - k - 1) %\ = \ 2n - 4 - k + 1 - n + k + 1
 %\ = \ n - 1 . \end{multline*}
In the second case, we have an exact sequence
\[ 0 \ \to \ K \ \to \ \Pi \ \to \ \Pi' \ \to \ 0 \]
with $\Pi' \in \Gr ( n - k - 2 , H_{n-k}^\vee )$. The codimension of the locus of such $\Pi$ in $\cT^2 ( E_m )$ is therefore at least
\[ \dim \Gr ( n-2, E_m ) - \dim \Gr ( n - k - 2 , H_{n-k}^\vee ) \ = \ ( 2n - 3 ) - (n - k - 2) \cdot 2 %\ = \ 2n - 4 + 1 - 2n + 2k - 4
 \ = \ 2k + 1 . \]
%\begin{multline*} \dim \Gr ( n-2, E_m ) - \dim \Gr ( n - k - 2 , H_{n-k}^\vee ) \ = \ ( 2n - 3 ) - (n - k - 2) \cdot 2 %\ = \ 2n - 4 + 1 - 2n + 2k - 4
% \ = \ 2k + 1 . \end{multline*}
For an $E_{m+1}$ of either type, any $[ E_r \to E_{m+1} ] \in \cT^{2(r - m - 1)} ( E_{m+1} )$ defines an element of (\ref{ENotSurj}). Moreover, it follows from Definition \ref{TorsionTypeT} that if $[E \to F]$ is any type $\cT$ elementary transformation with $(F/E)_p$ nonzero, then the rank of the map $E_p \to F_p$ is at most $n-2$. Therefore, every $[E_m \to F ]$ belonging to (\ref{mNotSurj}), and hence any elementary transformation of any such $E_m$, satisfies the second condition $\rank \left( E_p \to V_p \to V_p / \widetilde{H}_n \right) \le n-2$.

In summary, the locus (\ref{ENotSurj}) is of codimension at least $\min \{ 2k+1, n-1 \}$. As $k \le n-1$, we are done. \end{proof}

We now prove a similar statement for the partition $\rho_{n-1}$.

\begin{lemma} Suppose $0 \le m \le n$ and $n - m \equiv 0 \mod 2$. For a fixed $\Lambda \in \OG ( n )_\delta$, the set
\[ Z_{\rho_m} ( \Lambda ) \ = \ \{ \Sigma \in \OG ( n )_\delta \: | \: \dim ( \Sigma \cap \Lambda ) \ge m \} \]
is of codimension at least $\frac{1}{2} m ( m - 1 )$ in $\OG ( n )_\delta$. \label{lemma:codim-m} \end{lemma}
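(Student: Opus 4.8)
The plan is to bound the dimension of $Z_{\rho_m}(\Lambda)$ from above by means of an incidence correspondence, and then subtract from $\dim \OG(n)_\delta = \frac{1}{2}n(n-1)$. Concretely, I would introduce
\[ \mathcal{W} \ := \ \left\{ ( K , \Sigma ) \: : \: K \in \Gr ( m , \Lambda ), \ \Sigma \in \OG(n), \ K \subseteq \Sigma \right\} , \]
a closed subvariety of $\Gr ( m , \Lambda ) \times \OG(n)$, together with its two projections $p_1 \colon \mathcal{W} \to \Gr ( m , \Lambda )$ and $p_2 \colon \mathcal{W} \to \OG(n)$.

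Next I would analyze the two projections. Any $m$-dimensional subspace of the isotropic space $\Lambda$ is again isotropic, and $K \subseteq \Sigma$ with $\Sigma$ isotropic forces $\Sigma \subseteq K^\perp$; hence the fiber $p_1^{-1}(K)$ is identified with the orthogonal Grassmannian $\OG ( n - m , K^\perp / K )$ of the $2(n-m)$-dimensional quadratic space $K^\perp/K$, a variety of dimension $\frac{1}{2}(n-m)(n-m-1)$, independently of $K$. Since $p_1$ is surjective with equidimensional fibers, $\dim \mathcal{W} = m(n-m) + \frac{1}{2}(n-m)(n-m-1)$. On the other hand, $p_2(\mathcal{W})$ is exactly the locus $\{ \Sigma \in \OG(n) : \dim ( \Sigma \cap \Lambda ) \ge m \}$: given such a $\Sigma$, any $m$-dimensional subspace $K$ of $\Sigma \cap \Lambda$ yields $(K,\Sigma) \in \mathcal{W}$ with $p_2(K,\Sigma) = \Sigma$, and conversely. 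In particular $Z_{\rho_m}(\Lambda) \subseteq p_2(\mathcal{W})$ (and if $Z_{\rho_m}(\Lambda)$ is empty there is nothing to prove), so
\[ \dim Z_{\rho_m}(\Lambda) \ \le \ \dim \mathcal{W} \ = \ m(n-m) + \tfrac{1}{2}(n-m)(n-m-1) . \]
A short computation (substituting $p = n - m$) gives $\frac{1}{2}n(n-1) - m(n-m) - \frac{1}{2}(n-m)(n-m-1) = \frac{1}{2}m(m-1)$, whence $\codim \left( Z_{\rho_m}(\Lambda) , \OG(n)_\delta \right) \ge \frac{1}{2}m(m-1)$, as claimed.

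I do not expect a serious obstacle here; the argument is essentially a dimension count for an incidence variety. The one place requiring a little care is bookkeeping with connected components: the hypothesis $n - m \equiv 0 \bmod 2$ serves only to ensure that $Z_{\rho_m}(\Lambda)$ sits inside the single component $\OG(n)_\delta$ as an honest incidence locus — by Lemma \ref{GH} one has $\dim ( \Sigma \cap \Lambda ) \equiv n \bmod 2$ for $\Sigma$ in the component of $\Lambda$, so without this parity the conditions $\dim ( \Sigma \cap \Lambda ) \ge m$ and $\ge m+1$ would collapse. For the dimension estimate itself the parity is irrelevant, since all components of $\OG ( n-m , K^\perp/K )$ have the same dimension $\frac{1}{2}(n-m)(n-m-1)$; likewise the possible reducibility of $\mathcal{W}$ does no harm, as we only use the upper bound on $\dim \mathcal{W}$ coming from $p_1$. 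The only non-formal inputs are the standard facts $\dim \OG(n) = \frac{1}{2}n(n-1)$ and the identification of $p_1^{-1}(K)$, neither of which presents any difficulty.
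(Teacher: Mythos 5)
Your proof is correct and is essentially the paper's argument: the paper parameterizes each $\Sigma \in Z_{\rho_m}(\Lambda)$ by a pair $(\Sigma_1,\Sigma_2)$ with $\Sigma_1 \in \Gr(m,\Lambda)$ and $\Sigma_2 \in \OG(\Sigma_1^\perp/\Sigma_1)$ and performs the same dimension count $\frac{1}{2}n(n-1) - m(n-m) - \frac{1}{2}(n-m)(n-m-1) = \frac{1}{2}m(m-1)$; your incidence variety $\mathcal{W}$ with projections $p_1,p_2$ is just a more formally packaged version of that parameterization. The added remark that the parity hypothesis only controls which component $Z_{\rho_m}(\Lambda)$ sits in, and plays no role in the dimension estimate, is a correct observation that the paper leaves implicit.
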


\begin{proof} For $\Sigma \in Z_{\rho_m} ( \Lambda )$, since both $\Sigma$ and $\Lambda$ are isotropic, $\Sigma$ fits into a diagram of the form
\[ \xymatrix{ 0 \ar[r] & \Sigma_1 \ar[d] \ar[r] & \Sigma \ar[d] \ar[r] & \Sigma_2 \ar[d] \ar[r] & 0 \\
 0 \ar[r] & \Sigma_1 \ar[r] & \Sigma_1^\perp \ar[r] & {\Sigma_1^\perp}/{\Sigma_1} \ar[r] & 0 } \]
where $\Sigma_1 \in \Gr ( m , \Lambda )$ and $\Sigma_2 \in \OG \left( {\Sigma_1^\perp}/{\Sigma_1} \right)$. Thus $Z_{\rho_m} ( \Lambda )$ has codimension at least
\begin{multline} \dim \OG ( n ) - \dim \Gr ( m , \bC^n ) - \dim \OG ( n - m ) \ = \\
 \frac{1}{2} n ( n - 1 ) - m ( n - m ) - \frac{1}{2} (n - m ) ( n - m - 1 ) \ = \
% \frac{1}{2} \left( n ( n - 1 ) - 2m ( n - m ) - (n - m ) ( n - m - 1 ) \right)
% \frac{1}{2} \left( n ( n - 1 ) - ( n - m ) ( n + m - 1 ) \ = \\
% \frac{1}{2} \left( n^2 - n - n^2 - nm + n + nm + m^2 - m \right) \ = \\
% \frac{1}{2} \left( n^2 - n^2 - n + n - nm + nm + m^2 - m \right) \ = \\
 \frac{1}{2} m ( m - 1 ) . \label{codimOne} \qedhere \end{multline}
\end{proof}

\begin{lemma} Let $F \to C$ be a vector bundle of rank $n$. Suppose $0 \le m \le n$ and $n - m \equiv 0 \mod 2$, and let $\Pi$ be an $m$-dimensional subspace of $F_q$ for some $q \in C$. Then the set
\begin{equation} \left\{ [ E \to F ] \in \cT^{2r} ( F ) \: | \: \Image \left( E_q \to F_q \right) \subseteq \Pi \right\} \label{PiInFiber} \end{equation}
is empty or of codimension at least $\frac{1}{2} ( n - m ) ( n + m - 1 )$ in $\cT^{2r} ( F )$.

In particular, given a vector bundle $F \to C$ of rank $n$, the set
\[
\{ [E \to F] \in \cT^{2r}(F) \: | \: E_q \to F_q \ \text{is zero} \}
\]
has codimension at least $\frac{1}{2} n(n-1) $ in $\cT^{2r}(F)$.
 \label{lemma:Pi} \end{lemma}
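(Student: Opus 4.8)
The plan is to reduce the statement to a codimension estimate on the incidence variety $\cQ^{2r}(F)$ of Proposition~\ref{parameter-space} and prove that estimate by induction on $r$, after strengthening it with an auxiliary parameter. Since the classifying map $\cQ^{2r}(F)\to\cT^{2r}(F)$ is surjective and generically finite between irreducible varieties of the same dimension, and since the condition $\Image(E_q\to F_q)\subseteq\Pi$ depends only on $E$ and not on a chosen filtration over it, it is equivalent to bound the codimension of the locus of filtrations $[E_r\subset\cdots\subset E_0=F]$ in $\cQ^{2r}(F)$ satisfying this condition. Set $m=\dim\Pi$, and for $m\le v\le n$ put $\psi(n,m,v):=\tfrac12(n-v)(n+2m-v-1)$. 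I would prove the stronger claim that the locus $W_{r,v}\subseteq\cQ^{2r}(F)$ of filtrations with $\dim\bigl(\Image((E_r)_q\to F_q)+\Pi\bigr)\le v$ has codimension at least $\psi(n,m,v)$. The lemma is the case $v=m$, for which $\psi(n,m,m)=\tfrac12(n-m)(n+m-1)$, and the ``in particular'' assertion is the case $\Pi=0$, $v=m=0$.

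The point of introducing $v=\dim(\Image(E_q\to F_q)+\Pi)$ is that it is the invariant that propagates through a single elementary modification: merely tracking $\dim\Image(E_q\to F_q)$ would lose the extra condition that this image sit inside the \emph{specific} subspace $\Pi$. The base case $r=0$ is immediate, since $\cQ^0(F)$ is a point, $\Image(F_q\to F_q)=F_q$, and $\psi(n,m,n)=0$. For the inductive step I would use the forgetful morphism $\pi\colon\cQ^{2r}(F)\to\cQ^{2(r-1)}(F)$, whose fibres $\Gr_C(n-2,E_{r-1})$ all have dimension $2n-3$, so $\pi$ preserves codimensions. We may assume $\psi(n,m,v)>0$, i.e.\ $v\le n+m-2$. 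Over the locus where $\dim(\Image((E_{r-1})_q\to F_q)+\Pi)\le v$ already, the bound follows by pulling back the inductive hypothesis; on the complement I stratify by the exact value $v''=\dim(\Image((E_{r-1})_q\to F_q)+\Pi)$, which runs over $v<v''\le n$. By induction that stratum has codimension $\ge\psi(n,m,v'')$, and over it a final elementary modification achieving $\dim(\Image((E_r)_q\to F_q)+\Pi)\le v$ is forced to be supported at $q$ (codimension $1$, since a modification away from $q$ does not alter the fibre at $q$) and, after composing $(E_{r-1})_q\to F_q\to F_q/\Pi$ (a map of rank $v''-m$ on the stratum), to have its distinguished codimension-two subspace $\Pi_r$ meet the kernel of this composite in dimension $\ge n+m-2-v$; this is a relative Schubert condition of codimension $(n+m-2-v)(v''-v)$. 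When $v''>v+2$ the Schubert locus is empty, since one cannot drop by more than two in a single step, so only $v''\in\{v+1,v+2\}$ contribute.

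There remains the numerical inequality $\psi(n,m,v'')+1+(n+m-2-v)(v''-v)\ge\psi(n,m,v)$ for $v<v''\le n$; writing $\delta=v''-v$ and expanding the quadratic $\psi$, it collapses to $(\delta-1)(\delta-2)\ge0$, true for every integer $\delta\ge1$ (with equality exactly at $\delta=1,2$). Assembling the strata yields $\codim W_{r,v}\ge\psi(n,m,v)$; pushing this forward along the classifying map does not decrease codimension and the image is exactly the locus of the lemma, so the same bound holds on $\cT^{2r}(F)$, and specializing $v=m$ and then $\Pi=0$ gives both assertions. I expect the main obstacle to be finding the correct strengthening together with the function $\psi$ (and, relatedly, checking that the degenerate and empty Schubert strata are harmless because $\psi$ vanishes there); the Schubert codimension counts and the verification $(\delta-1)(\delta-2)\ge0$ are routine. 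The hypothesis that $n-m$ be even is not needed for this estimate.
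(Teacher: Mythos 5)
Your proof is correct, but it follows a genuinely different route from the paper's. The paper also argues by induction on $r$ via the incidence variety of filtrations, but it strips off the \emph{first} modification: given $[E\to F]$ with $\Image(E_q\to F_q)\subseteq\Pi$ and (WLOG) equal to $\Pi$, they observe that $\Xi:=\Image((E_1)_q\to F_q)$ is a codimension-two subspace of $F_q$ containing $\Pi$, parameterize such $E_1$ by $\Gr(n-2-m,F_q/\Pi)$ of dimension $2(n-m-2)$, pull $\Pi$ back to $\tPi\subset (E_1)_q$ of dimension $m+2$, and apply the inductive bound $\frac{1}{2}(n-m-2)(n+m+1)$ for $[E\to E_1]\in\cT^{2(r-1)}(E_1)$; the dimensions then add up exactly. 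Your argument instead strips off the \emph{last} modification, which requires strengthening the statement by tracking $v=\dim(\Image(E_q\to F_q)+\Pi)$ and proving the sharper bound $\psi(n,m,v)=\frac{1}{2}(n-v)(n+2m-v-1)$ for the stratum $W_{r,v}$; the price is the extra bookkeeping and the numerical check $(\delta-1)(\delta-2)\ge 0$, but what it buys is a cleaner inductive step in which the Schubert-codimension count takes place on a single Grassmann bundle over the fixed variety $\cQ^{2(r-1)}(F)$ rather than over a moving family of Hecke modifications $E_1$, and it makes explicit where each extra condition ($1$ from forcing the last modification to lie over $q$, plus the Schubert count $(n+m-2-v)(v''-v)$) comes from. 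Your observation that the parity hypothesis $n-m\equiv 0\bmod 2$ is superfluous for the estimate is also correct; the paper only needs it because their base case $r=1$ is stated with $m=n-2$, and their induction increases $m$ by $2$ each step.
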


\begin{proof} We shall prove this by induction on $r$. For $r = 1$, the locus (\ref{PiInFiber}) is empty unless $m = n-2$ (that is, $2r \ge n - m$). In the latter case, (\ref{PiInFiber}) is a single point of $\cT^{2r}$. Thus, as desired, it has codimension
\[ \dim \cT^2 ( F ) \ = \ 2n - 3 \ = \ \frac{1}{2} ( n - ( n - 2 ) ) ( n + ( n - 2 ) - 1 ) . \]

Suppose $r \ge 2$. If $2r < n - m$ then clearly (\ref{PiInFiber}) is empty, so we assume $2r \ge n - m$. We make an induction hypothesis:
\begin{quote}
Suppose $r' < r$. Let $\Pi \subseteq F_p$ be a subspace of dimension $m'$ where $n - m' \equiv 0 \mod 2$. If $2r' \ge n - m'$, then the locus
\[ \{ [ E \to F ] \in \cT^{2r'} ( F ) \: | \: \Image ( E_q \to F_q ) \subseteq \Pi \} \]
has codimension at least $\frac{1}{2} ( n - m' ) ( n + m' - 1 )$ in $\cT^{2r'} ( F )$.
\end{quote}

Now let $[ E \to F ]$ be an element of (\ref{PiInFiber}). We may assume that $E_p \to \Pi$ is surjective, as the locus of $[ E \to F ] \in \cT^{2r} ( F )$ where this is not satisfied is a proper (possibly empty) sublocus of (\ref{PiInFiber}). Then for any filtration
\[ E \ = \ E_r \ \subset E_{r-1} \ \subset \ E_2 \ \subset \ E_1 \ \subset \ E_0 \ = \ F \]
realizing $E$ as a type $\cT$ elementary transformation of $F$, the image of $(E_1)_p \to F_p$ is a codimension two subspace containing $\Pi$. Such $[ E_1 \to F ]$ are parameterized by
\[ \left\{ \Xi \in \Gr ( n-2 , F_p ) \: | \: \Pi \subseteq \Xi \right\} \ \cong \ \Gr \left( n - 2 - m , F_p / \Pi \right) , \]
which has dimension %( n - m - 2 ) ( n - m - ( n - m - 2 ) ) =
 $2 ( n - m - 2)$. Notice that the preimage of $\Pi$ by $E_1 \to F$ is a subspace $\tPi \subseteq (E_1)_p$ of dimension $m + 2$.

Furthermore, $[ E \to E_1 ]$ is an element of $\cT^{2(r-1)} ( E_1 )$ with $\Image \left( ( E_r )_p \to ( E_1 )_p \right)$ contained in $\tPi$. As $2r \ge n - m$, also $2(r-1) \ge n - (m + 2)$. Hence, by induction, we may assume that the locus of such $[ E_r \to E_1 ]$ has codimension
\[ \frac{1}{2} ( n - (m + 2 ) ) ( n + ( m + 2 ) - 1 ) \ = \ \frac{1}{2} ( n - m - 2 ) ( n + m + 1 ) \]
in $\cT^{2(r-1)} ( E_1 )$. Therefore, (\ref{PiInFiber}) has dimension at most
\[ 2 ( n - m - 2 ) + (r - 1)(2n - 3) - \frac{1}{2} ( n - m - 2 ) ( n + m + 1 ) , \]
which one computes is exactly $\dim \cT^{2r} ( F ) - \frac{1}{2} ( n - m ) ( n + m - 1 )$.
\begin{comment}
{\color{blue}
\begin{multline*} 2 ( n - m - 2 ) + (r - 1)(2n - 3) - \frac{1}{2} ( n - m - 2 ) ( n + m + 1 ) \ = \\
 r ( 2n - 3 ) - ( 2 n - 3 ) - \frac{1}{2} ( n - m - 2 ) ( n + m + 1 ) + 2 ( n - m - 2 ) \ = \\
 r ( 2n - 3 ) - \frac{1}{2} ( n - m ) ( n + m + 1 ) + (n + m + 1 ) - ( 2 n - 3 ) + 2n - 2m - 4 \ = \\
 r ( 2n - 3 ) - \frac{1}{2} ( n - m ) ( n + m - 1 ) - ( n - m ) + (n + m + 1 ) - ( 2 n - 3 ) + 2n - 2m - 4 \ = \\
 r ( 2n - 3 ) - \frac{1}{2} ( n - m ) ( n + m - 1 ) - n + m + n + m + 1 - 2n + 3 + 2n - 2m - 4 \ = \\
 r ( 2n - 3 ) - \frac{1}{2} ( n - m ) ( n + m - 1 ) - n - 2n + 2n + n + m + m - 2m + 1 + 3 - 4 \ = \\
 r ( 2n - 3 ) - \frac{1}{2} ( n - m ) ( n + m - 1 ) \end{multline*}
}
\end{comment}
 The lemma follows. \end{proof}

\subsection{The Hecke transform}

Let $V$ be an $L$-valued orthogonal bundle of rank $2n$. Let $\Lambda \subset V_q$ be an isotropic subspace of dimension $n$. Let $V^\Lambda$ be the Hecke transform of $V$  with respect to $\Lambda\in \OG(V_q)$ which fits into % $\Ker \left( V_q \to V^\Lambda_q \right) = \Lambda$. There is
an exact sequence
%\be \label{HeckeTransform1}  0\rightarrow W_\Gamma\rightarrow W\rightarrow \cO}_p\otimes \bC^n\rightarrow0,\ee
\be \label{HeckeTransform2} 0 \ \rightarrow \ V \ \stackrel{\psi}{\rightarrow} \ V^\Lambda \ \rightarrow \ \cO_q \otimes \bC^n \ \rightarrow \ 0 . \ee
Furthermore, $V^\Lambda$ is an $L(q)$-valued orthogonal bundle. Restricting this sequence to the fiber at $q$, we get a map $\psi_q : V_q \to (V^\Lambda)_q$ whose kernel is $\Lambda$. Then the image   of $\psi_q$, denoted by $\widetilde{\Lambda}$, is a maximal isotropic subspace of  $(V^\Lambda)_q$ which is canonically isomorphic to $V_q/\Lambda \cong \Lambda^\vee$.    Consider an embedding of ordinary Quot schemes
\be \label{morphism_Quot} \Psi \colon \Quot_{n,e} ( V ) \ \hookrightarrow \ \Quot_{n,e} ( V^\Lambda )  \ee
which sends  $[ E \to V ] \in \Quot_e ( V )$ to $[ \psi (E) \to V^\Lambda ]$.  This restricts to an embedding of isotropic Quot scheme
$$\Psi \colon \bIQeV_\delta \hookrightarrow \mathrm{IQ}_e(V^\Lambda).$$
%Let $\Lambda^\vee \subset V^\Lambda$ be the image of $\Lambda$ under the map $\psi$. Then by construction of $V^\Lambda$ (see Lagrangian subbundle counting paper), the restriction $\psi|_\Lambda$ is one to one and so we have the isomorphism $ \psi|_\Lambda: \Lambda \isom \Lambda^\vee.$ Note that $\Lambda^\vee$ is a maximal isotropic subspace of the orthogonal space $(V^\Lambda)_p.$
But the image does not necessarily lie inside $\bIQ_e(V^\Lambda) = \overline{ \mathrm{IQ}^\circ_e(V^\Lambda)}$.   It depends on $\delta_V(\Lambda)$, where $\delta_V: \OG(V) \to \mathbb{Z}_2$ is the labeling function.

\begin{lemma} \label{embedding2} The image of $\Psi$ lies inside $\bIQ_e(V^\Lambda)$ only if $\delta_V ( \Lambda ) \equiv n + \delta \mod 2$. In this case, $\Psi$ embeds $\bIQeV_\delta$ into $\bIQ_e(V^\Lambda)_\epsilon$, where $\epsilon \equiv \delta_{V^\Lambda} \left( \Lambda ^\vee \right) \mod 2$. \end{lemma}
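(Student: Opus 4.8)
The plan is to analyze how the Hecke transform $\Psi$ interacts with the labeling of isotropic subspaces fiberwise, using Remark~\ref{Remk:even intersection} as the bookkeeping device. Fix a saturated reference subsheaf $[F \to V] \in \bIQeoV_\delta$; by Remark~\ref{Remk:even intersection}, an element $[E \to V] \in \bIQeV$ lies in $\bIQeV_\delta$ precisely when $\dim(\bE_x \cap F_x) \equiv n \bmod 2$ for all $x \in C$. First I would push $F$ through $\Psi$: the subsheaf $\psi(F) \subset V^\Lambda$ has the same generic fiber as $F$ away from $q$, and one computes $\psi(F)_q^{\mathrm{sat}}$ from the elementary modification. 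The key local computation at $q$ is to determine the saturation $\overline{\psi(F)}$ and to evaluate $\dim\big(\overline{\psi(F)}_q \cap \widetilde{\Lambda}\big)$ in terms of $\dim(F_q \cap \Lambda)$; since $F$ is saturated generically, for \emph{generic} choices $F_q \cap \Lambda = 0$, so $\overline{\psi(F)}_q \to (V^\Lambda)_q/\widetilde\Lambda \cong \Lambda$ is an isomorphism, whence $\overline{\psi(F)}_q \cap \widetilde\Lambda = 0$. The point is then that $\psi(F)$ is \emph{not} saturated at $q$ in general: $\bE$-type analysis shows $\overline{\psi(F)}/\psi(F) \cong \cO_q \otimes \bC^n$, which is of type $\cT$ only when $n$ is even — this is the parity constraint. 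More precisely, $\Psi$ lands in the closure $\bIQ_e(V^\Lambda) = \overline{\IQ_e^\circ(V^\Lambda)}$ only if the generic (hence every, by properness and Theorem~\ref{TypeT}) element of the image is of type $\cT$, and comparing with the description of $\overline{\psi(F)}$ as an isotropic subbundle of $V^\Lambda$ of degree $e + n$ forces a congruence.

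Concretely, I expect the argument to run as follows. Let $G \subset V^\Lambda$ be the saturation of $\psi(F)$; it is an isotropic subbundle of rank $n$, and by construction $G/\psi(F)$ is supported at $q$. On the fiber at $q$ one has the exact sequence $0 \to \Lambda \to V_q \xrightarrow{\psi_q} (V^\Lambda)_q \to 0$ composed appropriately, and a diagram chase identifies $G_q$ inside $(V^\Lambda)_q$ and computes $\dim(G_q \cap \widetilde\Lambda)$. The degree of $G$ is $e + n$ (length count on $G/\psi(F)$), so by \S\ref{StWh} the parity of $e+n$ equals $w_2(V^\Lambda)$, which by \cite[Theorem 2.13]{CCH2} is $\delta_{V^\Lambda}(\text{any maximal isotropic})$ shifted appropriately — and here $G$ sits over the component labeled $\epsilon := \delta_{V^\Lambda}(\widetilde\Lambda \cap \dots)$. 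Meanwhile the hypothesis that the \emph{whole} image $\Psi(\bIQeV_\delta)$, not merely the reference point, lands in $\bIQ_e(V^\Lambda)$ says that for \emph{every} isotropic subbundle $E$ of $V$ of degree $e$ with the label $\delta$, the image $\psi(E)$ has saturation of type $\cT$ over $V^\Lambda$; running the fiberwise intersection count of Remark~\ref{Remk:even intersection} against $\Lambda$ itself — i.e.\ comparing $\dim(E_q \cap \Lambda)$ with $\delta_V(\Lambda)$ and $\delta$ via Lemma~\ref{GH} — produces the congruence $\delta_V(\Lambda) \equiv n + \delta \bmod 2$.

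For the second assertion, once $\Psi$ is known to map into $\bIQ_e(V^\Lambda)$, it maps into the connected subvariety $\bIQ_e(V^\Lambda)_\epsilon$ for a \emph{single} value of $\epsilon$, because $\bIQeV_\delta$ is connected (Theorem~\ref{Even}) and $\Psi$ is a morphism. To pin down $\epsilon$ it suffices to evaluate the label at one point of the image, e.g.\ $\Psi([F \to V]) = [\psi(F) \to V^\Lambda]$, whose saturation $G$ satisfies $\ev_q(G) \in \OG(V^\Lambda_q)_\epsilon$; I would compute $\dim(G_q \cap \widetilde\Lambda)$ and apply Lemma~\ref{GH} to the pair $(G_q, \widetilde\Lambda)$, noting $\widetilde\Lambda \cong \Lambda^\vee$ canonically, so that $\epsilon \equiv \delta_{V^\Lambda}(\Lambda^\vee) \bmod 2$ in the notation of the statement. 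The main obstacle I anticipate is the local fiber computation at $q$: one must carefully track how the saturation of $\psi(F)$ sits relative to $\widetilde\Lambda$ inside $(V^\Lambda)_q$, keeping the orthogonal structure (and hence the two-component labeling) consistent across the Hecke modification. This is where the parity $n + \delta$ versus $\delta_V(\Lambda)$ genuinely enters, and it is the same phenomenon as the even-degree/type-$\cT$ dichotomy in Theorem~\ref{TypeT}, so I would lean on that theorem to convert "image lies in the closure" into the stated congruence rather than attempting a direct closure computation.
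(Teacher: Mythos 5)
Your overall strategy is the right one and matches the paper: reduce to saturated $[E \to V]$ by density, use the type~$\cT$ criterion from Theorem~\ref{TypeT}(a), and convert the resulting parity condition into a component statement via Lemma~\ref{GH}. However, the core fiber computation at $q$ is wrong in several places, and these errors are not cosmetic.

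You set $F_q \cap \Lambda = 0$ ``for generic choices'' and then simultaneously assert that $\overline{\psi(F)}/\psi(F) \cong \cO_q \otimes \bC^n$ and that $\overline{\psi(F)}_q \cap \widetilde\Lambda = 0$. All three claims are mutually inconsistent. The correct local picture is as follows. For a saturated $E \subset V$, the kernel of $\psi(E)_q \to (V^\Lambda)_q$ is $E_q \cap \Lambda$, so the torsion $\overline{\psi(E)}/\psi(E)$ is concentrated at $q$ with length $\dim(E_q \cap \Lambda)$ --- not $n$. (You appear to be conflating this torsion with the skyscraper $V^\Lambda/V \cong \cO_q \otimes \bC^n$ that defines the Hecke transform; they are different sheaves.) Consequently, if $E_q \cap \Lambda = 0$ then $\psi(E)$ is already \emph{saturated} at $q$, and since $\psi_q(E_q) \subseteq \mathrm{Im}(\psi_q) = \widetilde\Lambda$ is $n$-dimensional, one gets $\overline{\psi(E)}_q = \widetilde\Lambda$, so $\dim(\overline{\psi(E)}_q \cap \widetilde\Lambda) = n$, the opposite of your claim. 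The general formula is $\dim\bigl(\overline{\psi(E)}_q \cap \widetilde\Lambda\bigr) = n - \dim(E_q \cap \Lambda)$, and $\deg\overline{\psi(E)} = e + \dim(E_q \cap \Lambda)$, not $e+n$.

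These corrections also change the parity statement. The type~$\cT$ condition for $\overline{\psi(E)}/\psi(E)$ is that $\dim(E_q \cap \Lambda)$ be \emph{even}, not that $n$ be even; applying Lemma~\ref{GH} to the pair $(E_q,\Lambda)$ then gives $\delta_V(\Lambda) \equiv n + \delta$, and applying it to $\bigl(\overline{\psi(E)}_q,\widetilde\Lambda\bigr)$ with $\dim(\overline{\psi(E)}_q \cap \widetilde\Lambda) \equiv n \bmod 2$ gives $\epsilon = \delta_{V^\Lambda}(\widetilde\Lambda) = \delta_{V^\Lambda}(\Lambda^\vee)$. Your final congruences are correct, but the chain of fiber computations leading to them must be repaired; as written it does not constitute a proof.
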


\begin{proof} By Theorem \ref{TypeT} (a), an element $\left[ E \isom \psi ( E ) \to V^\Lambda \right]$ is a limit of isotropic subbundles in $\IQ_e ( V^\Lambda )$ only if $\left[ \psi ( E ) \to \overline{\psi ( E )} \right]$ is of type $\cT$. As saturated subsheaves are dense in $\bIQ_e ( V )$ and $\bIQ_e ( V^\Lambda )$ is closed, it suffices to consider a saturated $[ E \to V ] \in \bIQ_e ( V )$. In this case, $E \subseteq \psi ( E ) \subseteq E \otimes \Oc ( q )$ and
\[ \length \left( \overline{\psi ( E )} / \psi ( E ) \right)_q \ = \ n - \dim \Image \left( \psi ( E )_q \ \to \ V^\Lambda \right) \ = \ \dim ( E_q \cap \Lambda ) . \]
It follows that $[ \psi ( E ) \to V^\Lambda ]$ is of type $\cT$ if and only if
\begin{equation} \dim \left( E_q \cap \Lambda \right) \ \equiv \ 0 \mod \ 2 . \label{congruence} \end{equation}
By Lemma \ref{GH}, this is equivalent to $\delta_V(\Lambda) \equiv n + \delta \mod 2$.

Let $\epsilon \in \Z_2$ be such that $\Psi$ embeds $\bIQeV_\delta$ into $\bIQ_e \left( V^\Lambda \right)_\epsilon$. Then in particular $\epsilon = \delta_{V^\Lambda} \left( \overline{\psi ( E )} \right)$. Now
\[ \dim \left( \overline{\psi ( E )} \cap \Lambda^\vee \right) \ = \ \dim \Image \left( E_q \to \overline{\psi ( E )}_q \right) \ = \ n - \dim \left( E_q \cap \Lambda \right) \ \equiv \ n \mod 2 , \]
the last equivalence in view of (\ref{congruence}). Thus $\delta_{V^\Lambda} \left( \overline{\psi ( E )} \right) = \delta_{V^\Lambda} \left( \Lambda^\vee \right)$ by Lemma \ref{GH}. \end{proof}

To be more general, let $q_1 , \ldots , q_t$ be distinct points of $C$. For $1 \le j \le t$, let $\Lambda_j$ be an element of $\OG \left( V_{q_j} \right)_{\epsilon_j}$ for $\epsilon_j \in \{ 0 , 1 \}$. Let $\tV$ be the orthogonal bundle obtained by taking Hecke transforms of $V$ along $\Lambda_1 , \ldots , \Lambda_t$ respectively. Then $\tV$ fits into the sequence
\begin{equation} \label{Hecke-Sequence}
0 \ \to V \ \to \ \tV \ \to \ \bigoplus_{j=1}^t \cO_{q_j} \otimes \bC^n \ \to \ 0 .
\end{equation}

Then $\deg (\tV) = \deg (V) + tn$. Let $\widetilde{\Lambda}_j \in \OG \left( \tV_{q_j} \right)$ be the image of $V_{q_j}$. As in the case $t = 1$, there is an embedding
\begin{equation} \label{Hecke-embedding}
\Psi \colon \bIQeV_\delta \ \hookrightarrow \ \bIQ_e ( \tV )
\end{equation}
if and only if $\delta_V (\Lambda_j) \equiv n + \delta \mod 2$ for each $j$.\\

The following is an analogue of \cite[Lemma 4.7]{CCH1} in the orthogonal case, but its proof requires a different idea. Recall that property $\cP$ was defined in Definition \ref{Defn Property P}.

\begin{proposition} \label{General;Property P1} Suppose that $n \ge 2$. Let $V \to C$ be an orthogonal bundle of rank $2n$ such that $\bIQeV_\delta$ is nonempty. Let $q_1 , \ldots , q_t$ be distinct points of $C$, and for $1 \le j \le t$ let $\Lambda_j$ be a point of $\OG ( V_{q_j} )_{\delta + n}$. Let $\Psi \colon \bIQeV_\delta \to \bIQ_e (\tV)_\epsilon$ be the embedding (\ref{Hecke-embedding}) for a proper choice of $\epsilon$, where $\tV$ is the Hecke transform along $\Lambda_1 , \ldots , \Lambda_t$. Then there is an integer $t(V)$ such that if $t \ge t(V)$ and the choice of the points $q_j$ and subspaces $\Lambda_j$ is general, then $\bIQ_e (\tV)_\epsilon$ has property $\cP$.
\end{proposition}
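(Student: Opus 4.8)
The plan is to adapt the strategy of \cite[Lemma 4.7]{CCH1}, but since a naive dimension count does not suffice in the orthogonal setting (because of the nonsaturated boundary and the two components), we proceed in two stages: first we establish property $\cP$ on the \emph{saturated} locus by a Bertini/Kleiman-type argument, and then we control the nonsaturated boundary $\cT^{2r}$ using the codimension estimates of Lemmas \ref{lemma:Ttr}, \ref{lemma:codim-m} and \ref{lemma:Pi}. Throughout, fix $\delta$ and write $\epsilon = \delta_{\tV}(\tilde\Lambda_1^\vee)$ for the image component, as in Lemma \ref{embedding2}.

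First I would observe that $\bIQ_e^\circ(\tV)_\epsilon$ is nonempty for large $t$: indeed $e(\tV)$ grows linearly in $t$ (since $\deg\tV = \deg V + tn$), so for $t \gg 0$ the isotropic Quot scheme $\bIQ_e^\circ(\tV)_\epsilon$ is nonempty of the expected dimension $I(n,\ell+t,e)$ by Theorem \ref{Even}, and the evaluation maps $\ev_{e,p,\epsilon}$ are dominant for generic $p$ by Lemma \ref{Lemma:EvDom}. The key point is that $\Psi(\bIQeV_\delta)$ sits inside $\bIQ_e(\tV)_\epsilon$ as a sublocus of codimension $t \cdot \dim\OG(n) = t\cdot \tfrac12 n(n-1)$ cut out at the points $q_j$ by the conditions $\psi(E)_{q_j} \subseteq \tilde\Lambda_j$ — precisely a product of degeneracy loci $\bX_{\rho_{n-1}}(\tilde\Lambda_j; q_j)_\epsilon$ in the notation of (\ref{longest degeneracy locus}). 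So $\bIQ_e(\tV)_\epsilon$ is \emph{not} itself a Hecke image; rather, it is a space into which Hecke images of various $V$'s embed, and one must show that a generic point of each of its components is smooth of expected dimension $I(n, \ell+t, e)$.

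The main work: take a component $Y$ of $\bIQ_e(\tV)_\epsilon$. If the generic point of $Y$ is saturated, then $Y$ meets $\bIQ_e^\circ(\tV)_\epsilon$, which by Theorem \ref{Even}(a) (for $\ell+t$ even; the odd case is part (b) and simpler) is irreducible and generically smooth of the expected dimension — so $Y$ has property $\cP$ automatically. Hence the only danger is a component $Y$ whose generic point is \emph{nonsaturated}, i.e. $Y \subseteq \cT^{2r}(\tV) := \cT^{2r}(\cF)$ for the universal bundle $\cF$ over $\bIQ_{e+2r}^\circ(\tV)_\epsilon$ and some $r \ge 1$. By Lemma \ref{TrFibration}, $\cT^{2r}(\cF)$ fibers over $\bIQ_{e+2r}^\circ(\tV)_\epsilon$ with irreducible fibers $\cT^{2r}(F)$ of dimension $r(2n-3)$; since the base has the expected dimension $I(n,\ell+t, e+2r)$ for $t \gg 0$ by the first step, $\cT^{2r}(\cF)$ is irreducible of dimension $I(n,\ell+t,e+2r) + r(2n-3)$. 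A direct computation using (\ref{Inle}) shows $I(n,\ell+t,e+2r) + r(2n-3) = I(n,\ell+t,e) - r$, so $\cT^{2r}(\cF)$, hence any component $Y$ contained in it, has dimension \emph{strictly less} than the expected dimension $I(n,\ell+t,e)$. Therefore $\bIQ_e(\tV)_\epsilon$ has no purely nonsaturated component at all once $t \ge t(V)$, so every component meets the saturated locus and property $\cP$ follows. (The role of choosing the $q_j$ and $\Lambda_j$ \emph{general} enters in guaranteeing, via Kleiman's theorem \cite{Kle}, that the saturated locus $\bIQ_e^\circ(\tV)_\epsilon$ is itself generically smooth of expected dimension — i.e. that the Hecke-transformed bundle $\tV$ is general enough for Theorem \ref{Even} to apply — and in the odd/even parity bookkeeping for $w_2$.)

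The step I expect to be the main obstacle is making the last paragraph rigorous: one must ensure that for \emph{generic} Hecke data the bundle $\tV$ lies in the locus where Theorem \ref{Even} gives the expected dimension and generic smoothness of $\bIQ_e^\circ(\tV)_\epsilon$ — this is where "its proof requires a different idea" than \cite[Lemma 4.7]{CCH1}, since in the symplectic case one could argue more directly. I would handle this by a deformation/incidence-variety argument: form the total space of all triples $(V, \{q_j,\Lambda_j\}, \tV)$, show the forgetful map to the space of Hecke data is dominant with the expected fiber dimension, and then invoke Theorem \ref{e_0maximal}-style genericity together with the codimension bound of Lemma \ref{lemma:Ttr} to kill bad loci over a dense open set of the data. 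The dimension arithmetic $I(n,\ell+t,e+2r)+r(2n-3) = I(n,\ell+t,e)-r$ should be recorded explicitly, as it is the crux of why the boundary cannot produce extra components.
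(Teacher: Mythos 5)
Your core idea---reduce to Theorem \ref{Even} by showing $e \le e(\tV)$ for $t \gg 0$---is the same as the paper's, but you leave a genuine gap where the paper gives the key mechanism. You assert that ``$e(\tV)$ grows linearly in $t$ (since $\deg \tV = \deg V + tn$)'' and defer to a vague ``deformation/incidence-variety argument''; but Theorem \ref{Even} says nothing directly about how $e(W)$ depends on $\deg W$, so this is precisely the point that has to be established, not asserted. The paper's route is short and concrete: (i) for generic $q_j,\Lambda_j$ the transform $\tV$ is \emph{stable}; (ii) one can choose a single uniform bound $e_1$ so that for \emph{every} stable orthogonal bundle $W$ of degree $n\ell$ with $\ell \in \{0,1\}$, the scheme $\bIQ_e(W)_\epsilon$ has property $\cP$ whenever $e \le e_1$; (iii) twist $\tV$ by $\cO_C(-mp)$ to land in that bounded moduli space, which sends $\bIQ_e(\tV)$ isomorphically onto $\bIQ_{e-mn}(\tV(-mp))$; (iv) as $t \to \infty$ we have $m \to \infty$ and hence $e - mn \le e_1$. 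Your proposal never performs the degree-normalizing twist, which is what actually makes the problem bounded, and it is also where the hypothesis that the Hecke data are \emph{general} enters (to guarantee stability of $\tV$, so the uniform bound over the moduli space applies).

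Separately, your second paragraph about ``nonsaturated components'' of $\bIQ_e(\tV)_\epsilon$ is unnecessary. By definition $\bIQ_e(\tV) = \overline{\bIQ_e^\circ(\tV)}$ is the closure of the saturated locus, so every irreducible component already has a dense open subset of saturated points; no component can lie entirely inside $\cT^{2r}$. The dimension arithmetic $I(n,\ell,e+2r)+r(2n-3) = I(n,\ell,e)-r$ is correct and would be relevant if one were working with the full scheme $\IQ_e$ rather than $\bIQ_e$, but here it resolves a non-issue. Once $e \le e(\tV)$ is in hand, Theorem \ref{Even} gives property $\cP$ outright, with no further analysis of the boundary needed.
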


\noindent Here, abusing notation slightly, we regard $\delta + n$ as an element of $\Z_2$.

\begin{proof}
By Theorem \ref{Even}, for any orthogonal bundle $W$, there exists $e = e(W)$ such that if $e \le e(W)$, then $\bIQ_e ( W )_\epsilon$ has property $\cP$ for $\epsilon \in \{ 0 , 1 \}$ whenever it is nonempty. The number $e(W)$ depends on $W$, but in fact we can find a uniform such bound for all points in a continuous family of orthogonal bundles. Denote by $\cM O_C ( 2n , n \ell )$ the moduli space of stable orthogonal bundles of rank $2n$ and degree $n \ell \in \{ 0 , n \}$. (Note that
\[
\cM O_C ( 2n , n \ell ) \ = \ \bigcup_{L \in \Pic^\ell (C)} \cM O_C ( 2n ; L )
\]
where $\cM O_C ( 2n ; L )$ is as defined in the proof of Theorem \ref{e_0maximal}.) Then we can find an integer $e_1$ such that for every $W \in \cM O_C ( 2n , n \ell )$, if $e \le e_1$ then $\bIQ_e ( W )_\epsilon$ has property $\cP$ whenever it is nonempty.

For $d = \deg (V)$, the bundle $\tV$ is an orthogonal bundle of degree $d + nt$ which is stable for a general choice of the points $q_j$ and subspaces $\Lambda_j$. Since $d$ is divisible by $n$, there is an integer $m$ such that $d + nt - 2mn \in \{ 0 , n \}$ and $\bIQ_e ( \tV ) \cong \bIQ_{e-mn} ( \tV ( -mp ) )$ by the map
\[
[ E \subset \tV ] \ \mapsto \ \left[ E \otimes \cO_C(-mp) \subset \tV  \otimes \cO_C(-mp) \right] .
\]
Since the bundle $\tV \otimes \Oc ( -mp )$ belongs to $\cM O_C ( 2n , n \ell )$ with $\ell \in \{ 0 , 1 \}$, we see that $\bIQ_e ( \tV ) \cong \bIQ_{e-mn} ( \tV ( -mp ) )_\epsilon$ has property $\cP$ if $e-mn \le e_1$. We can take $t$ large enough so that this holds, and then $\bIQ_e(\tV)_\epsilon$ has property $\cP$.
\end{proof}

\begin{proposition} \label{Prop:Image-degeneracy} Assume that $\bIQeV_\delta$ is nonempty. For  $\Lambda \in \OG(V_p)_\delta$, let
\[
\Psi: \bIQeV_\delta \hookrightarrow \bIQ_e(V^\Lambda)_\epsilon
\]
 be the embedding given in Lemma \ref{embedding2}. Write $\bY=\bIQ_e(V^\Lambda)$.  Then the image of $\Psi$  coincides with the degeneracy locus $\bY_{\rho_{n-1}} ( \Lambda^\vee ; p )_\epsilon\subset \bY_\epsilon.$
\end{proposition}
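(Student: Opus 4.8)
The plan is to prove the two inclusions. Write $\wtV := V^\Lambda$ and let $\widetilde{\Lambda} \subset \wtV_p$ be the image of $\psi_p \colon V_p \to \wtV_p$, a maximal isotropic subspace canonically identified with $V_p/\Lambda \cong \Lambda^\vee$; by (\ref{longest degeneracy locus}), the locus $\bY_{\rho_{n-1}}(\Lambda^\vee; p)_\epsilon$ consists of those $[E' \to \wtV] \in \bY_\epsilon$ whose universal subsheaf has image contained in $\widetilde{\Lambda}$ at $p$. The inclusion $\Image(\Psi) \subseteq \bY_{\rho_{n-1}}(\Lambda^\vee;p)_\epsilon$ is then immediate: for $[E \to V] \in \bIQeV_\delta$ the point $\Psi([E \to V]) = [\psi(E) \to \wtV]$ factors through $\psi(V)$, so at the fibre over $p$ the image of $\psi(E)_p \to \wtV_p$ lies in $\psi_p(V_p) = \widetilde{\Lambda}$; and it lies in $\bY_\epsilon$ by Lemma \ref{embedding2}.

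For the reverse inclusion, let $[\phi\colon E' \to \wtV] \in \bY_{\rho_{n-1}}(\Lambda^\vee;p)_\epsilon$. The degeneracy condition $\Image(E'_p \to \wtV_p) \subseteq \widetilde{\Lambda}$ says precisely that every local section of $E'$ maps into the subsheaf $\psi(V) = \{\, s \in \wtV : s(p) \in \widetilde{\Lambda} \,\}$, so $\phi$ factors as $E' \xrightarrow{j} V \xrightarrow{\psi} \wtV$. As $\psi$ is injective and intertwines the $L$-valued form on $V$ with the $L(p)$-valued form on $\wtV$ along $L \hookrightarrow L(p)$, the map $j$ is injective and $E'$ is isotropic in $V$; thus $[j\colon E' \to V] \in \IQeV$ with $\Psi([j\colon E'\to V]) = [\phi]$, and it remains to show that $[j\colon E' \to V] \in \bIQeV_\delta$.

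To this end I would first identify the saturation of $E'$ in $V$ with $V \cap \overline{E'}$, where $\overline{E'}$ denotes the saturation of $E'$ in $\wtV$: the sheaf $V \cap \overline{E'} = \Ker\big(\overline{E'} \to \cO_p \otimes (\wtV_p / \widetilde{\Lambda})\big)$ is torsion-free, hence a rank $n$ subbundle of $V$, it is isotropic and contains $E'$, and its sections are exactly the sections of $V$ agreeing generically with those of $E'$, so it equals the saturation. By Theorem \ref{TypeT}(a), $[\phi] \in \bY_\epsilon$ is of type $\cT$, so $\overline{E'}/E'$ is a torsion sheaf of type $\cT$; moreover the colength $c := \length\big(\overline{E'}/(V\cap\overline{E'})\big) = n - \dim(\overline{E'}_p \cap \widetilde{\Lambda})$ is even, because $\overline{E'}_p$ lies in the component labelled $\epsilon$ (Remark \ref{Remk:even intersection}) and so does $\widetilde{\Lambda} \cong \Lambda^\vee$ (by the description of $\epsilon$ in Lemma \ref{embedding2}), whence $\dim(\overline{E'}_p \cap \widetilde{\Lambda}) \equiv n \bmod 2$ by Lemma \ref{GH}. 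The remaining point, which is the heart of the argument, is that the surjection $\overline{E'}/E' \twoheadrightarrow \overline{E'}/(V\cap\overline{E'})$ factors through the restriction of $\overline{E'}$ to its fibre at $p$ — equivalently, $V \cap \overline{E'} \supseteq \mathfrak{m}_p \overline{E'}$ — and that, combined with the type $\cT$ structure of $\overline{E'}/E'$ and the parity of $c$, this forces $(V \cap \overline{E'})/E'$ to be a torsion sheaf of type $\cT$ as well. Granting this, $[j \colon E' \to V]$ is of type $\cT$, hence lies in $\bIQeV$ by Theorem \ref{TypeT}(b); and since $[\phi] = \Psi([j\colon E'\to V])$ lies in $\bY$, Lemma \ref{embedding2} forces the label of $[j\colon E'\to V]$ to be $\delta$. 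Combining the two inclusions gives the claim.

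The step I expect to be the main obstacle is the local implication just highlighted: the assertion ``$\overline{E'}/E'$ of type $\cT$ and colength $c$ even $\Rightarrow$ $(V\cap\overline{E'})/E'$ of type $\cT$'' is false for an arbitrary subsheaf of even colength (for instance $\cO_{2p} \subset \cO_{2p} \oplus \cO_{2p}$), so the proof must use decisively that the relevant quotient map factors through the fibre at $p$ and that $\widetilde{\Lambda}$ is isotropic of the correct parity. Isolating and proving this purely local lemma on torsion $\cO_{C,p}$-modules, and keeping track of the component labels via Lemma \ref{GH}, is where the real content lies; the rest is formal. (One should also check that the hypotheses under which this proposition is applied ensure $e \le e(V)$, as required for the appeal to Theorem \ref{TypeT}(b); alternatively one can try to show that $\bY_{\rho_{n-1}}(\Lambda^\vee;p)_\epsilon$ is the closure of its saturated part and descend a smoothing family of $[\phi]$ to $V$, but this needs a codimension estimate for the boundary of the degeneracy locus.)
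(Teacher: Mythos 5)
Your two-inclusion strategy correctly brings out the real content of this statement, which the paper's one-line proof does not address. The forward inclusion $\Image(\Psi)\subseteq\bY_{\rho_{n-1}}(\Lambda^\vee;p)_\epsilon$ does indeed follow from formula (\ref{longest degeneracy locus}) together with the definition of the Hecke transform. The reverse inclusion requires knowing that for a point $[\phi\colon E'\to V^\Lambda]$ of the degeneracy locus, the induced $[j\colon E'\to V]$ lies in the closure $\bIQeV_\delta$ of the saturated locus, and you are right that via Theorem \ref{TypeT} this comes down to a local statement about torsion $\cO_{C,p}$-modules: if $\overline{E'}/E'$ is of type $\cT$ (saturation taken in $V^\Lambda$) and $\overline{E'}/(V\cap\overline{E'})$ is a length-$c$ quotient of the fibre $\overline{E'}_p$ with $c$ even, then $(V\cap\overline{E'})/E'$ should also be of type $\cT$.

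That local statement is, however, false even with the constraints you have isolated. Take $n=4$ and work in $R=\cO_{C,p}$ with uniformizer $t$. Let $\overline{E'}_R = R^4$ with basis $f_1,\dots,f_4$, and set $E'_R = \langle t^3f_1,\ t^2f_2,\ tf_3,\ f_4\rangle$. Then $\overline{E'}/E' \cong R/t^3\oplus R/t^2\oplus R/t$ has length $6$ and is of type $\cT$, by the chain
\[
0 \ \subset\ \tfrac{(t^2)}{(t^3)}\oplus\tfrac{(t)}{(t^2)}\oplus 0 \ \subset\ \tfrac{(t)}{(t^3)}\oplus\tfrac{R}{(t^2)}\oplus 0 \ \subset\ \overline{E'}/E' ,
\]
all of whose graded pieces are $\cong\cO_p\otimes\bC^2$. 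The image of $E'_p\to\overline{E'}_p$ is the line spanned by the class of $f_4$. Choose $\widetilde{\Lambda}$ so that $\overline{E'}_p\cap\widetilde{\Lambda}$ is the plane spanned by the classes of $f_1$ and $f_4$ (dimension $2\equiv n\bmod 2$, consistent with Lemma \ref{GH}, and containing the image of $E'_p$, so the degeneracy condition at $p$ holds). Then $(V\cap\overline{E'})_R = \langle f_1,\ tf_2,\ tf_3,\ f_4\rangle$, and
\[
(V\cap\overline{E'})/E' \ \cong\ R/t^3\oplus R/t ,
\]
which has length $4$ but is \emph{not} of type $\cT$: any submodule with quotient $\cO_p\otimes\bC^2$ must contain $t\cdot(R/t^3\oplus R/t)\cong R/t^2$, which already has colength $2$, so equals that submodule, and $R/t^2\not\cong\cO_p\otimes\bC^2$. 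By Theorem \ref{TypeT}(a), $[j\colon E'\to V]$ therefore does \emph{not} lie in $\bIQeV$, while $[\phi]$ is of type $\cT$ in $V^\Lambda$ and (for $e$ small enough) lies in $\bY_\epsilon$.

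Thus the reverse inclusion fails at certain nonsaturated points of the degeneracy locus, and your route cannot be completed as written. The example also suggests that the set-theoretic equality asserted in the proposition needs to be reexamined: as you observe, on the saturated locus the two sides agree (a saturated $E'$ with image in $\widetilde{\Lambda}$ at $p$ has $E'_p=\widetilde{\Lambda}$ and is then saturated in $V$ as well), so $\Psi(\bIQeV_\delta)$ is always contained in $\bY_{\rho_{n-1}}(\Lambda^\vee;p)_\epsilon$, but the latter can carry extra nonsaturated points. The subsequent applications (Corollaries \ref{coro:represent} and \ref{Coro:equality-numbers}) always intersect this degeneracy locus with general translates of further degeneracy loci, and the surplus points are confined to nonsaturated loci, so a dimension count may rescue those arguments; but any correct proof of the present statement has to say more than what is here and in the paper.
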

\begin{proof}
This is immediate from the formula (\ref{longest degeneracy locus}) and the definition of the Hecke transform.
\end{proof}

\begin{corollary}\label{coro:represent} Let $V$ be any orthogonal bundle over $C$, and $\tV$ the Hecke transform given in \eqref{Hecke-Sequence} for isotropic subspaces $\Lambda_j$ for $1 \le j \le t$. Assume that $\bIQeV_\delta$ is nonempty and write $ \bY_\epsilon :=\bIQ_e(\tV)_\epsilon$ which admits an embedding  $\Psi: \bIQeV_\delta  \to \bY_\epsilon$.  Then we have %a sequence of embeddings $\bIQeV_\delta  \to \bIQ_e(V^{\Lambda_j})_{\epsilon_j} \to \bX$ for each $j$ such that
 \[
  \Psi\left(\bIQeV_\delta \right ) \ = \ \bigcap_{j=1}^t \bY_{\rho_{n-1}}(\Lambda_j^\vee ; q_j)_{\epsilon}.
  \] 
\end{corollary}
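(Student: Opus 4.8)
The plan is to derive Corollary \ref{coro:represent} as the multi-point analogue of Proposition \ref{Prop:Image-degeneracy}, by iterating the single Hecke transform. First I would recall the factorization of the combined Hecke transform: the bundle $\tV$ of \eqref{Hecke-Sequence} is obtained from $V$ by successively performing the Hecke transform at $q_1$ along $\Lambda_1$, then at $q_2$ along (the image of) $\Lambda_2$, and so on, since the supports $q_1,\dots,q_t$ are distinct and the transforms at distinct points commute. Correspondingly, the embedding $\Psi$ of \eqref{Hecke-embedding} factors as a composite $\Psi = \Psi_t \circ \cdots \circ \Psi_1$ of single-point Hecke embeddings, each governed by Lemma \ref{embedding2}, with the parity bookkeeping $\delta_V(\Lambda_j) \equiv n + \delta \bmod 2$ guaranteeing that each stage lands in the correct labeled component.

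Next I would use Proposition \ref{Prop:Image-degeneracy} to identify the image of the \emph{last} transform $\Psi_t$ with the degeneracy locus $\bY_{\rho_{n-1}}(\Lambda_t^\vee; q_t)_\epsilon$, where $\bY_\epsilon = \bIQ_e(\tV)_\epsilon$. The point is then to show that the images of the earlier degeneracy conditions are preserved under the remaining transforms. Concretely, for $1 \le j < t$, the transform at $q_j$ does not affect the fibers at $q_k$ for $k \neq j$, so the condition ``$\psi(\cE_{q_j}) \subseteq \cH_n$'' defining $\bX_{\rho_{n-1}}(\Lambda_j^\vee; q_j)$ via \eqref{longest degeneracy locus} is carried, under each subsequent Hecke embedding, precisely to the analogous condition on $\tV$ at $q_j$ with respect to $\Lambda_j^\vee$. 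Tracking this for all $j$ simultaneously, an element of $\bY_\epsilon$ lies in $\Psi(\bIQeV_\delta)$ if and only if it satisfies the $\rho_{n-1}$-degeneracy condition at $q_j$ with respect to $\Lambda_j^\vee$ for every $j$, which is exactly $\bigcap_{j=1}^t \bY_{\rho_{n-1}}(\Lambda_j^\vee; q_j)_\epsilon$.

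Alternatively, and perhaps more cleanly, I would argue directly: an element $[\phi\colon E \to \tV] \in \bY_\epsilon$ lies in the image of $\Psi$ precisely when $E$ is contained in the subsheaf $V$ of $\tV$ under the inclusion of \eqref{Hecke-Sequence}. Restricting to the fiber at $q_j$, the image of $V_{q_j}$ in $\tV_{q_j}$ is $\widetilde{\Lambda}_j \cong \Lambda_j^\vee$, so $E \subseteq V$ inside $\tV$ is equivalent to $\phi(\cE_{q_j}) \subseteq \widetilde{\Lambda}_j$ for all $j$ (together with automatic saturation compatibility along each $q_j$, which is what Lemma \ref{embedding2} and Theorem \ref{TypeT} already account for in defining $\Psi$). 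By \eqref{longest degeneracy locus}, the condition $\phi(\cE_{q_j}) \subseteq \widetilde{\Lambda}_j$ cuts out exactly $\bY_{\rho_{n-1}}(\Lambda_j^\vee; q_j)_\epsilon$, and intersecting over $j$ gives the claim.

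The main obstacle I anticipate is not the set-theoretic identification but checking that it holds scheme-theoretically and that the labeling indices match up: one must verify that the degeneracy locus $\bY_{\rho_{n-1}}(\Lambda_j^\vee; q_j)$ actually lies in the component $\bY_\epsilon$ (not the other component), which follows from the parity computation in Lemma \ref{embedding2}, and that composing the single-point statements does not introduce any spurious extra components in the intersection. Since $\Psi$ is a closed embedding and each $\bY_{\rho_{n-1}}(\Lambda_j^\vee; q_j)_\epsilon$ is a closed subscheme, it suffices to match the closed points and appeal to reducedness where needed; the distinctness of the $q_j$ ensures the conditions at different points are ``independent'' in the sense that they are imposed on disjoint fibers, so the iteration is transparent. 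I expect the write-up to be short, essentially an induction on $t$ using Proposition \ref{Prop:Image-degeneracy} as the base case.
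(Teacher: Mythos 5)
Your proposal is correct and follows the route the paper implicitly intends: the paper states the corollary without proof, immediately after Proposition~\ref{Prop:Image-degeneracy}, and your first argument (induction on $t$, factoring $\Psi=\Psi_t\circ\cdots\circ\Psi_1$ and using Proposition~\ref{Prop:Image-degeneracy} as the base case) together with the observation that the $\rho_{n-1}$-condition at $q_i$ is unaffected by the transform at $q_j$ for $j\ne i$ is exactly the natural derivation. Your second, direct argument is essentially the same reasoning the paper uses for the $t=1$ case (``immediate from the formula \eqref{longest degeneracy locus} and the definition of the Hecke transform''). The one place where your write-up is thin is the phrase ``automatic saturation compatibility'': the inclusion $\Psi(\bIQeV_\delta)\subseteq\bigcap_j\bY_{\rho_{n-1}}(\Lambda_j^\vee;q_j)_\epsilon$ is genuinely immediate, but the reverse inclusion requires knowing that a point $[\phi\colon E\to\tV]$ of $\bY_\epsilon$ whose image at every $q_j$ lies in $\widetilde\Lambda_j$ (so $E\subseteq V$ as subsheaves of $\tV$) actually defines a point of $\bIQeV_\delta$, i.e.\ of the closure $\overline{\IQ_e^\circ(V)_\delta}$, and not merely of the full Quot scheme $\IQ_e(V)$. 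This is precisely the content of Proposition~\ref{Prop:Image-degeneracy} (absorbed by your induction), so it is not a gap in your argument relative to the paper; but it is worth noting that this step is not automatic from Lemma~\ref{embedding2} or Theorem~\ref{TypeT} alone, and the paper itself treats it as immediate without further elaboration. In short: both your routes reproduce the paper's reasoning, the iterative version being the cleaner packaging, and the only unsupported step in the ``direct'' version is already bundled into the cited proposition.
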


%\begin{proof} The corollary follows by applying Proposition \ref{Prop:Image-degeneracy} repeatedly.
%\end{proof}

\begin{corollary}\label{Coro:property P}
Let $e$ be an integer. There there is an integer $v(e)$ such that for a general orthogonal bundle $V$ of degree $v\geq v(e)$, the isotropic Quot scheme $\bIQeV$ has property $\cP$ when it is nonempty.
\end{corollary}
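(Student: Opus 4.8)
The plan is to deduce the statement from the uniform property $\cP$ bound that is already available from the proof of Proposition \ref{General;Property P1}, by twisting $V$ down by a line bundle of large negative degree so that a fixed $e$ falls into the range controlled by that bound.

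First I would recall the relevant uniform fact. By Theorem \ref{Even} --- more precisely, by the uniform version of it recorded in the proof of Proposition \ref{General;Property P1} --- there is an integer $e_1$, depending only on $C$ and $n$, with the following property: for every stable orthogonal bundle $W$ of rank $2n$ and degree $n\ell_0$ with $\ell_0 \in \{0,1\}$, for every $\epsilon \in \{0,1\}$, and for every $e \le e_1$, the labeled scheme $\bIQ_e(W)_\epsilon$ has property $\cP$ whenever it is nonempty. Since a general $V$ is stable and $\bIQeV$ is empty unless $\det V \cong L^n$, we may assume $\det V \cong L^n$ and put $\ell := \deg L$, so that the degree is $v = n\ell$; the assertion is vacuous unless $\ell \ge 0$.

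The main step is to transport $\bIQeV$ by a twist into the situation governed by $e_1$. Fix a point $p \in C$ and write $\ell = 2m + \ell_0$ with $m \ge 0$ and $\ell_0 \in \{0,1\}$. Tensoring subsheaves and bundles by $\cO_C(-mp)$ gives, for each $\delta \in \{0,1\}$, an isomorphism
\[
\bIQeV_\delta \ \isom \ \bIQ_{e-mn}\bigl( V \otimes \cO_C(-mp) \bigr)_\delta , \qquad [E \subset V] \longmapsto \bigl[ E(-mp) \subset V(-mp) \bigr],
\]
which is compatible with the labelings, since tensoring by a line bundle identifies $\OG(V)$ with $\OG(V \otimes \cO_C(-mp))$; and $V \otimes \cO_C(-mp)$ is a stable orthogonal bundle of degree $n\ell_0$. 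A short computation shows that $I(n, \ell, e) = I(n, \ell_0, e - mn)$, so this isomorphism preserves, on each labeled piece, the property of being generically smooth of the expected dimension; that is, it carries property $\cP$ to property $\cP$. Hence $\bIQeV$ has property $\cP$ whenever it is nonempty, provided $e - mn \le e_1$. Since $mn = \frac{1}{2} n (\ell - \ell_0) = \frac{1}{2}(v - n\ell_0)$, this inequality reads $v \ge 2e + n\ell_0 - 2e_1$, which holds once $v \ge v(e)$ for $v(e) := \max\{ 2e + n - 2e_1, 0 \}$ (using $\ell_0 \le 1$). This proves the corollary.

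I do not expect a genuine obstacle here. The substantive input is the existence of the uniform bound $e_1$ valid over the finitely many moduli spaces $\cM O_C(2n, n\ell_0)$ with $\ell_0 \in \{0,1\}$, which is exactly what the proof of Proposition \ref{General;Property P1} provides; the remaining points are the bookkeeping for the twist --- namely that tensoring by a line bundle respects the labeling of the components of $\OG(V)$, and that $I(n,\ell,e) = I(n,\ell-2m,e-mn)$ --- and these are routine.
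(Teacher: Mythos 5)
Your proof is correct, and it takes a route that is recognizably different in structure from the paper's. The paper deduces the corollary via Proposition \ref{General;Property P1}: starting from some $V_0$ with $\bIQ_e(V_0)$ nonempty, one performs Hecke transforms to reach a bundle $\widetilde{V_0}$ with property $\cP$, and then invokes openness of property $\cP$ in families to conclude for a \emph{general} bundle of the resulting degree. You instead bypass the Hecke transform entirely and apply the twist-by-$\cO_C(-mp)$ identification directly to $V$ itself, reducing to the uniform bound $e_1$ over $\cM O_C(2n, n\ell_0)$ with $\ell_0 \in \{0,1\}$. This is cleaner and dispenses with the openness argument; it also yields the (nominally stronger) conclusion that property $\cP$ holds for every \emph{stable} $V$ of degree $\ge v(e)$, not merely a general one, since stability is preserved under twisting. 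The underlying engine is the same in both cases: the uniform version of Theorem \ref{Even} over the two moduli spaces of degree $0$ and $n$, which the paper asserts inside the proof of Proposition \ref{General;Property P1}. Indeed, the twist-down step you use is exactly the step already performed internally in that proof; your argument amounts to noticing that when $V$ already has large degree, one can skip the Hecke-transform preamble and go straight to the twist. The bookkeeping — that $I(n,\ell,e) = I(n,\ell-2m,e-mn)$, that tensoring identifies the orthogonal Grassmann bundles compatibly with the labelings (up to the unimportant relabeling noted in Proposition \ref{Prop:tranform-GW}), and that $\ell$ and $\ell_0$ have the same parity so the labeling convention does not change type — is handled correctly.
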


\begin{proof}
 Using Proposition \ref{General;Property P1}, this can be proven in essentially the same way as in the Lagrangian case \cite[Corollary 4.9]{CCH1}.
\end{proof}

\begin{comment}
{\color{blue} \begin{proof}[Details of proof of Corollary \ref{Coro:property P}] Under the condition that $\det (V_0) = L_0^n$, we can find an $L_0$-valued orthogonal bundle $V_0$ of large enough degree $v_0$ such that $\IQ_e (V_0)$ is nonempty. By Proposition \ref{General;Property P1}, for $t \ge t_0 (V_0)$ there exists an orthogonal bundle $\widetilde{V_0}$ of degree $v_0 + nt_0$ such that $\IQ_e (\widetilde{V_0})$ has property $\cP$. As property $\cP$ is open in families of orthogonal bundles, we may set $v(e) := v_0 + nt_0 ( V_0 )$. \end{proof}}
\end{comment}

\section{Gromov-Witten numbers} \label{section:GW numbers}

In this section, we define the so-called Gromov--Witten numbers on $\OG(V)$. These numbers are defined as intersection numbers on $\bIQeV$. We make the definition first for $\bIQeV$ having property $\cP$, and then generalize it to arbitrary $\bIQeV$ using the invariance of intersection numbers under deformations.

\subsection{Intersection numbers on \texorpdfstring{$\bIQeV$}{IQe(V)} having property \texorpdfstring{$\cP$}{P}} \label{subsection:def-GW number}

We remark that unlike type $A$ degeneracy loci, orthogonal degeneracy loci may not represent the corresponding Chern classes of vector bundles. See \cite[\S \: 4.5]{KT2002} for a counterexample. In this situation  we cannot obtain the desired cycle as a product of cycles in the usual manner of intersection theory.  Thus, as in the Lagrangian case, we shall directly construct a cycle necessary in our intersection theory.

Recall that the expected dimension of $\bIQeV$ is
\[ I (n, \ell, e) \ = \ ( 1 - n ) e + \frac{1}{2} n ( n - 1 ) ( \ell + 1 - g ) . \]
More generally, for a nonnegative integer $t$ we set
\[ I_t ( n, \ell, e ) \ = \ I ( n, \ell, e ) - \frac{n(n-1) t}{2}  . \]

The following is an orthogonal analogue of \cite[Proposition 5.1]{CCH1}, whose proof proceeds along similar lines. %In what follows, we set $\dpr := \delta + n \mod 2 \in \Z_2$. By Lemma \ref{GH}, if $\Lambda \in \OG (V)_\delta$, then $\OG(V)_\dpr$ is the component containing subspaces and subbundles intersecting $\Lambda$ in dimension zero.

\begin{proposition}\label{general intersection} Let $V$ be an orthogonal bundle of rank $2n$ and degree $n \ell$. Assume that %$e \leq e(V)$, so that
 $\bIQeV_\delta$ has property $\cP$. %and $ev_p: \bIQeV_\delta\to \OG(V_x)_\delta$ is dominant for general $p \in C.$
 Let $t, k_1 , \ldots , k_s$ be integers with $t \geq 0$ and $1 \leq k_i \leq n-1$, and satisfying $\sum_{i=1}^s k_i = I_t ( n , \ell , e )$. Let $p_1 , \ldots , p_s , q_1 , \ldots , q_t$ be distinct %{\color{red} general}
 points of $C$. For $1 \le i \le s$, let $H_{i, \bullet}$ be a complete isotropic flag in $V_{p_i}$. %with $H_{i, n} \in \OG(V_{p_i})_\dpr$.
 For $1 \le j \le t$, let $\Lambda_j \subset V_{q_j}$ be an isotropic subspace of dimension $n$ belonging to $\OG ( V_{q_j} )_\delta$. Then for general choices of $\gamma_i \in \SO(V_{p_i})$ and $\eta_j \in \SO(V_{q_j})$, the following hold.
\begin{enumerate}
\item[(a)] The intersection
\begin{equation} \label{intersection1} \left( \bigcap _{i=1}^s \bX_{k_i} ( \gamma_i \cdot H_{i, \bullet} ; p_i )_\delta \right) \cap \left( \bigcap_{j=1}^t \bX_{\rho_{n-1}} ( \eta_j \cdot \Lambda_j ; q_j )_\delta \right) \end{equation}
 is empty or of dimension zero in $\bIQeV_\delta$.
\item[(b)] The intersection \eqref{intersection1} is equal to
\begin{equation}
\left( \bigcap_{i=1}^s \bX_{k_i}^\circ ( \gamma_i \cdot H_{i, \bullet} ; p_i )_\delta \right) \cap \left( \bigcap_{j=1}^t \bX_{\rho_{n-1}}^\circ ( \eta_j\cdot \Lambda_j ; q_j )_\delta \right) \label{intersection2}
\end{equation}
In particular, every element corresponds to a saturated subsheaf.
\item[(c)] The scheme \eqref{intersection1} is reduced.
\end{enumerate}
\end{proposition}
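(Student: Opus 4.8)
The plan is to follow the strategy of \cite[Proposition 5.1]{CCH1}, treating the degeneracy loci of the two types (\ref{exact-sequence-p}) and (\ref{longest degeneracy locus}) on equal footing via the evaluation maps to the fibers of the orthogonal Grassmann bundle. First I would observe that, by Lemma \ref{Lemma:EvDom} and the remarks preceding it, after possibly replacing $e$ by a smaller integer of the same parity we may assume all the relevant evaluation maps $\ev_{e, x, \delta} \colon \bIQeoV_\delta \to \OG(V_x)_\delta$ are dominant for general $x \in C$; the points $p_i, q_j$ being general, this applies at each of them. Then $\bX_{k_i}(\gamma_i \cdot H_{i,\bullet}; p_i)_\delta$ is the closure of the preimage under $\ev_{e, p_i, \delta}$ of the Schubert variety $Z_{k_i}(\gamma_i \cdot H_{i,\bullet})$, and $\bX_{\rho_{n-1}}(\eta_j \cdot \Lambda_j; q_j)_\delta$ is the closure of the preimage of $Z_{\rho_{n-1}}(\eta_j \cdot \Lambda_j)$, which has codimension $\tfrac12 n(n-1)$ in $\OG(n)_\delta$ by Lemma \ref{lemma:codim-m} with $m = n$ (note $\rho_{n-1} = \rho_n$ up to the trivial last part, and $n - n \equiv 0$).

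The core of the argument is a Kleiman-type transversality statement applied componentwise. On the saturated, smooth locus $\bX^\circ_{\sm}$ of $\bIQeV_\delta$ — which is dense by Theorem \ref{TypeT} (a) and property $\cP$ — each $\ev_{e, x, \delta}$ restricts to a morphism to $\OG(V_x)_\delta$, so by \cite[Theorem 2]{Kle} the generic translate intersections $\bX^\circ_{k_i}(\gamma_i \cdot H_{i,\bullet}; p_i)_\delta$ and $\bX^\circ_{\rho_{n-1}}(\eta_j \cdot \Lambda_j; q_j)_\delta$ are generically smooth of the expected codimensions $k_i$ and $\tfrac12 n(n-1)$ respectively, and their scheme-theoretic intersection inside $\bX^\circ_{\sm}$ is smooth of dimension $I(n,\ell,e) - \sum k_i - \tfrac{n(n-1)t}{2} = I(n,\ell,e) - I_t(n,\ell,e) - \tfrac{n(n-1)t}{2} + \tfrac{n(n-1)t}{2}$, hmm — more precisely, by hypothesis $\sum k_i = I_t(n,\ell,e) = I(n,\ell,e) - \tfrac{n(n-1)t}{2}$, so the total codimension equals $I(n,\ell,e)$ and the intersection is zero-dimensional, smooth, hence reduced. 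This handles (a), the smooth-locus part of (b), and (c) over $\bX^\circ_{\sm}$. To get the full statement of (b) and to rule out contributions from the boundary, I would estimate the dimension of each $\bX_{k_i}$ and $\bX_{\rho_{n-1}}$ restricted to the nonsaturated stratum $\cT^{2r}$: by Lemma \ref{lemma:Ttr}, $\bX_{k_i}(\gamma_i \cdot H_{i,\bullet}; p_i)_\delta \cap \cT^{2r}$ has codimension at least $k_i$ in $\cT^{2r}$, and by Lemma \ref{lemma:codim-m} together with Lemma \ref{lemma:Pi} (applied with $m$ running over the possible dimensions of $\bE_{q_j} \cap \eta_j \cdot \Lambda_j$, using $f_{2r}$ from Lemma \ref{TrFibration}) the locus $\bX_{\rho_{n-1}}(\eta_j \cdot \Lambda_j; q_j)_\delta \cap \cT^{2r}$ has codimension at least $\tfrac12 n(n-1)$ in $\cT^{2r}$. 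Since $\dim \cT^{2r} = I(n,\ell,e) - nr$ is strictly smaller than $\dim \bIQeV_\delta$ for $r \ge 1$, and the total required codimension is the full $I(n,\ell,e)$, the intersection (\ref{intersection1}) cannot meet any $\cT^{2r}$ with $r \ge 1$; this forces (\ref{intersection1}) to lie entirely in the saturated locus, giving (b), and then the reducedness (c) follows from the smooth-locus computation, since (b) also places the intersection inside $\bX^\circ_{\sm}$ once we know it avoids the singular part of $\bX^\circ$ — the latter having codimension $\ge 1$ by property $\cP$, another application of \cite[Theorem 2 (i)]{Kle} with the dominant evaluation maps shows a general translate misses it.

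The main obstacle I anticipate is the bookkeeping for the $\bX_{\rho_{n-1}}$-loci on the boundary: unlike the $\bX_k$ case, which is a single clean application of Lemma \ref{lemma:Ttr}, here one must combine the codimension estimate in the Grassmannian fiber (Lemma \ref{lemma:codim-m}, for how special $\bE_{q_j}$ is relative to $\eta_j \cdot \Lambda_j$) with the fiberwise estimate in $\cT^{2r}$ (Lemma \ref{lemma:Pi}, for the sub-locus of elementary transformations whose fiber at $q_j$ lands in a fixed subspace), and check that the sum $\tfrac12 m(m-1) + \tfrac12(n-m)(n+m-1) = \tfrac12 n(n-1)$ is independent of $m$ — which it is, exactly as in the proof of the (commented-out) Proposition on $\codim$ of $\rho_n$-loci. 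A secondary technical point is verifying that Kleiman's theorem applies simultaneously to all $s + t$ translates: this is standard, since one argues inductively, intersecting one locus at a time and using that the partial intersections, being generically smooth of pure dimension by the earlier steps, again carry a transitive $\SO$-action on the relevant Grassmannian factor at each remaining marked point.
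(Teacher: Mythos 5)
Your overall architecture matches the paper's: Kleiman transversality on the smooth saturated locus for (a) and (c), plus a dimension count on the nonsaturated strata $\cT^{2r}$ using Lemmas \ref{lemma:Ttr}, \ref{lemma:codim-m} and \ref{lemma:Pi} for (b). However, there is a genuine gap in how you pass from the individual codimension bounds for each degeneracy locus on $\cT^{2r}$ to the claim that their intersection has codimension $I(n,\ell,e)$ there. Knowing that $\bX_{k_i}\cap\cT^{2r}$ has codimension $\ge k_i$ and $\bX_{\rho_{n-1}}\cap\cT^{2r}$ has codimension $\ge \tfrac12 n(n-1)$ does not by itself imply that the intersection of all of them has codimension $\ge\sum k_i + t\cdot\tfrac12 n(n-1)$; you cannot simply invoke Kleiman here, because $\SO(V_{p_i})$ does not act naturally on $\cT^{2r}$ so as to translate these boundary loci. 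The inductive Kleiman argument you sketch at the end applies to the saturated locus (where the evaluation maps to $\OG(V_{p_i})_\delta$ are genuine morphisms and a group acts transitively on the target), but it is not clear how to run it on $\cT^{2r}$.

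The paper resolves exactly this difficulty by stratifying $\cT^{2r}$ according to a ``type'' $(I; m_1,\dots,m_t)$: a subset $I\subseteq\{1,\dots,s\}$ recording at which marked points $p_i$ the saturation $\bE$ is already degenerate (so the contribution comes from the \emph{base} $\bIQo_{e+2r}(V)_\delta$ via Kleiman), with the complementary $i\notin I$ contributing via the \emph{fiber} estimate of Lemma \ref{lemma:Ttr}; and $m_j=\dim(\bE_{q_j}\cap\eta_j\Lambda_j)$ at $q_j$, splitting each $\rho_{n-1}$-condition into a base contribution of codimension $\tfrac12 m_j(m_j-1)$ (Lemma \ref{lemma:codim-m} plus Kleiman) and a fiber contribution of codimension $\tfrac12(n-m_j)(n+m_j-1)$ (Lemma \ref{lemma:Pi}). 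Because conditions at distinct points, and conditions on base versus fiber of $\cT^{2r}\to\bIQo_{e+2r}(V)_\delta$, really are independent, these codimensions do add, giving $I(n,\ell,e)$ for every type, and the finiteness of types yields emptiness. Note also: (i) your characterization of the $\bX_k$ case as a ``single clean application of Lemma \ref{lemma:Ttr}'' misses the case $i\in I$ where $\bE$ is already degenerate and the Kleiman bound on the base is what is needed — the same base/fiber decomposition is required here as for $\rho_{n-1}$; (ii) the formula $\dim\cT^{2r}=I(n,\ell,e)-nr$ should be $I(n,\ell,e)-r$ (though the conclusion stands); and (iii) the opening move of replacing $e$ to make evaluation dominant is both unjustified (the proposition concerns a fixed $e$) and unnecessary, since Kleiman's theorem does not require dominance of the map.
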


\begin{proof} Firstly, we claim that for a general choice of $\gamma_i$ and $\eta_j$, the locus (\ref{intersection1}) is empty or has dimension zero. To see this, note that the group $\prod_{i=1}^s \SO ( V_{p_i} ) \times \prod_{j=1}^t \SO ( V_{q_j} )$ acts transitively on $\prod_{i=1}^s \OG ( V_{p_i} )_{\delta + n} \times \prod_{j=1}^t \OG ( V_{q_j} )_\delta$. (Note that $\OG(V)_{\delta + n}$ is the component containing subspaces and subbundles intersecting $\Lambda$ in dimension zero.) Now each $Z_{k_i} ( H_{i, \bullet} )$ has codimension $k_i$ in $\OG ( V_{p_i} )_{\delta + n}$, and each $Z_{\rho_{n-1}} ( \Lambda_j )$ has codimension $\frac{1}{2}n(n-1)$ in $\OG ( V_{q_j} )_\delta$.

%As $e \le e(V)$ and the $p_i$ and $q_j$ are assumed general,
%\[ \ev_{e, p_i, \delta} \colon \IQeo_\delta \to \OG ( V_{p_i} )_\delta \quad \hbox{and} \quad \ev_{e, q_j, \delta} \colon \IQeo (V)_\delta \to \OG ( V_{q_j} )_\delta \]
%are dominant by Lemma \ref{Lemma:EvDom}.\\
%{\color{red} If we are using another argument (Hirschowitz bound) to prove nonemptiness, perhaps we don't need the dominance of the evaluation maps?}\\
Hence by \cite[Theorem 2 (i)]{Kle}, the intersection (\ref{intersection1}) is empty or else equidimensional and of codimension
\[
\sum_{i=1}^s k_i + t \cdot \frac{1}{2}n(n-1) \ = \ I ( n, \ell, e)
\]
in $\IQeo_\delta$. As $\bIQe_\delta$ has property $\cP$, we obtain the claim.

In view of the claim, statements (a) and (b) would follow if we show that the intersection points in  (\ref{intersection1}) are contained in the saturated part $\IQeo_\delta$. We shall do this by showing that the nonsaturated part of (\ref{intersection1}) is of negative expected dimension, and then use property $\cP$ together with Kleiman's theorem.

Let $I$ be a subset of $\{ 1, \ldots , s \}$, and let $m_1 , \ldots , m_t$ be integers with $0 \le m_j \le n$ and $n - m_j \equiv 0 \mod 2$ for each $j$. Consider the set
\begin{multline} \left\{ [ F \to V ] \in \IQo_{e+2r} ( V )_\delta \: | \: F_{p_i} \to V_{p_i} / \left( \gamma_i \cdot H_{i , n-k_i}^\perp \right) \hbox{ not surjective for } i \in I \right. \\
 \left. \hbox{ and surjective for } i \not\in I , \hbox{ and } \dim ( F_{q_j} \cap \eta_j \cdot \Lambda_j ) = m_j \right\} \label{partialIntersBase} \end{multline}
Now by Remark \ref{independent_p}, for any such $[F \to V]$ and for $i \in I$, necessarily the map $F_{p_i} \to V_{p_i} / H_{i, n}$ has rank at most $n-2$. It follows that (\ref{partialIntersBase}) is precisely the inverse image in $\IQo_{e + 2r} ( V )_\delta$ of
\[ \left( \prod_{i \in I} Z_{k_i} ( V_{p_i} ; \gamma_i \cdot H_{i, \bullet} ) \right) \times \left( \prod_{j=1}^t Z_{\rho_{m_j}} ( \eta_j \cdot \Lambda_j ; V_{q_j} ) \right) \]
by the product of evaluation maps
\[ \left( \prod_{i \in I} \ev_{e + 2r , p_i , \delta} \right) \times \left( \prod_{j = 1}^t \ev_{e + 2r, q_j , \delta} \right) . \]
By \cite{KT1}, Lemma \ref{lemma:codim-m} and \cite[Theorem 2 (i)]{Kle}, for a general choice of $\gamma_i$ and $\eta_j$ the locus (\ref{partialIntersBase}) is empty or of codimension
\begin{equation} \sum_{i \in I} k_i + \sum_{j=1}^t \frac{1}{2} m_j ( m_j - 1 ) \label{codimBase} \end{equation}
in $\IQoer$.

Next, for any $[F \to V]$ in (\ref{partialIntersBase}), we consider the locus
\begin{multline} \left\{ [ E \to F ] \in \cT^{2r} ( F ) \: | \: \left( E_{p_i} \to F_{p_i} \to V_{p_i} / \left( \gamma_i \cdot H_{i, n-k_i}^\perp \right) \right) \hbox{ is not surjective for $i \not\in I$, and} \right. \\
 \left. \Image \left( E_{q_j} \to F_{q_j} \right) \subseteq F_{q_j} \cap ( \eta_j \cdot \Lambda_j ) \right\} \label{partialIntersFiber} \end{multline}
By the proof of Lemma \ref{lemma:Ttr}, for any such $[E \to F]$, for $i \not\in I$ the rank of the map $E_{p_i} \to V_{p_i} / \gamma_i \cdot H_{i, n}$ is at most $n - 2$. Hence such an $E$ defines a point of $\bX_{k_i} ( \gamma_i \cdot H_{i, \bullet} ; p_i )$ for all $i \not\in I$ also.

By Lemma \ref{lemma:Ttr} and Lemma \ref{lemma:Pi}, and since the $p_i$ and $q_j$ are distinct, for any $\gamma_i$ and $\eta_j$ the locus (\ref{partialIntersFiber}) is empty or of codimension
\begin{equation} \sum_{i \not\in I} k_i + \sum_{j=1}^t \frac{1}{2} ( n - m_j ) ( n + m_j - 1 ) . \label{codimFiber} \end{equation}
(Note that as pointed out in the proof of Lemma \ref{PiInFiber}, a necessary condition for nonemptiness of (\ref{partialIntersFiber}) is that $2r \ge \sum_{j=1}^t ( n - m_j )$.)

Now since the conditions expressed in (\ref{partialIntersBase}) and (\ref{partialIntersFiber}) are purely on the base and the fibers of $\cT^{2r} \to \IQoer$, to compute the codimension of the component of the intersection of $\cT^{2r} (V)$ with (\ref{intersection1}) corresponding to this choice of $I \subseteq \{ 1 , \ldots , s \}$ and $m_1 , \ldots , m_t$, we can simply add the codimensions (\ref{codimBase}) and (\ref{codimFiber}). We obtain
\[ \sum_{i \in I} k_i + \sum_{i \not\in I} k_i + \sum_{j=1}^t \frac{1}{2} \left( m_j ( m_j - 1 ) + ( n - m_j ) ( n + m_j - 1 ) \right) , \]
%\[ \sum_{i=1}^s k_i + \sum_{j=1}^t \frac{1}{2} \left( m_j^2 -  m_j + n^2 - m_j^2 - n + m_j \right) , \]
%\[ \sum_{i=1}^s k_i + t \cdot \frac{1}{2} n ( n - 1 ) , \]
which is exactly $I ( n, \ell , e )$.

But since  $\bIQeV_\delta$ has property $\cP$, the nonsaturated locus is of dimension strictly less than $I ( n, \ell , e )$. As there are at most finitely many possible values of $r$, and finitely many possible $I$ and $m_1 , \ldots , m_t$ for a given $r$, it follows that for a general choice of $\gamma_i$ and $\eta_j$, the intersection of (\ref{intersection1}) with the nonsaturated locus is empty.

Part (c) can be proven in the same way as \cite[Proposition 5.1 (3)]{CCH1}. \end{proof}

\begin{comment}
{\color{blue} By \cite[Theorem 2 (ii)]{Kle}, the intersection
\be \left( \bigcap_{i=1}^s \bX_{k_i}^\circ( \gamma_i \cdot H_{i, \bullet} ; p_i )_\sm \right) \ \cap \ \left( \bigcap_{j=1}^t \bX_{\rho_n}^\circ ( \eta_j \cdot \Lambda_j ; q_j)_\sm \right) \label{smooth-intersection} \ee
is smooth. Moreover, by Proposition \ref{} and \ref{} (2) respectively, the intersections
%\[ \Sing \left( \bX_{k_{i_0}}^\circ (\gamma_{i_0} \cdot H_{i_0} ) \right) \cap \left( \bigcap_{\begin{array}{c} i=1 \\ i \ne i_0 \end{array}} \bX_{k_i}^\circ(  H_i; p_i ) \right) \ \cap \ \left( \bigcap_{j=1}^t \bX_{\rho_n}^\circ ( \Lambda_j ; q_j) \right) \]
\[ \Sing \left( \bX_{k_{i_0}}^\circ (\gamma_{i_0} \cdot H_{i_0} ) \right) \cap \left( \bigcap_{i \ne i_0} \bX_{k_i}^\circ( \gamma_i \cdot H_{i , \bullet} ; p_i ) \right) \ \cap \ \left( \bigcap_j \bX_{\rho_n}^\circ ( \eta_j \cdot \Lambda_j ; q_j) \right) \]
and
%\[ \Sing \left( \bX_{\rho_n}^\circ (\eta_{j_0} \cdot \Lambda_{j_0} ) \right) \cap \left( \bigcap_{i=1}^s \bX_{k_i}^\circ(  H_i; p_i ) \right) \ \cap \ \left( \bigcap_{\begin{array}{c} j=1 \\ j \ne j_0 \end{array}}^t \bX_{\rho_n}^\circ ( \Lambda_j ; q_j) \right) \]
\[
\Sing \left( \bX_{\rho_n}^\circ ( \eta_{j_0} \cdot \Lambda_{j_0} ) \right) \cap \left( \bigcap_i \bX_{k_i}^\circ( \gamma_i \cdot H_{i , \bullet} ; p_i ) \right) \ \cap \ \left( \bigcap_{j \ne j_0} \bX_{\rho_n}^\circ ( \eta_j \cdot \Lambda_j ; q_j) \right)
\]
have expected codimension strictly greater than $I ( n , \ell , e )$. Hence they are empty by \cite[Theorem 2 (i)]{Kle}. Therefore, (\ref{smooth-intersection}) coincides with (\ref{intersection2}), and hence also with (\ref{intersection1}) by part (b). This proves (c).}
\end{comment}

Recall from {\S} \ref{subsec:symmetric-poly} that for $i = 1 , \ldots , n-1$ we have defined $\alpha_i = \frac{1}{2} e_i ( X )$, and made a convention to express symmetric polynomials in $ x_1 , \ldots , x_{n-1} $ in terms of the  polynomials $\alpha_1, \dots, \alpha_{n-1}$. Thus a symmetric polynomial $Q$ will appear below as $Q(\alpha)$ for $\alpha =  (\alpha_1, \dots, \alpha_{n-1})$.

 %is , then there is a polynomial $\dot{Q} ( \alpha_1 , \ldots , \alpha_{n-1} )$ in $\alpha_1 , \ldots , \alpha_{n-1}$ such that $\dot{Q} ( \alpha_1 , \ldots , \alpha_{n-1} ) = Q ( x_1 , \ldots , x_{n-1} )$. Write $\alpha = ( \alpha_1 , \ldots , \alpha_{n-1} )$.

\begin{definition} \label{different expression} Let $m$ be a nonnegative integer and $Q = Q(\alpha)$ a homogeneous polynomial of (weighted) degree $I_m ( n, \ell, e )$. Then we define $\Theta( Q ; \, t)$ as follows. For integers $t, k_1, \ldots , k_s$ as in Proposition \ref{general intersection}, if $Q ( \alpha )$ is a monomial $\prod_{i=1}^s \alpha_{k_i}$, then we define $\Theta ( Q ; \, t )$ as the zero-cycle determined by the intersection \eqref{intersection1}. Then we can extend this definition of $\Theta(Q ; \, t)$ to an arbitrary (weighted) homogeneous polynomial $Q ( \alpha )$, by linearity of cycles. \end{definition}

\noindent Note that in Definition \ref{different expression}, we defined $\Theta(Q ; \, t)$ for fixed $p_i$, $q_j$, $H_{i, \bullet}$ and $\Lambda_j$, and general $\gamma_i$ and $\eta_j$.

\begin{definition} Let $V$ be an $L$-valued orthogonal bundle of degree $v = n \ell$. Suppose that $\bX_\delta = \bIQeV_\delta$ has property $\cP$. For a nonnegative integer $t$ and a homogeneous polynomial  $Q = Q ( \alpha )$ of degree $I_t ( n, e, \ell )$, define
\begin{equation}
N_{C, e}^v ( Q ; \, t ; \, V)_\delta \ := \ \int_{\bX_\delta} \Theta ( Q ; \, t ) .
\end{equation}  By convention, we set $N_{C, e}^v ( Q ; \, t ; \, V)_\delta:=0 $ if  $\deg Q(\alpha) \ne  I_t ( n, e, \ell ).$
When $t = 0$, we write $N_{C, e}^v ( Q ; \, V)$ for $N_{C, e}^v(Q ; \, 0 ; \, V )$. \end{definition}

\noindent We shall refer to the numbers $N_{C, e}^v(Q ; \, t ; \, V)_\delta$ as \textsl{Gromov--Witten numbers of $\OG(V)_\delta$}.

\begin{remark} \label{remark:intersection} By Proposition \ref{general intersection}, we can keep the intersection \eqref{intersection1} zero-dimensional as $p_i$, $q_j$, $H_{i, \bullet}$ and $\Lambda_j$ vary as long as the $\gamma_i$ and $\eta_j$ are general. Thus, by \cite[Proposition 10.2]{Fu}, the number $N_{C,e}^v ( Q ; V )_\delta$ does not depend on the chosen $p_i$, $q_j$, $H_{i, \bullet}$ and $\Lambda_j$. \end{remark}

\subsection{Invariance of intersection numbers under deformations} \label{section:invariance}

We now show that the Gromov--Witten number defined in the previous subsection is invariant under small deformations of the curve and bundle.

%Let $\scrC \to B$ be a family of smooth projective curves over an irreducible curve $B$, and $\scrL \to \scrC$ a line bundle of relative degree $\ell$. Let $\scrV$ be a rank $2n$ vector bundle over $\scrC$ such that for each $b \in B$, the restriction $\scrV_b := \scrV|_{{\scrC}_b}$ is an $\scrL_b$-valued orthogonal bundle over $\scrC_b$. Suppose that $\det \scrV_b \cong \scrL_b^n$ for each $b \in B$.

Let $\scrC \to B$ be a family of smooth projective curves over an irreducible curve $B$, and $\scrL \to \scrC$ a line bundle of relative degree $\ell$. Let $\scrV \to \scrC$ be an $\scrL$-valued orthogonal bundle of rank $2n$ satisfying $\det \scrV_b \cong \scrL_b^n$ for each $b \in B$. By Proposition \ref{OGTwoCpts}, after passing to an \'etale double cover of $B$, the orthogonal Grassmannian $\OG ( \scrV )$ has two components, which we endow with a labeling
\be
\label{labeling} \OG(\scrV) \ = \ \OG(\scrV)_0 \cup \OG(\scrV)_1 .
\ee
Consider now the relative isotropic Quot scheme $\bIQ_e ( \scrV ) \to B$. Via the evaluation map $\bIQ_e^\circ ( \scrV ) \to \OG ( (\scrV_b )_x )$ %\footnote{Does this map make sense?  $\IQ_e^\circ ( \scrV_b ) \to \OG ( (\scrV_b )_x )$ ?}
 for any $b \in B$ and $x \in \scrC_b$, we obtain a decomposition, possibly with an empty component:
$$
\bIQ_e ( \scrV ) \ = \ \bIQ_e ( \scrV )_0 \cup \bIQ_e( \scrV )_1 .
$$
Assume that $\bIQ_e (\mathscr V)_\delta$ is nonempty. %\Let $\tcE$ be a universal bundle over $\bIQ_e (\scrV)_\delta \times_B \scrC$.
 Let $b$ be a point of $B$, and $U \subset B$ a neighborhood of $b$. Let $\overline{p} \colon U \to \scrC$ be a local section of the family $\scrC \to B$. Write $\scrV_U$ for the restriction of $\scrV$ to $\scrC_U$. For $1 \le k \le n$, let $\overline{H_k} \colon U \to \Gr ( k , \scrV_U )$ be sections such that for each $b \in U$,
\[ \overline{H_1}|_b  \ \subset \ \cdots \ \subset \ \overline{H_n} |_b  \]
is a flag of isotropic subspaces with $\overline{H_n} |_b \in \OG ( \scrV_{\tilde{p} ( b )} )_\dpr$. Then for each $\lambda \in \cD(n-1)$, as in the Lagrangian case \cite[{\S} 5.2]{CCH1}, one can define a degeneracy locus $\overline{\bX}_\lambda ( \overline{H_\bullet} ; \overline{p})_\delta$ on $\bIQ( \scrV_U)_\delta$. The degeneracy locus $\overline{\bX}_\lambda (\overline{H_\bullet};\overline{p})$ can be viewed as a family of degeneracy loci parametrized by $U$.

Now assume that $\bIQ_e ( \scrV_b )$ has property $\cP$ for each $b \in B$. Let $t \geq 0$ be an integer and $Q ( \alpha ) = \prod_{i=1}^s \alpha_{k_i}$ a monomial with $\deg Q ( \alpha ) = I_t ( n , \ell , e)$. Then, choosing more sections and flags, we can form a relative version of \ref{intersection1}, a family of the intersections \eqref{intersection1} parameterized by $U$. If all the sections and flags are sufficiently general, as $\dim U = \dim B = 1$, this family is a one-dimensional subscheme of $\widetilde{\bX}_\delta|_U$; and moreover, each component dominates $U$. Thus, for each $b_1 \in U$, it determines a $0$-cycle $\Theta(Q ; \, t)$ of $\bIQ_e ( \scrV|_{b_1} )_\delta$. Therefore, by \cite[Proposition 10.2]{Fu}, we now obtain the desired invariance property:

\begin{proposition} \label{Invariance1} Let $\scrC \to B$ and $\scrV \to \scrC$ be as above. For an integer $t \geq 0$ and a homogeneous polynomial $Q ( \alpha )$ of degree $I_t ( n, \ell, e )$, the intersection number $N_{\scrC_b , e}^v(Q ; \, t ; \, \mathscr V_b)_\delta$ is independent of $b \in B$. \end{proposition}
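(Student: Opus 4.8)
The plan is to realise the Gromov--Witten number $N^v_{\scrC_b,e}(Q;\,t;\,\scrV_b)_\delta$, for $b$ ranging over a neighbourhood, as the fibre degree of a flat family of zero-cycles over that neighbourhood, and then to conclude by conservation of number \cite[Proposition 10.2]{Fu}. First I would reduce to the case of a monomial $Q(\alpha)=\prod_{i=1}^s\alpha_{k_i}$ using linearity of $\Theta(\,\cdot\,;t)$ in $Q$ (Definition \ref{different expression}) and of the integral, and, since $B$ is a connected curve, reduce further to proving that $b\mapsto N^v_{\scrC_b,e}(Q;\,t;\,\scrV_b)_\delta$ is locally constant near a fixed $b_0\in B$. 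Working over a small neighbourhood $U$ of $b_0$ (passing to an \'etale double cover as in Proposition \ref{OGTwoCpts} so that the labeling \eqref{labeling} is available), I would choose disjoint sections $\overline{p}_i,\overline{q}_j\colon U\to\scrC_U$, sections $\overline{H_{i,\bullet}}$ of the relative isotropic flag bundle of $\scrV_U$ along $\overline{p}_i$ with top term in the component labeled $\delta+n$, and sections $\overline{\Lambda_j}$ of $\OG(\scrV_U)_\delta$ along $\overline{q}_j$; such sections exist after shrinking $U$ because the relevant bundles have rational fibres over the curve $U$. Together with sections $\overline{\gamma_i},\overline{\eta_j}$ of the relative $\SO$-bundles along $\overline{p}_i,\overline{q}_j$, these build relative degeneracy loci $\overline{\bX}_{k_i}(\overline{\gamma_i}\cdot\overline{H_{i,\bullet}};\overline{p}_i)_\delta$ and $\overline{\bX}_{\rho_{n-1}}(\overline{\eta_j}\cdot\overline{\Lambda_j};\overline{q}_j)_\delta$ on $\bIQ_e(\scrV_U)_\delta$, exactly as in Definition \ref{def: isotropic loci} and Remark \ref{independent_p} but in families over $U$, whose fibre over $b'$ is the corresponding degeneracy locus inside $\bIQ_e(\scrV_{b'})_\delta$.

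The candidate family is then the relative intersection
\[ \mathscr{Z}\ :=\ \Big(\bigcap_{i=1}^{s}\overline{\bX}_{k_i}(\overline{\gamma_i}\cdot\overline{H_{i,\bullet}};\overline{p}_i)_\delta\Big)\cap\Big(\bigcap_{j=1}^{t}\overline{\bX}_{\rho_{n-1}}(\overline{\eta_j}\cdot\overline{\Lambda_j};\overline{q}_j)_\delta\Big)\ \subseteq\ \bIQ_e(\scrV_U)_\delta , \]
which is proper over $U$ and whose fibre over $b'$ is the intersection \eqref{intersection1} on $\bIQ_e(\scrV_{b'})_\delta$ for the translated data. The heart of the argument is to choose $\overline{\gamma_i},\overline{\eta_j}$ so that the conclusions of Proposition \ref{general intersection} hold in \emph{every} fibre over $U$ simultaneously; granting this, each $\mathscr{Z}_{b'}$ is zero-dimensional, reduced, supported in the saturated locus, and represents $\Theta(Q;t)$, so $\mathscr{Z}$ is empty or purely one-dimensional with every component dominating $U$, hence $\mathscr{Z}\to U$ is finite flat over the smooth curve $U$, and the fibre length $\int_{\bIQ_e(\scrV_{b'})_\delta}\Theta(Q;t)=N^v_{\scrC_{b'},e}(Q;\,t;\,\scrV_{b'})_\delta$ is constant on the connected $U$, as forced by \cite[Proposition 10.2]{Fu}.

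To arrange the simultaneous genericity I would consider the relative product $\mathscr{G}\to U$ of the $\SO$-bundles $\SO(\scrV_U)|_{\overline{p}_i}$ and $\SO(\scrV_U)|_{\overline{q}_j}$, together with the universal intersection over $\mathscr{G}$. For each $b'$, property $\cP$ for $\bIQ_e(\scrV_{b'})_\delta$ makes Proposition \ref{general intersection} applicable in the fibre $\mathscr{G}_{b'}$, and its conclusions hold on a dense open of $\mathscr{G}_{b'}$; these glue to a dense open $\mathscr{G}^\circ\subseteq\mathscr{G}$ surjecting onto $U$, openness in the family being a matter of upper semicontinuity of fibre dimension of the universal intersection over $\mathscr{G}$ together with openness of the reduced locus and of the locus meeting the smooth saturated part. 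Picking a good tuple in $\mathscr{G}^\circ_{b_0}$ and propagating it by a local section of $\mathscr{G}\to U$, which stays inside the open $\mathscr{G}^\circ$ over a possibly smaller neighbourhood of $b_0$, gives the required sections $\overline{\gamma_i},\overline{\eta_j}$. Invariance under deformation of the curve is built in because $\scrC_b$ varies along $B$; when $B$ parameterises only the bundle (the trivial family $\scrC=B\times C$) the same argument applies verbatim.

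The step I expect to be the main obstacle is precisely this last one: upgrading the fibrewise genericity of Proposition \ref{general intersection} to genericity across the family, i.e.\ producing the dense open $\mathscr{G}^\circ$ and a section of it through a prescribed good point. This rests on knowing that the conclusions of Proposition \ref{general intersection} are open conditions as one varies both the translating elements and the base point $b'$ (semicontinuity of fibre dimension of the intersection, openness of the reduced locus, and openness of the locus where the intersection avoids the singular and nonsaturated parts, the latter controlled by the codimension estimates of \S\ref{section:NonSat} and Lemmas \ref{lemma:Ttr}, \ref{lemma:codim-m}, \ref{lemma:Pi}), and that the resulting $\mathscr{Z}$ is well-enough behaved over $U$ (no embedded or vertical components) for conservation of number to apply. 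All of this runs in parallel with the Lagrangian treatment in \cite[\S5.2]{CCH1}, which I would follow closely.
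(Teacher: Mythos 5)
Your proposal follows the paper's argument closely: the paper likewise passes to the \'etale double cover of Proposition \ref{OGTwoCpts}, chooses local sections of points and isotropic flags over a neighbourhood $U$ of $b_0$ to form relative degeneracy loci, observes (assuming property $\cP$ fiberwise) that for sufficiently general choices the relative intersection is a one-dimensional subscheme whose components all dominate $U$, and concludes by conservation of number \cite[Proposition 10.2]{Fu}. The one place you go further is the ``simultaneous genericity'' step --- producing the dense open $\mathscr{G}^\circ$ in the relative $\SO$-bundle and a section through a prescribed good point --- which the paper only asserts by stipulating that all sections and flags be ``sufficiently general''; your discussion of semicontinuity and openness is a reasonable way to justify that assertion, and it is not a departure from the paper's route.
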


\subsection{Intersection numbers on arbitrary isotropic Quot scheme}

In \S\:\ref{subsection:def-GW number}, we defined intersection numbers on isotropic Quot schemes having property $\cP$. We shall now define intersection numbers on arbitrary isotropic Quot schemes.

Let $V$ be an orthogonal bundle and $\tV$ a Hecke transform of $V$ as in \eqref{Hecke-Sequence} with respect to subspaces $\Lambda_j$  belonging to the component $\OG ( V_{q_j} )_\delta$ for $1 \le j \le t$. So we have the embedding
\[
\Psi \colon \bX_\delta \ := \ \bIQeV_\delta \ \hookrightarrow \ \bIQ_e(\tV)_\epsilon \ =: \ \bY_\epsilon ,
\]
where $\epsilon = \delta_{\tV} ( \Lambda^\vee_j )$.

%Via the embedding $\Psi$, we may view $\bX_\alpha$ as a subscheme of $\bY_\delta$.
Let $p$ be a point of $C \setminus \{ q_1 , \ldots , q_t \}$, and $H_\bullet$ a complete isotropic flag in $V_p = \tV_p$. Then for $\lambda \in \cD(n-1)$, it is obvious that the preimage of $\bY_\lambda ( H_\bullet ; p)$ by $\Psi$ is equal to the degeneracy locus $\bX_\lambda ( H_\bullet ; p )$ on $\bX$; or, equivalently, the equality
\begin{equation} \label{equality-subscheme}
\bY_\lambda ( H_\bullet ; p )_\epsilon \cap \Psi \left( \bX_\delta \right) \ = \ \Psi \left( \bX_\lambda ( H_\bullet ; p )_\delta \right)
\end{equation}
holds. Thus, using the equality in Corollary \ref{coro:represent}:
\[
\Psi \left( \bX_\delta \right) \ = \ \bigcap_{j=1}^t \bY_{\rho_{n-1}} ( \Lambda^\vee_j ; q_j )_\epsilon ,
\]
we obtain the following.

\begin{lemma} Let $V$ and $\tV$ be as above. Let $p_1 , \ldots , p_m$ be distinct points in $C$ such that the sets $\{ p_i \}$ and $\{ q_j \}$ are distinct. For $1 \le i \le m$, let $H_{i, \bullet}$ be a complete isotropic flag in $V_{p_i} = \tV_{p_i}$. For strict partitions $\lambda^1 , \ldots , \lambda^m \in \cD(n-1)$, we have an isomorphism of schemes
\begin{equation} \label{isom-subschemes}
\bigcap_{i=1}^m \bX_{\lambda^i} ( H_{i, \bullet} ; p_i)_\delta \ \stackrel{\sim}{\rightarrow} \ \left( \bigcap_{i=1}^m \bY_{\lambda^i} ( H_{i, \bullet} ; \, p_i )_\epsilon \right) \cap \left( \bigcap_{j=1}^t \bY_{\rho_{n-1}} ( \Lambda^\vee_j ;\, q_j )_\epsilon \right)
\end{equation}
\end{lemma}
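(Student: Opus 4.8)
The plan is to deduce the lemma formally from the scheme-theoretic identity \eqref{equality-subscheme} together with the description of the image $\Psi(\bX_\delta)$ given in Corollary \ref{coro:represent}. Recall from Lemma \ref{embedding2} that $\Psi$ is a closed immersion of $\bX_\delta = \bIQeV_\delta$ into $\bY_\epsilon = \bIQ_e(\tV)_\epsilon$; hence $\Psi$ restricts to an isomorphism onto its scheme-theoretic image $\bX' := \Psi(\bX_\delta)$, and for any closed subschemes $Z, Z_1, Z_2 \subseteq \bY_\epsilon$ one has $\Psi\big(\Psi^{-1}(Z)\big) = Z \cap \bX'$ and $\Psi^{-1}(Z_1) \cap \Psi^{-1}(Z_2) = \Psi^{-1}(Z_1 \cap Z_2)$, all as scheme-theoretic identities.

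First I would record the multi-partition form of \eqref{equality-subscheme}. Since each $p_i$ lies outside $\{ q_1 , \ldots , q_t \}$ and $H_{i, \bullet}$ is a flag in $V_{p_i} = \tV_{p_i}$, equation \eqref{equality-subscheme} gives, for each $i$, the scheme-theoretic identity
\[
\bY_{\lambda^i}(H_{i, \bullet} ; p_i)_\epsilon \cap \bX' \ = \ \Psi\big( \bX_{\lambda^i}(H_{i, \bullet} ; p_i)_\delta \big) \ = \ \Psi\big( \Psi^{-1}( \bY_{\lambda^i}(H_{i, \bullet} ; p_i)_\epsilon ) \big) .
\]
Intersecting over $i = 1 , \ldots , m$ and using that $\Psi^{-1}$ commutes with intersections, together with $\Psi\big(\Psi^{-1}(Z)\big) = Z \cap \bX'$, I obtain
\[
\Big( \bigcap_{i=1}^m \bY_{\lambda^i}(H_{i, \bullet} ; p_i)_\epsilon \Big) \cap \bX' \ = \ \Psi\Big( \bigcap_{i=1}^m \bX_{\lambda^i}(H_{i, \bullet} ; p_i)_\delta \Big) .
\]
Now I would substitute the identity $\bX' = \bigcap_{j=1}^t \bY_{\rho_{n-1}}(\Lambda_j^\vee ; q_j)_\epsilon$ of Corollary \ref{coro:represent} (which rests on Proposition \ref{Prop:Image-degeneracy}, where $\bY_{\rho_{n-1}}(\Lambda^\vee ; q)_\epsilon$ is cut out scheme-theoretically by the condition $\psi(\cE_q) \subseteq \cH_n$ of \eqref{longest degeneracy locus}). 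This yields
\[
\Big( \bigcap_{i=1}^m \bY_{\lambda^i}(H_{i, \bullet} ; p_i)_\epsilon \Big) \cap \Big( \bigcap_{j=1}^t \bY_{\rho_{n-1}}(\Lambda_j^\vee ; q_j)_\epsilon \Big) \ = \ \Psi\Big( \bigcap_{i=1}^m \bX_{\lambda^i}(H_{i, \bullet} ; p_i)_\delta \Big) ,
\]
and since $\Psi$ is a closed immersion, its restriction to $\bigcap_{i=1}^m \bX_{\lambda^i}(H_{i, \bullet} ; p_i)_\delta$ is an isomorphism onto the right-hand side; this is precisely the asserted isomorphism \eqref{isom-subschemes}.

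The only genuine point to verify is that all the equalities above hold at the level of schemes, not merely of sets. For \eqref{equality-subscheme} this is because $\bX_\lambda(H_\bullet ; p)_\delta$ is by construction the scheme-theoretic locus defined by the rank conditions of Definition \ref{def: isotropic loci} on the universal subsheaf $\cE$ of $\bX_\delta \times C$ evaluated at $p$, and near $p$ — where $p \neq q_j$, so $\psi$ is an isomorphism in a neighbourhood of $p$ — the Hecke map identifies $\cE|_{\bX_\delta \times \{ p \}}$ with the pullback along $\Psi$ of the corresponding universal object on $\bY_\epsilon$, using the identical flag $H_\bullet$ in $V_p = \tV_p$; hence the defining ideals agree and \eqref{equality-subscheme} is an equality of subschemes. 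For Corollary \ref{coro:represent} the scheme structure is exactly the one provided by Proposition \ref{Prop:Image-degeneracy}, whose proof derives the equality of subschemes directly from \eqref{longest degeneracy locus} and the definition of the Hecke transform. Granting these facts, all established earlier, the argument is just the short book-keeping with closed immersions above, and I do not anticipate any further obstacle.
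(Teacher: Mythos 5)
Your proof is correct and follows essentially the same route as the paper: equation \eqref{equality-subscheme} gives $\Psi^{-1}(\bY_{\lambda^i}(H_{i,\bullet};p_i)_\epsilon) = \bX_{\lambda^i}(H_{i,\bullet};p_i)_\delta$, and Corollary \ref{coro:represent} identifies $\Psi(\bX_\delta)$ with $\bigcap_j \bY_{\rho_{n-1}}(\Lambda_j^\vee;q_j)_\epsilon$, after which the stated isomorphism is immediate. Your additional remark verifying that these equalities hold scheme-theoretically (via identification of the universal subsheaves near each $p_i \neq q_j$) is a useful explicit elaboration of what the paper treats as ``obvious.''
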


\begin{corollary}\label{Coro:equality-numbers}
Let $V$ be an orthogonal bundle of degree $v = n \ell$, and $\tV$ the Hecke transform of $V$ at points $q_1 , \ldots , q_t\in C$ as above. Assume that $\bIQeV_\delta$ and $\bIQ_e ( \tV )_\epsilon$ have property $\cP$. Let $u$ be a nonnegative integer and $Q ( \alpha )$ a homogeneous polynomial of degree $ I_u ( n , \ell , e )$. Then we have
\begin{equation} \label{equality-numbers}
N_{C, e}^v \left( Q ( \alpha ) ; u ; V\right)_\delta \ = \ N_{C, e}^{v + tn} \left( Q ( \alpha ) , u + t ; \tV \right)_\epsilon .
\end{equation}
\end{corollary}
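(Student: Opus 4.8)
My plan is to reduce, using additivity of both sides of \eqref{equality-numbers} in $Q$, to the case of a monomial $Q(\alpha) = \prod_{i=1}^s \alpha_{k_i}$ with $1 \le k_i \le n-1$ and $\sum_{i} k_i = I_u(n,\ell,e)$, and then to transport the zero-cycle $\Theta(Q;u)$ from $\bX_\delta := \bIQeV_\delta$ into $\bY_\epsilon := \bIQ_e(\tV)_\epsilon$ along the Hecke embedding $\Psi$. The first step is a degree check: the Hecke transform $\tV$ has degree $v + tn = n(\ell+t)$ and carries an $L(q_1 + \cdots + q_t)$-valued orthogonal structure, so the expected dimension of $\bY_\epsilon$ is $I(n,\ell+t,e) = I(n,\ell,e) + \tfrac12 n(n-1)t$, whence
\[
I_{u+t}(n,\ell+t,e) \ = \ I(n,\ell,e) - \tfrac12 n(n-1)u \ = \ I_u(n,\ell,e) \ = \ \deg Q .
\]
Thus $\Theta(Q;u+t)$ is defined on $\bY_\epsilon$, which has property $\cP$ by hypothesis, and $N_{C,e}^{v+tn}(Q;u+t;\tV)_\epsilon = \int_{\bY_\epsilon} \Theta(Q;u+t)$.

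The heart of the argument is to realize $\Theta(Q;u)$, computed on $\bX_\delta$, as an explicit reduced $0$-cycle sitting inside $\bY_\epsilon$. I would pick distinct points $p_1,\ldots,p_s$ together with auxiliary points $r_1,\ldots,r_u$, all different from $q_1,\ldots,q_t$, along with complete isotropic flags $H_{i,\bullet}$ in $V_{p_i}$ and isotropic $n$-planes $\Lambda_j' \subset V_{r_j}$, chosen exactly as in the setup of Proposition \ref{general intersection}. Since $\bX_\delta$ has property $\cP$, that proposition shows that for general $\gamma_i \in \SO(V_{p_i})$ and $\eta_j \in \SO(V_{r_j})$ the cycle $\Theta(Q;u)$ is represented by the reduced zero-dimensional scheme
\[
\Bigl( \bigcap_{i=1}^s \bX_{k_i}(\gamma_i \cdot H_{i,\bullet}; p_i)_\delta \Bigr) \cap \Bigl( \bigcap_{j=1}^u \bX_{\rho_{n-1}}(\eta_j \cdot \Lambda_j'; r_j)_\delta \Bigr) ,
\]
whose length is $N_{C,e}^v(Q;u;V)_\delta$. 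Now I would apply the scheme isomorphism \eqref{isom-subschemes} of the lemma preceding the present corollary, taking $m = s+u$ and partitions $\lambda^1 = (k_1),\ldots,\lambda^s = (k_s)$, $\lambda^{s+1} = \cdots = \lambda^{s+u} = \rho_{n-1}$ with the flags just chosen, together with Corollary \ref{coro:represent} identifying $\Psi(\bX_\delta) = \bigcap_{j=1}^t \bY_{\rho_{n-1}}(\Lambda_j^\vee; q_j)_\epsilon$. This exhibits the above scheme as isomorphic to
\[
Z \ := \ \Bigl( \bigcap_{i=1}^s \bY_{k_i}(\gamma_i \cdot H_{i,\bullet}; p_i)_\epsilon \Bigr) \cap \Bigl( \bigcap_{j=1}^u \bY_{\rho_{n-1}}(\eta_j \cdot \Lambda_j'; r_j)_\epsilon \Bigr) \cap \Bigl( \bigcap_{j=1}^t \bY_{\rho_{n-1}}(\Lambda_j^\vee; q_j)_\epsilon \Bigr) ,
\]
so that $Z$ is a reduced zero-dimensional subscheme of $\bY_\epsilon$ of length $N_{C,e}^v(Q;u;V)_\delta$.

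It remains to check that $Z$ computes $\int_{\bY_\epsilon} \Theta(Q;u+t)$. Indeed $Z$ is exactly the intersection occurring in the definition of $\Theta(Q;u+t)$ on $\bY_\epsilon$, for this monomial and for the $u+t$ points $r_1,\ldots,r_u,q_1,\ldots,q_t$ with isotropic $n$-planes $\eta_j\cdot\Lambda_j'$ and $\Lambda_j^\vee$ — the only discrepancy being that at the Hecke points $q_j$ we have used the fixed subspaces $\Lambda_j^\vee$ rather than their translates by general elements of $\SO(\tV_{q_j})$. To bridge this gap I would run a conservation-of-number argument over $G := \prod_{j=1}^t \SO(\tV_{q_j})$ (or over a general curve in $G$ through the identity): the incidence scheme $\cT \subset \bY_\epsilon \times G$ whose fibre over $(\zeta_1,\ldots,\zeta_t)$ is the intersection of the fixed degeneracy loci at the $p_i$ and $r_j$ with $\bigcap_{j=1}^t \bY_{\rho_{n-1}}(\zeta_j\cdot\Lambda_j^\vee; q_j)_\epsilon$ has generic fibre of length $\int_{\bY_\epsilon}\Theta(Q;u+t)$, by Proposition \ref{general intersection} and the definition of the Gromov--Witten numbers, and has $Z$ as its fibre over the identity. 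Using property $\cP$ of $\bY_\epsilon$ and the codimension estimates of Lemmas \ref{lemma:Ttr} and \ref{lemma:Pi} to bound the non-saturated loci — applied exactly as in the proof of Proposition \ref{general intersection} — one checks that $\cT$ is equidimensional of dimension $\dim G$ with every component dominating $G$, and that its fibre over the identity is the reduced zero-dimensional scheme $Z$; then \cite[Proposition 10.2]{Fu} gives $\length\, Z = \int_{\bY_\epsilon}\Theta(Q;u+t)$. Putting the three steps together,
\[
N_{C,e}^v(Q;u;V)_\delta \ = \ \length\,\Theta(Q;u) \ = \ \length\, Z \ = \ \int_{\bY_\epsilon}\Theta(Q;u+t) \ = \ N_{C,e}^{v+tn}(Q;u+t;\tV)_\epsilon ,
\]
which is \eqref{equality-numbers}. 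The delicate point — and what I expect to be the main obstacle — is precisely this last step: one must ensure that forcing the group elements at the Hecke points $q_j$ to be the identity does not make a component of the incidence family $\cT$ collapse over the identity (become ``vertical'' over $G$), so that the intersection number is genuinely conserved between the generic fibre and $Z$. This is the orthogonal analogue of the invariance reasoning behind Remark \ref{remark:intersection} and Proposition \ref{Invariance1}, and would be carried out along the lines of the Lagrangian arguments of \cite[\S 5]{CCH1}, with the requisite dimension bounds on the non-saturated parts supplied by Lemmas \ref{lemma:Ttr} and \ref{lemma:Pi}.
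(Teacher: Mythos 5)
Your proof is correct and takes essentially the same approach as the paper: reduce to a monomial by linearity, represent the zero-cycle $\Theta(Q;u)$ on $\bX_\delta$ explicitly, transport it into $\bY_\epsilon$ via \eqref{isom-subschemes} and Corollary \ref{coro:represent}, and use a conservation-of-number argument as the $\SO(\tV_{q_j})$-translates at the Hecke points specialize to the identity. Your explicit incidence-family argument — in particular the key observation that the fibre over the identity is finite and reduced, being isomorphic to the reduced zero-dimensional intersection on $\bX_\delta$ — is a careful unpacking of what the paper compresses into ``the degree does not change under deformation of zero-cycles.''
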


\begin{proof}
By linearity of intersection numbers, it suffices to consider the case when $Q ( \alpha )$ is a monomial of the form $\prod_{i=1}^s\alpha_{k_i}$. Set $m = u + s$. Let $p_1 , \ldots , p_m , q_1 , \ldots , q_t$ and $H_{i, \bullet}$ and $\Lambda_j$ be as above for $1 \le i \le m$ and $1 \le j \le t$. Let $\gamma_i \in \SO ( V_{p_i} )$ for $1 \le i \le m$ and $\eta_j \in \SO (V_{q_j} )$ for $1 \le j \le t$.

Now take $\lambda^i = ( k_i )$ for $1 \le i \le s$ and $\lambda^i = \rho_{n-1}$ for $s + 1 \le i \le s + u = m$. Then by Proposition \ref{general intersection}, for general $\gamma_i$ and $\eta_j$, the intersection in the righthand side of \eqref{isom-subschemes} gives a zero-dimensional scheme determining a cycle $\Theta ( Q ( \alpha ) ; u + t )$ on $\bY_\epsilon$. Recall that $N_{C, e}^{v + tn} \left( Q ( \alpha ) ; u + t ; \tV \right)$ is the degree of this cycle. Note that for a general choice of $\gamma_1 , \ldots , \gamma_m$, the cycle with $\eta_j = \Iden \in \SO ( V_{q_j} )$ for $1 \le j \le t$ is obtained as a limit of general zero-cycles $\Theta ( Q ( \alpha ) ; u + t )$ on $\bY_\delta$ as $\eta_1 , \ldots , \eta_t$ vary. Since the degree does not change under deformation of zero-cycles, the degree of this limit cycle is equal to $N_{C, e}^{v + tn} \left( Q ( \alpha ) , u + t ; \tV \right)_\epsilon$. On the other hand, the restriction of this limit cycle to the subscheme $\bX_\delta$ of $\bY_\epsilon$ is equal to the zero-cycle $\Theta ( Q ( \alpha ) ; u )_\delta$ on $\bX_\delta$ whose degree is $N_{C, e}^v \left( Q ( \alpha ) , u ; V \right)_\delta$. Hence we obtain the equality \eqref{equality-numbers}.
\end{proof}

%\begin{lemma} \label{Prop:Hecke-GW} Let $W$ be any orthogonal bundle of degree $w:=n\ell$ over $C$. Assume $\bIQeW_\alpha$ is nonempty. Let $\bIQeW_\alpha\hookrightarrow \bIQ_e(W^\Gamma)_\delta$ be an embedding of schemes in Lemma \ref{embedding2} induced by the Hecke transform $W\hookrightarrow W^\Gamma$ for a suitable $\Gamma\in \OG(W_p)$.
%If both $\bIQeW_\alpha$ and $\bIQ_e(W^\Gamma)_\delta$ have property $\cP$, then $$ N_{C,e}^w( Q(\alpha) ; W )_\alpha \ = \ N_{C,e}^{w+n} \left( \dot{\Pt}_{\rho_{n-1}}(\alpha)Q(\alpha) ; W^\Gamma \right)_\delta , $$where $\dot{\Pt}_{\rho_{n-1}}$ was defined in Section 2(?).\end{lemma}

%\begin{proof} We may assume that $\deg Q(\alpha) = I_e(n, \ell)$, as both sides are zero otherwise. From Proposition \ref{Prop:coincide} and Corollary \ref{cor-class}, for the subvariety $\bIQeW_\alpha\subset \bIQ_e(W^\Gamma)_\delta=:\bX_\delta$, we have
%\be \label{equality-class-gen}[\bIQeW_\alpha]=\Pt_{\rho_{n-1}}(\cE(p)^\vee)\cap [\bX_\delta] \ \in \ \bChow_*\left( \bX_\delta \right) , \ee
%Then the lemma is immediate from the projection formula. \end{proof}

% Note that in the definition of $N_{C, e}^{v} ( P(\alpha); \, u ; \,V )_\delta$, the two data $P ( \alpha )$ and $u$ are only to specify the zero-cycle $\Theta ( P ; \, u)$. In fact, since $\Theta ( P ; \, u ) = \Theta ( Q ; \, 0 )$, where
% \[ Q ( \alpha ) \ = \ P ( \alpha ) \cdot ( {\Pt}_{\rho_{n-1}} ( \alpha ) )^u , \]
%we may go without the second argument $u$. In particular, this is the case in the following definition.

Now we define an intersection number  $\bIQeV_\delta$ in general, without the condition of  property $\cP$.

\begin{definition} \label{Def:Arbitrary}
Let $V$ be an orthogonal bundle of degree $v = n \ell$, and suppose $\bIQeV_\delta$ is nonempty. Let $\tV$ be  the orthogonal Hecke transform of $V$ with respect to a general choice $\Lambda_1 , \ldots , \Lambda_t$ belonging to a component $\OG ( V )_\delta$. Then $\deg ( \tV / V ) = t n$, and by Proposition \ref{General;Property P1} we may assume that $\bIQetV_\delta$ has property $\cP$ for $t \gg 0$. Let $u$ be a nonnegative integer and $Q ( \alpha )$ a homogeneous polynomial of degree $I_u ( n , \ell , e )$. Then we define
$$
\tN_{C, e}^v ( Q ( \alpha ) ; u ; V )_\delta \ := \ N_{C, e}^{v + tn} ( Q ( \alpha ) ; u + t ; \tV )_\epsilon ,
$$
where $\epsilon = \delta_{\tV} ( \Lambda_j^\vee )$.
When $u = 0$, we write $\tN_{C, e}^v ( Q ( \alpha ) ; V )_\delta$ for $\tN_{C, e}^v ( Q ( \alpha ) ; 0 ; V )_\delta$.
\end{definition}

\begin{proposition} \label{prop: Ntilde} The number $\tN_{C, e}^v ( Q ( \alpha ) ; u ; V )_\delta$ is well defined and depends only on $g$, $e$ and $v$ once the polynomial $Q ( \alpha )$ is specified. More precisely,
\begin{enumerate}
\renewcommand{\labelenumi}{(\arabic{enumi})}
\item The number $\tN_{C, e}^v ( Q ( \alpha ) ; u ; V )_\delta$  does not depend on the chosen Hecke transform $\tV$.
\item Let $\scrV \to \scrC \to B$ be a family of curves and orthogonal bundles parameterized by a connected curve $B$. % Assume $\OG(\scrV)$ has two components {\color{red} (Or: Pulling back to a suitable \'etale cover of $B$?)}, and $\bIQ_e(\scrV|_{\cC_b})_\delta$ has property $\cP$ {(\color{red} Isn't the point that property $\cP$ is not required here?)}
 Then $\tN_{\cC_b , e}^v \left( Q ; u ; \scrV|_{\cC_b} \right)_\delta$ is constant with respect to $b \in B$. (In particular, it is invariant even for not necessarily flat families of isotropic Quot schemes.)
\end{enumerate}
\end{proposition}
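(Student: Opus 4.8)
The plan is to deduce both statements from three results already in hand: Corollary \ref{Coro:equality-numbers} (comparison of $N$-numbers across a Hecke transform when both sides have property $\cP$), Proposition \ref{Invariance1} (deformation invariance of $N$-numbers for families with property $\cP$), and Proposition \ref{General;Property P1} (a large Hecke transform achieves property $\cP$). Assertion (1) will come from a ``common refinement'' argument, and (2) by localising on $B$ and combining (1) with Proposition \ref{Invariance1}.

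The bookkeeping device behind (1) is a compatibility of labelings. Suppose $\tV$ is a Hecke transform of $V$ supported on a finite set $S \subset C$, with $\bIQeV_\delta \hookrightarrow \bIQ_e(\tV)_\epsilon$, and let $q \in C \setminus S$, so that canonically $V_q = \tV_q$. Then the component of $\OG(V_q)$ for which a further Hecke embedding of $\bIQeV_\delta$ stays valid coincides with the component of $\OG(\tV_q)$ for which the corresponding embedding of $\bIQ_e(\tV)_\epsilon$ stays valid: by Lemma \ref{embedding2} and Remark \ref{Remk:even intersection}, each is the set of isotropic $\Lambda$ with $\dim(\Lambda \cap F_q)$ even, where $F$ is a reference point of $\bIQeoV_\delta$ in the first case and of $\bIQ_e^\circ(\tV)_\epsilon$ in the second, and for $q \notin S$ one may take the second reference to be the saturation of the image of the first, whose fibre at $q$ is unchanged. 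Granting this, let $\tV$ and $\tV'$ be two Hecke transforms of $V$ as in Definition \ref{Def:Arbitrary}, both with property $\cP$. I would choose a third Hecke transform $\tV''$ of $V$ along $t'' \gg 0$ general points and general isotropic subspaces, with support disjoint from those of $\tV$ and $\tV'$ and with $t''$ large enough that Proposition \ref{General;Property P1} applies to $V$, to $\tV$ and to $\tV'$. The bundle $\tW$ obtained from $\tV$ by the further Hecke transform along the data of $\tV''$ is then, by the compatibility above, simultaneously a valid Hecke transform of $\tV$ and of $\tV''$ (it is the Hecke transform of $V$ along the union of the two sets of data), it has property $\cP$ by Proposition \ref{General;Property P1}, and two applications of Corollary \ref{Coro:equality-numbers} show that the appropriately shifted $N$-number of $\tW$ equals both the value defining $\tN$ computed from $\tV$ and the value computed from $\tV''$. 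Doing the same with $\tV'$ in place of $\tV$ gives $N(\tV) = N(\tV'') = N(\tV')$ after the evident degree and parameter shifts; in particular $\tN_{C,e}^v(Q(\alpha); u; V)_\delta$ is well defined, which is (1).

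For (2), fix $b_0 \in B$. By Proposition \ref{General;Property P1}, choose $t \gg 0$ general points and general isotropic subspaces on $\scrC_{b_0}$, lying in the components for which the Hecke embedding of $\bIQ_e(\scrV_{b_0})_\delta$ is valid, so that the corresponding Hecke transform $\widetilde{\scrV}_{b_0}$ of $\scrV_{b_0}$ has property $\cP$. Since $\scrC \to B$ and (after an étale base change splitting it into its two components, Proposition \ref{OGTwoCpts}) $\OG(\scrV) \to B$ are smooth near these points, the chosen data extend to sections over a neighbourhood $U$ of $b_0$, giving a family of Hecke transforms $\widetilde{\scrV} \to \scrC_U$ with $\widetilde{\scrV}_b$ a Hecke transform of $\scrV_b$ for every $b \in U$. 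Property $\cP$ being an open condition on the base of a family of isotropic Quot schemes (as in the Lagrangian case \cite[\S 5.2]{CCH1}) and holding at $b_0$, after shrinking $U$ I may assume $\bIQ_e(\widetilde{\scrV}_b)_\epsilon$ has property $\cP$ for all $b \in U$. Then part (1) gives $\tN_{\cC_b, e}^v(Q; u; \scrV_b)_\delta = N_{\cC_b, e}^{v+tn}(Q; u+t; \widetilde{\scrV}_b)_\epsilon$ for all $b \in U$, and the right-hand side is independent of $b \in U$ by Proposition \ref{Invariance1}. Hence $\tN$ is locally constant on $B$, and constant since $B$ is connected; as no step used flatness of any Quot scheme over $B$, the final parenthetical assertion follows too.

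The step I expect to require the most care is the label-tracking in (1): one must check that the iterated Hecke transforms can always be arranged so that the embeddings of $\bIQeV_\delta$ land in mutually compatible components (the role of the compatibility observation) and that every intermediate bundle in the chain genuinely has property $\cP$ — which is precisely why the auxiliary bundle $\tV''$ must be taken with large support rather than obtained by padding $\tV$ with a few extra Hecke points. Once these points are settled, both (1) and (2) are formal consequences of Corollary \ref{Coro:equality-numbers}, Proposition \ref{General;Property P1} and Proposition \ref{Invariance1}.
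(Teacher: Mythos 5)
Your proof follows the paper's own approach closely: for (1) a common-refinement argument (you bridge via an auxiliary $\tV''$ with support disjoint from both, while the paper refines $\tV_1$ and $\tV_2$ directly to a single $\tV_3$, after first arranging their supports to be disjoint), and for (2) extension of the Hecke data to local sections near $b_0$, openness of property $\cP$ in the family, Proposition \ref{Invariance1}, and chaining over the connected base. Your explicit verification of label compatibility is a point the paper takes for granted, and the auxiliary $\tV''$ neatly avoids having to perturb $\tV$ and $\tV'$ to disjoint supports, but the route is essentially the same as the paper's.
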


\begin{proof} For simplicity, for both (1) and (2), we prove the proposition for $u = 0$. The proof for the general case is not different.

(1) Take two different general Hecke transforms $\tV_1$ and $\tV_2$ of $V$ as in Corollary \ref{Coro:property P}.  We may assume that the Hecke transforms are obtained at $t_1+t_2$ subspaces $\Lambda_i \in \OG ( V_{q_i} )_\delta$ for $1 \le i \le t_1$ and $\Omega_j \in \OG ( V_{r_j})_\delta$ for $1 \le j \le t_2$, where the points $q_1, \dots, q_{t_1}, r_1, \dots, r_{t_2}$ are all distinct. We have the corresponding embeddings
$$
\bIQeV_\delta\hookrightarrow \bIQ_e(\tV_1)_{\epsilon_{1}}\ \ \mathrm{and}\ \ \bIQeV_{\delta}\hookrightarrow \bIQ_e(\tV_2)_{\epsilon_2} ,
$$
for suitable $\epsilon_1, \epsilon_2 \in \Z_2$  where moreover $\bIQ_e ( \tV_1 )_{\epsilon_1}$ and $\bIQ_e( \tV_2 )_{\epsilon_2}$ have property $\cP$. Now take a Hecke transform of $\tV_1$ at the isotropic subspaces $\Omega_j$ of $( \tV_1 )_{r_j} = V_{r_j}$ for $1 \le j \le t_2$, and also a Hecke transform of $\tV_2$ at the isotropic subspaces $\Lambda_i$ of $( \tV_2 )_{q_i} = V_{q_i}$ for $1 \le i \le t_1$ to obtain a orthogonal bundle $\tV_3$ which is a common Hecke transform of $\tV_1$ and $\tV_2$. Note that for the given labeling $\delta$, we have $\Lambda_i \in \OG(V_{q_i})_\delta$ and $\Omega_j \in \OG(V_{r_j})_\delta$ for $1 \le i \le t_1$ and $1 \le j \le t_2$. Thus by (\ref{Hecke-Sequence}), for some $\epsilon_3 \in \Z_2$ we have
$$
\bIQ_e ( \tV_1 )_{\epsilon_1} \ \hookrightarrow \ \bIQ_e ( \tV_3 )_{\epsilon_3} \ \ \hbox{and} \ \ \bIQ_e ( \tV_2 )_{\epsilon_2} \ \hookrightarrow \ \bIQ_e(\tV_3)_{\epsilon_3} .
$$
By Corollary \ref{Coro:property P}, we may assume that the isotropic Quot schemes of all the intermediate Hecke transforms in the construction above between $\tV_i$ and $\tV_3$ also have property $\cP$. Then, by Corollary \ref{Coro:equality-numbers},  we obtain
\[
N_{C,e}^{v + t_1 n}( Q(\alpha) ; t_1; \tV_1)_{\epsilon_1} \: = \: N_{C,e}^{v + (t_1+t_2) n}( Q(\alpha) ; t_1+t_2; \tV_3)_{\epsilon_3} \: =\: N_{C,e}^{v + t_2 n}( Q(\alpha);t_2; \tV_2)_{\epsilon_2},
\]
 which proves $(1)$.

(2) Let $b_0$ be a point of $B$. Take $t \geq t \left( \scrV|_{b_0} \right)$, so that $\bIQ_e \left( \overline{\scrV|_{b_0}} \right)_\delta$ has property $\cP.$ For $1 \le j \leq t$, let $\overline{q_j} \colon U \to \cC$ be a local section of $\cC \to B$, and let $\overline{\Lambda_j} \colon U \to \OG \left( \scrV|_{\overline{q_j}} \right)_\delta$ be a local section of the composition $\OG \left( \scrV|_{\overline{q_j}} \right)_\delta \to \cC \to B$ for some open subset $U \subseteq B$ containing $b_0$. Let $\overline{\scrV|_U}$ be the Hecke transform of $\scrV|_U$ with respect to $\overline{\Lambda}_1 , \ldots , \overline{\Lambda}_t$, which is defined as in \eqref{HeckeTransform2}. Then we have the embedding
\[
\bIQ_e \left( \scrV|_U \right)_\delta \ \hookrightarrow \ \bIQ_e \left( \overline{\scrV|_U} \right)_\epsilon .
\]
for a suitable $\epsilon \in \Z_2$. By openness of the property $\cP$ in families, after shrinking $U$ if necessary we may assume that $\bIQ_e \left( \overline{\scrV|_b} \right)_\epsilon$ has property $\cP$ for all $b \in U$. Then by Proposition \ref{Invariance1}, we see that
\begin{equation} \label{constant}
N_{C,e}^{v+tn} (Q ; \, t;\, \overline{\scrV|_b}) \hbox{ is constant with respect to } b \in U .
\end{equation}
Now let $b'$ be any other point of $B$. Then since any component of $B$ is quasi-projective, we can find a chain of open subsets $U = U_0 , U_1 , \ldots , U_h$ of components of $B$ such that $U_k \cap U_{k+1}$ is nonempty for $0 \le k \le h-1$ and $b' \in U_h$, together with the Hecke transforms $\overline{\scrV|_{U_k}}$ of $\scrV|_{U_k}$ for $1 \le k \le h$ where $\overline{\scrV|_{U_k}}$ has degree $v + t_k n$ for some $t_k \ge 0$, and $\bIQ_e( \overline{\scrV|_b})$ has property $\cP$ for each $b \in U_k$. Take $b \in U_{k-1} \cap U_k$. Then by part (1), we have
\[
N_{\cC_b , e}^{v + t_{k-1} n} ( Q (\alpha);\, t_{k-1} )_{\epsilon_k} \ = \ N_{\cC_b , e}^{v + t_k n}(Q(\alpha);\, t_k)_{\epsilon_k}.\]
for suitable choices of $\epsilon_k \in \Z_2$. By definition of $\tN_{C,e}(Q;\,V)_\delta$ and \eqref{constant} above, $\tN_{\cC_b,e}(Q(\alpha);\, \scrV|_b)_\delta$ is constant with respect to $b \in B$.
\end{proof}
%\begin{corollary} Let $W$ be an orthogonal bundle and assume that $\bIQeW_\delta$ is nonempty. Let $V$ be the Hecke transform of $W$ at $m$ distinct points of $C$, so we have the embedding $\bIQeW_\delta\hookrightarrow \bIQeV_\epsilon$. Then, for any $u\geq 0$ and a  homogeneous polynomial $Q(\alpha)$ of degree $I_u(n,e,\ell)$, we have
%\begin{equation}
%\tN_{C,e}(Q(\alpha);\, u\, ;W)\:=\: \tN_{C,e}(Q(\alpha);\, u+m\, ;V).
%\end{equation}
%\end{corollary}
\begin{remark}
For $\bIQeV_\delta$ not having property $\cP$, the numbers
 $\tN_{g,e}^v(Q(\alpha); u ;V)_{\delta}$ could be compared
with virtual intersection numbers on $\bIQ_e(V)_\delta$ once they were properly defined. In fact, Sinha \cite{Si} gave a virtual intersection theory on an isotropic Quot scheme, and then explicitly computed virtual intersection numbers for isotropic subsheaves of rank $2$, which corresponds to  the case $n=2$ in our result. We conjecture that those virtual intersection numbers are equal to the corresponding numbers $\tN_{g,e}^v(Q(\alpha); u ;V)_{\delta}$.

On the other hand, when $\bIQeV_\delta$ has property $\cP,$ we have
\[
\tN_{g,e}^v(Q(\alpha); u ;V)_{\delta} \ = \ N_{g,e}^v(Q(\alpha); u ;V)_{\delta}
\]
by Proposition \ref{Prop:coincide} below. Thus $\tN_{g,e}^v(Q(\alpha); u ;V)_{\delta}$ gives the actual (not virtual) count of points in the intersection \eqref{intersection1}. In particular,  when $\dim \: \bIQeV_\delta=0$ (with $Q(\alpha)=1$),
the number $\tN_{g,e}^v(1 ;V)_{\delta}$ counts maximal subbundles of $V$ under proper conditions.
\end{remark}

\subsection{Relations between intersection numbers}
%\[  I (n, e, \ell)  := -(n-1)e  -\frac{n(n-1)}{2} (g-1-\ell) \]
Now we investigate  relations among the numbers $\tN^v_{C, e} ( Q(\alpha); u ; V )_\delta$.

\begin{proposition} \label{Prop:coincide} Let $V$ be any orthogonal bundle of degree $w$ such that $\bIQeV_\delta$ has property $\cP.$ Then the two definitions of intersection number coincide:
$$ \tN_{C,e}^v ( Q (\alpha); u ; V)_\delta \ = \ N_{C,e}^v ( Q (\alpha) ; u ; V)_\delta. $$

\end{proposition}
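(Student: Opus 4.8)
The plan is to obtain this as an immediate consequence of Corollary \ref{Coro:equality-numbers} combined with Definition \ref{Def:Arbitrary} and the well-definedness statement Proposition \ref{prop: Ntilde}(1). By linearity of both sides in $Q(\alpha)$ it suffices to treat a monomial $Q(\alpha)=\prod_{i=1}^{s}\alpha_{k_i}$. Since $\bIQeV_\delta$ has property $\cP$, and in particular is nonempty, the right-hand side $N_{C,e}^v(Q(\alpha);u;V)_\delta$ is defined directly as the intersection number of \eqref{intersection1} on $\bIQeV_\delta$, while the left-hand side $\tN_{C,e}^v(Q(\alpha);u;V)_\delta$ is defined through Definition \ref{Def:Arbitrary}.

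First I would fix an auxiliary Hecke transform. Choose distinct points $q_1,\dots,q_t \in C$ and a general choice of isotropic subspaces $\Lambda_j$ in the appropriate component of $\OG(V_{q_j})$, with $t$ large enough that, by Proposition \ref{General;Property P1}, the isotropic Quot scheme $\bIQ_e(\tV)_\epsilon$ of the resulting Hecke transform $\tV$ has property $\cP$; here $\epsilon = \delta_{\tV}(\Lambda_j^\vee)$ is the label of the component into which $\Psi$ embeds $\bIQeV_\delta$. By Definition \ref{Def:Arbitrary} together with Proposition \ref{prop: Ntilde}(1) (which guarantees that $\tN$ does not depend on the chosen Hecke transform), for this particular $\tV$ we have
\[
\tN_{C,e}^v(Q(\alpha);u;V)_\delta \ = \ N_{C,e}^{v+tn}(Q(\alpha);u+t;\tV)_\epsilon .
\]
On the other hand, both $\bIQeV_\delta$ (by hypothesis) and $\bIQ_e(\tV)_\epsilon$ (by construction) have property $\cP$, so all hypotheses of Corollary \ref{Coro:equality-numbers} are met and it yields
\[
N_{C,e}^v(Q(\alpha);u;V)_\delta \ = \ N_{C,e}^{v+tn}(Q(\alpha);u+t;\tV)_\epsilon .
\]
Comparing the two displays gives the asserted equality.

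There is essentially no obstacle in this argument: the substantive point — that passing to a Hecke transform preserves the intersection number, up to the index shift $u \mapsto u+t$ — has already been established in Corollary \ref{Coro:equality-numbers}, via Proposition \ref{general intersection} and the deformation-of-$0$-cycles argument of \S\ref{section:invariance}. The present statement merely records that, when $\bIQeV_\delta$ itself has property $\cP$, the ``indirect'' definition of the intersection number through a Hecke transform agrees with the ``direct'' one. The only points requiring a word of care are that the label $\epsilon$ appearing in Definition \ref{Def:Arbitrary} coincides with the one appearing in Corollary \ref{Coro:equality-numbers} — both being $\delta_{\tV}(\Lambda_j^\vee)$ — and that a value of $t$ with the required genericity and property-$\cP$ behaviour exists, which is exactly the content of Proposition \ref{General;Property P1}.
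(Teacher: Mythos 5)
Your proof is correct but takes a more roundabout route than the paper's. The paper's proof is a one-liner: since $\bIQeV_\delta$ already has property $\cP$, one simply takes the trivial Hecke transform $\tV = V$ (i.e.\ $t = 0$) in Definition \ref{Def:Arbitrary}, so that $\tN_{C,e}^v(Q(\alpha);u;V)_\delta = N_{C,e}^v(Q(\alpha);u;V)_\delta$ on the nose. Well-definedness (Proposition \ref{prop: Ntilde}(1)) then guarantees that any other admissible $\tV$ would give the same value, so this is legitimate. You instead pick a nontrivial general Hecke transform $\tV$ with $t \gg 0$ satisfying property $\cP$, use Definition \ref{Def:Arbitrary} to get $\tN_{C,e}^v(Q(\alpha);u;V)_\delta = N_{C,e}^{v+tn}(Q(\alpha);u+t;\tV)_\epsilon$, and then invoke Corollary \ref{Coro:equality-numbers} (valid since both $\bIQeV_\delta$ and $\bIQ_e(\tV)_\epsilon$ have property $\cP$) to identify the right side with $N_{C,e}^v(Q(\alpha);u;V)_\delta$. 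This is a valid alternative: it buys you the small advantage of not having to argue about the degenerate $t = 0$ case in the definition of $\tN$, at the cost of invoking the heavier Corollary \ref{Coro:equality-numbers}. Either way the statement is a direct bookkeeping consequence of the surrounding machinery, and your careful check that the label $\epsilon$ agrees between Definition \ref{Def:Arbitrary} and Corollary \ref{Coro:equality-numbers} is appropriate.
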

\begin{proof} Since $\bIQeV_\delta$ already has property $\cP,$ we may take $\tV=V.$ Then the proposition is immediate.
\end{proof}
The following is a generalization of Corollary \ref{Coro:equality-numbers}.
\begin{proposition}\label{Prop:equality-Hecke}
Let $V$ and $\tV$ be as in \eqref{Hecke-Sequence}, so we have the embedding
$\bIQeV_\delta\hookrightarrow \bIQ_e(\tV)_\epsilon.$ Let $u$ be a nonnegative integer and $Q(\alpha)$  any homogeneous polynomial of degree $I_u(n,\ell,e)$.
Then we have
\begin{equation}
\tN_{C,e}^v(Q(\alpha); u ; V)_\delta=\tN_{C,e}^{v+tn}(Q(\alpha); u+t; \tV)_\epsilon.
\end{equation}
\end{proposition}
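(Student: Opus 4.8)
The plan is to reduce the asserted identity to Corollary \ref{Coro:equality-numbers}, whose hypothesis (property $\cP$ at both ends of the Hecke embedding) is not available here but can be arranged by passing to a sufficiently positive auxiliary Hecke transform. As in the proof of Proposition \ref{prop: Ntilde}, it is enough to carry out the argument for $u=0$; the general case is formally identical, the only change being the shift of the polynomial degree and the number of marked points.

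First I would fix a general Hecke transform $\widehat{V}$ of $\tV$ at $t'\gg 0$ general points, along isotropic subspaces in the component of $\OG(\tV)$ determined by $\epsilon$, so that the composite embedding $\bIQeV_\delta \hookrightarrow \bIQ_e(\tV)_\epsilon \hookrightarrow \bIQ_e(\widehat{V})_{\hat\epsilon}$ exists and, by Proposition \ref{General;Property P1} applied with $\tV$ in the role of the base bundle, $\bIQ_e(\widehat{V})_{\hat\epsilon}$ has property $\cP$. Since $\widehat{V}$ is then a general Hecke transform of $\tV$, Definition \ref{Def:Arbitrary} gives immediately
\[
\tN_{C,e}^{v+tn}\!\left(Q(\alpha);\, t;\, \tV\right)_\epsilon \ = \ N_{C,e}^{\,v+tn+t'n}\!\left(Q(\alpha);\, t+t';\, \widehat{V}\right)_{\hat\epsilon}.
\]
Now $\widehat{V}$ is also a Hecke transform of $V$, namely at the $t+t'$ points obtained by concatenating the Hecke data of $\tV$ with the new points, and concatenating the two elementary modifications realises the embedding $\bIQeV_\delta \hookrightarrow \bIQ_e(\widehat{V})_{\hat\epsilon}$ as a composite of Hecke embeddings. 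Thus it remains to check that this (not necessarily general) Hecke transform of $V$ still computes $\tN_{C,e}^{v}(Q(\alpha);\,0;\,V)_\delta$; granting this, both sides of the proposition equal the displayed $N$-number and we are done.

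For this last point I would prove a mild strengthening of Proposition \ref{prop: Ntilde}(1): for \emph{any} Hecke transform $W$ of $V$ at $s$ points for which the Hecke embedding $\bIQeV_\delta \hookrightarrow \bIQ_e(W)_{\epsilon'}$ exists and $\bIQ_e(W)_{\epsilon'}$ has property $\cP$, one has $N_{C,e}^{\,v+sn}(Q(\alpha);\,s;\,W)_{\epsilon'} = \tN_{C,e}^{v}(Q(\alpha);\,0;\,V)_\delta$. The proof is the common-refinement argument already used for Proposition \ref{prop: Ntilde}(1): take a general Hecke transform $V_1$ of $V$ with property $\cP$, which computes $\tN_{C,e}^{v}(Q(\alpha);\,0;\,V)_\delta$ by definition; arrange its Hecke points to be disjoint from those of $W$; form a common further Hecke transform $W'$ of both $V_1$ and $W$ at extra general points; use Corollary \ref{Coro:property P} to ensure that $\bIQ_e(W')$ and the isotropic Quot schemes of every intermediate Hecke transform along the two chains have property $\cP$; and apply Corollary \ref{Coro:equality-numbers} along the chain from $V_1$ to $W'$ and along the chain from $W$ to $W'$. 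Both chains land on the same $N$-number of $W'$, so the two $N$-numbers of $V_1$ and of $W$ coincide. Taking $W=\widehat{V}$ then finishes the argument.

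The step I expect to be the real obstacle is the bookkeeping of the $\Z_2$-labels: one must verify that the component of $\bIQ_e(\widehat{V})$ (and of $\bIQ_e(W')$) reached along the chain starting from $V_1$ is the \emph{same} as the one reached along the chain starting from $\tV$ (respectively from $W$), so that the label $\hat\epsilon$ (respectively the label on $W'$) is unambiguous and each invocation of Corollary \ref{Coro:equality-numbers} is legitimate. This reduces to the observation that the composite Hecke embedding $\bIQeV_\delta \hookrightarrow \bIQ_e(\widehat{V})$ depends only on the sheaf inclusion $V\hookrightarrow\widehat{V}$, not on the order in which the elementary modifications are performed, so its image lies in a single connected component; since $\bIQ_e(V_1)_{\epsilon_1}$ and $\bIQ_e(\tV)_\epsilon$ are connected their images also lie in that component, and the parity criterion of Lemma \ref{GH} (via Lemma \ref{embedding2}) pins down its label uniformly.
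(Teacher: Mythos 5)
Your proposal is correct, and in spirit it is the same as the paper's: the paper's entire proof of this proposition is the one sentence ``This is immediate from the definition of $\tN_{C,e}^v(Q(\alpha); u;V)$.'' What you have done is to unpack carefully what ``immediate from the definition'' has to mean, and you correctly identify the one subtlety the paper glosses over: $\widehat{V}$, a general Hecke transform of $\tV$, need not be a \emph{general} Hecke transform of $V$ (since the original $t$ Hecke points and subspaces of $\tV$ are fixed and could be special), so one cannot literally plug $\widehat{V}$ into Definition \ref{Def:Arbitrary} for $V$. The fix you propose --- that Proposition \ref{prop: Ntilde}(1) should be read in the slightly stronger form ``$N$ of \emph{any} Hecke transform of $V$ with property $\cP$ computes $\tN$ of $V$,'' proved by the very common-refinement-plus-Corollary-\ref{Coro:equality-numbers} argument used in the paper's proof of Proposition \ref{prop: Ntilde}(1) --- is exactly what the paper is implicitly invoking. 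Your closing remark about the $\Z_2$-labels (that the component reached is determined by the sheaf inclusion $V\hookrightarrow\widehat{V}$ alone, via Lemmas \ref{GH} and \ref{embedding2}, independent of the order of the elementary modifications) is a legitimate point that the paper leaves to the reader as well, and your treatment of it is right. In short: same approach, more scrupulously written than the source.
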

\begin{proof}
This is immediate from the definition of $\tN_{C,e}^v(Q(\alpha); u ;V)$.
\end{proof}

Let $V$ be an $L$-valued orthogonal  bundle of degree $v$ over $C$. Let $\widehat{L}$ be a line bundle of degree $\hat{\ell}$ over $C$. Then $V\otimes \widehat{L}$ is an $L \otimes \widehat{L}^2$-valued orthogonal bundle of degree $v + 2n \hat{\ell}$.
Then the association
\[ [ E \to V] \ \mapsto \ \left[ ( E \otimes \widehat{L} ) \to ( V \otimes \widehat{L} ) \right] \]
defines an isomorphism $\bIQeV \isom \bIQ_{e + n\hat{\ell}} ( V \otimes \widehat{L} )$. Thus if $\bIQeV_\delta$ is nonempty,  then there is a unique $\theta\in \Z_2$ such that  $\bIQeV_\delta\isom  \bIQ_{e + n\hat{\ell}} ( V \otimes \widehat{L} )_{\theta}$. Thus we have
\begin{proposition} \label{Prop:tranform-GW} Let $V$ and $\widehat{L}$ be as above. Let $u$ be a nonnegative integer and $Q(\alpha)$ be homogeneous polynomial of degree $I_u(n,\ell,e).$ If  $\bIQeV_\delta$ is nonempty,  then there is a unique $\theta\in \Z_2$ for which we have the equality
$$ \tN_{C,e}^{v} ( Q (\alpha) ; u; V)_{\delta} \ = \ \tN_{C,e + n \hat{\ell}}^{v+ 2n\hat{\ell}}( Q(\alpha) ; u ; V \otimes \widehat{L})_{\theta} . $$
\end{proposition}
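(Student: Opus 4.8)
The plan is to show that the tensoring isomorphism $\mu \colon \bIQeV \isom \bIQ_{e+n\hat{\ell}}(V\otimes\widehat{L})$ sending $[E\to V]$ to $[(E\otimes\widehat{L})\to(V\otimes\widehat{L})]$ preserves everything entering the definition of the Gromov--Witten numbers. First I would record that $\mu$ is compatible with universal subsheaves: writing $\cE,\cE'$ for the universal subsheaves over the two schemes, one has $(\mu\times\Iden_C)^*\cE'\cong\cE\otimes\pi_C^*\widehat{L}$, so restricting at a point $p\in C$ gives $\mu^*\cE'(p)\cong\cE(p)\otimes\widehat{L}_p$, a twist by a one-dimensional vector space. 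This identifies the Chern classes $c_i(\cE'(p)^\vee)$ with $c_i(\cE(p)^\vee)$, hence the classes $\alpha_i$ and therefore $Q(\alpha)$ on the two schemes; a short computation also confirms that $I_u(n,\ell+2\hat{\ell},e+n\hat{\ell})=I_u(n,\ell,e)$, so the degree hypothesis on $Q(\alpha)$ is the same on both sides, and $\mu$ carries $\bIQeV_\delta$ onto a single labeled component, which is what defines $\theta$.

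Next I would reduce to the case where $\bIQeV_\delta$ has property $\cP$. If $\tV$ is a general orthogonal Hecke transform of $V$ along subspaces $\Lambda_j\in\OG(V_{q_j})_\delta$, chosen via Proposition \ref{General;Property P1} so that $\bIQ_e(\tV)_\epsilon$ has property $\cP$ and used to define $\tN_{C,e}^v(Q;u;V)_\delta$ as in Definition \ref{Def:Arbitrary}, then tensoring the sequence \eqref{Hecke-Sequence} by $\widehat{L}$ exhibits $\tV\otimes\widehat{L}$ as the Hecke transform of $V\otimes\widehat{L}$ along $\Lambda_j\otimes\widehat{L}_{q_j}$. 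Since the tensoring isomorphism for $\tV$ is an isomorphism of schemes and property $\cP$ is a dimension and generic-smoothness condition, $\bIQ_e(\tV\otimes\widehat{L})$ also has property $\cP$. Via Definition \ref{Def:Arbitrary} together with Propositions \ref{Prop:coincide} and \ref{Prop:equality-Hecke}, this reduces the assertion to the corresponding equality between the numbers $N$ for $\tV$ and $\tV\otimes\widehat{L}$, i.e.\ to the case of bundles with property $\cP$.

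In the property-$\cP$ case I would argue by linearity, treating a monomial $Q(\alpha)=\prod_{i}\alpha_{k_i}$, and show that $\mu$ identifies the zero-cycles $\Theta(Q;t)$ directly. For each point $x$ in play, a choice of basis of $\widehat{L}_x$ gives an isometry-up-to-scalar $V_x\isom(V\otimes\widehat{L})_x$, inducing component-respecting isomorphisms $\OG(V_x)\isom\OG((V\otimes\widehat{L})_x)$ and $\SO(V_x)\isom\SO((V\otimes\widehat{L})_x)$; pushing the reference flags $H_{i,\bullet}$, subspaces $\Lambda_j$ and general elements $\gamma_i,\eta_j$ through these and chasing the rank conditions of Definition \ref{def: isotropic loci} and the description \eqref{longest degeneracy locus} through the twist $\mu^*\cE'(x)\cong\cE(x)\otimes\widehat{L}_x$, one sees that $\mu$ carries $\bX_{k_i}(\gamma_i\cdot H_{i,\bullet};p_i)_\delta$ and $\bX_{\rho_{n-1}}(\eta_j\cdot\Lambda_j;q_j)_\delta$ onto the analogous loci on the $V\otimes\widehat{L}$ side, with the transformed $\gamma_i',\eta_j'$ again general. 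By Proposition \ref{general intersection} the intersection \eqref{intersection1} is zero-dimensional, reduced and saturated on both sides, so $\mu$ identifies the cycles $\Theta(Q;u)$ and equality of their degrees follows; summing over monomials and using Proposition \ref{Prop:coincide} yields
\[
\tN_{C,e}^v(Q;u;V)_\delta \ = \ N_{C,e}^v(Q;u;V)_\delta \ = \ N_{C,e+n\hat{\ell}}^{v+2n\hat{\ell}}(Q;u;V\otimes\widehat{L})_\theta \ = \ \tN_{C,e+n\hat{\ell}}^{v+2n\hat{\ell}}(Q;u;V\otimes\widehat{L})_\theta .
\]

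I expect the main obstacle to be purely bookkeeping of the labelings: verifying that $\OG(V_{q_j})_\delta$ is consistently carried to $\OG((V\otimes\widehat{L})_{q_j})_\theta$ at every point, so that $\tV\otimes\widehat{L}$ (with Hecke subspaces $\Lambda_j\otimes\widehat{L}_{q_j}$) is exactly the type of transform used to compute $\tN$ on the $V\otimes\widehat{L}$ side, and more generally that the embeddings $\bIQeV_\delta\hookrightarrow\bIQ_e(\tV)_\epsilon$ and $\bIQ_{e+n\hat{\ell}}(V\otimes\widehat{L})_\theta\hookrightarrow\bIQ_e(\tV\otimes\widehat{L})_{\theta'}$ fit into a commuting square with the two tensoring isomorphisms. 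There is no genuinely new geometric input here; this is the orthogonal analogue of the ``tensor by a line bundle'' step of \cite{Ho} and \cite{CCH1}.
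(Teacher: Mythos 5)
Your proof is correct and follows the same route as the paper, which in fact states the result without a separate proof: the sentence ``Thus we have'' indicates the authors regard it as an immediate consequence of the canonical isomorphism $\bIQ_e(V)_\delta \cong \bIQ_{e+n\hat{\ell}}(V\otimes\widehat{L})_\theta$ set up in the preceding paragraph. Your write-up simply fills in the details the paper leaves implicit (compatibility with universal subsheaves, equality of the expected dimensions $I_u(n,\ell,e)=I_u(n,\ell+2\hat{\ell},e+n\hat{\ell})$, identification of degeneracy loci, and the reduction to the property-$\cP$ case via Hecke transforms), all of which check out.
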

 %For a nonnegative integer $r$ and a homogeneous polynomial $Q(\alpha)$,
 %write \[\widehat{Q}^{(r)}(\alpha)=Q(\alpha)\cdot \left[\dot{\Pt}_{\rho_{n-1}}(\alpha)\right]^r.\]

\begin{proposition} \label{Relation of GW-invariants} Let $V$ be a $L$-valued orthogonal bundle of degree $v=n\ell$ on $C$. Assume $\bIQeV_\delta$ is nonempty. Let $u$ be a nonnegative integer and $Q(\alpha)$ a homogeneous polynomial of degree $I_u(n,\ell,e).$
Then \begin{enumerate} \item
for each $k\geq 0,$  we have \be \tN_{C,e}^v(Q(\alpha); u ;V)_{\delta}=\tN_{C,e-2kn}^v(Q(\alpha); u+4k ;V)_{\delta},\ee
\item for any two nonnegative integers $r,s$, there is an orthogonal bundle $W$ of degree $w:=v+rn-2sn$ such that  $\bIQ_{e-sn}(W)_{\theta}$ is nonempty for some $\theta \in \Z_2$, and
\be \tN_{g,e}^v(Q(\alpha); u; V)_{\delta}=\tN_{g,e-sn}^w(Q(\alpha); u+r;\, W)_{\theta}.\ee

\end{enumerate}
\end{proposition}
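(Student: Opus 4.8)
The plan is to derive both equalities formally from the two elementary ``change-of-bundle'' relations already established — the Hecke relation of Proposition~\ref{Prop:equality-Hecke}, which at fixed $e$ sends $(v,u)$ to $(v+tn,\,u+t)$, and the twisting relation of Proposition~\ref{Prop:tranform-GW}, which at fixed $u$ sends $(e,v)$ to $(e+n\hat\ell,\,v+2n\hat\ell)$ — together with the well-definedness and deformation invariance of $\tN$ from Proposition~\ref{prop: Ntilde}. I would establish (2) first and then read off (1) as the special case $r=4k$, $s=2k$.

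For (2): starting from $V$, first choose $r$ distinct points of $C$ and general isotropic subspaces $\Lambda_1,\dots,\Lambda_r$ lying in the component for which the Hecke embedding \eqref{Hecke-embedding} is available, and let $V'$ be the corresponding Hecke transform of $V$, so that $\deg V'=v+rn$. For the induced label $\epsilon$ the scheme $\bIQ_e(V')_\epsilon$ is nonempty, and Proposition~\ref{Prop:equality-Hecke} gives $\tN_{g,e}^v(Q;u;V)_\delta=\tN_{g,e}^{v+rn}(Q;u+r;V')_\epsilon$. Next apply Proposition~\ref{Prop:tranform-GW} to $V'$ with $\widehat L=\Oc(-sp)$, so that $\hat\ell=-s$, and put $W:=V'\otimes\Oc(-sp)$; then $\deg W=(v+rn)+2n(-s)=v+rn-2sn=w$, the isomorphism $\bIQ_e(V')_\epsilon\isom\bIQ_{e-sn}(W)_\theta$ shows that $\bIQ_{e-sn}(W)_\theta$ is nonempty for the induced label $\theta$, and the proposition yields $\tN_{g,e}^{v+rn}(Q;u+r;V')_\epsilon=\tN_{g,e-sn}^{w}(Q;u+r;W)_\theta$. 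Chaining the two displayed equalities is exactly assertion (2). Specializing to $r=4k$, $s=2k$ makes $w=v+4kn-4kn=v$, the subsheaf degree $e-sn=e-2kn$, and the parameter $u+r=u+4k$, so the construction produces a bundle $W$ of degree $v$ with $\bIQ_{e-2kn}(W)_\theta$ nonempty and $\tN_{C,e}^v(Q;u;V)_\delta=\tN_{C,e-2kn}^v(Q;u+4k;W)_\theta$. Since $e-2kn\le e\le e(V)$ and $e-2kn\equiv e(V)\bmod 2$, the scheme $\bIQ_{e-2kn}(V)_\delta$ is nonempty by Theorem~\ref{Even}; as $\tN$ depends only on $g$, $e$ and $v$ (for the given label) by Proposition~\ref{prop: Ntilde}, the right-hand side equals $\tN_{C,e-2kn}^v(Q;u+4k;V)_\delta$, which is (1).

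The step I expect to be the real obstacle is the bookkeeping of the $\Z_2$-labels $\delta,\epsilon,\theta$ together with the parity conditions for nonemptiness — in particular, in (1), checking that the label $\theta$ attached to the auxiliary bundle $W$ agrees with the original label $\delta$ on $V$. Each of Propositions~\ref{Prop:equality-Hecke} and~\ref{Prop:tranform-GW} comes packaged with an explicit embedding or isomorphism of isotropic Quot schemes, so it determines the new label unambiguously from the old one; what remains is to match the composite label with $\delta$. When $\deg L$ is odd this is automatic, since $\bIQ_{e'}(V)$ is irreducible with label forced to be $e'\bmod 2$ and the two relevant degrees differ by the even number $2kn$; when $\deg L$ is even, the fixed labelings of $\OG(V)$ and $\OG(W)$ are a priori independent, so here one must appeal to the bundle-independence and to the family invariance in Proposition~\ref{prop: Ntilde}. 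By contrast, verifying that every isotropic Quot scheme encountered stays nonempty is a direct consequence of Theorem~\ref{Even} and the injectivity of the maps in \eqref{Hecke-embedding} and Proposition~\ref{Prop:tranform-GW}, and the degree computations involving $tn$, $n\hat\ell$ and $2n\hat\ell$ are of the same routine kind as those already appearing in this section.
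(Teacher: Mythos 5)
Your proof of part (2) is precisely the paper's own argument: a Hecke transform at $r$ points (degree goes up by $rn$) followed by tensoring with a line bundle of degree $-s$ (degree drops by $2sn$, subsheaf degree drops by $sn$), and then chaining Propositions \ref{Prop:equality-Hecke} and \ref{Prop:tranform-GW}. The paper writes it slightly more compactly but the construction of $W$ and the two intermediate equalities are identical. So (2) needs no further comment.

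For part (1) you take a genuinely different route from the paper. The paper simply defers (1) to the symplectic analogue \cite[Proposition 5.13]{CCH1}, treating (1) and (2) as independent statements. You instead derive (1) from (2) by specializing $r=4k$, $s=2k$ (which makes $w=v$, $e-sn=e-2kn$, $u+r=u+4k$) and then invoking the claim in Proposition \ref{prop: Ntilde} that $\tN$ depends only on $g$, $e$, $v$ and $Q$ to replace the auxiliary bundle $W$ by $V$ itself. This is a tidier logical organization — (1) really is a corollary of (2) once the bundle-independence of $\tN$ is granted — and the degree bookkeeping you carry out is correct.

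The one place where the argument is thinner than one would like is exactly the one you flag: replacing $\tN_{C,e-2kn}^v(Q;u+4k;W)_\theta$ by $\tN_{C,e-2kn}^v(Q;u+4k;V)_\delta$ when $\ell$ is even, where the labelings of $\OG(V)$ and $\OG(W)$ are chosen independently. Proposition \ref{prop: Ntilde} as stated gives Hecke-transform-independence and deformation invariance over a connected base; the headline ``depends only on $g$, $e$, $v$'' that you lean on is the intended conclusion but is not literally supplied by the ``More precisely'' items. Note that the fact that the number is insensitive to the choice of $\delta$ when $\ell$ is even is only established in Remark \ref{Last remark}, after the closed formula, so invoking it here would be circular. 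That said, the paper's own treatment of (1) — a pointer to CCH1 — faces the same issue once one tries to track the $\Z_2$-labels in the orthogonal setting, so this is a shared imprecision of the text rather than a defect specific to your approach. If you wanted to tighten it, you could instead argue as in the proof of Proposition \ref{prop: Ntilde} (1): build a common Hecke transform of $V$ and of $W$ through which both embed with the matching labels, and compare the intersection numbers there, which avoids having to decide a priori that $\theta$ ``equals'' $\delta$.
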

\begin{proof}
The proof of $(1)$ is essentially the same as the proof of   the symplectic case in \cite[Proposition 5.13]{CCH1}.
For a proof of $(2)$,
let $\tV$ be the Hecke transform of $V$ taken at distinct $r$ points, $p_1,\ldots ,p_r$, so that there is an embedding of schemes $\bIQeV_\delta\hookrightarrow \bIQetV_{\epsilon}$ for some $\epsilon \in \Z_2$.  Choose a line bundle $M$ of degree $-s$ and set $W:=\tV\otimes M,$ so we have $\bIQ_e(\tV)_\delta\isom \bIQ_{e-ns}(W)_\theta$ for some $\theta\in \Z_2.$ Then the  proposition is immediate from Propositions \ref{Prop:equality-Hecke} and \ref{Prop:tranform-GW}.
\end{proof}

\begin{corollary}\label{Two relations}
Let $V$ be an orthogonal bundle of degree $v=n\ell.$  Assume that $\bIQeV_\delta$ is nonempty.  Then the following hold.
\begin{enumerate}
 \item \label{relation1} If $\ell=4m-a$ for some integers $m, a\in \Z$ with $0\leq a\leq 3,$ then there is an orthogonal bundle $W_1$ of degree $0$ and $\epsilon_1\in \Z_2$ for which
\begin{equation}\label{equality1}\tN_{g,e}^v(Q(\alpha); u ;V)_{\delta} \ = \ \tN_{g,e-2mn}^0(Q(\alpha); u+a;W_1 )_{\epsilon_1}.\end{equation}
\item \label{relation2} If $\ell=4k+2-b$ for some $k, b\in \Z$ with $0\leq b \leq 3$, then there is an orthogonal bundle $W_2$ of degree $0$ and $\epsilon_2\in \Z_2$ for which
\begin{equation}\label{equality2}
\tN_{g,e}^v(Q(\alpha); u ; V)_{\delta} \ = \ \tN_{g,e-(2k+1)n}^0(Q(\alpha); u+b ; W_2 )_{\epsilon_2}.\end{equation}
\end{enumerate}
\end{corollary}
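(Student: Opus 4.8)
The plan is to deduce both identities from two structural relations established above: the Hecke--transform relation (Proposition~\ref{Prop:equality-Hecke}), which increases the degree of the bundle by a multiple of $n$ while shifting the parameter $u$, and the line--bundle--twist relation (Proposition~\ref{Prop:tranform-GW}), which changes the degree by twice a multiple of $n$ while shifting $e$. In other words, the idea is to run the same mechanism used in the proof of Proposition~\ref{Relation of GW-invariants}(2), choosing the parameters so that the output bundle has degree exactly $0$; the only novelty is that the twisting line bundle may have to be taken of positive degree (when $m$, resp.\ $k$, is negative), which is still permitted by Proposition~\ref{Prop:tranform-GW}.

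For (1), write $\ell = 4m - a$ with $0 \le a \le 3$. First I would perform a Hecke transform of $V$ at $a$ distinct general points $q_1, \dots, q_a$, choosing the subspace $\Lambda_j \subset V_{q_j}$ in the component $\OG(V_{q_j})_{\delta+n}$ so that, by Lemma~\ref{embedding2}, the resulting orthogonal bundle $\tV$ (of degree $v + an = n(\ell+a) = 4mn$) admits an embedding $\bIQeV_\delta \hookrightarrow \bIQ_e(\tV)_{\epsilon'}$ for some $\epsilon' \in \Z_2$. Proposition~\ref{Prop:equality-Hecke} then gives
\[ \tN_{g,e}^{v}(Q(\alpha); u; V)_\delta \ = \ \tN_{g,e}^{4mn}(Q(\alpha); u+a; \tV)_{\epsilon'} . \]
Next I would fix a line bundle $M$ of degree $-2m$ and set $W_1 := \tV \otimes M$, which is an orthogonal bundle of degree $4mn + 2n(-2m) = 0$; since twisting by $M$ shifts $e$ by $n \cdot (-2m) = -2mn$, Proposition~\ref{Prop:tranform-GW} produces $\epsilon_1 \in \Z_2$ with
\[ \tN_{g,e}^{4mn}(Q(\alpha); u+a; \tV)_{\epsilon'} \ = \ \tN_{g,e-2mn}^{0}(Q(\alpha); u+a; W_1)_{\epsilon_1} . \]
Composing the two displayed equalities yields \eqref{equality1}. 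The scheme $\bIQ_{e-2mn}(W_1)_{\epsilon_1}$ is nonempty because the twist by $M$ identifies it with $\bIQ_e(\tV)_{\epsilon'}$, which contains the image of the nonempty $\bIQeV_\delta$.

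For (2) I would argue identically, now writing $\ell = 4k + 2 - b$ with $0 \le b \le 3$: a Hecke transform at $b$ general points passes to a bundle of degree $n(\ell+b) = (4k+2)n$ and replaces $u$ by $u+b$, after which twisting by a line bundle of degree $-(2k+1)$ brings the degree down to $(4k+2)n - 2n(2k+1) = 0$ and replaces $e$ by $e - (2k+1)n$; this produces \eqref{equality2}.

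I do not expect a serious obstacle here; the argument is essentially bookkeeping. The points that must be checked are that $a$ and $b$ lie in $\{0,1,2,3\}$ so that they are legitimate numbers of Hecke points (immediate), that $\ell + a = 4m$ and $\ell + b = 4k+2$ are even so that a single twist by an integer--degree line bundle annihilates the degree (automatic from the definitions of $a,m$ and of $b,k$), and that the required component of $\OG(V_{q_j})$ can be chosen so that the Hecke embedding of Lemma~\ref{embedding2} exists (automatic, as every component is nonempty). In particular no explicit formula for the labelings $\epsilon'$, $\epsilon_1$, $\epsilon_2$ is needed, only their existence, which is exactly what Propositions~\ref{Prop:equality-Hecke} and~\ref{Prop:tranform-GW} supply.
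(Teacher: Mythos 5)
Your proof is correct and takes essentially the same route as the paper: the paper simply invokes Proposition~\ref{Relation of GW-invariants}(2) with $r=a,\,s=2m$ (resp.\ $r=b,\,s=2k+1$), whereas you have unpacked the proof of that proposition — a Hecke transform at $a$ (resp.\ $b$) points followed by a twist by a line bundle of degree $-2m$ (resp.\ $-(2k+1)$) — into explicit form, and the bookkeeping matches the paper's parameters exactly. Your remark that the twisting line bundle may need positive degree when $m<0$ (resp.\ $k<0$) is a worthwhile observation: the statement of Proposition~\ref{Relation of GW-invariants}(2) nominally requires $s\geq 0$, but its proof (and Proposition~\ref{Prop:tranform-GW}) works for any integer $s$, which is what the present corollary actually needs for arbitrary $\ell$.
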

\begin{proof} We apply $(2)$ of Proposition \ref{Relation of GW-invariants} to both cases.
To show (\ref{relation1}), take $r=a$ and $s=2m$ and to show (\ref{relation2}), take $r=b$ and $s=2k+1.$
\end{proof}
Note that  $\ell$ can be expressed in two ways as in  (1) and (2) above, and so the intersection number $\tN_{g,e}^v(Q(\alpha);V)_{\delta}$ has two different expressions
\eqref{equality1} and \eqref{equality2}.

\subsection{Counting isotropic subbundles}
%Informally speaking, a Gromov--Witten invariant of a manifold $Z$ gives a virtual count of certain curves inside $Z$ with prescribed intersection properties. In this section, we shall prove that in fact the Gromov--Witten invariants $N_{C, e}^w (P ; W)$ {\color{blue} Is this $N_{C, e}^v (Q ; V)$?} really enumerate Lagrangian subbundles; the virtual count corresponds to an actual number.

\begin{comment}\begin{proposition} \label{Prop-Open part} Let $V$ be any orthogonal bundle of degree $v = n \ell$ and assume that $\bIQeV_\delta$ has property $\cP$. Let $Q(\alpha)$ be a weighted homogeneous polynomial of degree $I(e,n, \ell)$ which is of the form
$$ Q(\alpha) \ = \ \left( \prod_{i=1}^s \alpha_{k_j} \right) \cdot \Pt_{\rho_{n-1}}(\alpha)^t . $$
Let $p_1, \ldots , p_s, q_1, \ldots , q_t \in C$ be distinct points. For each $p_i$, let $H_\bullet$  be a complete flag of isotropic subspaces with $H_n\in \OG(V_p)_\delta$, and for each $q_j$, let $\Lambda_\bullet $   be a complete flag of isotropic subspaces with $\Lambda_n\in \OG(V_p)_\delta$. Then the Gromov--Witten number $\tN_{C,e}^w ( Q ; V )_\delta$ enumerates the points, counted with multiplicities, of the intersection
\be \label{MovingLemmaIntersection} \left( \bigcap_{i=1}^s \bX_{k_i}^\circ( \gamma_i \cdot H_i; p_i )_\delta \right) \ \cap \ \left( \bigcap_{j=1}^t \bX_{\rho_{n-1}}^\circ (\eta_j \cdot \Lambda_j ; q_j)_\delta \right) \ee
for a general choice of $\gamma_i \in \SO \left( V_{p_i} \right)$ and $\eta_j \in \SO \left( V_{q_j} \right)$.
\end{proposition}
\end{comment}

Recall that the parameter space $\OG(n)$ of isotropic subspaces in the vector space $\bC^{2n}$ equipped with a symmetric form has two isomorphic components $\OG(n)_0$ and $\OG(n)_1$.
Let $V$ be a trivial orthogonal bundle  $\cO_C^{\oplus 2n}$ over $C$. Then $\OG(V)=\OG(V)_0\cup \OG(V)_1$ where $\OG(V)_i= C\times\OG(n)_i$ for $i=0,1.$ Then $\bIQeoV$ is nonempty if and only if $e$ is  even,
since $w_2(V)= 0\ \mathrm{mod} \ 2$.  On the other hand, a morphism $f:C\to \OG(n)_\delta$  of degree $d$ defines a section $\phi:C\to \OG(V)_\delta$, corresponding to an isotrpoic subbundle $E$ of $V$ with degree $-2d$   (\cite{KT1}). Hence there is an isomorphism $$\bIQeoV_\delta\isom \mathrm{Mor}_{-\frac{e}{2}}(C,\OG(n)_\delta).$$ Thus $\bIQeV_\delta$ is a compatification of $\mathrm{Mor}_{-\frac{e}{2}}(C,\OG(n)_\delta)$, alternative to the Kontsevich's moduli space of stable maps to $\OG(n)_\delta.$ In fact, Kresch and Tamvakis used  $\bIQeV$ (precisely, $\IQ_e(V)$) to compute the quantum cohomology ring of $\OG(n)_0$   for $C=\mathbb{P}^1$; see \cite{KT1}. For the trivial bundle $\cO_C^{\oplus 2n}$ on $C$, we have the following result.

\begin{corollary} \label{Prop:trivial} Let $C$ be a smooth projective curve of genus $g$. Let $u$ be a nonnegative integer and $\lambda^{1} , \ldots , \lambda^{m} \in \cD(n-1)$ strict partitions such that $\sum_{i=1}^m|\lambda^i| = I_u(n,0,e)$. Set $Q(\alpha) = \prod_{i=1}^m \Pt_{\lambda^i}(\alpha)$. Then we have the equality for both $\delta\in \{0, 1\}$,
 \begin{equation} \label{multiplication}
\tN^{0}_{C,e}(Q(\alpha); u ; \cO_C^{\oplus 2n})_\delta \ = \ \left\langle \tau^u_{\rho_{n-1}}, \tau_{\lambda^1} , \ldots\hspace{0.02in} , \tau_{\lambda^m}\right\rangle_{g,\frac{|e|}{2}}.
\end{equation}
%where $\tau_\lambda$ is the cohomology class in $H^*(\OG(n)_0).$
\end{corollary}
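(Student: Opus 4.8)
The plan is to establish \eqref{multiplication} first when $\bIQeV_\delta$ has property $\cP$, and then to deduce the general case from the recursion of Proposition~\ref{Relation of GW-invariants}~(1). Throughout put $V := \cO_C^{\oplus 2n}$ and $\bX := \bIQeV$, so $\ell = 0$ and $\OG(V) = \OG(V)_0 \cup \OG(V)_1$ with $\OG(V)_i = C \times \OG(n)_i$. If $\bIQeoV_\delta$ is empty — equivalently, since $w_2(V) = 0$, if $e$ is odd or too large — then both sides of \eqref{multiplication} vanish, so I assume $\bIQeoV_\delta \ne \emptyset$; then $e$ is even and I set $d := |e|/2 = -e/2 \ge 0$. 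Under the identification $\bIQeoV_\delta \isom \mathrm{Mor}_d(C,\OG(n)_\delta)$ the fiber $\cE(p)$ of the universal subsheaf at $([f],p)$ is the isotropic subspace $f(p) \subset \bC^{2n}$; comparing Definition~\ref{def: isotropic loci} and Remark~\ref{independent_p} with the description of the Schubert varieties $Z_\lambda(H_\bullet) \subset \OG(n)$ in \S\ref{section:CohomOrthGrass}, one checks that, for $1 \le k \le n-1$ and a complete isotropic flag $H_\bullet$ in $V_p$ with $H_n \in \OG(V_p)_\delta$,
\[
\bX_{(k)}(\gamma\cdot H_\bullet;p)_\delta \cap \bIQeoV_\delta = \ev_p^{-1}\bigl(Z_{(k)}(\gamma\cdot H_\bullet)\bigr), \qquad \bX_{\rho_{n-1}}(\eta\cdot H_n;q)_\delta \cap \bIQeoV_\delta = \ev_q^{-1}\bigl(\{\eta\cdot H_n\}\bigr),
\]
i.e.\ these particular loci are genuine preimages of (general translates of) Schubert \emph{varieties}, so the caveat of \S\ref{subsection:def-GW number} does not intervene here.

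For the case of property $\cP$: by Proposition~\ref{Prop:coincide} one has $\tN_{C,e}^0(Q(\alpha);u;V)_\delta = N_{C,e}^0(Q(\alpha);u;V)_\delta = \int_{\bX_\delta}\Theta(Q(\alpha);u)$. I would expand $Q(\alpha) = \prod_{i=1}^m \Pt_{\lambda^i}(\alpha)$ as a $\Z$-combination of monomials $\prod_{l=1}^S \alpha_{k_l}$ and use linearity of $\Theta$ in its polynomial argument (Definition~\ref{different expression}) to reduce to computing $\int_{\bX_\delta}\Theta(\prod_l \alpha_{k_l};u)$. By Proposition~\ref{general intersection}, for general $\gamma_l,\eta_j$ the defining intersection \eqref{intersection1} is a reduced zero-dimensional scheme contained in $\bIQeoV_\delta$, so by the identification above its length equals the number of degree-$d$ morphisms $f\colon C\to\OG(n)_\delta$ with $f(p_l)\in Z_{(k_l)}(\gamma_l\cdot H_{l,\bullet})$ for all $l$ and $f(q_j)=\eta_j\cdot\Lambda_j$ for all $j$; as $\OG(n)_\delta \cong \OG(n)_0$ and the numerical constraint on $Q(\alpha)$ makes \eqref{SumLengths} hold, this number is exactly $\langle \tau_{(k_1)},\ldots,\tau_{(k_S)},\tau_{\rho_{n-1}}^{\,u}\rangle_{g,d}$ by Definition~\ref{GW-for-OG}. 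Applying Proposition~\ref{corollary:gro;oge} to each such invariant and summing over the monomials, and using $\alpha_k = \tfrac12 e_k(X)$ while $\Pt_{(k)}(\zeta^J) = \tfrac12 e_k(\zeta^J)$ — so that the monomial sum reassembles $\prod_i \Pt_{\lambda^i}(\zeta^J)$ — I obtain
\[
\int_{\bX_\delta}\Theta(Q(\alpha);u) = 4^{\,d}\sum_{J\in\cI_{n-1}} S_{\rho_{n-1}}(\zeta^J)^{\,g-1}\,\Pt_{\rho_{n-1}}(\zeta^J)^{\,u}\prod_{i=1}^m \Pt_{\lambda^i}(\zeta^J) = \bigl\langle \tau_{\rho_{n-1}}^{\,u}, \tau_{\lambda^1},\ldots,\tau_{\lambda^m}\bigr\rangle_{g,d},
\]
the last equality again by Proposition~\ref{corollary:gro;oge}, which proves \eqref{multiplication} in this case.

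For the general case, Proposition~\ref{Relation of GW-invariants}~(1) gives $\tN_{C,e}^0(Q(\alpha);u;V)_\delta = \tN_{C,e-2kn}^0(Q(\alpha);u+4k;V)_\delta$ for every $k \ge 0$, and the constraint $\sum_i|\lambda^i| = I_u(n,0,e)$ persists since $I_u(n,0,e) = I_{u+4k}(n,0,e-2kn)$. Choosing $k$ large enough that $e-2kn \le e(V)$, Theorem~\ref{Even}~(a) shows that $\bIQ_{e-2kn}(V)_\delta$ has property $\cP$, so the previous step yields $\langle \tau_{\rho_{n-1}}^{\,u+4k}, \tau_{\lambda^1},\ldots,\tau_{\lambda^m}\rangle_{g,\,d+kn}$; it remains to identify this with $\langle \tau_{\rho_{n-1}}^{\,u}, \tau_{\lambda^1},\ldots,\tau_{\lambda^m}\rangle_{g,d}$. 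By Proposition~\ref{corollary:gro;oge} the two differ only by the factor $4^{kn}\,\Pt_{\rho_{n-1}}(\zeta^J)^{4k}$ inside the sum over $\cI_{n-1}$, so the claim reduces to the identity $4^{\,n}\,\Pt_{\rho_{n-1}}(\zeta^J)^4 = 1$ for all $J\in\cI_{n-1}$, which I would verify by a direct evaluation of the Pfaffian polynomial $\Pt_{\rho_{n-1}}$ at the tuples $\zeta^J$ of roots of unity (this reflects that $\tau_{\rho_{n-1}}$ is the class of a point of $\OG(n)_\delta$, whose value at each idempotent has fourth power $4^{-n}$). Combining the two steps finishes the proof, the argument being uniform in $\delta\in\{0,1\}$. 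The main obstacle is precisely this general case: one must show that the recursion of Proposition~\ref{Relation of GW-invariants}~(1) — which trades a decrease of $e$ for extra $\tau_{\rho_{n-1}}$-insertions — is mirrored on the Gromov--Witten side, and the cleanest route I see is through the Vafa--Intriligator formula together with the scalar identity $4^n\Pt_{\rho_{n-1}}(\zeta^J)^4 = 1$, rather than via a geometric comparison on $\OG(n)$; the remaining ingredients (reading the degeneracy loci on the trivial bundle as incidence conditions for morphisms, and expanding $\prod_i\Pt_{\lambda^i}(\alpha)$ into monomials) are routine.
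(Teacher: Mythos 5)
Your proof takes a genuinely different route from the paper's. The property-$\cP$ case you handle first is essentially correct (and essentially the content of the paper's display~\eqref{eq5} when $\bIQeV_\delta$ has property $\cP$), but your reduction of the general case relies on an identity you do not prove. The paper establishes~\eqref{eq5} directly for \emph{all} $e$ in one step: by Definition~\ref{Def:Arbitrary}, $\tN^0_{C,e}(P;u;\cO_C^{\oplus 2n})_\delta = N^{tn}_{C,e}(P;u+t;\tV)_\epsilon$ for a Hecke transform $\tV$ with property~$\cP$, and then the degree-of-limit-cycle argument from Corollary~\ref{Coro:equality-numbers} together with Proposition~\ref{general intersection} identifies this with a count of morphisms $C\to\OG(n)$, hence a Gromov--Witten invariant; the rest is linearity plus the observation that the Vafa--Intriligator expression depends only on the product of the inserted classes. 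No recursion is invoked.

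The concrete gap in your proposal is the scalar identity
\[
4^n\, \Pt_{\rho_{n-1}}(\zeta^J)^4 \;=\; 1 \qquad \text{for all } J\in\cI_{n-1},
\]
which you state you ``would verify by a direct evaluation'' but do not. This is not a triviality. Note that the Gromov--Witten-invariant form of this statement,
$\langle \tau_{\lambda^1},\ldots,\tau_{\lambda^m}\rangle_{g,d}=\langle\tau_{\rho_{n-1}}^{4s},\tau_{\lambda^1},\ldots,\tau_{\lambda^m}\rangle_{g,d+sn}$,
is exactly the final corollary of \S\ref{section:GW numbers}, which the paper derives \emph{from} Corollary~\ref{Prop:trivial} together with Proposition~\ref{Relation of GW-invariants}~(1). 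So your argument runs the implication in the opposite direction and therefore requires an independent proof of the scalar identity; without it, the general case is not established. (The identity is true — one can check $n=2,3$ by hand, and in general it amounts to the relation $\tau_{\rho_{n-1}}^{*4}=q^n$ in $qH^*(\OG(n)_0)$ via the formula $\Pt_{\rho_{n-1}}=2^{-(n-1)}S_{\rho_{n-1}}$ and the factorization $S_{\rho_{n-1}}(X)=\prod_{i<j}(x_i+x_j)\prod_i x_i$ evaluated at the roots of unity $\zeta^J$ — but a proof or a precise reference must be supplied, and it is not obviously shorter than the paper's direct route.) As a secondary remark, $\tN$ is only defined in Definition~\ref{Def:Arbitrary} when $\bIQeV_\delta$ is nonempty, so your opening reduction ``both sides vanish if $\bIQeoV_\delta=\emptyset$'' should be phrased as ``the statement is vacuous,'' not as a vanishing claim about an undefined quantity.
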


\begin{proof}
%Recall that $\dot{\Pt}_{\lambda}(X)=\tau_\lambda$(record this somewhere)
Consider the monomial $P(\alpha) = \prod_{i=1}^s \alpha_{k_i}$ of (weighted) degree $I_u(n,0,e)$.  Using Proposition \ref{general intersection} and the argument in the proof of Corollary \ref{Coro:equality-numbers}, we obtain
\be
\label{eq5} \tN_{C,e}^0 ( P(\alpha) ; u ; \cO_{C}^{\oplus 2n}  ) \ = \ \left\langle \tau^u_{\rho_{n-1}}, \tau_{k_1}, \ldots ,\tau_{k_s} \right\rangle_{g,\frac{|e|}{2}} .
\ee
On the other hand, in view of the Vafa--Intriligator-type formula, the Gromov--Witten invariant $\left\langle \tau_{\lambda^1}, \ldots ,\tau_{\lambda^m}\right\rangle_{g,d}$ only depends on the product $\prod_{i=1}^m \tau_{\lambda^i}$ of the arguments. Thus we have
\[\left\langle \tau^u_{\rho_{n-1}}, \tau_{\lambda^1}, \ldots ,\tau_{\lambda^m}\right\rangle_{g,d} \ = \ \left\langle \tau^u_{\rho_{n-1}}\cdot \prod_{i=1}^m\tau_{\lambda^i}\right\rangle_{g,d}.\]
Recall that the Chow ring  $\mathrm{CH}^*(\OG(n)_0)$ is generated by $\tau_1,\ldots,\tau_{n-1}.$ In particular, we have  \[\tau^u_{\rho_{n-1}}\cdot \prod_{i=1}^m\tau_{\lambda^i} \ = \ \Pt_{\rho_{n-1}}^u (\tau)\cdot Q(\tau),\] where
$\Pt_{\rho_{n-1}}^u (\tau)\cdot Q(\tau)$ is  the polynomial $\Pt_{\rho_{n-1}}^u (\alpha)\cdot Q(\alpha)$ with $\alpha$ replaced by $\tau=(\tau_1,\ldots, \tau_{n-1}).$ Then the equality  (\ref{multiplication}) follows from the linearity of both sides \eqref{eq5}.
\end{proof}

\begin{comment}
The following corollary has been easily expected and  is immediate from Corollary \ref{Prop:trivial}.

\begin{comment}
 Let $V$ be an orthogonal bundle over $C$ of degree $n\ell.$
  Assume $\bIQeV_\delta$ is nonempty. Let $u$ be a nonnegative integer and $Q(\alpha)$ a homogeneous polynomial of degree $I_u(n,\ell,e).$
If $Q(\alpha)=P(\alpha)\cdot \Pt^t_{\rho_{n-1}}(\alpha)$ for some $t\geq 0$, and $W$ is any Hecke transform of $V$ such that
 $\deg \frac{V}{W}=t$, and so we have embedding $\bIQeW_\epsilon\hookrightarrow \bIQeV_\delta,$ then
  we have \be \label{Formula-reverse} \tN_{C,e}^v(Q(\alpha); u ;V)_{\delta} \ = \ \tN_{C,e}^{v-tn}(P(\alpha); u+t ;W)_{\epsilon}.\ee
\end{corollary}
\begin{proof}
Both sides of \eqref{Formula-reverse} are reduced to genus $0$ GW-invariants by  Corollary \ref{Both relations} and deformation of orthogonal bundles;
But this is correct for some $V$; Only some $V$ is connected to the trivial bundle via  three operations; deformation, Hecke trasnform, and tensoring with a line bundle.
\end{proof}
The formula \eqref{Formula-reverse} reverse the formula in some sense.
We could directly obtain it by generalizing the result of Proposition  to general degeneracy loci, without resorting to genus-0 Gromov-Witten invariants.
\end{comment}

Corollary \ref{Prop:trivial} together with (1) of Propositions \ref{Relation of GW-invariants} yields the following recursive relation among Gromov-Witten invariants of $\OG(n)_\delta$ for any $\delta \in \Z_2$.

\begin{corollary} Let $C$ be a smooth projective curve of genus $g$. Suppose $\sum_{i=1}^m|\lambda^{i}|= 2(n-1)d + \frac{n(n-1)}{2}(1-g)$ for $d\geq 0.$ Then for any integer $s\geq 0$, we have
$$
\left\langle \tau_{\lambda^1} , \ldots \hspace{0.02in} , \tau_{\lambda^m} \right\rangle_{g,d} \ = \ \left\langle \tau_{\rho_{n-1}}^{4s} , \tau_{\lambda^1} , \ldots \hspace{0.02in} , \tau_{\lambda^m} \right\rangle_{g,d+sn} .
$$
\end{corollary}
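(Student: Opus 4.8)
The plan is to evaluate both sides of the claimed identity as $\widetilde{N}$-numbers for the trivial orthogonal bundle $\cO_C^{\oplus 2n}$, using Corollary~\ref{Prop:trivial}, and then to pass from the left-hand side to the right-hand side by a single application of part~(1) of Proposition~\ref{Relation of GW-invariants}. Fix $\delta\in\Z_2$, and put $e := -2d$ and $Q(\alpha) := \prod_{i=1}^m \Pt_{\lambda^i}(\alpha)$. Since $\ell=0$ and $I_t(n,\ell,e)=I(n,\ell,e)-\tfrac12 n(n-1)t$, we have
\[
I_0(n,0,-2d) \ = \ I(n,0,-2d) \ = \ 2(n-1)d + \tfrac12 n(n-1)(1-g),
\]
which by hypothesis equals the weighted degree $\sum_{i=1}^m|\lambda^i|$ of $Q(\alpha)$; moreover $e=-2d$ is even, so the saturated locus of $\bIQ_{-2d}(\cO_C^{\oplus 2n})_\delta$ is nonempty (see the discussion preceding Corollary~\ref{Prop:trivial}). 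Hence Corollary~\ref{Prop:trivial} with $u=0$ gives
\[
\tN^{0}_{C,-2d}\!\left(Q(\alpha);\, 0;\, \cO_C^{\oplus 2n}\right)_\delta \ = \ \left\langle \tau_{\lambda^1} , \ldots , \tau_{\lambda^m}\right\rangle_{g,d}.
\]

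Next I would apply part~(1) of Proposition~\ref{Relation of GW-invariants} with $V=\cO_C^{\oplus 2n}$, $v=0$, $e=-2d$, $u=0$ and $k=s\ge 0$, obtaining
\[
\tN^{0}_{C,-2d}\!\left(Q(\alpha);\, 0;\, \cO_C^{\oplus 2n}\right)_\delta \ = \ \tN^{0}_{C,-2(d+sn)}\!\left(Q(\alpha);\, 4s;\, \cO_C^{\oplus 2n}\right)_\delta,
\]
writing $-2d-2sn=-2(d+sn)$. A short computation shows
\[
I_{4s}\bigl(n,0,-2(d+sn)\bigr) \ = \ I\bigl(n,0,-2(d+sn)\bigr) - 2sn(n-1) \ = \ 2(n-1)d + \tfrac12 n(n-1)(1-g),
\]
which is again the weighted degree of $Q(\alpha)$, and $-2(d+sn)$ is even; so Corollary~\ref{Prop:trivial}, this time with $u=4s$, yields
\[
\tN^{0}_{C,-2(d+sn)}\!\left(Q(\alpha);\, 4s;\, \cO_C^{\oplus 2n}\right)_\delta \ = \ \left\langle \tau_{\rho_{n-1}}^{4s},\, \tau_{\lambda^1} , \ldots , \tau_{\lambda^m}\right\rangle_{g,d+sn}.
\]
Concatenating the three displayed equalities proves the identity for every $\delta\in\Z_2$.

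There is no serious obstacle here: the argument is pure bookkeeping, and the only thing requiring care is to check, at each invocation of Corollary~\ref{Prop:trivial}, that the weighted degree of $Q(\alpha)$ matches the relevant $I_u(n,0,\cdot)$ (so the convention $\tN=0$ for mismatched degrees does not interfere) and that the evaluation degree is even (so the isotropic Quot scheme has a nonempty saturated locus). Both verifications reduce, as carried out above, to the hypothesis $\sum_{i=1}^m|\lambda^i| = 2(n-1)d + \tfrac12 n(n-1)(1-g)$ together with the defining relation $I_t(n,\ell,e)=I(n,\ell,e)-\tfrac12 n(n-1)t$; one also checks along the way that $(\ref{SumLengths})$ then automatically holds for the right-hand Gromov--Witten invariant, so that both sides are genuine enumerative counts.
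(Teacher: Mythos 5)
Your proof is correct and follows exactly the route the paper indicates: apply Corollary~\ref{Prop:trivial} to rewrite both Gromov--Witten invariants as $\tN$-numbers for the trivial orthogonal bundle, then use Proposition~\ref{Relation of GW-invariants}~(1) with $k=s$ to identify the two $\tN$-numbers. The degree bookkeeping ($I_0(n,0,-2d)=I_{4s}(n,0,-2(d+sn))=\sum_i|\lambda^i|$) and the nonemptiness check are exactly the verifications needed.
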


\begin{comment}
\section{Odd rank orthogonal bundles}

\begin{proposition} Let $L$ be a line bundle of degree $2m$. Assume that the number of maximal rank $n-1$ isotropic subbundles of a general $V \in \MO^i ( 2n - 1, L )$ is finite of cardinality $N ( g, 2n-1 )$. Then $N ( g, 2n - 1) = \frac{1}{2} N ( g, 2n )$. \end{proposition}

\begin{proof} Let $V$ be general in $\MO^i ( 2n - 1 , L )$. Let $M^\in \sqrt{L} \subset \Pic^m ( C )$ be the line bundle of degree $m$ such that $\det V \cong M^{2n-1}$. Then $V \perp M$ is $L$-valued orthogonal of rank $2n$, and by \cite[Lemma 8.1]{CCH3} the number of maximal rank $n$ isotropic subbundles of $V$ is exactly half that of $V$. Thus if we can show that \end{proof}

\end{comment}

From now on we restrict ourselves to the intersection numbers $\tN_{g,e}^v(Q(\alpha); V)$ (the case $u=0$).
Let $L$ be a line bundle of degree $\ell.$ Let  $m, a, k, b\in \Z$ be integers such that $\ell=4m-a$ for $0\leq a\leq 3,$ and $\ell=4k+2-b$ for $0\leq b\leq 3.$
\begin{corollary}\label{Last Coro} Let $V$ be an orthogonal bundle of degree $v=n\ell.$ Let $Q(\alpha)$ be a homogeneous polynomial of degree $I(n, \ell, e).$ Assume $\bIQeV_\delta$ is nonempty.  Then the intersection number $\tN_{g,e}^v(Q(\alpha);V)_\delta$ is given as follows.
\begin{enumerate}
\item  If $e$ is even, then (for any $n$)
\begin{equation}\label{formula for even}
 \tN_{g,e}^v(Q(\alpha);V)_\delta \ = \ \frac{1}{2^{e-2mn}}\sum_{J\in
\cI_{n-1}}S_{\rho_{n-1}}(\zeta^J)^{g-1}\Pt_{\rho_{n-1}}(\zeta^J)^aQ(\zeta^J),\end{equation}

\item if both $n$ and $e$ are odd, then \begin{equation}
 \tN_{g,e}^v(Q(\alpha);V)_\delta \ = \ \frac{1}{2^{e-(2k+1)n}}\sum_{J\in
\cI_{n-1}}S_{\rho_{n-1}}(\zeta^J)^{g-1}\Pt_{\rho_{n-1}}(\zeta^J)^bQ(\zeta^J).\end{equation}

\end{enumerate}
\end{corollary}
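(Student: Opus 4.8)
The plan is to connect $V$ to the trivial orthogonal bundle $\cO_C^{\oplus 2n}$ by a chain of the operations catalogued in \S\ref{section:GW numbers}, and then to read off the answer from the Vafa--Intriligator-type formula. Treating case (1) first (so $e$ is even), I would apply Corollary~\ref{Two relations}(1) with $u = 0$: writing $\ell = 4m - a$ with $0 \le a \le 3$, this yields a rank-$2n$ orthogonal bundle $W_1$ of degree $0$ and some $\epsilon_1 \in \Z_2$ with $\bIQ_{e-2mn}(W_1)_{\epsilon_1}$ nonempty and
\[ \tN_{g,e}^v\big(Q(\alpha);V\big)_\delta \ = \ \tN_{g,\,e-2mn}^{0}\big(Q(\alpha);\,a;\,W_1\big)_{\epsilon_1}. \]
Because $e$ is even, $e - 2mn$ is even as well, and by the Stiefel--Whitney discussion of \S\ref{StWh} the nonemptiness of $\bIQ_{e-2mn}(W_1)$ forces $w_2(W_1) = 0$; hence $W_1$ lies in the connected component of the moduli of rank-$2n$, degree-$0$ orthogonal bundles with $w_2 = 0$, which also contains $\cO_C^{\oplus 2n}$. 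Passing to a chain of (possibly non-flat) families of isotropic Quot schemes through this component and invoking the deformation invariance of Proposition~\ref{prop: Ntilde}(2), I would replace $W_1$ by $\cO_C^{\oplus 2n}$; by Corollary~\ref{Prop:trivial} the resulting value does not depend on the label induced along the family.

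Next I would write $Q(\alpha)$ as a $\bC$-linear combination of monomials $\prod_i \alpha_{k_i}$ and apply Corollary~\ref{Prop:trivial} (whose proof handles such monomials directly) together with the fact that the genus-$g$ Gromov--Witten invariant depends only on the product of its arguments in $qH^*(\OG(n)_0,\bC)$, obtaining
\[ \tN_{g,\,e-2mn}^{0}\big(Q(\alpha);\,a;\,\cO_C^{\oplus 2n}\big) \ = \ \big\langle\, \tau_{\rho_{n-1}}^{a}\cdot Q(\tau)\,\big\rangle_{g,\,d}, \qquad d := \tfrac12\,|e - 2mn|, \]
where $Q(\tau)$ is the class obtained from $Q(\alpha)$ by substituting $\tau_i$ for $\alpha_i$. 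Using (\ref{Inle}) one checks the degree compatibility $I_a(n,0,e-2mn) = I(n,\ell,e)$, which is what makes Corollary~\ref{Two relations}(1) meaningful for a fixed $Q$, and deduces from it that $e - 2mn \le 0$, so $d = (2mn - e)/2 \ge 0$ and (\ref{SumLengths}) holds automatically. Then Proposition~\ref{corollary:gro;oge} evaluates the invariant as $4^d \sum_{J \in \cI_{n-1}} S_{\rho_{n-1}}(\zeta^J)^{g-1}\,\Pt_{\rho_{n-1}}(\zeta^J)^{a}\,Q(\zeta^J)$, and since $4^{d} = 2^{2mn-e} = 2^{-(e-2mn)}$ this is exactly the right-hand side of (\ref{formula for even}). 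Case (2) runs along identical lines using Corollary~\ref{Two relations}(2): with $\ell = 4k+2-b$ one lands on a degree-$0$ bundle $W_2$ via the shift $e \mapsto e - (2k+1)n$, and this shifted degree is even precisely because both $n$ and $e$ are odd (so $(2k+1)n$ is odd), whence $w_2(W_2) = 0$, $W_2$ deforms to $\cO_C^{\oplus 2n}$, and the same computation produces the stated formula with $a,\,2mn$ replaced by $b,\,(2k+1)n$ and $4^d = 2^{(2k+1)n-e}$.

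The step I expect to be the main obstacle is the deformation of $W_i$ to the trivial bundle. It requires knowing that the relevant component of the moduli (or moduli stack) of rank-$2n$, degree-$0$, $w_2 = 0$ orthogonal bundles is connected and contains $\cO_C^{\oplus 2n}$ as a (poly)stable point, and then arranging, in the style of the proof of Proposition~\ref{prop: Ntilde}(2), a finite chain of local families of curves and orthogonal bundles joining $W_i$ to $\cO_C^{\oplus 2n}$ and carrying the induced labelings through this chain — the saving grace being that the endpoint value is label-independent by Corollary~\ref{Prop:trivial}. The remaining work — checking which of case (1) or (2) applies via the parity of the shifted degree and the associated $w_2$, and the arithmetic identity $4^{d} = 2^{2mn-e}$ (resp.\ $2^{(2k+1)n-e}$) — is routine once the degree shifts are tracked.
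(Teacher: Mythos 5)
Your proof is correct and follows essentially the same route as the paper's (terse) proof, which invokes Corollary~\ref{Two relations}, Corollary~\ref{Prop:trivial}, and Proposition~\ref{corollary:gro;oge} in the same order you do. The paper leaves implicit exactly the step you identify as the ``main obstacle'' — that the degree-$0$ bundle $W_i$ with $w_2 = 0$ can be connected to $\cO_C^{\oplus 2n}$ by a chain of local families so that Proposition~\ref{prop: Ntilde}(2) applies; this is the standard fact that the moduli stack of $\SO_{2n}$-bundles on $C$ is connected for each topological type indexed by $\pi_1(\SO_{2n})\cong\Z_2$, and the paper simply takes it for granted. Your degree bookkeeping ($I_a(n,0,e-2mn)=I(n,\ell,e)$, the nonpositivity of the shifted degree from nonemptiness for the trivial bundle, and $4^d = 2^{2mn-e}$, resp.\ $2^{(2k+1)n-e}$) is accurate, and the use of the label-independence for the trivial bundle to dispose of the ambient $\epsilon_i$ matches the ``Recall that\ldots'' line of the paper's proof.
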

\begin{proof}Recall that for an even integer $f\leq 0$,  we have
\[\tN_{g,f}^0(Q(\alpha);\cO_C^{\oplus 2n})_{0} \ = \ \tN_{g,f}^0(Q(\alpha);\cO_C^{\oplus 2n})_{1}.\]

The formulas $(1)$ and $(2)$ follow from  Corollaries  \ref{Two relations}, \ref{Prop:trivial} and  Proposition \ref{corollary:gro;oge}.
\end{proof}
%\[  I (n, \ell, e)  := -(n-1)e  -\frac{n(n-1)}{2} (g-1-\ell) \]

\begin{remark}\label{Last remark}  We notice that if $\ell$ is even, then for each even  $e$ with $e\equiv w_2(V) \ \mathrm{mod}\  2$ (so that $\bIQeV$ is nonempty),  the formula in \eqref{formula for even} does not depend on $\delta \in \{0,1\},$ i.e., we have $ \tN_{g,e}^v(Q(\alpha);V)_0= \tN_{g,e}^v(Q(\alpha);V)_1$. This confirms the speculation in \cite[Conjecture 8.2]{CH4}, and   it can also be shown as in the proof of Proposition \ref{counting odd rank}.

%Thus in this case we have $\tN_{g,e}^v(Q(\alpha);V)=2\tN_{g,e}^v(Q(\alpha);V)_0$.
%If $\ell$ is odd, then for  each $e$ with $\bIQeV$ nonempty, there is exactly one $\delta \in\Z_2$ such that $\bIQeV_\epsilon$ is nonempty, and hence $\tN_{g,e}^v(Q(\alpha);V)=\tN_{g,e}^v(Q(\alpha);V)_\delta$.
%(Question: For a general stable orthogonal bundle $V$, in case $e\equiv w(V) \ \mathrm{mod} \ 2$, then  if $\bIQeV_i$ is nonempty, then is $\bIQeV_j$ nonempty? Here $\{i,j\}=\Z_2)$
\end{remark}

Note that  we do not have a closed formula for $\tN_{g,e}^v(Q(\alpha);V)_\delta$ when $n$ is even and $e$ is odd. This is because the bundles in this case cannot be connected  to the trivial bundle by Hecke transforms and tensoring by line bundles. Recall that intersection numbers for the trivial bundle are explicitly computed using Gromov-Witten invarants.  On the other hand, an orthogonal bundles $V$ with $n$ even and $e$ odd can be connected to  an  orthogonal bundles $W$ of degree $0$ and with $w_2(W)=1$. Thus it would be interesting to compute intersection numbers for an orthogonal bundle $W$ of degree $0$ and with $w_2(W)=1$.

\section{Counting formulas for orthogonal bundles}\label{Section:counting formulas}
In this section, we give counting formulas for maximal isotropic subbundles of orthogonal bundles.
Let $V$ be a general stable $L$-valued orthogonal  bundle over $C$ of rank $r=2n$, or $2n+1$ such that $\bIQ_{e_0}(V)$ is a nomempty smooth scheme of zero dimension.(See Theorem \ref{e_0maximal} for a condition on $e_0$.) Then the intersection number corresponding to $Q(\alpha)=1$:
  \[\int_{\bIQ_{e_0}(V)}1\]
  is precisely the number of maximal isotropic subbundles of $V$. Recall that this number is invariant under a deformation of $V$ and $C$, and we denote it by $N(g,r,\ell,e_0)$.
\begin{comment}
\begin{definition} We define:
$$ \tN_{g,e}^v(Q(\alpha);V)=\tN_{g,e}^v(Q(\alpha);V)_{0}
+N_{g,e}^w(P(X);V)_{1},$$ where $N_{g,e}^v(Q(\alpha);V)_{\delta}= 0$ if $\bIQ_e(V)_{\delta}$ is empty.
\end{definition}
\end{comment}

\subsection{Even rank case}  \label{ResultsEven}

%Let $V$ be a general stable $L$-valued orthogonal  bundle over $C$ of rank $2n$ with $\det V \cong L^n$, where  $n(\ell - g + 1)$ is even for  $\ell = \deg (L)$. Let $Q$ be the constant polynomial $1$, and $e_0 := -\frac{1}{2}n(g-1-\ell )$. Then the invariant $\tN_{g,e_0}^v (1 ; V)$ is precisely the number of maximal isotropic subbundles of $V$, and we denote it by $N(g,2n,\ell,e_0)$. {\color{blue} Recall that when $\ell$ is even, this should be applied to those bundles $V$ with $w_2 (V) \equiv e_0$  as discussed in \S\:\ref{StWh}.}
Assume  $r=2n.$ If $\ell$ is odd,  $N(g,2n,\ell,e_0)=\tN_{g,e}^v(Q(\alpha);V)_\delta$ for exactly one   $\delta \in \mathbb{Z}_2$. If $\ell$ is even,
$N(g,2n,\ell,e_0)= 2\tN_{g,e}^v(Q(\alpha);V)_\delta$ by Remark \ref{Last remark}. Thus from Corollary \ref{Last Coro}, we have
\begin{corollary} \label{counting formula} \quad
 Let $e_0 := -\frac{1}{2} n(g-1 - \ell)$. Then the number $N(g,2n,\ell,e_0)$ is given as follows.
\begin{enumerate}
\item Assume that $e_0$ is even.  If $\ell=4m-a$ for some $m, a\in \Z$ and $0\leq a\leq 3,$  then   we have
\begin{displaymath} N (g, 2n, \ell, e_0) \ = \
\begin{cases}

\scriptstyle{\scriptstyle{{A_1}\sum_{J\in
\cI_{n-1}}S_{\rho_{n-1}}(\zeta^J)^{g-1}\Pt_{\rho_{n-1}}(\zeta^J)^a}}, & \text{if} \ \: a=0,2 , \\

\scriptstyle{\scriptstyle{{A_2}\sum_{J\in
\cI_{n-1}}S_{\rho_{n-1}}(\zeta^J)^{g-1}\Pt_{\rho_{n-1}}(\zeta^J)^a}}, & \text{if} \ \: a=1,3 ,\\
\end{cases}
\end{displaymath}
where $A_1=2^{2mn-e_0+1}=\sqrt{2}^{n(a+g-1)+2}$ and $A_2=2^{2mn-e_0}=\sqrt{2}^{n(a+g-1)}$.
\item Assume that  $n$ is odd and $e_0$ is odd. If $\ell=4k+2-b$ for $0\leq b\leq 3$, then
\begin{displaymath} N (g, 2n, \ell, e_0) \ = \
\begin{cases}

\scriptstyle{\scriptstyle{{B_1}\sum_{J\in
\cI_{n-1}}S_{\rho_{n-1}}(\zeta^J)^{g-1}\Pt_{\rho_{n-1}}(\zeta^J)^b}}, & \text{if} \ \: b=0,2 , \\

\scriptstyle{\scriptstyle{{B_2}\sum_{J\in
\cI_{n-1}}S_{\rho_{n-1}}(\zeta^J)^{g-1}\Pt_{\rho_{n-1}}(\zeta^J)^b}}, & \text{if} \ \: b=1,3 ,\\
\end{cases}
\end{displaymath} where $B_1=2^{n(1+2k)-e_0+1}=\sqrt{2}^{n(b+g-1)+2}$ and  $B_2=2^{n(1+2k)-e_0}=\sqrt{2}^{n(b+g-1)}.$
\end{enumerate}
\end{corollary}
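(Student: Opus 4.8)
The plan is to deduce Corollary~\ref{counting formula} directly from Corollary~\ref{Last Coro}, using Theorem~\ref{e_0maximal} to identify the number of maximal isotropic subbundles with an intersection number on $\bIQ_{e_0}(V)$ and Remark~\ref{Last remark} to combine the two components of $\OG(V)$ when $\ell$ is even. First I would recall that, for general $V$ (under the standing hypothesis $\det V\cong L^n$ and the numerical conditions on $e_0$), Theorem~\ref{e_0maximal} says $\IQ_{e_0}(V)$ is nonempty, reduced, zero-dimensional and consists entirely of saturated isotropic subsheaves; hence it coincides with $\bIQ_{e_0}(V)$, and its length equals $N(g,2n,\ell,e_0)$. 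Being smooth of the expected dimension $I(n,\ell,e_0)=0$, the scheme $\bIQ_{e_0}(V)$ and each of its labeled pieces has property~$\cP$, so Proposition~\ref{Prop:coincide} applies; taking $Q(\alpha)=1$ and $u=0$ in the definition of the Gromov--Witten numbers yields
\[
N(g,2n,\ell,e_0) \ = \ \int_{\bIQ_{e_0}(V)}1 \ = \ \tN_{g,e_0}^v(1;V)_0 + \tN_{g,e_0}^v(1;V)_1 ,
\]
where a summand is interpreted as $0$ if the corresponding component is empty.

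Next I would split on the parity of $\ell$. If $\ell$ is odd, the labeling convention of \S\ref{subsec:labeling} together with Theorem~\ref{Even}(b) puts all degree-$e_0$ isotropic subbundles in the single component $\bIQ_{e_0}(V)_{\delta_0}$ with $\delta_0\equiv e_0\bmod 2$, so $N(g,2n,\ell,e_0)=\tN_{g,e_0}^v(1;V)_{\delta_0}$. If $\ell$ is even, Remark~\ref{Last remark} gives $\tN_{g,e_0}^v(1;V)_0=\tN_{g,e_0}^v(1;V)_1$, so $N(g,2n,\ell,e_0)=2\,\tN_{g,e_0}^v(1;V)_\delta$ for either $\delta$. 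Then I would feed this into Corollary~\ref{Last Coro} with $Q(\alpha)=1$: its case~(1) supplies the formula for $\tN_{g,e_0}^v(1;V)_\delta$ when $e_0$ is even (for any $n$), and its case~(2) supplies it when $n$ and $e_0$ are both odd --- exactly the two ranges appearing in Corollary~\ref{counting formula}.

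Finally there remains only an elementary power-of-two computation. From $e_0=-\tfrac12 n(g-1-\ell)=\tfrac12 n(\ell+1-g)$ and $\ell=4m-a$ one gets $2mn-e_0=\tfrac12 n(a+g-1)$, so the prefactor $2^{-(e_0-2mn)}$ of Corollary~\ref{Last Coro}(1) equals $\sqrt2^{\,n(a+g-1)}$; inserting the extra factor $2$ precisely when $\ell$ is even, i.e.\ when $a\in\{0,2\}$, gives $A_1=\sqrt2^{\,n(a+g-1)+2}$ there and $A_2=\sqrt2^{\,n(a+g-1)}$ when $a\in\{1,3\}$. Case~(2) is the same computation with $\ell=4k+2-b$ and $(2k+1)n-e_0=\tfrac12 n(b+g-1)$, producing $B_1$ and $B_2$. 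I do not expect any real obstacle; the enumerative content is entirely contained in the cited results, and the only point needing care is the parity bookkeeping --- verifying that $a\in\{0,2\}$ (resp.\ $b\in\{0,2\}$) is exactly the condition ``$\ell$ even'', matching the ranges of Corollary~\ref{Last Coro}, and checking that the numerical hypotheses of Theorem~\ref{e_0maximal} are consistent with the parities used throughout.
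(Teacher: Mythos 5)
Your proposal is correct and follows essentially the same route as the paper: identify $N(g,2n,\ell,e_0)$ with the length of the zero-dimensional scheme $\bIQ_{e_0}(V)$ via Theorem~\ref{e_0maximal}, split it over the two labels, observe that both labeled pieces contribute equally when $\ell$ is even (the $\delta$-independence of the formula in Corollary~\ref{Last Coro} — the paper cites Remark~\ref{Last remark}, which as literally stated covers the case $e_0$ even, but the same $\delta$-independence is manifest in Corollary~\ref{Last Coro}(2) for the odd $e_0$ case, which is what you and the paper implicitly rely on), while only one is nonempty when $\ell$ is odd, then apply Corollary~\ref{Last Coro} with $Q(\alpha)=1$ and simplify the powers of $2$. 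Your parity bookkeeping and the identities $2mn-e_0=\tfrac12 n(a+g-1)$ and $(2k+1)n-e_0=\tfrac12 n(b+g-1)$ check out.
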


\begin{comment}\begin{proof}
The formulas are immediate from Corollary \ref{Last Coro} and Remark \ref{Last remark} when we know that the intersections in question are nonempty. The nonemptiness follows from the Hirschowitz-type bound in \cite[Theorem 1.3]{CH2}.
\end{proof}
\end{comment}

\begin{comment}
\begin{remark} \label{rmkevenrank}
In the above formulas, we don't have a counting formula when $n$ is even and $e_0$ is odd, which is the case if and only if $r= 8k+4$ and $g \equiv \ell \mod 2$. For instance, if  $r=8k+4$, $L = \Oc$ and $g$ is even, we don't have a counting formula for $\bIQ_{e_0}(V)$ when $w_2(V)=1$ in general.  But for $r=4$, it can be shown that  $N(g, 4, 0, 1-g) = 2 \cdot 2^g$ (see \cite[Theorem 4.12]{CH4}).  Note that in this case, the same formula holds also for ($g$ odd and $w_2=0$).
\end{remark}
\end{comment}

\subsection{The odd rank case} \label{ResultsOdd}

We shall now use a technique applied in \cite{CCH2}, \cite{CH4} and elsewhere to deduce statements of enumerative geometry for isotropic Quot schemes of odd rank orthogonal bundles from those established for bundles of even rank.
 Note that if  $\bIQ_{e_0}(V)$ is a nomempty smooth scheme of dimension zero, we have $\mathrm{IQ}_{e_0}(V)=\bIQ_{e_0}(V)=\bIQ_{e_0}^\circ (V)$.

Let $V$ be an orthogonal bundle of rank $2n+1$ for $n \ge 1$. As observed in \cite[Lemma 2.4]{CCH2}, there is a uniquely determined line bundle $M$  such that $ M^2 \cong L$ and $\det V \cong M^{2n+1}$. Let $m = \deg (M)$. The following confirms and generalizes \cite[Conjecture 8.1]{CH4}.
%det V = M^{2n+1}
%V \perp M is rank $2n + 2$ $L$-valued orthogonal of determinant $L^n$
%Isom of isot quot schemes from \cite{CCH2}
%Argument that the count for V \perp M gives an actual count. N ( V )$ is finite by \cite{CHoddrank}, so N ( V \perp M ) is finite and equal to $2 \cdot N (V)$ See remark \cite{Last remark}.
%
\begin{proposition} \label{counting odd rank}
For $\ell= 2m$, let $e_0 := -\frac{1}{2} (n+1)(g-1) +mn$. Then we have

\[N ( g, 2n+1, \ell , e_0 ) \ = \ \frac{1}{2} \cdot N ( g, 2n+2, \ell, e_0+m ) .\]
\end{proposition}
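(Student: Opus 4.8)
The strategy is the classical ``orthogonal sum with a line bundle'' trick used in \cite{CCH2} and \cite{CH4}: given a general orthogonal bundle $V$ of rank $2n+1$, form the rank $2n+2$ orthogonal bundle $V \perp M$, where $M$ is the unique line bundle with $M^2 \cong L$ and $\det V \cong M^{2n+1}$. Equipping $M$ with the $L$-valued form given by the square of the canonical isomorphism $M \otimes M \isom L$, one checks that $V \perp M$ is $L$-valued orthogonal of rank $2n+2$ with $\det(V \perp M) \cong M^{2n+1}\otimes M = M^{2n+2} = L^{n+1}$, so that Lemma \ref{existence of isotropic bundle} applies and maximal isotropic subbundles of rank $n+1$ exist. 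The first step is to verify that $V \perp M$ is again general (more precisely, lies in a component of $\cM O_C(2n+2;L)$ whose general member satisfies the hypotheses of Theorem \ref{e_0maximal}(1)), so that $\bIQ_{e_0 + m}(V \perp M)$ is a nonempty smooth zero-dimensional scheme; here one must check the numerical bookkeeping, namely that $e_0 + m = -\tfrac12(n+1)(g-1-\ell)$ with $\ell = 2m$, which is a direct substitution, and that $w_2(V \perp M) \equiv e_0 + m \bmod 2$.

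The heart of the argument is a bijection-with-multiplicity-two between rank $n$ isotropic subbundles of $V$ of degree $e_0$ and rank $n+1$ isotropic subbundles of $V \perp M$ of degree $e_0 + m$. Given an isotropic $E \subset V$ of rank $n$, the subsheaf $E \perp M \subset V \perp M$ is isotropic of rank $n+1$ and degree $e_0 + m$; conversely, I would show that any rank $n+1$ isotropic subbundle $\widetilde E \subset V \perp M$ must contain a copy of $M$: projecting $\widetilde E$ to the $M$-summand cannot be injective on a rank-$\ge 2$ piece since $M$ has rank one, and a dimension/isotropy count (as in \cite[Lemma 8.1]{CCH2} or \cite[\S 2]{CH4}) forces the kernel of $\widetilde E \to M$ to be a rank $n$ isotropic subbundle $E$ of $V$ with $\widetilde E = E \perp M$ after saturating; since $M$ itself, being a maximal isotropic line bundle, has two possible ``orientations'' relative to the two components of $\OG(V\perp M)$, each $E$ gives rise to exactly two such $\widetilde E$, one in each component. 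This is precisely where the factor $\tfrac12$ (equivalently, the factor $2$ in the displayed formula) comes from, and making this correspondence scheme-theoretic — matching tangent spaces $H^0(C, L \otimes \wedge^2 E^*)$ with $H^0(C, L\otimes \wedge^2 \widetilde E^*)$ — is the technical point that guarantees equality of lengths, not merely of underlying sets.

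Concretely, I would argue: $N(g,2n+1,\ell,e_0) = \#\,\IQ_{e_0}(V)$ by Theorem \ref{e_0maximal}(2); the map $E \mapsto E \perp M$ together with the decomposition $\OG(V\perp M) = \OG(V\perp M)_0 \sqcup \OG(V\perp M)_1$ identifies $\IQ_{e_0}(V)$ with $\bIQ_{e_0+m}(V\perp M)_\delta$ for a single $\delta$; and by Remark \ref{Last remark} (the $\delta$-independence of the count when $\ell$ is even), $\#\,\bIQ_{e_0+m}(V\perp M)_\delta = \tfrac12 \#\,\bIQ_{e_0+m}(V\perp M) = \tfrac12 N(g,2n+2,\ell,e_0+m)$. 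I expect the main obstacle to be the scheme-theoretic reducedness/smoothness matching: one must be sure that a general $V$ of rank $2n+1$ yields a $V \perp M$ that is general enough for Theorem \ref{e_0maximal}(1) to apply verbatim, rather than merely semistable or living in a bad component; this is handled by the same deformation-theoretic argument as in the even-rank proof, exhibiting a single good $E$ (e.g. a direct sum of general stable rank-two or line bundles) with $H^1(C, L \otimes \wedge^2 (E \perp M)^*) = 0$, and then invoking openness. The $\delta$-independence input from Remark \ref{Last remark} is essential and cannot be circumvented, since without it one only gets the count for one component.
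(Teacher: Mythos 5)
Your plan has the right high-level outline (pass to the even-rank bundle $V\perp M$, relate the two counts, exploit the two components of $\OG(V\perp M)$), which is indeed the structure of the paper's proof via \cite[Lemma 8.1]{CCH2}. But there are several concrete problems with the route you propose.

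First, the map you describe is not correct: $E\perp M$ is \emph{not} isotropic in $V\perp M$. The orthogonal form on $V\perp M$ restricts to a nondegenerate form on the summand $M$, so no subsheaf containing $M$ can be isotropic; likewise a rank-$(n+1)$ isotropic $\widetilde E$ cannot ``contain a copy of $M$'' as you assert. The correct mechanism behind \cite[Lemma 8.1]{CCH2} is: for isotropic $E\subset V$ of rank $n$, the rank-two $L$-valued orthogonal bundle $(E^\perp/E)\perp M$ carries exactly two isotropic line subbundles; lifting either to $E^\perp\oplus M\subset V\perp M$ gives a rank-$(n+1)$ isotropic $\widetilde E\supset E$, and the two choices land in opposite components of $\OG(V\perp M)$. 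This yields an isomorphism $\bIQo_e(V)\cong \bIQo_{e+m}(V\perp M)_\delta$ for \emph{both} $\delta$. Your proposal first says each $E$ gives two $\widetilde E$ and then says the map identifies $\IQ_{e_0}(V)$ with a \emph{single} $\bIQ_{e_0+m}(V\perp M)_\delta$; these two statements contradict each other, and it is the former that matches Lemma 8.1.

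Second, because the correspondence already hands you both components isomorphically, the appeal to Remark~\ref{Last remark} is unnecessary—but worse, it is unavailable in general. Remark~\ref{Last remark} is stated only for \emph{even} $e$, and when $e_0+m$ is odd (e.g.\ rank $5$, $L=\Oc$, $g\equiv 3\pmod 4$, which is one of the cases in Theorem~\ref{main1.1}(5)) it does not apply; moreover in the uncovered case ($n+1$ even, $e_0+m$ odd) the closed formula of Corollary~\ref{Last Coro} does not exist and the ``also shown as in the proof of Proposition~\ref{counting odd rank}'' route would make your argument circular. The paper instead \emph{derives} the equal lengths of $\bIQ_{e_0+m}(W)_0$ and $\bIQ_{e_0+m}(W)_1$ from Lemma 8.1 (which gives both isomorphic to $\bIQ_{e_0}(V)$) combined with deformation invariance; no external $\delta$-independence input is required.

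Third, $V\perp M$ is a nontrivial direct sum and hence not stable, so it does not lie in $\cM O_C(2n+2;L)$ and Theorem~\ref{e_0maximal}(1) cannot be applied to it ``verbatim,'' regardless of the choice of $E$. This is not a minor matter: it is precisely why the paper does not try to argue that $V\perp M$ is general, and instead invokes the deformation invariance of intersection numbers (Proposition~\ref{prop: Ntilde}(2), \S\ref{section:invariance}), which applies to arbitrary connected families. Since $\bIQ_{e_0+m}(V\perp M)_\delta\cong \bIQ_{e_0}(V)$ is smooth of dimension zero, it has property $\cP$, hence its length equals the intersection number, which in turn equals the length of $\bIQ_{e_0+m}(W)_\delta$ for a general stable $W$ in the same component; summing over $\delta$ gives the factor of two. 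Your proposal gestures at ``the same deformation-theoretic argument as in the even-rank proof,'' but what is actually needed is the deformation invariance of \S 6.2, not the dominance-of-classifying-map argument of Theorem~\ref{e_0maximal}.
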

\begin{proof}
 By \cite[Lemma 8.1]{CCH2}, we have an isomorphism
\[
\bIQo_e(V) \ \cong \ \bIQo_{e+m} (V \perp M)_\delta
\]
for both $\delta \in \{0,1 \}$. Note that $V \perp M$ is an $L$-valued orthogonal bundle of rank $2n+2$.  For $e=e_0$ and for a general $V$, we have seen in Theorem \ref{e_0maximal} that  $\bIQo_{e}(V)$ is a smooth scheme of dimension zero.
By the invariance of the intersection numbers under a deformation which was discussed in \S \: \ref{section:invariance}, the length of $\bIQo_{e+m} (V \perp M)_\delta$ is equal to that of $\bIQ_{e+m} (W)_\delta$ for a general $W$ in the same component as $V \perp M$. Since $\bIQo_{e+m} (V \perp M)$ is isomorphic to the union of two copies of $\bIQ_e(V) =\bIQo_e(V)$,  the statement follows.
\end{proof}

 \begin{remark}\label{Remark:Not have}
  In Corollary \ref{counting formula} and Proposition \ref{counting odd rank} we don't have a closed formula for $N(g, 2n, \ell, e_0)$  when $n$ is even and $e_0$ is odd, and accordingly a formula  $N(g, 2n+1, \ell, e_0)$ when both $n$  and  $e_0+m$ are odd. See the last paragraph of  \S\:\ref{section:GW numbers} for more details.
 By the way, in \cite[Theorem 5.16]{CH4},   explicit formulas are obtained for low rank cases, and in particular for any $g$, it is shown that
  \[ N(g, 4,0,1-g)=2\cdot2^g ; \ \ \  \mathrm{and} \ \ \  N(g, 3, 0, 1-g) =  2^g .\]
  \end{remark}

\subsection{Counting results for orthogonal bundles of low rank} \label{low_rank}

Now we list some explicit counting results obtained from the previous formulas for orthogonal bundles of rank $r$, where $3 \le r \le 6$.
\begin{theorem} \label{main1.1}
Let $C$ be any curve of genus $g \ge 2$. Let $w_2 \in \{0,1\}$ be the 2nd Stiefel--Whitney class.
\begin{enumerate}
\item If $g$ is odd and $w_2 = 0$, then $N ( g , 4 , 0 , 1-g ) = 2^{g+1}$.
%\item $N ( g , 4 , 1 , 2-g ) \ = 2^g$.
\item If $g$ is odd and $w_2 = 0$, then $N ( g , 3 , 0 , 1-g ) \ = 2^g$.
\item If either  ($g \equiv 1 \mod 4$, $w_2 = 0$) or ($g \equiv 3 \mod 4$, $w_2 = 1$) then
\[ N \left( g , 6 , 0 , \frac{3}{2}(1 - g) \right) \ = \ 2^{2g + 1} . \]
\item If $g \equiv 2 \mod 4$ and $w_2=0$, then $N \left( g , 6 , 1 , \frac{3}{2} ( 2 - g ) \right) = 2^{2g}$.
\item If either ($g \equiv 1 \mod 4$, $w_2 = 0$) or ($g \equiv 3 \mod 4$, $w_2 = 1$), then
\[ N \left( g , 5 , 0 , \frac{3}{2}(1 - g) \right) \ = \ 2^{2g} . \]
\end{enumerate}
\end{theorem}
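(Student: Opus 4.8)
The plan is to derive each of the five statements from the general counting formulas already established — Corollary \ref{counting formula} in the even-rank case and Proposition \ref{counting odd rank} in the odd-rank case — by specializing to $n \le 3$ and evaluating the resulting finite sums over the index sets $\cI_{n-1}$ of \S\ref{subsec:notation}. Before doing so, I would record that items (2) and (5) are immediate consequences of items (1) and (3): applying Proposition \ref{counting odd rank} with $\ell = 0$ (so $m = 0$) gives $N(g,3,0,1-g) = \tfrac12 N(g,4,0,1-g)$ and $N(g,5,0,\tfrac32(1-g)) = \tfrac12 N(g,6,0,\tfrac32(1-g))$, and the congruence hypotheses on $g$ stated in the theorem are precisely the conditions $w_2 \equiv e_0 \bmod 2$ required to apply Theorem \ref{e_0maximal} in the relevant ranks, which also guarantees that $\bIQ_{e_0}(V)$ is a nonempty smooth zero-dimensional scheme so that $N(g,r,\ell,e_0)$ is the number of its points. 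Thus it will suffice to prove (1), (3) and (4).

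For (1), I would apply Corollary \ref{counting formula}(1) with $n = 2$, $\ell = 0$ and $e_0 = 1-g$, which is even because $g$ is odd; writing $\ell = 4m-a$ with $m = a = 0$, the prefactor is $A_1 = 2^{2mn-e_0+1} = 2^g$. Since $\rho_{n-1} = (1)$ one has $S_{(1)}(x_1) = x_1$, and $\cI_1 = \{(0),(1)\}$ with $\zeta = -1$, so the two evaluation points are $1$ and $-1$; as $a = 0$, the sum in the formula is $1^{g-1}+(-1)^{g-1} = 2$ because $g-1$ is even. This gives $N(g,4,0,1-g) = 2^g\cdot 2 = 2^{g+1}$, whence also $N(g,3,0,1-g) = 2^g$.

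For (3) and (4), I would take $n = 3$, so $\rho_{n-1} = (2,1)$, and record from \S\ref{subsec:symmetric-poly} that $S_{(2,1)}(x_1,x_2) = x_1 x_2(x_1+x_2)$ and $\Pt_{(2,1)}(x_1,x_2) = \tfrac14 S_{(2,1)}(x_1,x_2)$, while $\cI_2$ is the four-element set $\{(-\tfrac12,\tfrac12),(-\tfrac12,\tfrac52),(\tfrac12,\tfrac32),(\tfrac32,\tfrac52)\}$ with $\zeta = \sqrt{-1}$, on which $S_{(2,1)}(\zeta^J)$ takes the values $\sqrt2\cdot\{1,\sqrt{-1},-\sqrt{-1},-1\}$. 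It then follows that, for any $a \ge 0$,
\[
\sum_{J\in\cI_2} S_{(2,1)}(\zeta^J)^{g-1}\,\Pt_{(2,1)}(\zeta^J)^{a} \ = \ \frac{(\sqrt2)^{\,g-1+a}}{4^{a}}\bigl(1+(\sqrt{-1})^{k}+(-\sqrt{-1})^{k}+(-1)^{k}\bigr),\qquad k:=g-1+a,
\]
which equals $4^{\,1-a}(\sqrt2)^{\,g-1+a}$ when $k\equiv 0\bmod 4$ and vanishes otherwise. Substituting into Corollary \ref{counting formula} then finishes both cases: for $\ell = 0$ and $g\equiv 1\bmod 4$, $e_0$ is even so branch (1) applies with $a = 0$ and $A_1 = \sqrt2^{\,3(g-1)+2}$, giving $2^{2g+1}$; for $\ell = 0$ and $g\equiv 3\bmod 4$, $e_0$ is odd and $n = 3$ is odd, so branch (2) applies, here $\ell = 4\cdot 0+2-2$ so $b = 2$ and $B_1 = \sqrt2^{\,3(g+1)+2}$, again giving $2^{2g+1}$; and for $\ell = 1$ and $g\equiv 2\bmod 4$, $e_0$ is even so branch (1) applies with $\ell = 4\cdot 1-3$, i.e.\ $m = 1$, $a = 3$ and $A_2 = \sqrt2^{\,3(g+2)}$, yielding $2^{2g}$. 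Item (5) then follows from (3) and the reduction above.

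The step I expect to be most delicate is not any individual computation but the matching of data in each of the five cases: one must determine the parity of $e_0$ (which selects the branch of Corollary \ref{counting formula}), choose the representation $\ell = 4m-a$ or $\ell = 4k+2-b$ with $0\le a,b\le 3$, read off the correct power-of-$\sqrt2$ prefactor, and verify that the congruence on $g\bmod 4$ in the theorem is exactly the condition $g-1+a\equiv 0\bmod 4$ (respectively $g-1+b\equiv 0\bmod 4$) under which the Vafa--Intriligator-type sum of Proposition \ref{corollary:gro;oge} is nonzero. The nonemptiness and zero-dimensionality of each $\bIQ_{e_0}(V)$ are supplied by Theorem \ref{e_0maximal}, and the evaluation of $S_{\rho_{n-1}}$ and $\Pt_{\rho_{n-1}}$ at the relevant roots of unity is elementary for $n\le 3$, so no further difficulty is expected.
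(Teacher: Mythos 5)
Your proposal is correct and takes essentially the same approach as the paper: the paper's proof likewise derives (1), (3), (4) by direct evaluation of Corollary \ref{counting formula}, and obtains (2), (5) from (1), (3) via Proposition \ref{counting odd rank}. Your write-up simply carries out in full detail the Vafa--Intriligator sums over $\cI_1$ and $\cI_2$ that the paper leaves to the reader, and all of your intermediate computations (the values of $S_{\rho_{n-1}}$ at the roots of unity, the choice of $(m,a)$ or $(k,b)$, and the matching of the mod-4 congruences on $g$ with the nonvanishing condition $g-1+a\equiv 0\bmod 4$) check out.
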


\begin{proof}
By direct computation using Corollary \ref{counting formula}, parts (1), (3) and (4)  are obtained. Using Proposition \ref{counting odd rank}, parts (2) and (5)  are  consequences of (1) and (3) respectively.  Note that when $\ell=0$, the parity of the degree of a rank $n $ isotropic subbundle of $V$ coincides with $w_2 (V)$.
\end{proof}

Note that this result coincides with that of \cite{CH4}, where the numbers were computed by using canonical isomorphisms for low rank orthogonal bundles. In particular,  (5) confirms \cite[Conjecture 7.8]{CH4}.\\


\begin{thebibliography}{99}

\bibitem{Ab} L.\ Abrams: \textsl{The quantum Euler class and the quantum cohomology of the Grassmannians}, Israel J.\ Math.\ \textbf{117} (2000), 335--352.

%\bibitem {Bea} A.\ Beauville: \textsl{Orthogonal bundles on curves and theta functions}, Ann.\ Inst.\ Fourier \textbf{56}, no.\ 5 (2006), 1405--1418.

\bibitem{Be} A.\ Bertram: \textsl{Towards a Schubert calculus for maps from a Riemmann surface to a Grassmannian}, Internat.\ J.\ Math.\ \textbf{5} (1994), no.\ 6, 811--825.

\bibitem{BG} I.\ Biswas and T.\ G\'{o}mez: \textsl{Hecke correspondence for symplectic bundles with application to the Picard bundles}, Internat.\ J.\ Math.\ \textbf{17} (2006), no.\ 1, 45--63.

\bibitem{BR} I.\ Biswas and S.\ Ramanan: \textsl{An infinitesimal study of the moduli of Hitchin pairs}, J.\ London Math.\ Soc.\ \textbf{49} (1994), no.\ 2, 219--231.

\bibitem{CMP} P.\ E.\ Chaput, L.\ Manivel and N.\ Perrin: \textsl{Quantum cohomology of minuscule homogeneous spaces III, Semi-simplicity and consequences}, Canad.\ J.\ Math.\ \textbf{62} (2010), no.\ 6, 1246--1263.

\bibitem{Che1} D.\ Cheong: \textsl{Vafa--Intriligator type formulas and quantum Euler classes for Lagrangian and orthogonal Grassmannians}, Internat.\ J.\ Algebra Comput.\ \textbf{21} (2011), no.\ 4, 575--594.

\bibitem{Che2} D.\ Cheong: \textsl{Quantum multiplication operators for Lagrangian and orthogonal Grassmannians}, J.\ Algebr.\ Comb.\ \textbf{45} (2017), no.\ 4, 1153--1171.

\bibitem{CCH1} D.\ Cheong, I.\ Choe and G.\ H.\ Hitching: \textsl{Counting Lagrangian subbundles over an algebraic curve}, J.\ Geom.\ Phys.\ \textbf{167} (2021), 20 pp.

\bibitem{CCH2} D.\ Cheong, I.\ Choe and G.\ H.\ Hitching: \textsl{Isotropic Quot schemes of orthogonal bundles over a curve}, Int.\ J.\ Math.\ \textbf{32}, no.\ 8 (2021), 36 pp.

\bibitem {CCH3} D.\ Cheong, I.\ Choe and G.\ H.\ Hitching: \textsl{Irreducibility of Lagrangian Quot schemes over an algebraic curve}, Math.\ Z.\ \textbf{300}, no.\ 2 (2022), 1265--1289.

%\bibitem {CH1} I.\ Choe and G.\ H.\ Hitching: \textsl{Secant varieties and Hirschowitz bound on vector bundles over a curve}, Manuscripta Math. 133, no. 3-4 (2010), 465--477.

\bibitem {CH1} I.\ Choe and G.\ H.\ Hitching: \textsl{Lagrangian subbundles of symplectic vector bundles over a curve}, Math.\ Proc.\ Camb.\ Phil.\ Soc.\ \textbf{153} (2012), 193--214.

\bibitem {CH2} I.\ Choe and G.\ H.\ Hitching: \textsl{A stratification on the moduli spaces of symplectic and orthogonal bundles over a curve}, Internat.\ J.\ Math.\ \textbf{25} (2014), no. 5, 1450047, 27 pp.

\bibitem {CH3} I.\ Choe and G.\ H.\ Hitching: \textsl{Maximal isotropic subbundles of orthogonal bundles of odd rank over a curve}, Internat.\ J.\ Math.\ \textbf{26} (2015), no. 13, 1550106, 23 pp.

\bibitem {CH4} I.\ Choe and G.\ H.\ Hitching: \textsl{Low rank orthogonal bundles and quadric fibrations},   J. \  Korean Math. Soc. \ \textbf{60}, no. \ 6 (2023), 1137--1169.

\bibitem{EGA} A.\ Grothendieck and J.\ A.\ Dieudonn\'{e}: \textsl{\'El\'{e}ments de G\'{e}ometrie Alg\'{e}brique IV}, in: \textrm{Publ.\ Math.\ IH\'{E}S} \textbf{11} (1961), 5--259; \textbf{24} (1965), 5--231; \textbf{28} (1966), 5-255; \textbf{32} (1967), 5--361.

\bibitem{Fu} W.\ Fulton: \textsl{Intersection Theory}. Springer-Verlag, New York, 1998.

\bibitem{FuPr} W.\ Fulton and P.\ Pragacz: \textsl{Schubert Varieties and Degeneracy Loci}. \textrm{Lecture Notes in Math.} 1689, Springer-Verlag, Berlin, 1998.

\bibitem {GH} P.\ Griffiths and J.\ Harris: \textsl{Principles of Algebraic Geometry}.\ USA, John Wiley \& Sons Inc., 1978.

\bibitem{Go} T.\ Goller: \textsl{A weighted topological quantum field theory for Quot schemes on curves}, arXiv:1711.01405v1.

%\bibitem {Hi} G.\ H.\ Hitching: \textsl{Subbundles of symplectic and orthogonal vector bundles over curves}, Math.\ Nachr.\ \textbf{280} (2007), no.\ 13--14, 1510--1517.

%\bibitem {Hit2} G.\ H.\ Hitching: \textsl{Moduli of rank 4 symplectic vector bundles over a curve of genus 2}, J.\ London Math.\ Soc.\ (2) \textbf{75}, no.\ 1 (2007), 255--272.

\bibitem {Ho} Y.\ I.\ Holla: \textsl{Counting maximal subbundles via Gromov--Witten invariants}, Math.\ Ann.\ \textbf{328} (2004), no.\ 1--2, 121--133.

\bibitem {Kle} S.\ L.\ Kleiman: \textsl{The transversality of a general translate}, Compos.\ Math.\ \textbf{28} (1974), 287--297.

\bibitem {Ko} J.\ Koll\'{a}r: \textsl{Rational curves on algebraic varieties}, Springer-Verlag, 1996.

\bibitem{KT2002} A.\ Kresch and H.\ Tamvakis: \textsl{Double Schubert polynomials and degeneracy loci for the classical groups}, Ann.\ Inst.\ Fourier \textbf{52} (2002), no.\ 6, 1681--1727.

%\bibitem{KT} A.\ Kresch and H.\ Tamvakis, \textsl{Quantum cohomology of the Lagrangian Grassmannian}, J.\ Algebraic Geometry \textbf{12} (2003), 777--810.

\bibitem{KT1} A. Kresch and H. Tamvakis: \textsl{Quantum cohomology of orthogonal Grassmannians}, Compositio Math. 140 (2004), 482-500.

\bibitem {LN} H.\ Lange and P.\ E.\ Newstead: \textsl{Maximal subbundles and Gromov--Witten invariants}, A tribute to C.\ S.\ Seshadri (Chennai, 2002), 310--322, Trends Math.\, Birkh\"{a}user, Basel, 2003.

\bibitem{Ma} A.\ Marian: \textsl{On the intersection theory of Quot schezmes and moduli of bundles with sections}, J.\ Reine Angew.\ Math.\ \textbf{610} (2007), 13--27.


\bibitem {Na} M.\ Nagata: \textsl{On self-intersection number of a section on a ruled surface}, Nagoya Math.\ J.\ \textbf{37} (1970), 191--196.

\bibitem {Ox} W.\ M.\ Oxbury: \textsl{Varieties of maximal line subbundles}, Math.\ Proc.\ Camb. Philos.\ Soc.\ \textbf{129} (2000), no.\ 1, 9--18.

\bibitem {PoRo} M.\ Popa and M.\ Roth: \textsl{Stable maps and Quot schemes}, Invent.\ Math.\ \textbf{152} (2003), no.\ 3, 625--663.

\bibitem {PrRa} P.\ Pragacz and J.\ Ratajski: \textsl{Formulas for Lagrangian and orthogonal degeneracy loci; $\Qt$-polynomial approach}, Compos.\ Math.\ \textbf{107} (1997), 11--87.

\bibitem {Ra} S.\ Ramanan: \textsl{Orthogonal and spin bundles over hyperelliptic curves}, Proc.\ Indian Acad.\ Sci., Math.\ Sci.\ \textbf{90} (1981), 151--166.

\bibitem {RuTi} Y. Ruan; G.\ Tian: \textsl{A mathematical theory of quantum cohomology}, J.\  Differential Geom.\ \textbf{42} (1995), no.\ 2, 259--367.

\bibitem {Seg} C.\ Segre: \textsl{Recherches g\'en\'erales sur les courbes et les surfaces alg\'ebriques}, Math.\ Ann.\ \textbf{34} (1889), 1--29.

\bibitem {Ser} J.-P. Serre: \textsl{Rev\^etements \`a ramification impaire et th\^eta-caract\'eristiques}, C.\ R.\ Acad.\ Sci.\ Paris \textbf{311}, s\'erie I (1990), 547--552.

\bibitem{Si} S.\ Sinha: \textsl {The virtual intersection theory of isotropic Quot schemes}. 2021. arXiv: 2106.11356.

\bibitem{Te} M.\ Teixidor i Bigas: \textsl{Subbundles of maximal degree}, Math.\ Proc.\ Camb.\  Phil.\ Soc.\ \textbf{136} (2003),  no.\ 3, 541--543.

\bibitem{Wi} E. \ Witten: \textsl{The Verlinde algebra and the cohomology of the Grassmannian}, Geometry, \ topology and physics, \ 357--422, Conf. Proc. Lecture Notes Geom. Topology, IV, Internat. Press, Cambridge, MA, 1995.
\end{thebibliography}
\end{document}